\newtheorem{theorem}{Theorem}[section]
\newtheorem{proposition}[theorem]{Proposition}
\newtheorem{lemma}[theorem]{Lemma}
\newtheorem{corollary}[theorem]{Corollary}
\theoremstyle{remark}
\newtheorem{remark}[theorem]{Remark}
\newtheorem*{definition}{Definition}
\newtheorem{example}{Example}
\newtheorem*{notations}{Notations}
\numberwithin{equation}{section}
\newcommand{\bez}{\nopagebreak\hspace*{\fill}\nolinebreak$\Box$}
\newcommand{\vep}{\varepsilon}
\newcommand{\R}{{\mathbb{R}}}
\newcommand{\Q}{{\mathbb{Q}}}
\newcommand{\Z}{{\mathbb{Z}}}
\newcommand{\N}{{\mathbb{N}}}
\newcommand{\T}{{\mathbb{T}}}
\newcommand{\xbm}{(X,\mathcal{B},\mu)}
\newcommand{\cp}{\mathcal{P}}
\newcommand{\sgn}{\operatorname{sgn}}
\newcommand{\var}{\operatorname{Var}}
\newcommand{\bv}{\operatorname{BV}}
\newcommand{\pl}{\operatorname{PL}}
\newcommand{\Int}{\operatorname{Int}}
\newcommand{\SL}{\operatorname{SL}}
\newcommand{\supp}{\operatorname{supp}}
\begin{document}
\title[Cocycles over IETs and multivalued Hamiltonian flows]
{Cocycles over interval exchange transformations and multivalued
Hamiltonian flows}
\author[J.-P.\ Conze, K. Fr\k{a}czek]{Jean-Pierre Conze and Krzysztof Fr\k{a}czek}
\address{Universit\'e de Rennes I, IRMAR, CNRS UMR 6625, Campus de Beaulieu, 35042 Rennes Cedex, France}
\email{Jean-Pierre.Conze@univ-rennes1.fr}

\address{Faculty of Mathematics and Computer Science, Nicolaus
Copernicus University, ul. Chopina 12/18, 87-100 Toru\'n, Poland}
\address{Institute of Mathematics\\
Polish Academy of Science\\
ul. \'Sniadeckich 8\\
00-956 Warszawa, Poland } \email{fraczek@mat.umk.pl}
\date{}

\subjclass[2000]{ 37A40, 37C40} \keywords{interval exchange
transformation, cocycle, multivalued Hamiltonian flow, infinite
invariant measure, ergodicity}
\thanks{Research partially supported by MNiSzW grant N N201
384834 and Marie Curie "Transfer of Knowledge" program, project
MTKD-CT-2005-030042 (TODEQ)}

\begin{abstract}
We consider interval exchange transformations of periodic type and
construct different classes of recurrent ergodic cocycles of
dimension $\geq 1$ over this special class of IETs. Then using
Poincar\'e sections we apply this construction to obtain recurrence
and ergodicity for some smooth flows on non-compact manifolds which
are extensions of multivalued Hamiltonian flows on compact surfaces.
\end{abstract}

\maketitle

\tableofcontents

\section{Introduction}

Let $T:\xbm\to\xbm$ be an ergodic automorphism of a standard Borel
probability space and $G$ be a locally compact abelian group with
identity element denoted by $0$. We will consider essentially the
case $G = \R^\ell$, for $\ell \geq 1$.

\vskip 3mm Each measurable function $\varphi:X\rightarrow G$
determines a {\em cocycle} $\varphi^{(\,\cdot\,)}:\Z\times X \to
G$ for $T$ by the formula
\begin{displaymath}
\varphi^{(n)}(x)=\left\{
\begin{array}{ccl}
\varphi(x)+\varphi(Tx)+\ldots+\varphi(T^{n-1}x), & \mbox{if} & n>0 \\
0, & \mbox{if} & n=0,\\
-(\varphi(T^nx)+\varphi(T^{n+1}x)+\ldots+\varphi(T^{-1}x)), &
\mbox{if} & n<0.
\end{array}
\right.
\end{displaymath}
We consider the associated {\it skew product}
\begin{eqnarray}
T_{\varphi}:(X\times
G,{\mathcal{B}}\times{\mathcal{B}_{G}},{\mu}\times m_{G})
&\rightarrow&
(X\times G,{\mathcal{B}}\times{\mathcal{B}_{G}},{\mu}\times m_{G}), \nonumber \\
T_{\varphi}(x,g)&=&(Tx,g+{\varphi}(x)),
\end{eqnarray}
where $\mathcal{B}_{G}$ denotes the $\sigma$-algebra of Borel
subsets and $m_{G}$ the Haar measure of $G$.

\vskip 3mm The cocycle $(\varphi^{(\,\cdot\,)})$ can be viewed as
a {\it "stationary" walk} in $G$ over the dynamical system $(X,
\mu ,T)$. We say that it is {\it recurrent} if
$(\varphi^{(n)}(x))$ returns for a.e. $x$ infinitely often in any
neighborhood of the identity element. The transformation
$T_\varphi$ is then conservative for the invariant $\sigma$-finite
measure $\mu \times
 m_G$. If moreover the system $(X \times G, \mu \times
 m_G, T_\varphi)$ is {\it ergodic}, we say that the cocycle
$\varphi^{(\,\cdot\,)}$ is ergodic. For simplicity, the expression
"cocycle $\varphi$" refers to the cocycle
$(\varphi^{(\,\cdot\,)})$ generated by $\varphi$ over the
dynamical system $(X,\mathcal{B},\mu,T)$.

\vskip 3mm A problem is the construction of recurrent ergodic
cocycles defined over a given dynamical system by regular functions
$\varphi$ with values in $\R^\ell$. There is an important literature
on skew products over an irrational rotation on the circle, and
several classes of ergodic cocycles with values in $\R$ or $\R^\ell$
are known in that case (see \cite{Le-Pa-Vo}, \cite{Or} and
\cite{Pa1} for some classes of ergodic piecewise absolutely
continuous non-continuous $\R$-cocycles, \cite{Gr} for examples of
ergodic cocycles with values in a nilpotent group, \cite{Co-Gu} for
ergodic cocycles in $\Z^2$ associated to special directional
rectangular billiard flows in the plane).

\vskip 3mm Skew products appear in a natural way in the study of
the billiard flow in the plane with $\Z^2$ periodically
distributed obstacles. For instance when the obstacles are
rectangles, they can be modeled as skew products over interval
exchange transformations (abbreviated as IETs). Recurrence and
ergodicity of these models are mainly open questions. Nevertheless
a first step is the construction of recurrent ergodic cocycles
over some classes of IETs (see also a recent paper by P.\ Hubert
and B.\ Weiss \cite{HuWe} for cocycles associated to non-compact
translation surfaces).

\vskip 3mm For the rotations on the circle, a special class consists
in the rotations with bounded partial quotients. For IETs, it is
natural to consider the so-called interval exchange transformations
of periodic type. The aim of this paper is to construct different
classes of recurrent ergodic cocycles over IETs in this special
class.

\vskip 3mm This is done in Sections \ref{piecewise}, \ref{step}, and
\ref{correct}. In Section \ref{prelim} we recall basic facts about
IETs of periodic type, as well as from the ergodic theory of
cocycles. In the appendix proofs of the needed results on the growth
of cocycles of bounded variation (abbreviated as BV cocycles) are
given, mainly adapted from \cite{Ma-Mo-Yo}.

\vskip 3mm In Sections \ref{multihami} and \ref{constmulti} we
present smooth models for recurrent and ergodic systems based on
the previous sections. We deal with a class of smooth flows on
non-compact manifolds which are extensions of multivalued
Hamiltonian flows on compact surfaces of higher genus. These flows
have Poincar\'e sections for which the first recurrence map is
isomorphic to a skew product of an IET and a BV cocycle. This
allows us to prove a sufficient condition for recurrence and
ergodicity (see Section~\ref{multihami}) whenever the IET is of
periodic type. In Section~\ref{constmulti} we show how to
construct explicit non-compact ergodic extensions of some
Hamiltonian flows.

\vskip 4mm
\section{Preliminaries \label{prelim}}

\subsection{Interval exchange transformations}
\vskip 3mm

\hfill \break In this subsection, we recall standard facts on IET's,
with the presentation and notations from \cite{Vi0} and \cite{ViB}.
Let $\mathcal{A}$ be a $d$-element alphabet and let
$\pi=(\pi_0,\pi_1)$ be a pair of bijections
$\pi_\vep:\mathcal{A}\to\{1,\ldots,d\}$ for $\vep=0,1$. Denote by
$\mathcal{S}^0_{\mathcal{A}}$ the subset of irreducible pairs, i.e.\
such that $\pi_1\circ\pi_0^{-1}\{1,\ldots,k\}\neq\{1,\ldots,k\}$ for
$1\leq k<d$. We will denote by $\pi^{sym}_d$ any pair
$(\pi_0,\pi_1)$ such that $\pi_1\circ\pi_0^{-1}(j)=d+1-j$ for $1\leq
j\leq d$.

Let us consider
$\lambda=(\lambda_\alpha)_{\alpha\in\mathcal{A}}\in
\R_+^{\mathcal{A}}$, where $\R_+=(0,+\infty)$. Set
\[|\lambda|=\sum_{\alpha\in\mathcal{A}}\lambda_\alpha,\;\;\;
I=\left[0,|\lambda|\right)\] and
\[I_{\alpha}=[l_\alpha,r_\alpha),\text{ where
}l_\alpha=\sum_{\pi_0(\beta)<\pi_0(\alpha)}\lambda_\beta,\;\;\;r_\alpha
=\sum_{\pi_0(\beta)\leq\pi_0(\alpha)}\lambda_\beta.\] Then
$|I_\alpha|=\lambda_\alpha$. Denote by $\Omega_\pi$ the matrix
$[\Omega_{\alpha\,\beta}]_{\alpha,\beta\in\mathcal{A}}$ given by
\[\Omega_{\alpha\,\beta}=
\left\{\begin{array}{cl} +1 & \text{ if
}\pi_1(\alpha)>\pi_1(\beta)\text{ and
}\pi_0(\alpha)<\pi_0(\beta),\\
-1 & \text{ if }\pi_1(\alpha)<\pi_1(\beta)\text{ and
}\pi_0(\alpha)>\pi_0(\beta),\\
0& \text{ in all other cases.}
\end{array}\right.\]
Given $(\pi,\lambda)\in
\mathcal{S}^0_{\mathcal{A}}\times\R_+^\mathcal{A}$, let
$T_{(\pi,\lambda)}:[0,|\lambda|)\rightarrow[0,|\lambda|)$ stand for
the {\em interval exchange transformation} (IET) on $d$ intervals
$I_\alpha$, $\alpha\in\mathcal{A}$, which are rearranged according
to the permutation $\pi^{-1}_1\circ\pi_0$, i.e.\
$T_{(\pi,\lambda)}x=x+w_\alpha$ for $x\in I_\alpha$, where
$w=\Omega_\pi\lambda$.

Note that for every $\alpha\in\mathcal{A}$ with $\pi_0(\alpha)\neq
1$ there exists $\beta\in\mathcal{A}$ such that $\pi_0(\beta)\neq
d$ and $l_\alpha=r_\beta$. It follows that
\begin{equation}\label{zbzero}
\{l_\alpha:\alpha\in\mathcal{A},\;\pi_0(\alpha)\neq 1\}=
\{r_\alpha:\alpha\in\mathcal{A},\;\pi_0(\alpha)\neq d\}.
\end{equation}
By $\widehat{T}_{(\pi,\lambda)}:(0,|I|]\to(0,|I|]$  denote the
exchange of the intervals
$\widehat{I}_\alpha=(l_\alpha,r_\alpha]$, $\alpha\in\mathcal{A}$,
i.e.\ $T_{(\pi,\lambda)}x=x+w_\alpha$ for $x\in
\widehat{I}_\alpha$. Note that for every $\alpha\in\mathcal{A}$
with $\pi_1(\alpha)\neq 1$ there exists $\beta\in\mathcal{A}$ such
that $\pi_1(\beta)\neq d$ and
$T_{(\pi,\lambda)}l_\alpha=\widehat{T}_{(\pi,\lambda)}r_\beta$. It
follows that
\begin{equation}\label{zbjeden}
\{T_{(\pi,\lambda)}l_\alpha:\alpha\in\mathcal{A},\;\pi_1(\alpha)\neq
1\}=
\{\widehat{T}_{(\pi,\lambda)}r_\alpha:\alpha\in\mathcal{A},\;\pi_1(\alpha)\neq
d\}.
\end{equation}

A pair ${(\pi,\lambda)}$ satisfies the {\em Keane condition} if
$T_{(\pi,\lambda)}^m l_{\alpha}\neq l_{\beta}$ for all $m\geq 1$
and for all $\alpha,\beta\in\mathcal{A}$ with $\pi_0(\beta)\neq
1$.

\vskip 3mm Let $T=T_{(\pi,\lambda)}$, $(\pi,\lambda)
\in\mathcal{S}^0_{\mathcal{A}}\times\R_+^{\mathcal{A}}$, be an IET
satisfying Keane's condition. Then
$\lambda_{\pi_0^{-1}(d)}\neq\lambda_{\pi_1^{-1}(d)}$. Let
\[\tilde{I}=\left[0,\max\left({l}_{\pi_0^{-1}(d)},{l}_{\pi_1^{-1}(d)}\right)\right)\]
and denote by $\mathcal{R}(T)=\tilde{T}:\tilde{I}\to\tilde{I}$ the
first return map of $T$ to the interval $\tilde{I}$. Set
\[\vep(\pi,\lambda)=\left\{
\begin{array}{ccl}
0&\text{ if }&\lambda_{\pi_0^{-1}(d)}>\lambda_{\pi_1^{-1}(d)},\\
1&\text{ if }&\lambda_{\pi_0^{-1}(d)}<\lambda_{\pi_1^{-1}(d)}.
\end{array}
\right.\]  Let us consider a pair
$\tilde{\pi}=(\tilde{\pi}_0,\tilde{\pi}_1)\in\mathcal{S}^0_{\mathcal{A}}$,
where
\begin{eqnarray*}\tilde{\pi}_\vep(\alpha)&=&\pi_\vep(\alpha)
\text{ for all }\alpha\in\mathcal{A}\text{ and }\\
\tilde{\pi}_{1-\vep}(\alpha)&=&\left\{
\begin{array}{cll}
\pi_{1-\vep}(\alpha)& \text{ if
}&\pi_{1-\vep}(\alpha)\leq\pi_{1-\vep}\circ\pi^{-1}_\vep(d),\\
\pi_{1-\vep}(\alpha)+1& \text{ if
}&\pi_{1-\vep}\circ\pi^{-1}_\vep(d)<\pi_{1-\vep}(\alpha)<d,\\
\pi_{1-\vep}\pi^{-1}_\vep(d)+1& \text{ if
}&\pi_{1-\vep}(\alpha)=d.\end{array} \right.
\end{eqnarray*}
As it was shown by Rauzy in \cite{Ra}, $\tilde{T}$ is also an IET
on $d$-intervals
\[\tilde{T}=T_{(\tilde{\pi},\tilde{\lambda})}\text{ with }
\tilde{\lambda}=\Theta^{-1}(\pi,\lambda)\lambda,\] where
\[\Theta(T)=\Theta(\pi,\lambda)=I+E_{\pi_{\vep}^{-1}(d)\,
\pi_{1-\vep}^{-1}(d)}\in\SL(\Z^{\mathcal{A}}).\]
Moreover,
\begin{equation}\label{omega}
\Theta^t(\pi,\lambda)\Omega_{\pi}\Theta(\pi,\lambda)=\Omega_{\tilde{\pi}}.
\end{equation}

It follows that $\ker\Omega_{{\pi}} =
\Theta(\pi,\lambda)\ker\Omega_{\tilde{\pi}}$. We have also
$\Omega_{{\pi}}^t = - \Omega_{{\pi}}$. Thus taking
$H_{\pi}=\Omega_{{\pi}}(\R^{\mathcal{A}})=\ker\Omega_{{\pi}}^\perp$,
we get $H_{\tilde{\pi}}=\Theta^t(\pi,\lambda)H_{{\pi}}$. Moreover,
$\dim H_{{\pi}}=2g$ and $\dim \ker\Omega_{{\pi}}=\kappa-1$, where
$\kappa$ is the number of singularities and $g$ is the genus of the
translation surface  associated to $\pi$. For more details we refer
the reader to \cite{ViB}.

\vskip 3mm The IET $\tilde{T}$ fulfills the Keane condition as well.
Therefore we can iterate the renormalization procedure and generate
a sequence of IETs $(T^{(n)})_{n\geq 0}$, where
$T^{(n)}=\mathcal{R}^n(T)$ for $n\geq 0$. Denote by
$\pi^{(n)}=(\pi^{(n)}_0,\pi^{(n)}_1)\in\mathcal{S}^0_{\mathcal{A}}$
the pair and by
$\lambda^{(n)}=(\lambda^{(n)}_\alpha)_{\alpha\in\mathcal{A}}$ the
vector which determines $T^{(n)}$. Then $T^{(n)}$ is the first
return map of $T$ to the interval $I^{(n)}=[0,|\lambda^{(n)}|)$ and
\[\lambda=\Theta^{(n)}(T)\lambda^{(n)}\text{ with }\Theta^{(n)}(T)=
\Theta(T)\cdot\Theta(T^{(1)})\cdot\ldots\cdot\Theta(T^{(n-1)}).\]

\vskip 3mm
\subsection{IETs of periodic type}
\vskip 3mm

\begin{definition}[see \cite{Si-Ul}] An IET $T$ is of {\em periodic type}
if there exists $p>0$ (called a {\em period of $T$}) such that
$\Theta(T^{(n+p)})=\Theta(T^{(n)})$ for every $n\geq 0$ and
$\Theta^{(p)}(T)$ (called a {\em periodic matrix of $T$} and denoted
by $A$ in all that follows) has strictly positive entries.
\end{definition}

\begin{remark}
Suppose that $T=T_{(\pi,\lambda)}$ is of periodic type. It follows
that
\[\lambda=\Theta^{(pn)}(T)\lambda^{(pn)}=\Theta^{(p)}(T)^n\lambda^{(pn)}
\in\Theta^{(p)}(T)^n\R^{\mathcal{A}},\] and hence $\lambda$
belongs to $\bigcap_{n\geq 0}\Theta^{(p)}(T)^n\R^{\mathcal{A}}$
which is a one-dimensional convex cone (see \cite{Ve1}). Therefore
$\lambda$ is a positive right Perron-Frobenius eigenvector of the
matrix $\Theta^{(p)}(T)$. Since the set
$\mathcal{S}^0_{\mathcal{A}}$ is finite,  multiplying the period
$p$ if necessary, we can assume that $\pi^{(p)}=\pi$. It follows
that
$(\pi^{(p)},\lambda^{(p)}/|\lambda^{(p)}|)=(\pi,\lambda/|\lambda|)$
and $\rho:=|\lambda|/|\lambda^{(p)}|$ is the Perron-Frobenius
eigenvector of the matrix $\Theta^{(p)}(T)$. Recall that similar
arguments to those above show that every IET of periodic type is
uniquely ergodic.

\vskip 3mm A procedure giving an explicit construction of IETs of
periodic type was introduced in \cite{Si-Ul}. The construction is
based on choosing closed paths on the graph giving the Rauzy
classes. Every IET of periodic type can be obtained this way.
\end{remark}

\vskip 3mm Let $T=T_{(\pi,\lambda)}$ be an IET of periodic type and
$p$ be a period such that $\pi^{(p)}=\pi$. Let $A=\Theta^{(p)}(T)$.
By (\ref{omega}),
\[A^t\Omega_{\pi}A=\Omega_{{\pi}}\text{ and hence }
\ker\Omega_{{\pi}}=A\ker\Omega_{{\pi}}\text{ and
}H_{{\pi}}=A^tH_{{\pi}}.\] Multiplying the period $p$ if
necessary, we can assume that $A|_{\ker\Omega_{\pi}}=Id$ (see
Appendix~\ref{korekcja} for details). Denote by $Sp(A)$ the
collection of complex eigenvalues of $A$, including
multiplicities. Let us consider the collection of Lyapunov
exponents $\log|\rho|$, $\rho\in Sp(A)$. It consists of the
numbers
\[\theta_1>\theta_2\geq\theta_3\geq\ldots\geq\theta_g\geq0
=\ldots=0\geq-\theta_g\geq\ldots\geq-\theta_3\geq-\theta_2>-\theta_1,\]
where $2g=\dim H_{\pi}$ and $0$ occurs with the multiplicity
$\kappa-1=\dim\ker\Omega_{{\pi}}$ (see e.g.\ \cite{Zo0} and
\cite{Zo}). Moreover, $\rho_1:=\exp\theta_1$ is the
Perron-Frobenius eigenvalue of $A$. We will use sometimes the
symbol $\theta_i(T)$ instead of $\theta_i$ to emphasize that it is
associated to $T$.

\begin{definition}
An IET of periodic type $T_{(\pi,\lambda)}$  has {\em
non-degenerated spectrum} if $\theta_g>0$.
\end{definition}

\vskip 3mm \subsection{Growth of BV cocycles}\vskip 3mm

\hfill \break The recurrence of a cocycle $\varphi$ with values in
$\R^\ell$ is related to the growth of $\varphi^{(n)}$ when $n$
tends to $\infty$.

\vskip 3mm For an irrational rotation $T: x \rightarrow x + \alpha
{\rm \ mod \ }1$ (this can be viewed as an exchange of 2 intervals),
when $\varphi$ has a bounded variation, the growth of
$\varphi^{(n)}$ is controlled by the Denjoy-Koksma inequality: if
$\varphi$ is a zero mean function on $X = \R/\Z$ with bounded
variation $\var \varphi$, and $(q_n)$ the denominators (of the
convergents) given by the continued fraction expansion of $\alpha$,
then the following inequality holds:
\begin{eqnarray}
|\sum_{j=0}^{q_n-1} \varphi(x+ j \alpha)| \le \var \varphi,
\forall x \in X. \label{DK}
\end{eqnarray}
This inequality implies obviously recurrence of the cocycle
$\varphi^{(\,\cdot\,)}$ and if $\alpha$ has  bounded partial
quotients (we say for brevity {\it bpq})
$\sum_{j=0}^{n-1}\varphi(x+ j\alpha) = O(\log n)$ uniformly in
$x\in X$.

\vskip 3mm It is much more difficult to get a precise upper bound
for the growth of a cocycle over an IET. The following theorem
(proved in Appendix \ref{proofs-dev}) gives for an IET of periodic
type a control on the growth of a BV cocycle in terms of the
Lyapunov exponents of the matrix $A$.

\begin{theorem}\label{thmthetas}
Suppose that $T_{(\pi,\lambda)}:I\to I$ is an interval exchange
transformation of periodic type, $0\leq\theta_2<\theta_1$ are the
two largest Lyapunov exponents, and $M$ is the  maximal size of
Jordan blocks in the Jordan decomposition  of its periodic matrix
$A$. Then there exists $C>0$ such that
\[\|\varphi^{(n)}\|_{\sup}\leq
C\cdot\log^{M+1}n\cdot n^{\theta_2/\theta_1}\cdot\var\varphi\] for
every function $\varphi:I\to\R$ of bounded variation with zero
mean and for each natural $n$. \bez
\end{theorem}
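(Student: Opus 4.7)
The approach is to follow the Marmi--Moussa--Yoccoz strategy and reduce the problem to the growth of iterates of the linear cocycle $A$ on a vector of ``tower Birkhoff sums'', while using the zero-mean hypothesis to kill the top Lyapunov exponent $\theta_1$ so that only $\theta_2$ governs the growth. For each $n\ge 0$, Rauzy--Veech induction decomposes $I$ into $d$ Rokhlin towers of heights $h^{(n)}_\alpha$ over the bases $I^{(n)}_\alpha$; set
\[
\Phi^{(n)}_\alpha := \varphi^{(h^{(n)}_\alpha)}(l^{(n)}_\alpha), \qquad \Phi^{(n)} := (\Phi^{(n)}_\alpha)_{\alpha\in\mathcal{A}}.
\]
A Denjoy--Koksma estimate adapted to IETs gives $\sup_{x\in I^{(n)}_\alpha}|\varphi^{(h^{(n)}_\alpha)}(x)-\Phi^{(n)}_\alpha|\le \var\varphi$, because the two orbit segments meet each floor of the $\alpha$-tower exactly once and the termwise difference telescopes to at most $\var\varphi$.

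The combinatorics of one Rauzy--Veech step (the winner's tower absorbs the loser's) yields the linear recursion
\[
\Phi^{(n+1)} = \Theta(T^{(n)})\Phi^{(n)} + r^{(n)}, \qquad \|r^{(n)}\|_\infty \le \var\varphi,
\]
in which $r^{(n)}$ is supported on the single winner coordinate. Using the periodicity $\Theta(T^{(n+p)})=\Theta(T^{(n)})$ together with $A=\Theta^{(p)}(T)$, iteration produces
\[
\Phi^{(pk)} = A^{k}\Phi^{(0)} + \sum_{j=0}^{k-1}A^{k-1-j} R_j, \qquad \|R_j\|_\infty \le C_1\var\varphi.
\]
Since $A$ is strictly positive, the Perron eigenvalue $\rho_1=e^{p\theta_1}$ is simple with right eigenvector $\lambda$, and on the $A$-invariant complementary hyperplane $W$ the restriction $A|_W$ satisfies $\|A^k|_W\|\le C_2\, k^M \rho_2^k$ with $\rho_2=e^{p\theta_2}$, by the Jordan decomposition. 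The zero-mean hypothesis enters through the identity $|\langle \lambda^{(n)},\Phi^{(n)}\rangle|\le C_3\var\varphi$, a direct consequence of $\int_I\varphi\,d\Leb=0$ combined with the Denjoy--Koksma bound above, which together with $\lambda^{(pk)}=\rho_1^{-k}\lambda$ forces the component of $\Phi^{(pk)}$ along the Perron eigenline $\R\lambda$ to remain of order $k^M \rho_2^k\var\varphi$ rather than the a priori $\rho_1^k\var\varphi$. Consequently $\|\Phi^{(pk)}\|_\infty \le C_4\, k^M \rho_2^k\,\var\varphi$.

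Finally, for arbitrary $n\ge 1$ choose $k$ with $\min_\alpha h^{(pk)}_\alpha\le n<\min_\alpha h^{(p(k+1))}_\alpha$, so that $k\asymp \log n/(p\theta_1)$ by the Perron--Frobenius growth $h^{(pk)}_\alpha\asymp \rho_1^k$, and greedily decompose the orbit segment $\{x,Tx,\ldots,T^{n-1}x\}$ into complete tower traversals at successively lower levels $pk,pk-1,\ldots,0$. At each of the $O(k)$ levels only a bounded number of full traversals can occur (bounded by entries of the Rauzy--Veech matrices), and each contributes at most $\|\Phi^{(pj)}\|_\infty+\var\varphi$. Summing the resulting geometric series gives
\[
|\varphi^{(n)}(x)|\le C_5\,k\cdot k^M \rho_2^k\,\var\varphi \le C(\log n)^{M+1}n^{\theta_2/\theta_1}\,\var\varphi,
\]
as claimed. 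The main technical obstacle is the quantitative control of the Perron component of $\Phi^{(pk)}$: the naive iteration of the recursion does not immediately show that the $\lambda$-direction grows no faster than $k^M\rho_2^k$, and one must carefully use the pairing with the renormalized widths $\lambda^{(n)}$ (which contract at rate $\rho_1^{-n/p}$ along $\lambda$) rather than with $\lambda$ itself, to transport the zero-mean information through the iteration; the tower combinatorics and the greedy orbit decomposition are standard and follow \cite{Ma-Mo-Yo}.
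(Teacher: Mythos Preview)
Your overall strategy is the right one and matches the paper's in spirit, but the step you flag as the ``main technical obstacle'' is not actually resolved, and the resolution you sketch does not work.

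Concretely: the pairing bound $|\langle\lambda^{(pk)},\Phi^{(pk)}\rangle|\le C_3\var\varphi$ is equivalent (since $\lambda^{(pk)}=\rho_1^{-k}\lambda$) to $|\langle\lambda,\Phi^{(pk)}\rangle|\le C_3\rho_1^{k}\var\varphi$, which is the \emph{trivial} bound on the Perron component. In your recursion $\Phi^{(p(k+1))}=A^{t}\Phi^{(pk)}+R_k$ (note: the heights and Birkhoff sums transform via $Q^{t}$, so the matrix here is $A^{t}$, not $A$, and the Perron direction of $A^{t}$ is spanned by the height eigenvector $\mu$, not by $\lambda$), the error vectors $R_k$ have nonzero $\mu$-component of size $\asymp\var\varphi$. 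Hence the $\mu$-component of $\Phi^{(pk)}$ is $\rho_1^{k}c_0+\sum_{j<k}\rho_1^{k-1-j}a_j$ with $|a_j|\le C\var\varphi$, which genuinely grows like $\rho_1^{k}\var\varphi$. No pairing with $\lambda^{(n)}$ can improve this, because the information $\int_I\varphi=0$ is a single scalar constraint and is already exhausted by the bound $|c_0|\le C\var\varphi$.

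The fix---and this is exactly what the paper does via the Marmi--Moussa--Yoccoz inequality (\ref{szacsl})---is to track not the pointwise vector $\Phi^{(pk)}$ but the vector of \emph{means} $h_k:=\bigl(\tfrac{1}{|I^{(pk)}_\alpha|}\int_{I^{(pk)}_\alpha}S(pk)\varphi\bigr)_{\alpha}$. Since $\int_{I^{(pk)}}S(pk)\varphi=\int_I\varphi=0$, one has $h_k\in Ann(\lambda)=\Gamma_0$ \emph{exactly}, at every step. The recursion becomes $h_{k+1}=A^{t}h_k+e_k$ with $e_k\in Ann(\lambda)$ (automatic, since $h_{k+1}$ and $A^{t}h_k$ both lie there) and $\|e_k\|\le\|A\|\var\varphi$. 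Now the whole iteration lives in the $A^{t}$-invariant hyperplane $Ann(\lambda)$, on which $\|(A^{t})^{k}\|\le Ck^{M-1}\rho_2^{k}$, and the Perron direction never enters. Finally $\|\Phi^{(pk)}-h_k\|\le\var\varphi$ (your Denjoy--Koksma estimate) transfers the bound back to $\Phi^{(pk)}$, and the greedy orbit decomposition finishes the proof as you described.
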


For our purpose, this inequality is useful when
$\theta_2(T)/\theta_1(T)$ is small. In Appendix~\ref{theta1theta2}
we will give examples with arbitrary small values of this ratio.

\vskip 3mm
\subsection{Recurrence, essential values, and ergodicity of cocycles}
\vskip 3mm

\hfill \break In this subsection we recall some general facts about
cocycles. For relevant background material concerning skew products
and infinite measure-preserving dynamical systems, we refer the
reader to \cite{Sch} and \cite{Aa}.

\vskip 3mm Denote by $\overline{G}$ the one point compactification
of the group $G$. An element $g\in \overline{G}$ is said to be an
{\em essential value} of $\varphi$, if for every open neighbourhood
$V_g$ of $g$ in $\overline{G}$ and any set $B\in\mathcal{B}$,
$\mu(B)>0$, there exists $n\in\Z$ such that
\begin{eqnarray}
\mu(B\cap T^{-n}B\cap\{x\in X:\varphi^{(n)}(x)\in V_g\})>0.
\label{val-ess}
\end{eqnarray}

The set of essential values of $\varphi$ will be denoted by
$\overline{E}(\varphi)$. The set of finite essential values
${E}(\varphi):=G\cap\overline{E}(\varphi)$ is a closed subgroup of
$G$. We recall below some properties of $\overline{E}(\varphi)$ (see
\cite{Sch}).

\vskip 3mm Two cocycles $\varphi,\psi: X\to G$ are called {\em
cohomologous} for $T$ if there exists a measurable function
$g:X\to G$ such that $\varphi=\psi+g-g\circ T$. The corresponding
skew products $T_\varphi$ and $T_{\psi}$ are then
measure-theoretically isomorphic. A cocycle $\varphi:X\to G$ is a
{\em coboundary} if it is cohomologous to the zero cocycle.

\vskip 3mm If $\varphi$ and $\psi$ are cohomologous then
$\overline{E}(\varphi)=\overline{E}(\psi)$. Moreover, $\varphi$ is
a coboundary if and only if $\overline{E}(\varphi)=\{0\}$.

\vskip 3mm A cocycle $\varphi: X \to G$ is recurrent (as defined
in the introduction) if and only if, for each open neighborhood
$V_0$ of $0$, (\ref{val-ess}) holds for some $n\neq 0$. This is
equivalent to the conservativity of the skew product $T_\varphi$
(cf. \cite{Sch}). Let $\varphi:X\to\R^\ell$ be an integrable
function. If it is recurrent, then $\int_X\varphi\,d\mu=0$;
moreover, for $\ell=1$ this condition is sufficient for recurrence
when $T$ is ergodic.

\vskip 3mm The group ${E}(\varphi)$ coincides with the group of
{\em periods} of $T_{\varphi}$-invariant functions i.e.\ the set
of all $g_0\in G$ such that, if $f:X\times G\rightarrow\mathbb{R}$
is a $T_{\varphi}$-invariant measurable function, then
$f(x,g+g_0)=f(x,g)$ ${\mu}\times m_{G}$-a.e. In particular,
$T_{\varphi}$ is ergodic if and only if $E(\varphi)=G$.

\vskip 3mm A simple sufficient condition of recurrence is the
following:
\begin{proposition}[see Corollary 1.2 in
\cite{Ch-Co}]\label{cochco} If $\varphi:X\to\R^\ell$ is a square
integrable cocycle for an automorphism $T:\xbm\to\xbm$ such that
$\|\varphi^{(n)}\|_{L^2(\mu)}=o(n^{1/\ell})$, then it is
recurrent.
\bez
\end{proposition}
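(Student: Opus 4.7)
The plan is a proof by contradiction built around the essential-value characterization of recurrence already recalled in this subsection. If $\varphi$ is not recurrent, then $0\notin\ov{E}(\varphi)$, so there exist $\delta>0$ and $B\in\mathcal{B}$ with $\mu(B)>0$ such that
\[
\mu\bigl(B\cap T^{-n}B\cap\{x:|\varphi^{(n)}(x)|<\delta\}\bigr)=0\quad\text{for every }n\neq 0.
\]
In other words, whenever $x$ and $T^nx$ both lie in $B$ with $n\neq 0$, the value $\varphi^{(n)}(x)$ has norm at least $\delta$. This rigid $\delta$-separation of return-time cocycle values is the resource we use to derive an incompatibility with the $L^2$ growth hypothesis.

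Fix a small parameter $\vep>0$ to be chosen later and put $R_N:=\vep N^{1/\ell}$. For $x\in B$ set
\[
S_N(x):=\{1\leq n\leq N : T^nx\in B,\ |\varphi^{(n)}(x)|\leq R_N\}.
\]
For any two distinct $n>m$ in $S_N(x)$, the cocycle identity $\varphi^{(n)}(x)-\varphi^{(m)}(x)=\varphi^{(n-m)}(T^mx)$ together with the displayed equation above (applied to $T^mx\in B$ and $T^{n-m}(T^mx)=T^nx\in B$) forces $|\varphi^{(n)}(x)-\varphi^{(m)}(x)|\geq\delta$. Hence $\{\varphi^{(n)}(x):n\in S_N(x)\}$ is a $\delta$-separated subset of the Euclidean ball of radius $R_N$ in $\R^\ell$, so a volume-packing estimate gives
\[
|S_N(x)|\leq C_\ell(R_N/\delta)^\ell=C_\ell\,\vep^\ell\,\delta^{-\ell}\,N
\]
uniformly in $x$, with $C_\ell$ a purely dimensional constant.

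For the opposite inequality, integrate over $B$ and apply Fubini:
\[
\int_B |S_N(x)|\,d\mu(x)=\sum_{n=1}^N\mu\bigl(B\cap T^{-n}B\cap\{|\varphi^{(n)}|\leq R_N\}\bigr),
\]
and bound each term from below by Chebyshev as $\mu(B\cap T^{-n}B)-\|\varphi^{(n)}\|_{L^2(\mu)}^2/R_N^2$. The von Neumann mean ergodic theorem applied to $\mathbf{1}_B$ gives $\tfrac1N\sum_{n=1}^N\mu(B\cap T^{-n}B)\to\|P\mathbf{1}_B\|_2^2\geq\mu(B)^2>0$, where $P$ is the orthogonal projection onto $T$-invariant functions; so this partial sum is at least $\mu(B)^2N/2$ for $N$ large. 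A routine splitting of the sum using $\|\varphi^{(n)}\|_2=o(n^{1/\ell})$ yields $\sum_{n=1}^N\|\varphi^{(n)}\|_2^2=o(N^{1+2/\ell})$, whence $R_N^{-2}\sum_{n=1}^N\|\varphi^{(n)}\|_2^2=o(N)/\vep^2$. Combining with the packing bound,
\[
\tfrac12\mu(B)^2\,N\leq \mu(B)\,C_\ell\,\vep^\ell\,\delta^{-\ell}\,N+o(N).
\]
Dividing by $N$ and letting $N\to\infty$ produces $\mu(B)/2\leq C_\ell\vep^\ell/\delta^\ell$, which becomes absurd once $\vep$ is chosen small enough (the quantities $\mu(B)$, $\delta$, $C_\ell$ being already fixed).

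The main technical point is balancing the Chebyshev lower bound against the packing upper bound at the critical scale $R_N=\vep N^{1/\ell}$: at any smaller scale the packing bound becomes useless, while at any larger scale the Chebyshev tail swamps the mean-ergodic contribution. The hypothesis $\|\varphi^{(n)}\|_2=o(n^{1/\ell})$ is precisely calibrated so that the loss from Chebyshev is $o(N)$ at this scale, leaving only the dimensional constant to be beaten, which is achieved by the free parameter $\vep$.
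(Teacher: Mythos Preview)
The paper does not supply its own proof of this proposition; it is stated with a citation to \cite{Ch-Co} and closed with a $\Box$, so there is no in-paper argument to compare against.

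Your proof is correct. One terminological slip: $0$ is always an essential value of $\varphi$ (take $n=0$ in the defining condition), so non-recurrence is not literally ``$0\notin\overline{E}(\varphi)$''; but the consequence you actually use---a set $B$ of positive measure and $\delta>0$ with $\mu(B\cap T^{-n}B\cap\{|\varphi^{(n)}|<\delta\})=0$ for every $n\neq 0$---is precisely the negation of the recurrence criterion recalled just before the proposition, so nothing is lost. From there the $\delta$-separation of $\{\varphi^{(n)}(x):n\in S_N(x)\}$ via the cocycle identity, the packing bound $|S_N(x)|\leq C_\ell(R_N/\delta)^\ell$, and the lower bound on $\int_B|S_N|\,d\mu$ via the mean ergodic theorem and Chebyshev are all sound; the calibration $R_N=\vep N^{1/\ell}$ together with $\|\varphi^{(n)}\|_2^2=o(n^{2/\ell})$ makes the Chebyshev loss $o(N)$ for each fixed $\vep$, and sending $\vep\to 0$ after $N\to\infty$ yields the contradiction. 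This packing-versus-ergodic-average balance is exactly the mechanism behind the cited result.
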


In view of Theorem \ref{thmthetas}, as a consequence we have the
following.

\begin{corollary}\label{correc}
If $T:I\to I$ is an IET of periodic type such that
$\theta_2(T)/\theta_1(T)<1/\ell$ for an integer $\ell \geq 1$, then
every cocycle $\varphi:I\to\R^\ell$ over $T$ of bounded variation
with zero mean is recurrent. If, for $j=1,\ldots,\ell$,
$T_j:I^{(j)}\to I^{(j)}$ are interval exchange transformations of
periodic type such that $\theta_2(T_j)/\theta_1(T_j)<1/\ell $, then
every "product" cocycle $\varphi = (\varphi_1,\ldots ,
\varphi_\ell): I^{(1)}\times\ldots\times I^{(\ell)}\to\R^\ell$ of
bounded variation with zero mean over $T_1 \times ... \times T_\ell$
is recurrent. \bez
\end{corollary}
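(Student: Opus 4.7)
The plan is to combine the two earlier results in the obvious way: Theorem \ref{thmthetas} supplies a sup-norm (hence $L^{2}$) estimate on $\varphi^{(n)}$, while Proposition \ref{cochco} turns an $L^{2}$ estimate of order $o(n^{1/\ell})$ into recurrence of the cocycle. So the whole task is to verify that the hypothesis $\theta_{2}/\theta_{1}<1/\ell$ delivers exactly the little-$o$ rate required.

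For the first statement, I would write $\varphi=(\varphi_{1},\ldots,\varphi_{\ell})$ and note that each coordinate $\varphi_{j}:I\to\R$ is of bounded variation with zero mean. Theorem \ref{thmthetas} applied coordinate-wise yields a single constant $C$ (independent of $n$) such that $\|\varphi_{j}^{(n)}\|_{\sup}\leq C\cdot\log^{M+1}n\cdot n^{\theta_{2}/\theta_{1}}\cdot \var\varphi_{j}$ for all $n\geq 1$. Since $I$ has finite Lebesgue measure, $\|\varphi^{(n)}\|_{L^{2}(\mu)}\leq |I|^{1/2}\,\|\varphi^{(n)}\|_{\sup}$, and the exponent $\theta_{2}/\theta_{1}$ is strictly less than $1/\ell$ by hypothesis, so the logarithmic factor is absorbed and $\|\varphi^{(n)}\|_{L^{2}(\mu)}=o(n^{1/\ell})$. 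Proposition \ref{cochco} then yields recurrence.

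For the product statement, the same scheme works once one observes that for a product cocycle $\varphi=(\varphi_{1},\ldots,\varphi_{\ell})$ over $T_{1}\times\cdots\times T_{\ell}$ the Birkhoff sums split coordinate-wise: the $j$-th coordinate of $\varphi^{(n)}(x_{1},\ldots,x_{\ell})$ equals $\varphi_{j}^{(n)}(x_{j})$, the Birkhoff sum of $\varphi_{j}$ under $T_{j}$ alone. Theorem \ref{thmthetas} applied to each $T_{j}$ separately gives $\|\varphi_{j}^{(n)}\|_{\sup}\leq C_{j}\cdot\log^{M_{j}+1}n\cdot n^{\theta_{2}(T_{j})/\theta_{1}(T_{j})}\cdot \var\varphi_{j}$, and the $L^{2}$ norm of $\varphi^{(n)}$ on the product is bounded by a constant times the sum of these sup-norms. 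By hypothesis each exponent is less than $1/\ell$, so each term is $o(n^{1/\ell})$, and once again Proposition \ref{cochco} closes the argument.

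No serious obstacle is visible here: the corollary is a direct packaging of the two cited results, which is why it is labelled a corollary rather than a theorem. The only minor checks are that the variation of each coordinate $\varphi_{j}$ is controlled by the variation of the $\R^{\ell}$-valued $\varphi$ (standard for any reasonable notion of BV in several coordinates), and that on a finite-measure space the sup-norm dominates the $L^{2}$-norm up to a constant.
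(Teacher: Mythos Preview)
Your proposal is correct and matches the paper's approach exactly: the corollary is stated there without proof (just a \bez), with the preceding sentence ``In view of Theorem \ref{thmthetas}, as a consequence we have the following,'' so the intended argument is precisely the combination of Theorem~\ref{thmthetas} with Proposition~\ref{cochco} that you spell out. Your handling of the product case---noting that the Birkhoff sums split coordinate-wise because $\varphi_{j}$ depends only on $x_{j}$, then applying Theorem~\ref{thmthetas} to each $T_{j}$ separately---is the natural reading of ``product cocycle'' and is consistent with how the result is later used in Theorem~\ref{kawallin1}.
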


We continue these preliminaries by some useful observations for
proving the ergodicity of cocycles. Let $(X,d)$ be a compact metric
space. Let $\mathcal{B}$ stand for the $\sigma$--algebra of all
Borel sets and let $\mu$ be a probability Borel measure on $X$. By
$\chi_B$ we will denote the indicator function of a set $B$. Suppose
that $T:\xbm\to\xbm$ is an ergodic measure--preserving automorphism
and there exist an increasing sequence of natural numbers $(q_n)$
and a sequence of Borel sets $(C_n)$ such that
\[\mu(C_n)\to\alpha>0,\;\;\mu(C_n\triangle T^{-1}C_n)\to 0\;\;\mbox{ and }
\sup_{x\in C_n}d(x,T^{q_n}x)\to 0.\] Assume that $G\subset
\R^\ell$ for some $\ell\geq 1$. Let $\varphi:X\to G$ be a Borel
integrable cocycle for $T$ with zero mean. Suppose that the
sequence $(\int_{C_n}|\varphi^{(q_n)}(x)|d\mu(x))_{n \geq 1}$ is
bounded. As the distributions
\[(\mu(C_n)^{-1}(\varphi^{(q_n)}|_{C_n})_*(\mu|_{C_n}),n\in\N)\]
are uniformly tight, by passing to a further subsequence if
necessary we can assume that they converge weakly to a probability
Borel measure $P$ on $G$.

\begin{lemma}
The topological support of the measure $P$ is included in the
group $E(\varphi)$ of essential values  of the cocycle $\varphi$.
\end{lemma}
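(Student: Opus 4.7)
The plan is to verify directly that every $g\in\supp P$ belongs to $E(\varphi)$: given any Borel $B\subset X$ with $\mu(B)>0$ and any open neighborhood $U$ of $g$, I need some $n$ with $\mu(B\cap T^{-q_n}B\cap\{\varphi^{(q_n)}\in U\})>0$. Fix a continuous bump function $f:G\to[0,1]$ with $f(g)=1$ and $\supp f\subset U$, and set $G_n(x):=\chi_{C_n}(x)\,f(\varphi^{(q_n)}(x))$. By the weak convergence $\mu(C_n)^{-1}(\varphi^{(q_n)})_*(\mu|_{C_n})\to P$ and $g\in\supp P$, one has $\int G_n\,d\mu\to\alpha\int f\,dP>0$.

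Two equidistribution facts drive the argument. The first is the transfer principle $\mu(A\cap C_n)\to\alpha\mu(A)$ for every Borel $A$, proved from $\mu(C_n\triangle T^{-1}C_n)\to0$ and ergodicity of $T$: iterate $|\mu(A\cap C_n)-\mu(T^{-k}A\cap C_n)|\le k\mu(C_n\triangle T^{-1}C_n)$, Cesaro-average in $k$, and apply the mean ergodic theorem $N^{-1}\sum_{k<N}\chi_A\circ T^k\to\mu(A)$ in $L^1$. The second, more delicate, is the asymptotic $T$-invariance $\|G_n\circ T-G_n\|_{L^1(\mu)}\to 0$; granted this, the same Cesaro argument applied to $(G_n)$ gives, for every bounded Borel $h:X\to\R$,
\[
\int h\,G_n\,d\mu\;\longrightarrow\;\Bigl(\int h\,d\mu\Bigr)\,\alpha\int f\,dP.
\]
Proving the asymptotic invariance is the main obstacle. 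Via the cocycle identity $\varphi^{(q_n)}(Tx)-\varphi^{(q_n)}(x)=\varphi(T^{q_n}x)-\varphi(x)$ and continuity of $f$, it reduces to making $\varphi(T^{q_n}x)-\varphi(x)$ small on most of $C_n$. Apply Lusin's theorem to choose a compact $K_\varphi\subset X$ with $\mu(K_\varphi)>1-\eta$ on which $\varphi$ is uniformly continuous; the hypothesis $\sup_{x\in C_n}d(T^{q_n}x,x)\to0$ then forces $\varphi(T^{q_n}x)\to\varphi(x)$ uniformly on $C_n\cap K_\varphi\cap T^{-q_n}K_\varphi$, which exhausts $C_n$ up to $2\eta$ in measure. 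Combined with $\|\chi_{C_n}\circ T-\chi_{C_n}\|_{L^1}\to0$ and $\eta\to0$, this gives the required invariance.

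To conclude, pick by inner regularity a compact $K\subset B$ with $\mu(K)>0$ and apply the second equidistribution with $h=\chi_K$ to obtain $\int_K G_n\,d\mu\to\mu(K)\,\alpha\int f\,dP>0$; hence $\mu(K\cap C_n\cap\{\varphi^{(q_n)}\in U\})\ge\int_K G_n\,d\mu>0$ for $n$ large. Finally, by outer regularity choose an open $W\supset K$ with $\mu(W\setminus B)<\tfrac12\mu(K)\,\alpha\int f\,dP$ and set $\delta_0:=d(K,W^c)>0$; the hypothesis $\sup_{x\in C_n}d(T^{q_n}x,x)\to0$ implies $K\cap C_n\subset T^{-q_n}W$ for all large $n$, and subtracting $T^{-q_n}(W\setminus B)$ from $K\cap C_n\cap\{\varphi^{(q_n)}\in U\}$ leaves a subset of positive measure contained in $B\cap T^{-q_n}B\cap\{\varphi^{(q_n)}\in U\}$, as required.
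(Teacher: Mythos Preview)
Your argument is correct and follows essentially the same strategy as the paper: pick a continuous bump function $f$ supported in the given neighborhood of $g$, and show that $\int_{C_n} f(\varphi^{(q_n)}(x))\,\chi_B(x)\,\chi_B(T^{q_n}x)\,d\mu(x)$ (or a close variant) is eventually positive. The paper does this in two lines by invoking Lemma~5 of \cite{Fr-Le2}, which asserts directly that this integral converges to $\alpha\,\mu(B)\int f\,dP$; you instead supply a self-contained proof of (a version of) that lemma.

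Your implementation differs slightly from what the cited lemma gives: rather than treating $\chi_B(x)\chi_B(T^{q_n}x)$ at once, you first establish $\int \chi_K\,G_n\,d\mu\to \mu(K)\,\alpha\int f\,dP$ via the asymptotic $T$-invariance $\|G_n\circ T-G_n\|_{L^1}\to 0$ and a Ces\`aro/mean-ergodic-theorem diagonalization, and then recover the factor $\chi_B\circ T^{q_n}$ by the topological argument with the open $W\supset K$ and the rigidity $\sup_{C_n}d(T^{q_n}x,x)\to 0$. Both routes use exactly the same hypotheses in the same places (Lusin to tame $\varphi$, the cocycle identity for $\varphi^{(q_n)}\circ T-\varphi^{(q_n)}$, uniform continuity of the compactly supported $f$, and ergodicity of $T$), so the difference is organizational rather than conceptual. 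Two small remarks: your ``first equidistribution fact'' $\mu(A\cap C_n)\to\alpha\mu(A)$ is not actually used in your final argument and can be dropped; and in the last paragraph you should take $\mu(W\setminus B)$ strictly smaller than the eventual lower bound you have for $\int_K G_n\,d\mu$ (e.g.\ choose $\mu(W\setminus B)<\tfrac14\mu(K)\alpha\int f\,dP$ and wait until $\int_K G_n\,d\mu>\tfrac12\mu(K)\alpha\int f\,dP$) so that the subtraction yields a strictly positive number.
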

\begin{proof}
Suppose that $g\in\supp(P)$. Let $V_g$ be an open neighborhood of
$g$. Let $\psi:G\to[0,1]$ be a continuous function such that
$\psi(g)=1$ and $\psi(h)=0$ for $h\in G\setminus V_g$. Thus
$\int_{G}\psi(g)\,dP(g)>0$. By Lemma~5 in \cite{Fr-Le2}, for every
$B\in\mathcal{B}$ with $\mu(B)>0$ we have
\begin{align*}
\mu(&B\cap T^{-q_n}B\cap(\varphi^{(q_n)}\in V_g))
\geq \int_{C_n}\psi\left(\varphi^{(q_n)}(x)\right)\chi_B(x)\chi_B(T^{q_n}x)\,d\mu(x)\\
 &\to
\alpha\int_{X}\int_{G}\psi(g)\chi_B(x)\,dP(g)\,d\mu(x)=\alpha\mu(B)\int_{G}\psi(g)\,dP(g)>0,
\end{align*}
and hence $g\in E(\varphi)$.
\end{proof}

\begin{corollary}[see also \cite{Co}]\label{ergodic}
If $\varphi^{(q_n)}(x)=g_n$ for all $x\in C_n$ and $g_n\to g$, then
$g\in E(\varphi)$.
\end{corollary}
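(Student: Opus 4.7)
The plan is to derive the corollary as an immediate consequence of the preceding lemma, by identifying the limiting measure $P$ in the situation where $\varphi^{(q_n)}$ is constant on each $C_n$.

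First I would verify that the hypotheses of the lemma are met. Since $\varphi^{(q_n)}(x) = g_n$ for every $x \in C_n$ and $g_n \to g$ in $G \subset \R^\ell$, the sequence $(|g_n|)$ is bounded. Hence
\[
\int_{C_n}|\varphi^{(q_n)}(x)|\,d\mu(x) = |g_n|\,\mu(C_n) \to |g|\,\alpha,
\]
so the boundedness condition required by the lemma holds. The sequences $(q_n)$ and $(C_n)$ are given, and $T$ is assumed ergodic measure-preserving, so the remaining standing assumptions are already in force.

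Next I would identify the measure $P$. Because $\varphi^{(q_n)}$ takes the single value $g_n$ on $C_n$, the pushforward $(\varphi^{(q_n)}|_{C_n})_*(\mu|_{C_n})$ is the point mass $\mu(C_n)\,\delta_{g_n}$, so its normalization is exactly $\delta_{g_n}$. Since $g_n \to g$, we have weak convergence $\delta_{g_n} \to \delta_g$ on $\overline{G}$. In the lemma one passes to a subsequence to ensure convergence of the normalized distributions, but here no subsequence is needed: the whole sequence converges, and every subsequential limit coincides with $P = \delta_g$.

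Applying the lemma then yields $\supp(P) = \{g\} \subset E(\varphi)$, which gives $g \in E(\varphi)$ as required. There is no serious obstacle here; the content is entirely in the previous lemma, and this corollary records the degenerate case where the weak limit is a Dirac mass.
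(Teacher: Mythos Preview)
Your proposal is correct and matches the paper's approach: the paper gives no separate proof for this corollary, treating it as an immediate consequence of the preceding lemma, and you have spelled out exactly that deduction by observing that the normalized pushforwards are the Dirac masses $\delta_{g_n}$, which converge weakly to $\delta_g$, so $P=\delta_g$ and $g\in\supp(P)\subset E(\varphi)$. One cosmetic point: the weak convergence takes place on $G$ rather than $\overline{G}$, since $g\in G$ and the lemma concerns a probability Borel measure on $G$.
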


\vskip 3mm
\begin{proposition}[see Proposition 3.8 in \cite{Sch}]\label{compact}
Let $T:\xbm\to\xbm$ be an ergodic automorphism and let $\varphi:X\to
G$ be a measurable cocycle for $T$. If $K\subset G$ is a compact set
such that $K\cap E(\varphi)=\emptyset$, then there exists
$B\in\mathcal{B}$ such that $\mu(B)>0$ and
\[\mu(B\cap T^{-n}B\cap(\varphi^{(n)}\in K))=0\text{ for every }n\in\Z.\]
\end{proposition}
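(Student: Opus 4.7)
The approach is to negate the essential-value definition at each point of $K$, reduce to a finite cover by compactness of $K$, and then amalgamate the local witnesses into a single witness set $B$ by an inductive argument. Concretely, for each $g\in K$, since $g\notin E(\varphi)$, the negation of (\ref{val-ess}) produces an open neighbourhood $V_g$ of $g$ in $\overline{G}$ and a Borel set $A_g$ with $\mu(A_g)>0$ such that
\[
\mu(A_g\cap T^{-n}A_g\cap(\varphi^{(n)}\in V_g))=0\text{ for every }n\in\Z.
\]
A finite subcover $V_{g_1},\ldots,V_{g_m}$ of $K$ exists by compactness; writing $V_i=V_{g_i}$ and $A_i=A_{g_i}$, the inclusion $K\subset\bigcup_i V_i$ together with subadditivity of $\mu$ reduces the claim to producing a single Borel set $B$ with $\mu(B)>0$ and $\mu(B\cap T^{-n}B\cap(\varphi^{(n)}\in V_i))=0$ for all $i=1,\ldots,m$ and all $n\in\Z$.

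I would build $B$ inductively, exploiting the fact that the witness property is hereditary under passing to Borel subsets of positive measure. Set $B^{(1)}=A_1$; at step $k$, given $B^{(k-1)}$ witnessing the property for $V_1,\ldots,V_{k-1}$, seek $B^{(k)}\subset B^{(k-1)}$ of positive measure that additionally witnesses it for $V_k$. In the favourable sub-case $\mu(B^{(k-1)}\cap A_k)>0$, the choice $B^{(k)}=B^{(k-1)}\cap A_k$ suffices: heredity delivers the property for $V_1,\ldots,V_{k-1}$ (from $B^{(k)}\subset B^{(k-1)}$) and for $V_k$ (from $B^{(k)}\subset A_k$).

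The main obstacle arises when $\mu(B^{(k-1)}\cap A_k)=0$. Ergodicity of $T$ then supplies some $j_0\in\Z$ with $\mu(B^{(k-1)}\cap T^{-j_0}A_k)>0$, but the translate $T^{-j_0}A_k$ is not a direct witness for $(V_k,\varphi)$: the cocycle identity gives
\[
\varphi^{(n)}(T^{j_0}y)=\varphi^{(n)}(y)+\varphi^{(j_0)}(T^n y)-\varphi^{(j_0)}(y),
\]
so $T^{-j_0}A_k$ witnesses non-essentiality of $g_k$ only for the cohomologous cocycle $\varphi^{(n)}+\varphi^{(j_0)}\circ T^n-\varphi^{(j_0)}$, not for $\varphi$ itself. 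Following the argument of \cite[Prop.~3.8]{Sch}, I would circumvent this by strengthening the initial step: for each $g\in K$ fix a pair of open neighbourhoods $V_g'\Subset V_g$ of $g$ and cover $K$ by the smaller $V_g'$, while keeping the witness $A_g$ defined with respect to the larger $V_g$. Then, by measure-theoretic approximation controlling the coboundary $\varphi^{(j_0)}-\varphi^{(j_0)}\circ T^n$ on a positive-measure subset of $B^{(k-1)}\cap T^{-j_0}A_k$, one transfers the witness property from $V_k$ back down to $V_k'$ for the original cocycle $\varphi$. Iterating this enhanced step over $k=1,\ldots,m$ yields the required $B$.
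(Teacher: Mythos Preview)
The paper does not supply its own proof of this proposition; it is simply quoted from Schmidt's monograph \cite{Sch}. So there is nothing to compare against within the paper itself. Your sketch follows the standard line of Schmidt's argument and is essentially correct: negate the essential-value condition pointwise on $K$, pass to a finite subcover by compactness, and then inductively shrink to a common witness, using ergodicity to bring the next witness $A_k$ into contact with the current $B^{(k-1)}$.

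The one place where you are vague is the phrase ``measure-theoretic approximation controlling the coboundary $\varphi^{(j_0)}-\varphi^{(j_0)}\circ T^n$''. This can be made precise in one line, and it is worth saying explicitly since the difficulty is that $n$ ranges over all of $\Z$: the point is that $j_0$ is \emph{fixed}, so $\varphi^{(j_0)}$ is a single measurable function. Choose a symmetric neighbourhood $W$ of $0$ with $V_k'+W-W\subset V_k$, and pass to a positive-measure subset $B^{(k)}\subset B^{(k-1)}\cap T^{-j_0}A_k$ on which $\varphi^{(j_0)}$ takes values in a single translate $c+W$. Then for $y,T^ny\in B^{(k)}$ one has $\varphi^{(j_0)}(T^ny)-\varphi^{(j_0)}(y)\in W-W$, so $\varphi^{(n)}(y)\in V_k'$ forces $\varphi^{(n)}(T^{j_0}y)\in V_k$; since $T^{j_0}y,\,T^{j_0}(T^ny)\in A_k$, the witness property of $A_k$ for $V_k$ gives the desired vanishing for $B^{(k)}$ and $V_k'$. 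With this detail filled in, your induction closes.
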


\begin{lemma}\label{lemsym}
Let $K\subset G$ be a compact set. If for every $B\in\mathcal{B}$
with $\mu(B)>0$ and every neighborhood $V_0 \subset G$ of zero there
exists $n\in\Z$ such that
\[\mu(B\cap T^{-n}B\cap(\varphi^{(n)}\in K+V_0))>0,\]
then $K\cap E(\varphi)\neq\emptyset$. In particular, when $K =
\{g,-g\}$, where $g$ is an element of $G$, then $g\in E(\varphi)$.
\end{lemma}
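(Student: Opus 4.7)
The plan is to argue by contradiction using Proposition~\ref{compact}. Suppose $K\cap E(\varphi)=\emptyset$. Recall that $E(\varphi)$ is a closed subgroup of $G$, and $K$ is compact. Since $G$ is a locally compact abelian group, I can choose an open neighborhood $V_0$ of $0$ with compact closure $\overline{V_0}$ small enough that the compact set $K+\overline{V_0}$ is still disjoint from $E(\varphi)$. (This is a routine topological step: cover $K$ by finitely many open sets whose closures avoid $E(\varphi)$, then shrink using a symmetric neighborhood of $0$.)

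Now apply Proposition~\ref{compact} to the compact set $K+\overline{V_0}$. It yields $B\in\mathcal{B}$ with $\mu(B)>0$ such that
\[
\mu\bigl(B\cap T^{-n}B\cap(\varphi^{(n)}\in K+\overline{V_0})\bigr)=0\quad\text{for all }n\in\Z.
\]
Since $K+V_0\subset K+\overline{V_0}$, this directly contradicts the hypothesis applied to this particular $B$ and this particular $V_0$. Hence $K\cap E(\varphi)\neq\emptyset$.

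For the final assertion, suppose the hypothesis holds with $K=\{g,-g\}$. By the first part, $E(\varphi)$ meets $\{g,-g\}$. Since $E(\varphi)$ is a subgroup of $G$, it is closed under negation; therefore $g\in E(\varphi)$ in either case.

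The only mildly delicate point is the selection of the auxiliary neighborhood $V_0$ with $K+\overline{V_0}$ compact and disjoint from $E(\varphi)$; this is where local compactness of $G$ is used to upgrade the pointwise disjointness $K\cap E(\varphi)=\emptyset$ to a uniform one, so that Proposition~\ref{compact} can be applied to a compact set that absorbs the perturbation $V_0$ appearing in the hypothesis.
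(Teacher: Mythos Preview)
Your proof is correct and follows essentially the same approach as the paper: argue by contradiction, use local compactness to find a neighborhood $V_0$ with compact closure such that $(K+\overline{V_0})\cap E(\varphi)=\emptyset$, and then invoke Proposition~\ref{compact} on the compact set $K+\overline{V_0}$ to obtain the contradicting set $B$. Your handling of the case $K=\{g,-g\}$ via the subgroup property of $E(\varphi)$ is exactly the content of the paper's ``The last claim is clear.''
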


\begin{proof}
Suppose that $K\cap E(\varphi)=\emptyset$. Since $K$ is compact and
$E(\varphi)$ is closed, there exists a neighborhood $V_0$ of zero
such that $\overline{V_0}$ is compact and $(K+\overline{V_0})\cap
E(\varphi)=\emptyset$. As $K+\overline{V_0}$ is also compact, by
Proposition~\ref{compact}, there exists $B\in\mathcal{B}$ such that
$\mu(B)>0$ and
\[\mu(B\cap T^{-n}B\cap(\varphi^{(n)}\in (K+\overline{V_0})))=0
\text{ for every }n\in\Z, \] contrary to assumption. The last claim
is clear.
\end{proof}

Consider the quotient cocycle $\varphi^*:X\to G/{E}(\varphi)$
given by $\varphi^*(x)=\varphi(x)+{E}(\varphi)$. Then
$E(\varphi^*)=\{0\}$. The cocycle $\varphi$ is called {\em
regular} if $\overline{E}(\varphi^*)=\{0\}$ and  {\em
non--regular} if $\overline{E}(\varphi^*)=\{0,\infty\}$. Recall
that if $\varphi$ is regular then it is cohomologous to a cocycle
$\psi:X\to E(\varphi)$ such that $E(\psi)=E(\varphi)$.

\begin{lemma}\label{lemergo} If $H$ is a closed subgroup of $E(\varphi)$
such that the quotient cocycle $\varphi_H:X\to G/H$,
$\varphi_H(x)=\varphi(x)+H$ is ergodic, then $\varphi:X\to G$ is
ergodic as well.
\end{lemma}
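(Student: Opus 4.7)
The plan is to exploit two characterizations recalled just above: $T_\varphi$ is ergodic if and only if $E(\varphi)=G$, and $E(\varphi)$ coincides with the group of periods of every $T_\varphi$-invariant measurable function. It therefore suffices to show that any bounded measurable $T_\varphi$-invariant function $F:X\times G\to\R$ is $(\mu\times m_G)$-essentially constant, from which $E(\varphi)=G$ and hence ergodicity of $T_\varphi$ follow.

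Since $H\subset E(\varphi)$, the period characterization forces $F(x,g+h)=F(x,g)$ for $(\mu\times m_G)$-a.e.\ $(x,g)$, simultaneously for every $h\in H$. Fix a Borel section $s:G/H\to G$ of the quotient map $\pi:G\to G/H$ and define
\[
\tilde F:X\times G/H\to\R,\qquad \tilde F(x,\bar g):=F(x,s(\bar g)).
\]
A Fubini argument combined with the $H$-invariance of $F$ along almost every vertical fiber shows $F(x,g)=\tilde F(x,\pi(g))$ for $(\mu\times m_G)$-a.e.\ $(x,g)$, so that $F$ genuinely descends through $\id_X\times\pi$.

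The next step is to verify that $\tilde F$ is $T_{\varphi_H}$-invariant. Writing $T_{\varphi_H}(x,\bar g)=(Tx,\bar g+\overline{\varphi(x)})$, we have $s(\bar g+\overline{\varphi(x)})=s(\bar g)+\varphi(x)+h(x,\bar g)$ for some $h(x,\bar g)\in H$ depending measurably on $(x,\bar g)$. Using the $H$-invariance of $F$ to absorb $h(x,\bar g)$, and then the $T_\varphi$-invariance of $F$, one gets
\[
\tilde F(T_{\varphi_H}(x,\bar g))=F(Tx,s(\bar g)+\varphi(x))=F(x,s(\bar g))=\tilde F(x,\bar g)
\]
for a.e.\ $(x,\bar g)$. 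Ergodicity of $\varphi_H$ forces $\tilde F$ to be a.e.\ constant, hence so is $F$, completing the proof.

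The genuine obstacle is not the strategy but the measurability bookkeeping in the descent: the a.e.\ statements indexed by $h\in H$ must be fused into a single a.e.\ $H$-invariance of $F$ on $X\times G$. For $G=\R^\ell$ and $H$ a closed subgroup this is routine via Fubini and a countable dense set in $H$, once a Borel section $s$ has been fixed, but it is the only point where one has to be careful with null sets.
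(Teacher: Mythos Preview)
Your proof is correct and follows essentially the same approach as the paper's: take a $T_\varphi$-invariant function, use $H\subset E(\varphi)$ and the period characterization to get $H$-invariance, descend to $X\times G/H$, check $T_{\varphi_H}$-invariance, and conclude constancy from the ergodicity of $\varphi_H$. The paper compresses all of this into three sentences, simply asserting that an $H$-invariant $T_\varphi$-invariant function is constant because $\varphi_H$ is ergodic; you have supplied the descent via a Borel section and the null-set bookkeeping that the paper leaves implicit. One small remark: once you know every bounded measurable $T_\varphi$-invariant function is constant, ergodicity of $T_\varphi$ is immediate by definition---the detour through $E(\varphi)=G$ is unnecessary.
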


\begin{proof} Let $f(x,g)$ be a measurable $T_\varphi$-invariant
function. Then, since $H\subset E(\varphi)$, $f$ is $H$-invariant.
Since $\varphi_H$ is ergodic, $f$ is constant.
\end{proof}

\vskip 6mm
\section{Ergodicity of piecewise linear cocycles \label{piecewise}}
\vskip 3mm

\begin{notations} We denote by $\bv(\sqcup_{\alpha\in \mathcal{A}}
I^{(k)}_{\alpha})$ the space of functions $\varphi:I^{(k)}\to
\mathbb{R}$ such that the restriction $\varphi:I^{(k)}_{\alpha}\to
\mathbb{R}$ is of bounded variation for every $\alpha\in
\mathcal{A}$, and by $\bv_0(\sqcup_{\alpha\in \mathcal{A}}
I^{(k)}_{\alpha})$ the subspace of functions in
$\bv(\sqcup_{\alpha\in \mathcal{A}} I^{(k)}_{\alpha})$ with zero
mean. We adopt the notation from \cite{Ma-Mo-Yo}. The space
$\bv(\sqcup_{\alpha\in \mathcal{A}} I^{(k)}_{\alpha})$ is equipped
with the norm $\|\varphi\|_{\bv}=\|\varphi\|_{\sup}+\var\varphi$,
where
\[\var \varphi=\sum_{\alpha\in \mathcal{A}}\var \varphi|_{I^{(k)}_ {\alpha}}.\]

For $\varphi\in\bv(\sqcup_{\alpha\in \mathcal{A}} I_{\alpha})$ and
$x\in I$, $\varphi_+(x)$ and $\varphi_-(x)$ denote the right-handed
and left-handed limit of $\varphi$ at $x$ respectively. We denote by
$\bv^1(\sqcup_{\alpha\in \mathcal{A}} I_{\alpha})$ the space of
functions $\varphi:I\to \mathbb{R}$ which are absolutely continuous
on each  $I_{\alpha}$, $\alpha\in \mathcal{A}$ and such that
$\varphi'\in \bv(\sqcup_{\alpha\in \mathcal{A}} I_{\alpha})$. For
$\varphi\in\bv^1(\sqcup_{\alpha\in \mathcal{A}} I_{\alpha})$ let
\[s(\varphi)=\int_I\varphi'(x)\,dx=\sum_{\alpha\in\mathcal{A}}
(\varphi_{-}(r_\alpha)-\varphi_{+}(l_\alpha)).\]

We denote by $\bv_*^1(\sqcup_{\alpha\in \mathcal{A}} I_{\alpha})$
the subspace of functions $\varphi \in \bv^1(\sqcup_{\alpha\in
\mathcal{A}} I_{\alpha})$ for which $s(\varphi)=0$, and by
$\pl(\sqcup_{\alpha\in \mathcal{A}} I_{\alpha})$ the set of
piecewise linear (with constant slope) functions $\varphi:I\to\R$
such that $\varphi(x)=sx+c_{\alpha}$ for $x\in I_{\alpha}$.
\end{notations}

\begin{proposition}[see \cite{Ma-Mo-Yo}]\label{cohcon}
If $T:I\to I$ satisfies a Roth type condition, then each cocycle
$\varphi\in \bv_*^1(\sqcup_{\alpha\in \mathcal{A}} I_{\alpha})$ for
$T$ is cohomologous to a cocycle which is constant on each interval
$I_{\alpha}$, $\alpha\in \mathcal{A}$. Moreover, the set of IETs
satisfying this Roth type condition has full measure and contains
all IETs of periodic type.
\end{proposition}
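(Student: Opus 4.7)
The plan is to follow the Marmi--Moussa--Yoccoz renormalization approach. At each step of the Rauzy--Veech induction, a BV cocycle $\varphi$ over $T$ is cohomologous to its induced cocycle $S(\varphi)$ over $T^{(1)}=\mathcal{R}(T)$ (extended by zero outside $I^{(1)}$), via an explicit transfer function $g_1\in\bv$ supported on $I\setminus I^{(1)}$ and with a BV-norm controlled by $\|\varphi\|_{\bv}$. Iterating this step produces the telescoping identity
\[
\varphi \;=\; S^n(\varphi)\oplus 0 \;+\; (g_1+\cdots+g_n) \;-\; (g_1+\cdots+g_n)\circ T,
\]
and the strategy is to pass to the limit $n\to\infty$ in order to obtain a piecewise constant limit cocycle $\psi$ on $I$ and a transfer function $g=\sum_{n\geq 1}g_n\in\bv$ such that $\varphi=\psi+g-g\circ T$.

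Given $\varphi\in\bv_*^1(\sqcup_\alpha I_\alpha)$, I would split $S^n(\varphi)$, on each renormalized subinterval $I^{(n)}_\alpha$, into its mean value (a piecewise constant function on $I^{(n)}$) plus an oscillatory remainder with zero mean on each $I^{(n)}_\alpha$. The hypothesis $s(\varphi)=0$ is preserved along the induction, keeping the oscillatory part inside $\bv_*^1$, while the vector of means transforms under a transpose of $\Theta^{(n)}(T)$. The BV-norm of the oscillatory remainder should be shown to contract at a rate governed by the spectral gap $\theta_2/\theta_1$ of $\Theta^{(n)}(T)$, via a Denjoy--Koksma type inequality for zero-mean BV functions over IETs --- a refined, BV-valued companion of the sup-norm estimate underlying Theorem~\ref{thmthetas}.

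The Roth-type condition provides exactly the two ingredients needed to close this loop: a quantitative Perron--Frobenius control of $\|\Theta^{(n)}(T)\|$ in stable directions, ensuring that the means of $S^n(\varphi)$ stabilize into a well-defined piecewise constant $\psi$ on $I$; and strict separation of the top Lyapunov exponent from the rest, ensuring that the oscillatory parts, and therefore the $g_n$'s, decay geometrically in $\bv$-norm. The sum $g=\sum_n g_n$ then converges in $\bv$, and passing to the limit in the telescoping identity yields the sought cohomology with $\psi$ constant on each $I_\alpha$.

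For the second assertion, IETs of periodic type are automatic: one has $\Theta^{(pn)}(T)=A^n$ for a fixed integer matrix $A$ with strictly positive entries, so Perron--Frobenius applied to $A$ furnishes a simple dominant eigenvalue $\rho_1$ together with a uniform spectral gap, and the Roth-type bounds hold without any Diophantine argument. The full-measure statement is Veech's genericity theorem for the Kontsevich--Zorich cocycle, sharpened in \cite{Ma-Mo-Yo} to the Roth setting. The main obstacle is to upgrade the control of $S^n(\varphi)$ from the sup-norm of Theorem~\ref{thmthetas} to the full $\bv$-norm, since summability of $\{g_n\}$ in $\bv$ is indispensable for the telescoping argument; this requires tracking how each application of $S$ affects the variation of the cocycle on every subinterval $I^{(n)}_\alpha$, which is the technical heart of \cite{Ma-Mo-Yo}.
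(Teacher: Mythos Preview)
The paper does not supply its own proof of this proposition; it is quoted directly from Marmi--Moussa--Yoccoz. So there is nothing in the paper to compare against, but your outline has a genuine gap that is worth naming.

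Your telescoping identity in the first paragraph is correct (with $g_1=\sum_{j\geq 1}\varphi\circ T^{j}\cdot\chi_{\{j<\text{return time}\}}$, supported off $I^{(1)}$), but the limit $n\to\infty$ does not produce a nonzero piecewise constant $\psi$. The support $I^{(n)}$ of $S^n(\varphi)\oplus 0$ shrinks to a point, so $S^n(\varphi)\oplus 0\to 0$ in $L^1$. If your series $\sum g_n$ converged, the identity would give $\varphi=g-g\circ T$, i.e.\ $\varphi$ would be a coboundary --- false for generic $\varphi\in\bv_*^1$. The piecewise constant limit $\psi$ on the \emph{original} intervals $I_\alpha$ simply cannot arise as the limit of the functions $S^n(\varphi)\oplus 0$, which live on shrinking domains.

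The mechanism in \cite{Ma-Mo-Yo} is different in orientation. One first \emph{constructs} the correction $\chi\in\Gamma^{(0)}$ by analysing how the vector of means of $S(k)\varphi$ evolves under $Q(k)^t$: this is exactly the content of the operators $P^{(k)}$ and the limit in Theorem~\ref{thmcorre} of this paper, where the sequence $U^{(0)}\circ S(k)^{-1}\circ P_0^{(k)}\circ S(k)\varphi$ is shown to converge in $\Gamma^{(0)}/\Gamma^{(0)}_{cs}$. Only after $\chi$ is in hand does one prove that the special Birkhoff sums $S(k)(\varphi-\chi)$ stay bounded under the Roth hypothesis, and then a Gottschalk--Hedlund argument gives the continuous transfer function. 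Your second paragraph gestures at the right splitting (means plus oscillatory part), but it is not connected to the telescoping of the first paragraph, and one further point is off: the variation does \emph{not} contract under $S(k,l)$ --- one only has $\var S(k,l)\varphi\leq\var\varphi$ (see (\ref{nave})) --- so the claimed geometric decay of the oscillatory remainder in $\bv$-norm is not available. What the Roth condition controls is the sup-norm of the zero-mean part via estimates of the type (\ref{szacsl}), not the variation.

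Your remarks on periodic type and on full measure are correct.
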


As a consequence of Proposition~\ref{cohcon} we have the following.

\begin{lemma}\label{lempl}
If $T:I\to I$ is of periodic type, then each cocycle
$\varphi\in\bv^1(\sqcup_{\alpha\in \mathcal{A}} I_{\alpha})$ is
cohomologous to a cocycle $\varphi_{pl}\in\pl(\sqcup_{\alpha\in
\mathcal{A}} I_{\alpha})$ with $s({\varphi}_{pl})=s(\varphi)$.
\end{lemma}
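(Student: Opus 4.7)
The plan is to reduce the statement directly to Proposition~\ref{cohcon} by peeling off the affine part of $\varphi$. Set $a=s(\varphi)/|I|$ and consider the globally linear function $\varphi_{\mathrm{lin}}:I\to\R$ defined by $\varphi_{\mathrm{lin}}(x)=ax$. It lies in $\bv^1(\sqcup_{\alpha\in\mathcal{A}} I_\alpha)$ (it is smooth on each $I_\alpha$ with constant derivative), and a direct computation gives $s(\varphi_{\mathrm{lin}})=\int_I a\,dx=a|I|=s(\varphi)$.

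Next, I would form $\widetilde\varphi:=\varphi-\varphi_{\mathrm{lin}}\in\bv^1(\sqcup_{\alpha\in\mathcal{A}} I_\alpha)$. Since the functional $s(\,\cdot\,)$ is linear on $\bv^1$, we obtain $s(\widetilde\varphi)=s(\varphi)-s(\varphi_{\mathrm{lin}})=0$, so $\widetilde\varphi\in\bv_*^1(\sqcup_{\alpha\in\mathcal{A}} I_\alpha)$. Because $T$ is of periodic type, it satisfies the Roth type condition of Proposition~\ref{cohcon} (as stated at the end of that proposition), so $\widetilde\varphi$ is cohomologous to a cocycle $\psi$ which is constant on every interval $I_\alpha$. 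Write $\psi|_{I_\alpha}=c_\alpha$, and let $g:I\to\R$ be a measurable transfer function with $\widetilde\varphi=\psi+g-g\circ T$.

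I would then define $\varphi_{pl}:=\varphi_{\mathrm{lin}}+\psi$. By construction $\varphi_{pl}(x)=ax+c_\alpha$ for $x\in I_\alpha$, so $\varphi_{pl}\in\pl(\sqcup_{\alpha\in\mathcal{A}} I_\alpha)$ with slope $a$. The cohomology identity
\[
\varphi-\varphi_{pl}=\widetilde\varphi-\psi=g-g\circ T
\]
shows that $\varphi$ and $\varphi_{pl}$ are cohomologous for $T$. Finally, $s(\psi)=\sum_{\alpha\in\mathcal{A}}(c_\alpha-c_\alpha)=0$, hence $s(\varphi_{pl})=s(\varphi_{\mathrm{lin}})+s(\psi)=s(\varphi)$, as required.

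There is no real obstacle in this argument: all the analytic and dynamical work is already encapsulated in Proposition~\ref{cohcon}, and the only thing to do here is to choose the correct affine normalization $ax$ so that the residual cocycle belongs to $\bv_*^1$, at which point the conclusion is immediate.
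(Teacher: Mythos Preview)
Your proof is correct and matches exactly what the paper intends: the paper simply states the lemma ``as a consequence of Proposition~\ref{cohcon}'' without further detail, and the natural way to make this deduction is precisely to subtract an affine function $ax$ with $a=s(\varphi)/|I|$, apply Proposition~\ref{cohcon} to the residual (which now lies in $\bv_*^1$), and add the affine part back. Your verification that $s(\varphi_{pl})=s(\varphi)$ via $s(\psi)=0$ for piecewise constants is the right closing observation.
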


\subsection{Piecewise linear cocycles}
\vskip 3mm

\hfill \break Now we will focus on the case where the slope of a
piecewise linear cocycle is non-zero and show ergodicity. We will
need an information on the distribution of discontinuities of
$\varphi^{(n)}$.

Let $T:I\to I$ be an arbitrary IET satisfying Keane's condition.
Denote by $\mu$ the Lebesgue measure on $I$. Each finite subset
$D\subset I$ determines a partition $\cp(D)$ of $I$ into left-closed
and right-open intervals. Denote by $\min\cp(D)$ and $\max\cp(D)$
the length of the shortest and the longest interval of the partition
$\cp(D)$ respectively. For every $n\geq 0$ let $\cp_n(T)$ stand for
the partition given by the subset
$\{T^{-k}l_\alpha:\alpha\in\mathcal{A},0\leq k<n\}$. Then $T^n$ is a
translation on each interval of the partition $\mathcal{P}_n(T)$.
The following result shows that the discontinuities for iterations
of IETs of periodic type are well distributed.

\begin{proposition}[see \cite{Ku}]\label{kulaga}
For every IET $T$ of periodic type there exists $c\geq 1$ such
that for every $n\geq 1$ we have
\begin{equation}\label{condist}
\frac{1}{cn}\leq \min\mathcal{P}_n(T) \leq \max\mathcal{P}_n(T)
\leq\frac{c}{n}
\end{equation}
\end{proposition}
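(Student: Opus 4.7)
The plan is to exploit the self-similar tower structure coming from Rauzy--Veech induction at the period $p$, combined with the Perron--Frobenius theorem applied to the positive matrix $A=\Theta^{(p)}(T)$. For each $m\geq 0$ the iterated first-return to $I^{(pm)}$ produces a Rokhlin tower decomposition $I=\bigsqcup_{\alpha\in\mathcal{A}}\bigsqcup_{0\leq j<h^{(pm)}_\alpha}T^jI^{(pm)}_\alpha$, with heights $h^{(pm)}_\alpha=\sum_\beta(A^m)_{\beta\alpha}$; I will write $\mathcal{Q}_m$ for this partition of $I$. Since $\lambda$ is, up to scale, the positive right Perron--Frobenius eigenvector of $A$ with eigenvalue $\rho=e^{\theta_1}$, one has $\lambda^{(pm)}_\alpha=\rho^{-m}\lambda_\alpha$, and primitivity of $A$ yields uniform constants $c_1,c_2>0$ such that
\[
c_1\rho^{-m}\leq \lambda^{(pm)}_\alpha\leq c_2\rho^{-m},\qquad c_1\rho^{m}\leq h^{(pm)}_\alpha\leq c_2\rho^{m}
\]
for every $m$ and every $\alpha\in\mathcal{A}$. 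Every atom of $\mathcal{Q}_m$ thus has length of order $\rho^{-m}$.

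Given $n\geq 1$, I would choose the integer $m$ so that $\max_\alpha h^{(pm)}_\alpha\leq n<\max_\alpha h^{(p(m+1))}_\alpha$, which by the above estimates forces $n\asymp\rho^m$. For the upper bound $\max\mathcal{P}_n(T)\leq c/n$, once $n$ exceeds every $h^{(pm)}_\alpha$ the forward orbit of each base $I^{(pm)}_\alpha$ has already swept through its full tower within $n$ iterations, and tracking the Rauzy--Veech construction shows that every endpoint of an atom of $\mathcal{Q}_m$ lies in the set $\{T^{-k}l_\beta:0\leq k<n,\,\beta\in\mathcal{A}\}$ of discontinuities of $T^n$. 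Hence $\mathcal{P}_n(T)$ refines $\mathcal{Q}_m$ and every atom inherits the bound $\leq c_2\rho^{-m}\leq C/n$.

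For the lower bound $\min\mathcal{P}_n(T)\geq 1/(cn)$ I would argue that $\mathcal{P}_n(T)$ cuts each atom of $\mathcal{Q}_m$ into at most a uniformly bounded number of pieces. The new discontinuities of $T^n$ interior to a level $T^jI^{(pm)}_\alpha$ arise only when the forward orbit first re-enters the base $I^{(pm)}$ through the image $T^{(pm)}(I^{(pm)}_\alpha)$, which meets at most $d=|\mathcal{A}|$ of the bases $I^{(pm)}_\beta$; after this the process may repeat. Because the Perron--Frobenius bounds give $n/h^{(pm)}_{\min}\leq R$ for an $R$ depending only on $A$, this splitting-and-climbing can occur at most $R$ times in $n$ steps, so every atom of $\mathcal{Q}_m$ is broken into at most $d^R$ sub-atoms. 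Combined with $\min\mathcal{Q}_m\geq c_1\rho^{-m}$ this yields $\min\mathcal{P}_n(T)\geq c_1\rho^{-m}/d^R\geq c'/n$.

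The main obstacle is the lower bound, and more precisely the uniform control of the number $R$ of returns to $I^{(pm)}$ within $n$ iterations and of the number of bases crossed by each image $T^{(pm)}(I^{(pm)}_\alpha)$. This is exactly where the periodic type assumption is decisive: the combinatorial data $(\pi^{(pm)},\lambda^{(pm)}/|\lambda^{(pm)}|)$ is independent of $m$, so the geometry of $T^{(pm)}(I^{(pm)}_\alpha)$ relative to the bases is, after rescaling by $\rho^{-m}$, the same at every level, and the Perron--Frobenius ratio $h^{(pm)}_{\max}/h^{(pm)}_{\min}$ remains bounded as $m\to\infty$.
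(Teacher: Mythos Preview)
The paper does not give its own proof of this proposition; it is quoted from an external preprint \cite{Ku}. Your overall strategy---choosing the Rauzy--Veech scale $m$ so that $n\asymp\rho^m$ and invoking Perron--Frobenius to make all tower heights and base widths uniformly comparable to $\rho^{\pm m}$---is the natural one and is undoubtedly what lies behind the cited result.

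There is, however, a genuine error in your upper-bound step. The claim that $\mathcal{P}_n(T)$ refines the Rokhlin tower partition $\mathcal{Q}_m$ once $n\geq h^{(pm)}_{\max}$ is false. The endpoints of the tower levels are \emph{forward} images $T^j l^{(pm)}_\alpha$ of the base endpoints, whereas the cut points of $\mathcal{P}_n(T)$ are \emph{backward} images $T^{-k}l_\gamma$ of the original discontinuities; these two orbit segments need not coincide. Already for the golden-mean rotation ($d=2$), the tower boundary $\alpha=Tl_{\pi_0^{-1}(1)}$ is a forward image of $0$ and is never of the form $T^{-k}l_\gamma$, so it lies in no $\mathcal{P}_n$ cut set. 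Consequently $\mathcal{P}_n$ never refines $\mathcal{Q}_m$, and in that example one finds an atom of $\mathcal{P}_3$ of length strictly larger than every level of $\mathcal{Q}_1$.

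What \emph{is} true (and what your phrase ``tracking the Rauzy--Veech construction'' should be made to say) is that each base endpoint $l^{(pm)}_\alpha$, and the right endpoint $|I^{(pm)}|$, lies in the $\mathcal{P}_n$ cut set once $n\geq h^{(pm)}_{\max}$: these points are precisely the first entries in $I^{(pm)}$ of the backward orbits of the $l_\gamma$. This forces every atom of $\mathcal{P}_n$ that meets $I^{(pm)}$ to be contained in $I^{(pm)}$, hence of length $\leq |I^{(pm)}|\leq d\,c_2\rho^{-m}$. For an arbitrary atom $J$ one then shows that some $T^kJ$ with $0\leq k<n$ falls inside $I^{(pm)}$ (equivalently, one works with the backward towers, whose levels $T^{-j}(T^{(pm)}I^{(pm)}_\alpha)$ really are bounded by $\mathcal{P}_n$ cut points). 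This is the missing ingredient; once it is in place your Perron--Frobenius estimates finish the job. Your lower-bound sketch is essentially sound, though the $d^R$ count is cruder than necessary: each return to $I^{(pm)}$ contributes at most $d-1$ new cut points, so the count is linear in $R$.
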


We begin by a preliminary result which will be proved later in a
general version (see Theorem~\ref{kawallin2} and \ref{kawallin1} for
$\ell=1$).

\begin{theorem}\label{kawallin}
Let $T:I\to I$ be an IET of periodic type. If
$\varphi\in\pl(\sqcup_{\alpha\in \mathcal{A}} I_{\alpha})$ is a
piecewise linear cocycle with zero mean and $s(\varphi)\neq 0$, then
the skew product $T_{\varphi}$ is ergodic.
\end{theorem}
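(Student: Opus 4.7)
The plan is to prove $E(\varphi) = \R$, which is equivalent to ergodicity of $T_\varphi$ by the results recalled in Section~\ref{prelim}. The approach uses the lemma preceding Corollary~\ref{ergodic}: if subsets $C_n \subset I$ and times $q_n \in \N$ satisfy $\mu(C_n) \to \alpha > 0$, $\mu(C_n \triangle T^{-1}C_n) \to 0$, $\sup_{x \in C_n} d(x, T^{q_n}x) \to 0$, and the normalized pushforwards $\mu(C_n)^{-1}(\varphi^{(q_n)}|_{C_n})_*(\mu|_{C_n})$ are tight with weak limit $P$, then $\supp(P) \subset E(\varphi)$. Since any closed subgroup of $\R$ containing a nondegenerate interval must equal $\R$, it is enough to engineer $(C_n, q_n)$ for which the limit $P$ has support containing an interval of positive length.

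As a preliminary, on each atom $J$ of the partition $\mathcal{P}_n(T)$ — where each $T^k$ is a translation for $0 \le k < n$ — the Birkhoff sum reduces to $\varphi^{(n)}(x) = \sum_{k=0}^{n-1}(sT^k x + c_{\beta_k}) = ns\,x + D_J$, hence is affine of slope $ns \ne 0$. Proposition~\ref{kulaga} then yields that $|\varphi^{(n)}(J)| = |ns|\cdot|J|$ is uniformly comparable to $|s|$, so the image is an interval of bounded length both from above and away from zero.

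I would then take $q_n = h^{(pn)}_{\alpha_n}$ and $C_n = \bigsqcup_{j=0}^{q_n - 1} T^j I^{(pn)}_{\alpha_n}$, the full Rokhlin tower over a suitably chosen base $I^{(pn)}_{\alpha_n}$ coming from the periodic renormalization. Proposition~\ref{kulaga} gives $\mu(C_n) = q_n\,|I^{(pn)}_{\alpha_n}| \in [1/c,c]$, while $\mu(C_n \triangle T^{-1}C_n) \le 2|I^{(pn)}_{\alpha_n}| \to 0$ and $\sup_{x \in C_n} d(x, T^{q_n}x) \le |I^{(pn)}| \to 0$ follow directly from the tower structure, using that $T^j$ acts as an isometry on $I^{(pn)}_{\alpha_n}$ for $0 \le j < q_n$ and $T^{q_n}I^{(pn)}_{\alpha_n} \subset I^{(pn)}$. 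The pushforward of $\mu|_{C_n}$ by $\varphi^{(q_n)}$ then decomposes as a sum of uniform measures on the $q_n$ affine images coming from the tower levels, each an interval of length $\asymp |s|$.

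The main obstacle is twofold: \emph{(a)} selecting $\alpha_n$ and extracting a subsequence so that all $q_n$ image intervals remain in a uniformly bounded subset of $\R$ (tightness), and \emph{(b)} ensuring that the weak limit $P$ has support containing a nondegenerate interval. For \emph{(a)} I would use the bound $\|\varphi^{(n)}\|_{\sup} \le C(\log n)^{M+1} n^{\theta_2/\theta_1}$ from Theorem~\ref{thmthetas} combined with a pigeonhole/Cauchy-type argument on the shift constants $D_J$ (exploiting the zero-mean condition to center the distribution). For \emph{(b)}, nondegeneracy rests on the fact that each level image has length bounded below by $|s|/c$, so the weak limit is a superposition of uniform densities on positive-length intervals whose support cannot collapse to a point. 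Once an interval is seen to lie in $\supp(P) \subset E(\varphi)$, the subgroup argument above forces $E(\varphi) = \R$ and hence ergodicity. I expect the delicate part to be the simultaneous control of tightness and nondegeneracy, which presumably motivates the more general Theorems~\ref{kawallin2} and~\ref{kawallin1} proved later.
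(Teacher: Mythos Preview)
Your approach has a genuine gap at step~(a), tightness. The bound from Theorem~\ref{thmthetas} gives only $\|\varphi^{(q_n)}\|_{\sup}=O\bigl((\log q_n)^{M+1}q_n^{\theta_2/\theta_1}\bigr)$, which tends to infinity; the pushforwards $\mu(C_n)^{-1}(\varphi^{(q_n)}|_{C_n})_*(\mu|_{C_n})$ are therefore \emph{a priori} spreading out, and no pigeonhole on the constants $D_J$ --- which themselves can be of order $q_n^{\theta_2/\theta_1}$ --- produces a subtower of measure bounded below on which $\varphi^{(q_n)}$ stays bounded. You give no mechanism for this. A secondary issue: the rigidity claim $\sup_{x\in C_n}d(x,T^{q_n}x)\le|I^{(pn)}|$ over the \emph{full} tower is not justified, since for $x=T^jx_0$ one needs $x_0$ and $T^{q_n}x_0$ to lie in the same atom of $\mathcal{P}_j(T)$; this is exactly why Lemma~\ref{ciasny2} restricts to the bottom $h^{(n)}_{\alpha_1}$ levels.

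The paper takes a different and shorter route that bypasses tightness entirely. It uses Lemma~\ref{lemsym} (not the weak-limit lemma) together with the fact that a zero-mean $\R$-valued cocycle over an ergodic automorphism is automatically \emph{recurrent}. Fix $0<a<1/(4c)$, a set $B$ with $\mu(B)>0$, and $\varepsilon>0$. Recurrence supplies an $n$ and a point $x_0\in B'\cap T^{-n}B'$ with $|\varphi^{(n)}(x_0)|<\varepsilon$, where $B'\subset B$ is a set of density points. On the atom $J(x_0)\in\mathcal{P}_n(T)$ the map $\varphi^{(n)}$ is affine with slope $ns$ and $|J(x_0)|>1/(cn)$ by Proposition~\ref{kulaga}; sliding from $x_0$ inside $J(x_0)$ one reaches $y_0$ with $|\varphi^{(n)}(y_0)|\in(a-\varepsilon,a+\varepsilon)$, and a density-point estimate then gives $\mu\bigl(B\cap T^{-n}B\cap\{\varphi^{(n)}\in\{-a,a\}+(-\varepsilon,\varepsilon)\}\bigr)>0$. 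By Lemma~\ref{lemsym}, $a\in E(\varphi)$, so $E(\varphi)=\R$. The point you are missing is that recurrence delivers, for free and at every scale, the centering your pigeonhole step was supposed to achieve.
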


Now we consider cocycles taking values in $\R^\ell$, $\ell\geq 1$.
Suppose that $\varphi:I\to\R^\ell$ is a piecewise linear cocycle
with zero mean such that the slope $s(\varphi)\in\R^\ell$ is
non-zero. Then, by an appropriate choice of coordinates,  we obtain
$s(\varphi_1)\neq 0$ and $s(\varphi_2)=0$, where
$\varphi=(\varphi_1,\varphi_2)$ and $\varphi_1:I\to\R$,
$\varphi_2:I\to\R^{\ell-1}$. Thus $\varphi_2$ is piecewise constant
and, roughly speaking, the ergodicity of $\varphi_2$ implies the
ergodicity of $\varphi$. The ergodicity of piecewise constant
cocycles will be studied in Sections~\ref{secstepcoc} and
\ref{correct}.

\begin{theorem}\label{kawallin2}
Suppose that $T:I\to I$ is an IET of periodic type such that
$\theta_2(T)/\theta_1(T)<1/\ell$. Let
$\varphi_1\in\pl(\sqcup_{\alpha\in \mathcal{A}} I_{\alpha},\R)$,
$\varphi_2\in\pl(\sqcup_{\alpha\in \mathcal{A}}
I_{\alpha},\R^{\ell-1})$ be piecewise linear cocycles with zero mean
such that $s(\varphi_1)\neq 0$ and $s(\varphi_2)=0$. If the cocycle
$\varphi_2:I\to\R^{\ell-1}$ is ergodic, then the cocycle
$\varphi=(\varphi_1,\varphi_2):I\to\R^\ell$ is ergodic as well.
\end{theorem}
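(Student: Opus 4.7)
My plan has two phases: first reduce the ergodicity to an inclusion of essential values via Lemma~\ref{lemergo}, then establish that inclusion using the Rokhlin tower structure supplied by the periodic renormalization.

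\emph{Reduction.} Set $H=\R\times\{0\}^{\ell-1}$, a closed subgroup of $\R^\ell$. The quotient cocycle $\varphi_H:I\to\R^\ell/H\cong\R^{\ell-1}$ is precisely $\varphi_2$, which is ergodic by hypothesis. Hence Lemma~\ref{lemergo} reduces the theorem to proving $H\subset E(\varphi)$.

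\emph{Essential values from towers.} Fix a letter $\alpha\in\mathcal{A}$, and for each multiple $n$ of the period $p$ let $q_n$ be the height of the Rokhlin tower over $I^{(n)}_\alpha$. By the Perron-Frobenius property of periodic-type IETs, $q_n\lambda^{(n)}_\alpha$ stays in a fixed compact subinterval of $(0,\infty)$. Since $T^k$ is continuous on $I^{(n)}_\alpha$ for every $0\leq k<q_n$, the $\pl$-structure yields
\[
\varphi_1^{(q_n)}|_{I^{(n)}_\alpha}(x)=s(\varphi_1)\,q_n\,x+a_n,
\]
an affine function whose image is an interval of length $|s(\varphi_1)|\,q_n\lambda^{(n)}_\alpha$ uniformly bounded away from $0$ and $\infty$. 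At the same time, because $\varphi_2$ is piecewise constant with jumps only at the points $\{l_\beta\}$ and these discontinuities are never hit inside $I^{(n)}_\alpha$ or its first $q_n-1$ iterates, $\varphi_2^{(q_n)}|_{I^{(n)}_\alpha}\equiv c_n\in\R^{\ell-1}$ is a single constant. Choosing $C_n$ as a union of tower levels on which $\mu(C_n)$ is bounded below and $T^{q_n}$ is close to the identity, the pushforwards of the normalized measure on $C_n$ by $\varphi^{(q_n)}$ concentrate on line segments $\{(s(\varphi_1)q_n x+a_n,c_n):x\in I^{(n)}_\alpha\}\subset\R\times\{c_n\}$. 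After extraction of a subsequence, these distributions converge weakly to a probability measure $P$ on $\R^\ell$; by the lemma preceding Corollary~\ref{ergodic}, $\supp P\subset E(\varphi)$. As $\supp P$ is then a non-degenerate segment $[a,b]\times\{c\}$, the subgroup property of $E(\varphi)$ produces $[-(b-a),b-a]\times\{0\}\subset E(\varphi)$, and since $E(\varphi)$ is a closed subgroup of $\R^\ell$ this forces $H=\R\times\{0\}^{\ell-1}\subset E(\varphi)$, completing the proof.

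\emph{Main obstacle.} The delicate step is the uniform control of the pair $(a_n,c_n)$: without it, the pushforward distributions escape to infinity and no limit measure $P$ is produced. The standing hypothesis $\theta_2(T)/\theta_1(T)<1/\ell$ is what makes this feasible: Theorem~\ref{thmthetas} applied to the BV zero-mean function $\varphi_2$ gives $|c_n|\leq C\log^{M+1}(q_n)\,q_n^{\theta_2/\theta_1}\var\varphi_2$, and a careful scaling of $C_n$ as a sub-tower of $I^{(n)}_\alpha$ (so that the horizontal oscillation of $\varphi_1^{(q_n)}$ dominates and $a_n$ is kept bounded) allows the extraction of a subsequence along which both $a_n$ and $c_n$ converge. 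Balancing the tower scale $q_n$ against this BV-growth rate of $\varphi_2^{(q_n)}$, while preserving the lower bound on $\mu(C_n)$ and the near-identity property of $T^{q_n}$ on $C_n$, is the crux of the argument.
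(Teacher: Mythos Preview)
Your reduction via Lemma~\ref{lemergo} matches the paper exactly, and your observation that $\varphi_1^{(q_n)}$ is affine with slope $s(\varphi_1)q_n$ while $\varphi_2^{(q_n)}\equiv c_n$ is constant on $I^{(n)}_\alpha$ is correct. The gap is precisely where you flag it, and the resolution you sketch does not work. Theorem~\ref{thmthetas} yields $\|c_n\|\le C\log^{M+1}(q_n)\,q_n^{\theta_2/\theta_1}$, which is \emph{unbounded}; the hypothesis $\theta_2/\theta_1<1/\ell$ buys you sub-$q_n^{1/\ell}$ growth (hence recurrence via Proposition~\ref{cochco}), not boundedness. No choice of sub-tower height $h\le q_n$ helps: the value $\varphi_2^{(h)}|_{I^{(n)}_\alpha}$ is a fixed vector in $\R^{\ell-1}$ that you have no a~priori control over, so the pushforward measures on $\R^\ell$ are not tight and the weak-limit lemma cannot be invoked. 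The ``balancing'' and ``scaling'' you allude to would need a mechanism that makes $c_n$ bounded along a subsequence, and the tower structure alone does not provide one.

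The paper's argument avoids this obstacle entirely by a different mechanism. Rather than trying to bound $\varphi_2^{(q_n)}$ on prescribed towers, it uses the hypothesis $\theta_2/\theta_1<1/\ell$ only to deduce that the \emph{full} cocycle $(\varphi_1,\varphi_2)$ is recurrent (Corollary~\ref{correc}). Given any $B$ of positive measure and any $\vep>0$, recurrence produces an $n$ and a point $x_0\in B\cap T^{-n}B$ with $|\varphi_1^{(n)}(x_0)|<\vep$ and $\|\varphi_2^{(n)}(x_0)\|<\vep$. On the interval $J(x_0)$ of $\mathcal P_n(T)$ containing $x_0$, $\varphi_1^{(n)}$ is linear with slope $n$ and $\varphi_2^{(n)}$ is constant (hence still $<\vep$), so one slides along $J(x_0)$ to a sub-interval where $|\varphi_1^{(n)}|\in a+(-\vep,\vep)$; a density-point argument then shows this sub-interval meets $B\cap T^{-n}B$ in positive measure, and Lemma~\ref{lemsym} gives $(a,0)\in E(\varphi)$. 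The key conceptual difference: recurrence hands you both coordinates small \emph{at once}, so the growth of $\|\varphi_2^{(n)}\|_{\sup}$ is irrelevant.
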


\begin{proof}
Without loss of generality we can assume that $s(\varphi_1)= 1$. It
suffices to show that for every $0 <a < {1 \over 4c}$, the pair
$(a,0)$ belongs to $E(\varphi_1,\varphi_2)$. Indeed this implies
that $\R\times\{0\}\subset E(\varphi_1,\varphi_2)$, and since the
cocycle $\varphi_2$ is ergodic, by Lemma~\ref{lemergo}, it follows
that $(\varphi_1,\varphi_2):I\to\R^\ell$ is ergodic as well.

\vskip 3mm Fix $0 < a < {1 \over 4c}$, where $c$ is given by
Proposition \ref{kulaga}. By a density point argument, for every
measurable $B\subset I$ with $\mu(B)>0$ and every $\varepsilon \in
(0, {a \over 2})$, there are $B'\subset B$ with $\mu(B')>0$ and
$n_0\geq 1$ such that for $n \geq n_0$,
\begin{equation}\label{densarg1}
\mu\left(\left(x-\frac{c}{n},x+\frac{c}{n}\right)\setminus
B\right)<\frac{\vep}{n}\text{ for every }x\in B'.
\end{equation}
Since $\theta_2(T)/\theta_1(T)<1/\ell$, by Corollary~\ref{correc},
$(\varphi_1,\varphi_2)$ is recurrent, and hence there exists
$n\geq n_0$ such that
\[\mu(B'\cap T^{-n}B'\cap(|\varphi_1^{(n)}|<\vep)\cap(\|\varphi_2^{(n)}\|<\vep))>0.\]
Let $x_0 \in I$ be such that $x_0,T^nx_0\in B'$,
$|\varphi_1^{(n)}(x_0)|<\vep$ and $\|\varphi_2^{(n)}(x_0)\|<\vep$.
Denote by $J(x_0)\subset I$ the interval of the partition
$\mathcal{P}_n(T)$ which contains $x_0$. Then $\varphi_1^{(n)}$ is
a linear function on $J(x_0)$ with slope $n$. Since
$2\vep<a<1/(2c)-2\vep$ and $|J(x_0)|>1/(cn)$ (by (\ref{condist})),
there exists $y_0$ such that $(y_0-\vep/n,y_0+\vep/n)\subset
J(x_0)$ and
\[|\varphi_1^{(n)}(y)|\in a+(-\vep,\vep)\text{ for all } y\in
(y_0-\vep/n,y_0+\vep/n).\] Since $\varphi_2^{(n)}$ is constant on
$J(x_0)$, we have
\[\|\varphi_2^{(n)}(x)\|<\vep\text{ for all }x\in(y_0-\vep/n,y_0+\vep/n).\]
Therefore
\begin{align}\label{kakaka}
\begin{split}
&\mu\left(B\cap
T^{-n}B\cap(\varphi_1^{(n)}\in\{-a,a\}+(-\vep,\vep))
\cap(\varphi_2^{(n)}\in(-\vep,\vep)^{\ell-1})\right)\\
&\;\;\;\;\;\;\;\geq
\mu\left(\left(y_0-{\vep}/{n},y_0+{\vep}/{n}\right)\cap B\cap
T^{-n}B\right).
\end{split}
\end{align}
By (\ref{condist}) we have $|J(x_0)|<{c}/{n}$, and hence
$J(x_0)\subset(x_0-c/n,x_0+c/n)$. Moreover, $T^nJ(x_0)$ is an
interval such that $|T^nJ(x_0)|=|J(x_0)|<c/n$, so that
\begin{equation*}
T^nJ(x_0)\subset\left(T^nx_0-\frac{c}{n},T^nx_0+\frac{c}{n}\right).
\end{equation*}
Since $x_0,T^nx_0\in B'$, by (\ref{densarg1}), $\mu(J(x_0)\setminus
B)<\vep/n$ and $\mu(T^nJ(x_0)\setminus B)<\vep/n$. Therefore,
$\mu(J(x_0)\setminus(B\cap T^{-n}B))<2\vep/n$, and hence
\[\mu\left((y_0-{\vep}/{n},y_0+{\vep}/{n})\setminus(B\cap T^{-n}B)\right)<2\vep/n.\]
Thus
\[\mu\left((y_0-{\vep}/{n},y_0+{\vep}/{n})\cap B\cap T^{-n}B\right)>0.\]
In view of (\ref{kakaka}), it follows that
\[\mu\left(B\cap
T^{-n}B\cap(\varphi_1^{(n)}\in\{-a,a\}+(-\vep,\vep))\cap(\varphi_2^{(n)}
\in(-\vep,\vep)^{\ell-1})\right)>0.\] By Lemma~\ref{lemsym}, we
conclude that $(a,0)\in E(\varphi_1,\varphi_2)$, which completes the
proof.
\end{proof}

\subsection{Product cocycles}\vskip 3mm

\hfill \break The method used in Theorem~\ref{kawallin} allows us to
prove the ergodicity for Cartesian products of certain skew
products. As an example, first we apply this method for cocycles
taking values in $\Z$ over irrational rotations on the circle. This
will give a class of ergodic $\Z^2$-cocycles driven by 2-dimensional
rotations

\vskip 3mm Let $T(x,y)=(x+\alpha_1, x+\alpha_2)$ be a 2-dimensional
rotation and $\varphi$ be a zero mean function on $\T^2$ of the form
$\varphi(x,y) = (\varphi_1(x), \varphi_2(y))$ with $\varphi_1$ and
$\varphi_2$ BV functions. If $\alpha_1$ and $\alpha_2$ have bounded
partial quotients, then (\ref{DK}) implies
$\|\varphi^{(n)}\|_{\sup}=O(\log n)$, and therefore, by Proposition
\ref{cochco}, the cocycle $\varphi$ is recurrent.

Consider the function $\varphi(x,y) = (2\cdot\chi_{[0,{\frac12})}(x)
- 1,2\cdot\chi_{[0, {\frac12})}(y) - 1)$ or more generally assume
that $\varphi_i$, $i=1,2$, are step functions in one variable with
values in $\Z$. For $i=1,2$, we denote by $D_i\subset\T$ the finite
set of discontinuities of $\varphi_i$ and by $J_i\subset\Z$ the
corresponding set of jumps of the functions $\varphi_i$. It defines
a recurrent $\Z^2$-cocycle driven by a 2-dimensional rotation. A
question is then the ergodicity (with respect to the measure $\mu
\times m$ the product of the uniform measure on $\T^2$ by the
counting measure on $\Z^2$) of the skew-product
$$T_\varphi:\T^2\times\Z^2\to\T^2\times\Z^2,\;\;\;T_\varphi
(x, y, \bar{n})= (x+\alpha_1,y+\alpha_2, \bar{n}+\varphi (x,y)).$$

\begin{theorem} \label{2dim} Let $\alpha_1$ and $\alpha_2$ be two rationally
independent irrational bpq numbers, and let $\varphi(x,y) =
(\varphi_1(x), \varphi_2(y))$ be a function on the torus with step
functions components $\varphi_i:\T\to\Z$, $i=1,2$, such that
$D_1,D_2\subset\Q$ and the sets of the jumps $J_1\times\{0\}$,
$\{0\}\times J_2$ generate $\Z^2$. Then the system $(\T^2 \times
\Z^2, \mu \times m, T_\varphi)$ is ergodic.
\end{theorem}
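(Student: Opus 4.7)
The plan is to show $E(\varphi)=\Z^2$, which by the standard criterion recalled in Section~\ref{prelim} is equivalent to ergodicity of $T_\varphi$. As $E(\varphi)$ is a closed subgroup of $\Z^2$ and $J_1\times\{0\}\cup\{0\}\times J_2$ generates $\Z^2$ by hypothesis, it is enough to prove $(j,0)\in E(\varphi)$ for every $j\in J_1$; the symmetric statement for $J_2$ is handled identically.

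Recurrence of $\varphi$ follows from Denjoy--Koksma (\ref{DK}) applied coordinatewise: the bpq assumption gives $\|\varphi^{(n)}\|_{\sup}=O(\log n)=o(n^{1/2})$, so Proposition~\ref{cochco} applies with $\ell=2$. Because $\Z^2$ is discrete, $\{0\}$ is itself an open neighborhood of the identity, so recurrence means precisely that for every Borel $B\subset\T^2$ with $\mu\otimes\mu(B)>0$ there exist infinitely many $n\ge 1$ with
\[\mu\otimes\mu\bigl(B\cap T^{-n}B\cap\{\varphi^{(n)}=(0,0)\}\bigr)>0.\]

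Fix $j\in J_1$ equal to the jump of $\varphi_1$ at some $d\in D_1$. The argument now mimics Theorem~\ref{kawallin2}, except that where piecewise-linearity was used there to continuously interpolate cocycle values, here we cross a single, well-chosen discontinuity of $\varphi_1^{(n)}$ to change the value by exactly $\pm j$. Two structural facts drive this: (a) because $D_1\subset\Q$ and $\alpha_1\notin\Q$, the points $\{d_i-m\alpha_1:d_i\in D_1,\,0\le m<n\}$ are pairwise distinct, so each discontinuity of $\varphi_1^{(n)}$ carries exactly one jump from $J_1$, and those arising from the chosen $d$ carry jump $j$; (b) bpq combined with the three-distance theorem forces the orbit $\{d-m\alpha_1:0\le m<n\}$ to be $O(1/n)$-dense in $\T$, and the continuity atoms of $\varphi_1^{(n)}$ to have length $\Theta(1/n)$. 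Analogous statements hold for $\varphi_2^{(n)}$.

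Now fix $B\subset\T^2$ with $\mu\otimes\mu(B)>0$. Combining the $2$-dimensional Lebesgue density theorem with Egorov produces $B'\subset B$ of positive measure and $n_0\ge 1$ such that, for $n\ge n_0$ and $(x,y)\in B'$, every $O(1/n)$-square about $(x,y)$ meets $\T^2\setminus B$ in measure $o(1/n^2)$. By recurrence, choose $n\ge n_0$ and $(x_0,y_0)\in B'\cap T^{-n}B'$ with $\varphi^{(n)}(x_0,y_0)=(0,0)$. Using (b), pick $m\in[0,n)$ with $|x_0-(d-m\alpha_1)|=O(1/n)$ and let $x_1$ lie in the continuity atom of $\varphi_1^{(n)}$ adjacent to that of $x_0$ across $d-m\alpha_1$. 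By (a), $\varphi_1^{(n)}(x_1)=\pm j$, while $\varphi_2^{(n)}(y_0)=0$ is unchanged since $\varphi_2$ depends only on the second coordinate. Let $R$ be the product of the continuity atoms containing $x_1$ and $y_0$; then both $R$ and $T^nR$ sit inside $O(1/n)$-squares around $(x_0,y_0)$ and $T^n(x_0,y_0)$ respectively, so the density estimate forces $\mu\otimes\mu(R\cap B\cap T^{-n}B)>0$, on which $\varphi^{(n)}\equiv(\pm j,0)$. Lemma~\ref{lemsym} applied with $K=\{(j,0),(-j,0)\}$ yields $K\cap E(\varphi)\neq\emptyset$, and since $E(\varphi)$ is a subgroup we conclude $(j,0)\in E(\varphi)$. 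The main technical delicacy is calibrating the density scale against the atom sizes so that the shifted rectangle $R$ remains captured by the density estimates both at $(x_0,y_0)$ and at $T^n(x_0,y_0)$.
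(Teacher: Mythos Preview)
Your overall strategy mirrors the paper's proof (recurrence via Denjoy--Koksma and Proposition~\ref{cochco}, density points, then crossing discontinuities of $\varphi_1^{(n)}$ near a point where the cocycle vanishes), but there is a genuine gap at the key step. You fix a particular $d\in D_1$ with jump $j$, pick $m$ so that $d-m\alpha_1$ is within $O(1/n)$ of $x_0$, and then take $x_1$ ``in the continuity atom of $\varphi_1^{(n)}$ adjacent to that of $x_0$ across $d-m\alpha_1$''. But the atoms of $\varphi_1^{(n)}$ are cut out by \emph{all} points $\{d'-k\alpha_1:d'\in D_1,\,0\le k<n\}$, not only by the $d$-orbit. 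There is no reason why the nearest $d$-orbit point should be a boundary of the atom containing $x_0$; in general several discontinuities coming from other $d'\in D_1$ lie between $x_0$ and $d-m\alpha_1$ (their number is bounded in terms of $|D_1|$ and the bpq constants, but is not $1$). Hence moving from $x_0$ to a point just across $d-m\alpha_1$ changes $\varphi_1^{(n)}$ by an uncontrolled sum of elements of $J_1$, not by $\pm j$, so your application of Lemma~\ref{lemsym} with $K=\{(\pm j,0)\}$ is not justified. Fact~(a) only tells you what happens across one fixed discontinuity, not that this discontinuity bounds the atom of $x_0$.

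The paper's proof circumvents exactly this difficulty, and its mechanism is worth noting because it is genuinely different from a single-jump argument. One fixes $M$ large enough that any $2M+1$ consecutive atoms of $\varphi_i^{(n)}$ span an interval of length exceeding $2/q^i_{r_i}$, which forces discontinuities of \emph{every} type $d'\in D_i$ to appear among their boundaries. One then looks at the whole block of $(2M+1)^2$ product rectangles around $(x_0,y_0)$; the cocycle values $\kappa_{k,\ell}$ on these rectangles all lie in a fixed finite set $K\subset\Z^2$. Proposition~\ref{compact} is applied to $K_1=K\setminus E(\varphi)$: the density/recurrence argument shows every $\kappa_{k,\ell}$ is an essential value, and since consecutive differences $\kappa_{k+1,\ell}-\kappa_{k,\ell}$ (resp.\ $\kappa_{k,\ell+1}-\kappa_{k,\ell}$) range over all of $J_1\times\{0\}$ (resp.\ $\{0\}\times J_2$), these jump sets lie in $E(\varphi)$. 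The point is that one never needs to know \emph{which} jump sits at a given boundary; one only needs that the full list of jumps appears somewhere in the block, and that all block values are essential.
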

\begin{proof} \ We have seen that the cocycle $\varphi^{(n)}$ is recurrent.
We prove that the group of its finite essential values is $\Z^2$.

Let $n$ be a fixed integer and let $(\gamma_{n,k}^i)_{k= 1,\ldots,
d_in}$ be the ordered set of the $d_in$ discontinuities of
$\varphi_i^{(n)}$ in $[0, 1)$ (where $d_i:=\# D_i$). In the sequence
of denominators of $\alpha_i$, let $q_{r_i(n)}^i$ be such that
$q_{r_i(n)}^i \leq n < q_{r_i(n)+1}^i$. We write simply $q_{r_i}^i$
for $q_{r_i(n)}^i$. As $\alpha_i$ is bpq, the ratio $q_{r_i+1}^i/
q_{r_i}^i$ is bounded by a constant independent from $n$.

Since $\alpha_i$ is bpq and the discontinuity points of $\varphi_i$
are rational, the distances between consecutive discontinuities of
$\varphi^{(n)}$ are of the same order: there are two positive
constants $c_1,c_2$ such that
\begin{equation}\label{defcs}
{\frac{c_1}{n}} \leq \gamma_{n, k+1}^i - \gamma_{n,k}^i \leq
\frac{c_2}{ n}, \ k= 1,\ldots, d_in, \;i=1,2.
\end{equation}
Recall that, for each $t \in D_i$ and each $0 \leq \ell < q_{r_i}$,
there is (mod 1) a point $t- k \alpha_i$, $0 \leq k < q_{r_i}$ in
each interval $[t+\ell/q_{r_i}^i, t+(\ell+1) /q_{r_i}^i]$.
Therefore, in each interval of length greater than $2 /q_r^i$ and
for each $t \in D_i$, there is at least one discontinuity of
$\varphi_i^{(n)}$ of the form $t-k\alpha_i$, $0\leq k<n$.

It implies that if we move a point $x$ on the unit interval by a
displacement greater than $2 /q_{r_i}^i$, we cross discontinuities
of $\varphi_i^{(n)}$ corresponding to each different discontinuity
$t \in D_i$ of $\varphi_i$.

For $x \in \T$, consider the interval $[\gamma_{n,k}^i,
\gamma_{n,k+1}^i)$ which contains $x$ and denote it by $I_n^i(x)$.
The intervals $[\gamma_{n, k+\ell}^i, \gamma_{n, k+\ell+1}^i)$,
where $k+\ell$ is taken mod $d_1n$, are denoted by
$I_{n,\ell}^i(x)$. This gives two collections of rectangles
\[R_{k,\ell}^n(x,y): = I_{n,k}^1(x) \times I_{n,\ell}^2(y)\text{ and
}\tilde R_{k,\ell}^n(x,y) := T^nR_{k,\ell}^n(T^{-n}(x,y))\] for each
$(x,y)\in\T^2$. By (\ref{defcs}), we have
\begin{equation}\label{boundmeas}
\mu(R_{k,\ell}^n(x,y)) \  \text{ and } \ \mu(\tilde
R_{k,\ell}^n(x,y)) \in \left[\frac{c_1^2}{n^2},
\frac{c_2^2}{n^2}\right].
\end{equation}
Let $M$ be a natural number such that $c_1M>1$. Then, by
(\ref{defcs}), the length of $\bigcup_{k=-M}^M I_{n,k}^i(x)$ is
greater than $ 2/q_{r_i}^i$, $i=1,2$.  Let $\delta > 0$ be such
that $\delta c^2 (2M+1)^2 < {1/2}$ with $c=c_2^2/c_1^2$. Set
\begin{eqnarray*} R_M^n(x,y) &:=& \bigcup_{k=-M}^{k=M}
\bigcup_{\ell=-M}^{\ell=M} R_{k,\ell}^n(x,y), \\
\tilde R_M^n(x,y) &:=&
T^nR_{M}^n(T^{-n}(x,y))=\bigcup_{k=-M}^{k=M}
\bigcup_{\ell=-M}^{\ell=M} \tilde R_{k,\ell}^n(x,y).
\end{eqnarray*}
In view of (\ref{defcs}),
\begin{equation}\label{rectangles}
\frac{\operatorname{length}(R_M^n(x,y))}
{\operatorname{width}(R_M^n(x,y))} \  \text{ and } \
\frac{\operatorname{length}(\tilde{R}_M^n(x,y))}
{\operatorname{width}(\tilde{R}_M^n(x,y))} \in
\left[\frac{c_1}{c_2}, \frac{c_2}{c_1}\right].
\end{equation}
The cocycle $\varphi^{(n)}$ has a constant value on each rectangle
$R_{k,\ell}^n(x,y)$ and the difference between its value on
$R_{k+1,\ell}^n(x,y)$ and $R_{k,\ell}^n(x,y)$ (resp.
$R_{k,\ell+1}^n(x,y)$ and $R_{k,\ell}^n(x,y)$) belongs to
$J_1\times\{0\}$ (resp. $\{0\}\times J_2$). Denote by
$\kappa^n_{k,\ell}(x,y)$ the value of $\varphi^{(n)}$ on
$R_{k,\ell}^n(x,y)$. Since the length of $R_M^n(x,y)$ is greater
than $2/q^1_{r_1}$ and the width of $R_M^n(x,y)$ is greater than
$2/q^2_{r_2}$ we have
\begin{eqnarray}\label{kappas}
\begin{aligned}
&\{\kappa^n_{k+1,\ell}(x,y)-\kappa^n_{k,\ell}(x,y):-M\leq k<M\}=
J_1\times\{0\}, \\
&\{\kappa^n_{k,\ell+1}(x,y)-\kappa^n_{k,\ell}(x,y):-M\leq l<M\}=
\{0\}\times J_2. \end{aligned}\end{eqnarray}

Let
\[K:=\overbrace{(J_1\cup\{0\}+\ldots+J_1\cup\{0\})}^{M}
\times\overbrace{(J_2\cup\{0\}+\ldots+J_2\cup\{0\})}^{M}.\] Let
$K_1$ be the subset of elements of $K$ which are not essential
values of $\varphi$, and suppose $K_1 \not = \emptyset$. By
Proposition \ref{compact}, there exists $B\subset\T^2$ such that
$\mu(B)>0$ and
\begin{equation}\label{absessval}
\mu(B\cap T^{-n}B\cap(\varphi^{(n)}\in K_1))=0\text{ for every
}n\in\Z.
\end{equation} Since the areas of $R_M^n(x,y)$, $\tilde
R_M^n(x,y)$ tend to $0$ as $n\to\infty$ and the rectangles satisfy
(\ref{rectangles}), by a density point argument, there is a Borel
subset $B'$ of $B$ of positive measure and there is $n_0 \in \N$
such that for $n \geq n_0$ and $(x,y) \in B'$:
\[
\mu(B \cap R_M^n(x,y)) \geq (1 - \delta) \mu(R_M^n(x,y)), \ \mu(B
\cap \tilde R_M^n(x,y)) \geq (1 - \delta) \mu(\tilde R_M^n(x,y)).
\]
By (\ref{boundmeas}), the areas of the small rectangles being
comparable, and hence
$$\mu(R_M^n(x,y)) \leq (2M+1)^2 c^2 \mu(R_{k,\ell}^n(x,y))\text{ for all }k,\ell \in [-M, M].$$
Therefore, by the choice of $\delta$, for  each $(x,y) \in B'$ we
have
\begin{multline*} \mu(B \cap R_{k,\ell}^n(x,y)) \ge \mu(R_{k,\ell}^n(x,y)) -
\mu(B^c \cap R_{k,\ell}^n(x,y)) \\
\begin{aligned}
&\ge \mu(R_{k,\ell}^n(x,y)) - \mu(B^c \cap R_{M}^n(x,y)) \ge
\mu(R_{k,\ell}^n(x,y)) - \delta \mu(R_M^n(x,y))
\\ &\ge \mu(R_{k,\ell}^n(x,y))  - \delta (2M+1)^2 c^2
\mu(R_{k,\ell}^n(x,y))
> {\frac12} \mu(R_{k,\ell}^n(x,y)).
\end{aligned}
\end{multline*}
In the same way, if $ T^n (x,y) \in B'$, then $\mu(B \cap \tilde
R_{k,\ell}^n(T^n(x,y))) > {\frac12} \mu( \tilde
R_{k,\ell}^n(T^n(x,y)))$. Since $\tilde
R_{k,\ell}^n(T^n(x,y))=T^nR_{k,\ell}^n(x,y)$, we have
$$\mu( T^{-n} B \cap R_{k,\ell}^n(x,y)) > {\frac12}
\mu(R_{k,\ell}^n(x,y)).$$ The preceding inequalities imply
\begin{eqnarray}
\mu( B \cap  T^{-n} B \cap R_{k,\ell}^n(x,y)) > 0, \ \forall
k,\ell \in [-M, M]. \label{intersec}
\end{eqnarray}
By the recurrence property, there is $n > n_0$ such that
$$\mu(B'
\cap  T^{-n} B' \cap \{ \varphi^{(n)}(\,\cdot\,) = (0,0)\})
>0.$$
If $(x,y)\in B' \cap  T^{-n} B' \cap \{ \varphi^{(n)}(\,\cdot\,) =
(0,0)\}$, then $\varphi^{(n)}$ is equal to $(0,0)$ on
$R_{0,0}^n(x,y)$. Moreover, on each rectangle $R_{k,\ell}^n(x,y)$,
$k,\ell \in [-M, M]$, the cocycle $\varphi^{(n)}$ is constant and is
equal to $\kappa_{k,\ell}(x,y)\in K$. In view of (\ref{intersec}),
it follows that
\[\mu( B \cap  T^{-n} B \cap \{ \varphi^{(n)}(\,\cdot\,) = \kappa_{k,\ell}(x,y)\}) > 0, \ \forall
k,\ell \in [-M, M].\] Therefore, by (\ref{absessval}) and the
definition of $K_1$, $\kappa_{k,l}(x,y)\not \in K_1$, and so it
belongs to $E(\varphi)$ for all $k,\ell \in [-M, M]$. In view of
(\ref{kappas}), it follows that $J_1\times\{0\},\{0\}\times
J_2\subset E(\varphi)$, and hence $E(\varphi)=\Z^2$.
\end{proof}

\begin{remark}
The ergodicity of $T_\varphi$ can be proven also for the more
general case where $\alpha_i$ is bpq and
$(D_i-D_i)\setminus\{0\}\subset
(\Q+\Q\alpha_i)\setminus(\Z+\Z\alpha_i)$ for $i=1,2$. To extend
the result of Theorem \ref{2dim}, we use that the discontinuities
of the cocycle are "well distributed" (the condition
(\ref{defcs})) which is a consequence of Lemma~2.3 in
\cite{Fr-Le-Le}.
\end{remark}

Now by a similar method we show the ergodicity of Cartesian
products of skew products that appeared in Theorem~\ref{kawallin}.
We need an elementary algebraic result:

\begin{remark}\label{macierz} Let $R$ be a real  $m\times k$--matrix.
Then the subgroup $R(\Z^k)$ is dense in $\R^m$ if and only if
\begin{eqnarray}
\forall a \in \R^m, \ R^t(a) \in \Z^k \Rightarrow a= 0.
\label{denseRm}
\end{eqnarray}
For instance, if $R=[r_{ij}]$ is a $m\times (m+1)$--matrix such
that $r_{ij}=\pm\delta_{ij}$ for $1\leq i,j\leq m$ and
$1,r_{1\,m+1},\ldots,r_{m\,m+1}$ are independent over $\Q$, then
(\ref{denseRm}) holds.
\end{remark}

\begin{theorem}\label{kawallin1}
Let $T_j:I^{(j)}\to I^{(j)}$ be an interval exchange transformation
of periodic type such that $\theta_2(T_j)/\theta_1(T_j)<1/\ell $ for
$j=1,\ldots,\ell $. Suppose that the Cartesian product
$T_1\times\ldots\times T_\ell$ is ergodic. If
$\varphi_j\in\pl(\sqcup_{\alpha\in \mathcal{A_j}} I^{(j)}_{\alpha})$
is a piecewise linear cocycle with zero mean and $s(\varphi_j)\neq
0$ for $j=1,\ldots,\ell $, then the Cartesian product
$(T_1)_{\varphi_1}\times \ldots\times(T_\ell)_{\varphi_\ell}$ is
ergodic.
\end{theorem}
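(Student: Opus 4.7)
Let $X = I^{(1)} \times \cdots \times I^{(\ell)}$ with product Lebesgue measure $\mu$, $T = T_1 \times \cdots \times T_\ell$, and let $\Phi\colon X \to \R^\ell$ be defined by $\Phi(x_1,\ldots,x_\ell) = (\varphi_1(x_1),\ldots,\varphi_\ell(x_\ell))$; then $\Phi^{(n)}$ has coordinates $\varphi_j^{(n)}(x_j)$ and the product skew product of the statement is $T_\Phi$. Because $T$ is ergodic by hypothesis, it suffices to show $E(\Phi) = \R^\ell$; as $E(\Phi)$ is a closed subgroup of $\R^\ell$, it is enough to prove that for each index $j$ and all sufficiently small $a>0$ the vector $a e_j$ (with $e_j$ the $j$-th standard basis vector) lies in $E(\Phi)$. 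Recurrence of $\Phi$ is immediate from Corollary~\ref{correc}, since each $\theta_2(T_j)/\theta_1(T_j) < 1/\ell$.

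The strategy is to mimic Theorem~\ref{kawallin2} in the product setting. Fix $j$, choose a constant $c>0$ valid for every $T_i$ in Proposition~\ref{kulaga}, and fix $a$ with $0 < a < |s(\varphi_j)|/(2c)$. Given $B \subset X$ with $\mu(B) > 0$ and a neighbourhood $V_0$ of $0$ in $\R^\ell$, choose $\eta>0$ small enough that any product interval of side $2\eta/n$ around a point where the $i$-th coordinate of $\Phi^{(n)}$ equals $\pm a \delta_{ij}$ (up to a tiny error) has $\Phi^{(n)}$-image inside $\{a e_j, -a e_j\} + V_0$; then choose $\delta>0$ small depending on $\eta$, and apply the Lebesgue density theorem on $X$ to obtain $B' \subset B$ of positive measure and $n_0$ such that for $n \geq n_0$ and $x \in B'$ the cube $C_n(x) := \prod_i (x_i - c/n, x_i + c/n)$ satisfies $\mu(C_n(x) \setminus B) < \delta\,\mu(C_n(x))$. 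By recurrence of $\Phi$, select $n \geq n_0$ and $x^0 \in X$ with $x^0, T^n x^0 \in B'$ and $|\Phi^{(n)}(x^0)| < \varepsilon$, where $\varepsilon>0$ is much smaller than $a, \eta$.

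Let $J_i$ be the cell of $\mathcal{P}_n(T_i)$ containing $x_i^0$ and $J(x^0) := \prod_i J_i$. On $J(x^0)$ the $i$-th coordinate of $\Phi^{(n)}$ is affine in $y_i$ alone with slope $n\,s(\varphi_i) \neq 0$, and $T^n$ acts by a single translation vector. Proposition~\ref{kulaga} gives $|J_i| \geq 1/(cn)$, so I can pick $y^0 \in J(x^0)$ with $y_i^0 = x_i^0$ for $i \neq j$ and $y_j^0 = x_j^0 \pm a/(n\,s(\varphi_j))$, the sign chosen so that $y_j^0$ lies in $J_j$ with a margin of order $1/n$; then $\varphi_j^{(n)}(y_j^0) = \varphi_j^{(n)}(x_j^0) \pm a$ and $\varphi_i^{(n)}(y_i^0) = \varphi_i^{(n)}(x_i^0)$ for $i \neq j$. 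The small product cube $\mathcal{D} := \prod_i (y_i^0 - \eta/n, y_i^0 + \eta/n)$ is contained in $J(x^0)$ and satisfies $\Phi^{(n)}(\mathcal{D}) \subset \{a e_j, -a e_j\} + V_0$ by the choice of $\eta$.

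Since $J(x^0) \subset C_n(x^0)$ and $T^n J(x^0) \subset C_n(T^n x^0)$ (because $T^n$ translates the rectangle $J(x^0)$ and both $J(x^0)$ and its image have sides at most $c/n$), the density estimate yields $\mu(J(x^0) \setminus (B \cap T^{-n}B)) < 2\delta\,\mu(C_n(x^0))$. The ratio $\mu(\mathcal{D})/\mu(J(x^0))$ is bounded below by a positive constant depending only on $\eta$ and on the constant of Proposition~\ref{kulaga}, so taking $\delta$ small enough forces $\mu(\mathcal{D} \cap B \cap T^{-n}B) > 0$ and hence
\[
\mu\bigl(B \cap T^{-n}B \cap (\Phi^{(n)} \in \{a e_j,-a e_j\} + V_0)\bigr) > 0.
\]
Lemma~\ref{lemsym} then gives $a e_j \in E(\Phi)$, as required. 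The main technical obstacle is the coordination of the small parameters $\varepsilon, \eta, \delta$ so that $\mathcal{D}$ simultaneously (i) fits inside the single cell $J(x^0)$, (ii) has $\Phi^{(n)}$-image within the prescribed neighbourhood of $\{\pm a e_j\}$, and (iii) retains positive mass after deletion of the $B$-defect.
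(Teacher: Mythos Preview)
Your argument is correct in spirit and in fact takes a more direct route than the paper's. The paper does not try to put $a e_j$ into $E(\bar\varphi)$ directly. Instead it moves \emph{all} coordinates simultaneously: given $r=(r_1,\ldots,r_\ell)\in[0,1/(4c))^\ell$, it finds $y^0$ with $|\varphi_j^{(n)}(y_j^0)|\approx r_j$ for every $j$, so that on the small cube one only knows $\bar\varphi^{(n)}\in\prod_j(\{-r_j,r_j\}+(-\vep,\vep))$. Lemma~\ref{lemsym} then yields merely that $E(\bar\varphi)$ meets the $2^\ell$-element set $\{s\bullet r:s\in\{-1,1\}^\ell\}$, with no control over which sign pattern occurs. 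To extract $E(\bar\varphi)=\R^\ell$ from this, the paper invokes an algebraic step (Remark~\ref{macierz}): it chooses $\ell+1$ vectors $r^{(1)},\ldots,r^{(\ell+1)}$ so that \emph{any} choice of one sign-pattern per vector generates a dense subgroup. Your idea of moving only the $j$-th coordinate while leaving the others at $x_i^0$ keeps the $i$-th component of $\Phi^{(n)}$ small for $i\neq j$, so the limiting set is just $\{a e_j,-a e_j\}$ and the algebraic detour becomes unnecessary. This is a genuine simplification.

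One small point to tighten: you assert that $\mathcal{D}=\prod_i(y_i^0-\eta/n,y_i^0+\eta/n)\subset J(x^0)$, but for $i\neq j$ you took $y_i^0=x_i^0$, and nothing prevents $x_i^0$ from lying within $\eta/n$ of an endpoint of $J_i$. The fix is immediate: for each $i\neq j$, since $|J_i|\geq 1/(cn)>2\eta/n$, shift $y_i^0$ inside $J_i$ by at most $\eta/n$ to secure the margin; this changes $\varphi_i^{(n)}(y_i^0)$ by at most $\eta|s(\varphi_i)|$, which is absorbed into $V_0$. (The paper implicitly performs exactly this adjustment when it writes ``we can find $(y_j^0-\vep/(4n),y_j^0+\vep/(4n))\subset J_{j,n}(x_j^0)$'' for every $j$.) With that patch your proof is complete.
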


\begin{proof}
Since $T_1,\ldots,T_\ell$ have periodic type, by Lemma~\ref{kulaga}
there exists $c>0$ such that
\begin{equation}\label{niernac}
\frac{1}{cn}\leq \min\mathcal{P}_n(T_j) \leq
\max\mathcal{P}_n(T_j)\leq\frac{c}{n}\text{ for all
}j=1,\ldots,\ell \text{ and }n>0.
\end{equation} Let
$\bar{I}=I^{(1)}\times\ldots\times I^{(l)}$,
$\bar{T}=T_1\times\ldots\times T_\ell$ and let
$\bar{\varphi}:\bar{I}\to\R^\ell $ be given by
$$\bar{\varphi}(x_1,\ldots,x_\ell)=(\varphi_1(x_1),\ldots,\varphi_\ell(x_\ell)).$$
Then $(T_1)_{\varphi_1}\times
\ldots\times(T_\ell)_{\varphi_\ell}=\bar{T}_{\bar{\varphi}}$. Denote
by $\bar{\mu}$ the Lebesgue measure on $\bar{I}$. Without loss of
generality we can assume that $s(\varphi_j)=\pm 1$ for
$j=1,\ldots,\ell $. By Corollary~\ref{correc}, the cocycle
$\bar{\varphi}$  for $\bar{T}$ is recurrent.

\vskip 3mm To prove the result, it suffices to show that, for every
$r=(r_1,\ldots,r_\ell)\in[0, {1 \over 4c})^\ell$, the set
$E(\bar{\varphi})$ has nontrivial intersection with
\[\{s\bullet r:=(s_1r_1,\ldots,s_\ell r_\ell):s=(s_1,\ldots,s_\ell)\in\{-1,1\}^\ell \}.\]
Indeed, for a fixed rational $0<r< {1 \over 4c}$, let us consider
a collection of vectors $r^{(i)}=(r_{1i},\ldots,r_{\ell i})\in
[0,1/(4c))^\ell $, $1\leq i\leq\ell +1$   such that
$r_{ij}=r\delta_{ij}$ for all $1\leq i,j\leq\ell $ and
$1,r_{1\,\ell +1},\ldots,r_{\ell \,\ell +1}$ are independent over
$\Q$. By Remark~\ref{macierz}, for any choice $s^{(i)}\in
\{-1,1\}^\ell $, $1\leq i\leq\ell +1$ the subgroup generated by
vectors $s^{(i)}\bullet r^{(i)}$, $1\leq i\leq\ell +1$ is dense in
$\R^\ell $. Since $E(\bar{\varphi})\subset\R^\ell $ is a closed
subgroup and for every $1\leq i\leq\ell +1$ there exists
$s^{(i)}\in\{-1,1\}^\ell $ such that $s^{(i)}\bullet r^{(i)}\in
E(\varphi)$, it follows that $E(\bar{\varphi})=\R^\ell $, and
hence $\bar{T}_{\bar{\varphi}}$ is ergodic.

\vskip 3mm Fix  $r=(r_1,\ldots,r_\ell)\in [0,{1 \over 4c})^\ell $.
We have to show that for every measurable set $B\subset \bar{I}$
with $\bar{\mu}(B)>0$ and $0<\vep<1/c$ there exists $n>0$ such that
the set of all $\bar{x}=(x_1,\ldots,x_\ell)\in B$ such that
\[(T_1^nx_1,\ldots,T_\ell^nx_\ell)\in B,\;\varphi_j^{(n)}(x_j)\in
\{-r_j,r_j\}+(-\vep,\vep)\text{ for  } 1\leq j\leq\ell \] has positive $\bar{\mu}$ measure.
By a density point argument, there exists $B'\subset B$ and
$n_0\geq 1$ such that $\bar{\mu}(B')>0$ and for every
$(x_1,\ldots,x_\ell)\in B'$ and $n\geq n_0$ we have
\begin{equation}\label{pauza}
\bar{\mu} (\prod_{j=1}^\ell
\left(x_j-\frac{c}{n},x_j+\frac{c}{n}\right)\setminus
B)<\frac{\vep}{4(2n)^\ell }.
\end{equation}
Since $\bar{\varphi}$ (as a cocycle for $\bar{T}$) is recurrent,
there exists $n\geq n_0$ such that
\[\bar{\mu}\left(B'\cap \bar{T}^{-n}B'\cap(\bar{\varphi}^{(n)}\in (-\vep/2,\vep/2)^\ell )\right)>0.\]
Next choose $x^0=(x^0_1,\ldots,x_\ell^0)\in B'$ so that
$(T_1^nx^0_1,\ldots,T_\ell^nx_\ell^0)\in B'$,
$|\varphi_j^{(n)}(x_j^0)|<\vep/2$ for $1\leq j\leq\ell $. For each
$1\leq j\leq\ell $ denote by $J_{j,n}(x_j^0)\subset I_j$ the
interval of the partition $\mathcal{P}_n(T_j)$ such that $x_j^0\in
J_{j,n}(x_j^0)$. By assumption,  $\varphi_j^{(n)}$ is continuous on
every interval of $\mathcal{P}^n(T_j)$. Therefore, for every $1\leq
j\leq\ell, $ the function $\varphi_j^{(n)}$ is continuous on
$J_{j,n}(x_j^0)$, and hence $\varphi_j^{(n)}(x)=\pm nx+d_{n,j}$ for
$x\in J_{j,n}(x_j^0)$. In view of (\ref{niernac}),
$\frac{1}{cn}<|J_{j,n}(x_j^0)|<\frac{c}{n}$, and hence
$J_{j,n}(x_j^0)\subset(x^0_j-c/n,x^0_j+c/n)$ for every $1\leq
j\leq\ell $. Moreover, $T^n_jJ_{j,n}(x_j^0)$ is an interval such
that $|T^n_jJ_{j,n}(x_j^0)|=|J_{j,n}(x_j^0)|<c/n$, so
\begin{equation}\label{przesu}
T^n_jJ_{j,n}(x_j^0)\subset\left(T^n_jx_j^0-\frac{c}{n},T^n_jx_j^0+\frac{c}{n}\right).
\end{equation}
Since $|\varphi_j^{(n)}(x_j^0)|<\vep/2$, $\varphi_j^{(n)}$ is
linear on $J_{j,n}(x_j^0)$ with slope $\pm n$ and  $0\leq
r_j<\frac{1}{4c}<\frac{1}{2c}-\frac{\vep}{4}$, we can find
$(y^0_j-\vep/(4n),y^0_j+\vep/(4n))\subset J_{j,n}(x_j^0)$ such
that
\begin{equation}\label{oky1}
|\varphi_j^{(n)}(x)|\in r_j+(-\vep,\vep)\text{ for all } x\in
(y^0_j-\vep/(4n),y^0_j+\vep/(4n)).
\end{equation}
Let $y^0=(y^0_1,\ldots,y^0_\ell)\in\prod_{j=1}^\ell
J_{j,n}(x^0_j)$. Since
\[\prod_{j=1}^\ell \left(y^0_j-\frac{\vep}{4n},y^0_j+\frac{\vep}{4n}\right)
\subset\prod_{j=1}^\ell J_{j,n}(x^0_j)\subset\prod_{j=1}^\ell
\left(x_j^0-\frac{c}{n},x_j^0+\frac{c}{n}\right),\] $x^0\in B'$
and $n\geq n_0$, by (\ref{pauza}), we have
\[\bar{\mu}\left(\prod_{j=1}^\ell \left(y^0_j-\frac{\vep}{4n},y^0_j+\frac{\vep}{4n}\right)\setminus B\right)
<\frac{\vep}{4(2n)^\ell }.\] Moreover, by (\ref{przesu}),
\begin{eqnarray*}
\bar{T}^n\prod_{j=1}^\ell
\left(y^0_j-\frac{\vep}{4n},y^0_j+\frac{\vep}{4n}\right)\subset\prod_{j=1}^\ell
T^n_jJ_{j,n}(x^0_j)\subset\prod_{j=1}^\ell
\left(T^n_jx_j^0-\frac{c}{n},T^n_jx_j^0+\frac{c}{n}\right).
\end{eqnarray*}
Since $(T^n_1x_1^0,\ldots,T^n_\ell x_\ell^0)\in B'$ and $n\geq
n_0$, by (\ref{pauza}), it follows that
\[\bar{\mu}(\prod_{j=1}^\ell (y^0_j-\frac{\vep}{4n},y^0_j+\frac{\vep}{4n})\setminus
\bar{T}^{-n}B) = \bar{\mu}(\bar{T}^n\prod_{j=1}^\ell
(y^0_j-\frac{\vep}{4n},y^0_j+\frac{\vep}{4n})\setminus
B)<\frac{\vep}{4(2n)^\ell }.\] Hence
\[\bar{\mu}(\prod_{j=1}^\ell \left(y^0_j-\frac{\vep}{4n},y^0_j+\frac{\vep}{4n})\cap
(B\cap\bar{T}^{-n}B)\right)>\frac{\vep}{2(2n)^\ell }>0.\] By
(\ref{oky1}),
\[\bar{\varphi}^{(n)}(x)\in\prod_{j=1}^\ell (\{-r_j,r_j\}+(-\vep,\vep))\text{ if
}x\in\prod_{j=1}^\ell
\left(y^0_j-\frac{\vep}{4n},y^0_j+\frac{\vep}{4n}\right).\] Thus
\[\bar{\mu} (B\cap \bar{T}^{-n}B\cap (\bar{\varphi}^{(n)}\in\prod_{j=1}^\ell
(\{-r_j,r_j\}+(-\vep,\vep))))>\frac{\vep}{2(2n)^\ell }>0.\] By
Lemma~\ref{lemsym}, it follows that $\left(\prod_{j=1}^\ell
\{-r_j,r_j\}\right)\cap E(\bar{\varphi})\neq \emptyset$. This
completes the proof.
\end{proof}

\section{Ergodicity of certain step cocycles}\label{secstepcoc}\label{step}

In this section we apply Corollary~\ref{ergodic} to prove the
ergodicity of step cocycles over IETs of  periodic type.

\subsection{Step cocycles}\vskip 3mm

\hfill \break Let $T:I\to I$ be an arbitrary  IET satisfying
Keane's condition. Suppose that $(n_k)_{k\geq 0}$ is an increasing
sequence of natural numbers such $n_0=0$ and the matrix
\[Z(k+1)=\Theta(T^{(n_k)})\cdot\Theta(T^{(n_k+1)})\cdot\ldots\cdot\Theta(T^{(n_{k+1}-1)})\]
has positive entries for each $k\geq 0$. In what follows, we
denote by $(\pi^{(k)},\lambda^{(k)})$ the pair defining
$T^{(n_k)}$. By abuse of notation, we continue to write $T^{(k)}$
for $T^{(n_k)}$. With this notation,
\[\lambda^{(k)}=Z(k+1)\lambda^{(k+1)}.\]
We adopt the notation from \cite{Ma-Mo-Yo}. For each $k<l$ let
\[Q(k,l)=Z(k+1)\cdot Z(k+2)\cdot\ldots\cdot Z(l).\]
Then
\[\lambda^{(k)}=Q(k,l)\lambda^{(l)}.\]
We will write $Q(l)$ for $Q(0,l)$. By definition,
$T^{(l)}:I^{(l)}\to I^{(l)}$ is the first return map of
$T^{(k)}:I^{(k)}\to I^{(k)}$ to the interval $I^{(k)}\subset
I^{(l)}$. Moreover, $Q_{\alpha\beta}(k,l)$ is the time spent by any
point of $I^{(l)}_{\beta}$ in $I^{(k)}_{\alpha}$ until it returns to
$I^{(l)}$. It follows that
\[Q_{\beta}(k,l)=\sum_{\alpha\in\mathcal{A}}Q_{\alpha\beta}(k,l)\]
is the first return time of points of $I^{(l)}_{\beta}$ to
$I^{(l)}$.

Suppose that $T=T_{(\pi,\lambda)}$ is of periodic type and $p$ is
a period such that $\pi^{(p)}=\pi$. Let $A=\Theta^{(p)}(T)$.
Considering the sequence $(n_k)_{k\geq 0}$, $n_k=pk$ we get
$Z(l)=A$ and $Q(k,l)=A^{l-k}$ for all $0\leq k\leq l$.

The norm of a vector is defined as the largest absolute value of the
coefficients. We set
$\|B\|=\max_{\beta\in\mathcal{A}}\sum_{\alpha\in\mathcal{A}}|B_{\alpha\beta}|$
for $B=[B_{\alpha\beta}]_{\alpha,\beta\in\mathcal{A}}$. Following
\cite{Ve2}, for every matrix
$B=[B_{\alpha\beta}]_{\alpha,\beta\in\mathcal{A}}$ with positive
entries, we set
$$\nu(B)=\max_{\alpha,\beta,\gamma\in\mathcal{A}}\frac{B_{\alpha\beta}}{B_{\alpha\gamma}}.$$
Then
\begin{equation}
\sum_{\alpha\in\mathcal{A}}B_{\alpha\beta}\leq\nu(B)
\sum_{\alpha\in\mathcal{A}}B_{\alpha\gamma}\text{ for all }
\beta,\gamma\in\mathcal{A}\text{ and }\nu(CB)\leq\nu(B),
\end{equation}
for any nonnegative nonsingular matrix $C$. It follows that
$\nu(B^m)\leq\nu(B)$, and hence
\begin{equation}\label{maxmin1}
\|B^m\|=\max_{\beta\in\mathcal{A}}\sum_{\alpha\in\mathcal{A}}
B^m_{\alpha\beta}\leq\nu(B)\min_{\beta\in\mathcal{A}}
\sum_{\alpha\in\mathcal{A}}B^m_{\alpha\beta}.
\end{equation}
Denote by $\Gamma^{(k)}$ the space of functions
$\varphi:I^{(k)}\to \mathbb{R}$ constant on each interval
$I^{(k)}_{\alpha}$, $\alpha\in\mathcal{A}$ and denote by
$\Gamma^{(k)}_0$ the subspace of functions with zero mean. Every
function $\varphi=\sum_{\alpha\in \mathcal{A}}
h_{\alpha}\chi_{I^{(k)}_{\alpha}}$ in $\Gamma^{(k)}$ can be
identified with the vector $h=(h_{\alpha})_{\alpha\in
\mathcal{A}}\in\R^{\mathcal{A}}$. Moreover,
\begin{equation}\label{warkockst}
\varphi^{(Q(k,l)_\alpha)}(x)=(Q(k,l)^th)_\alpha\text{ for every
}x\in I^{(l)}_\alpha,\,\alpha\in\mathcal{A}.
\end{equation}
The induced IET $T^{(n)}:I^{(n)}\to I^{(n)}$ determines a
partition of $I$ into disjoint towers $H^{(n)}_{\alpha},$
$\alpha\in\mathcal{A}$, where
\[H^{(n)}_\alpha=\{T^k I^{(n)}_\alpha:0\leq k<h_\alpha^{(n)}:=Q_\alpha(n)\}.\]
Denote by $h^{(n)}_{\max}$ and $h^{(n)}_{\min}$ the height of the
highest and the lowest tower respectively.

Assume that $I^{(n+1)}\subset I^{(n)}_{\alpha_1}$, where
$\pi_0^{(n)}(\alpha_1)=1$. For every $\alpha\in\mathcal{A}$ denote
by $C^{(n)}_{\alpha}$ the tower $\{T^iI^{(n+1)}_\alpha:0\leq
i<h_{\alpha_1}^{(n)}\}$.

\begin{lemma} \label{ciasny2} For every
$\alpha\in\mathcal{A}$ we have
\begin{equation}\label{ciasny}
\mu(C^{(n)}_{\alpha}\triangle TC^{(n)}_{\alpha})\to 0\text{ and
}\sup_{x\in C^{(n)}_{\alpha}}|T^{h_\alpha^{(n+1)}}x-x|\to 0 \text{
as }n\to+\infty.
\end{equation}
If
$\varphi=\sum_{\alpha\in\mathcal{A}}v_\alpha\chi_{I^{(0)}_\alpha}$
for some  $v=(v_\alpha)_{\alpha\in\mathcal{A}}\in \Gamma^{(0)}_0$,
then
\begin{equation}\label{wartosci0}
\varphi^{(h_\alpha^{(n+1)})}(x)=(Q(n+1)^tv)_{\alpha}\text{ for all
}x\in C^{(n)}_{\alpha}.
\end{equation}
If additionally $T$ is of periodic type then
\begin{equation}\label{dolcn}
\liminf_{n\to\infty}\mu(C^{(n)}_{\alpha})>0
\end{equation}
and
\begin{equation}\label{wartosci}
\varphi^{(h_\alpha^{(n+1)})}(x)=((A^t)^{n+1}v)_{\alpha}\text{ for
all }x\in C^{(n)}_{\alpha}.
\end{equation}
\end{lemma}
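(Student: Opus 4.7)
The four assertions all rest on a single structural picture: the set $C^{(n)}_\alpha$ is a Rokhlin tower with base $I^{(n+1)}_\alpha$ and height $h^{(n)}_{\alpha_1}$, sitting inside the taller Rokhlin tower $\bigsqcup_{i=0}^{h^{(n)}_{\alpha_1}-1}T^iI^{(n)}_{\alpha_1}$. Since $h^{(n)}_{\alpha_1}$ is by definition the first return time of $I^{(n)}_{\alpha_1}$ to $I^{(n)}$, for each $0\le i<h^{(n)}_{\alpha_1}$ the iterate $T^i$ is a single translation on $I^{(n)}_{\alpha_1}$ and the floor $T^iI^{(n)}_{\alpha_1}$ is contained in exactly one interval $I_{\beta(i)}$. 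In particular $\varphi$ is constant on each floor, hence $\varphi^{(i)}$ is constant on $I^{(n)}_{\alpha_1}$ for every $0\le i\le h^{(n)}_{\alpha_1}$. I will use these facts repeatedly below.

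For (\ref{ciasny}), the symmetric difference $C^{(n)}_\alpha\triangle TC^{(n)}_\alpha$ reduces to the bottom floor $I^{(n+1)}_\alpha$ and the displaced top floor $T^{h^{(n)}_{\alpha_1}}I^{(n+1)}_\alpha$, giving $\mu(C^{(n)}_\alpha\triangle TC^{(n)}_\alpha)\le 2|\lambda^{(n+1)}|\to 0$. For the displacement bound write $x=T^iy$ with $y\in I^{(n+1)}_\alpha$ and $0\le i<h^{(n)}_{\alpha_1}$. Since $T^{h^{(n+1)}_\alpha}y=T^{(n+1)}y\in I^{(n+1)}\subset I^{(n)}_{\alpha_1}$, both $y$ and $T^{h^{(n+1)}_\alpha}y$ lie in $I^{(n)}_{\alpha_1}$, on which $T^i$ acts as a translation; therefore $|T^{h^{(n+1)}_\alpha}x-x|=|T^{(n+1)}y-y|\le|\lambda^{(n+1)}|\to 0$.

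For (\ref{wartosci0}), (\ref{warkockst}) applied with $k=0$, $l=n+1$ yields $\varphi^{(h^{(n+1)}_\alpha)}(y)=(Q(n+1)^tv)_\alpha$ for $y\in I^{(n+1)}_\alpha$. For $x=T^iy$ with $i<h^{(n)}_{\alpha_1}$, the cocycle identity gives
\[
\varphi^{(h^{(n+1)}_\alpha)}(T^iy)=\varphi^{(h^{(n+1)}_\alpha)}(y)+\varphi^{(i)}(T^{(n+1)}y)-\varphi^{(i)}(y),
\]
and the last two terms cancel since $\varphi^{(i)}$ is constant on $I^{(n)}_{\alpha_1}$ and both $y,\,T^{(n+1)}y$ lie in $I^{(n+1)}\subset I^{(n)}_{\alpha_1}$. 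Under periodic type (with the period chosen so that $\pi^{(p)}=\pi$ and the sequence $n_k=pk$) one has $Z(k)=A$ and $Q(n+1)=A^{n+1}$, so (\ref{wartosci}) is immediate from (\ref{wartosci0}).

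It remains to prove (\ref{dolcn}). Apply (\ref{maxmin1}) to $A^n$ to get $\max_\beta h^{(n)}_\beta=\|A^n\|\le\nu(A)\min_\beta h^{(n)}_\beta$, whence $h^{(n)}_{\alpha_1}\ge\max_\beta h^{(n)}_\beta/\nu(A)$. Combined with the Fubini identity $|I|=\sum_\beta h^{(n)}_\beta\lambda^{(n)}_\beta\le\max_\beta h^{(n)}_\beta\cdot|\lambda^{(n)}|$, this yields $h^{(n)}_{\alpha_1}\ge|I|/(\nu(A)|\lambda^{(n)}|)$. For periodic type $\lambda^{(n+1)}=\rho^{-1}\lambda^{(n)}$ where $\rho$ is the Perron--Frobenius eigenvalue of $A$, so
\[
\mu(C^{(n)}_\alpha)=h^{(n)}_{\alpha_1}\lambda^{(n+1)}_\alpha\ge\frac{|I|}{\nu(A)|\lambda^{(n)}|}\cdot\frac{\lambda_\alpha|\lambda^{(n)}|}{\rho|\lambda|}=\frac{|I|\lambda_\alpha}{\rho\,\nu(A)|\lambda|},
\]
a positive constant independent of $n$. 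The main conceptual step is the tower-inside-a-tower observation at the start, which underwrites both the displacement estimate in (\ref{ciasny}) and the cancellation of the two $\varphi^{(i)}$ terms in (\ref{wartosci0}); everything else reduces to bookkeeping with the cocycle identity and the Perron--Frobenius bound (\ref{maxmin1}).
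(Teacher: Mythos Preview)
Your proof is correct and follows essentially the same approach as the paper's: the tower-inside-a-tower picture, the symmetric-difference bound by the two extreme floors, the displacement via the fact that $T^i$ is a translation on $I^{(n)}_{\alpha_1}$, and the constancy of $\varphi^{(i)}$ on $I^{(n)}_{\alpha_1}$ to propagate (\ref{warkockst}) from the base to all of $C^{(n)}_\alpha$. The only cosmetic differences are that you package the cancellation via the cocycle identity for $\varphi^{(i)}$ whereas the paper writes out the telescoping sum $\sum_{0\le j<i}[\varphi(T^{h^{(n+1)}_\alpha}T^jx_0)-\varphi(T^jx_0)]$ term by term, and for (\ref{dolcn}) you use the Fubini identity $|I|=\sum_\beta h^{(n)}_\beta\lambda^{(n)}_\beta$ in place of the paper's direct bound $h^{(n)}_{\min}\ge\rho_1^n/(C\nu(A))$ from (\ref{minimax}); both routes rest on the same $\nu(A)$-comparison (\ref{maxmin1}).
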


\begin{proof} Since $C^{(n)}_{\alpha}\triangle TC^{(n)}_{\alpha}
\subset T^{h_{\alpha_1}^{(n+1)}}I^{(n+1)}_\alpha\cup
I^{(n+1)}_\alpha$, we have
\[\mu(C^{(n)}_{\alpha}\triangle TC^{(n)}_{\alpha})\leq
2\mu(I^{(n+1)}_\alpha)\to 0\text{ as }n\to+\infty.\] Suppose that
$x\in T^iI^{(n+1)}_\alpha$ for some  $0\leq i<h_{\alpha_1}^{(n)}$.
Then
\[T^{h_\alpha^{(n+1)}}x\in T^iT^{h_\alpha^{(n+1)}}I^{(n+1)}_\alpha
\subset T^iI^{(n+1)}\subset T^iI^{(n)}_{\alpha_1}.\]
It follows that
\begin{equation}\label{stalewa}
x,T^{h_\alpha^{(n+1)}}x\in T^iI^{(n)}_{\alpha_1}\subset
I_\beta\text{ for some }\beta\in\mathcal{A}.
\end{equation}
Therefore
\begin{equation*}
|x-T^{h_\alpha^{(n+1)}}x|\leq |I^{(n)}_{\alpha_1}|\text{ for all
}x\in C^{(n)}_{\alpha}.
\end{equation*}
Next, by (\ref{warkockst}),
$\varphi^{(h_\alpha^{(n+1)})}(x)=(Q(n+1)^tv)_{\alpha}$  for every
$x\in I^{(n+1)}_\alpha$. Moreover, if $x\in C^{(n)}_{\alpha}$, say
$x= T^ix_0$ with $x_0\in I^{(n+1)}_\alpha$ and  $0\leq
i<h_{\alpha_1}^{(n)}$, then
\[\varphi^{(h_\alpha^{(n+1)})}(T^ix_0)-\varphi^{(h_\alpha^{(n+1)})}
(x_0)=\sum_{0\leq j< i}\varphi(T^{h_\alpha^{(n+1)}}T^jx_0)-\varphi(T^jx_0).\]
By (\ref{stalewa}),
$\varphi(T^{h_\alpha^{(n+1)}}T^jx_0)=\varphi(T^jx_0)$ for every
$0\leq j<h_{\alpha_1}^{(n)}$, and hence
\begin{equation*}
\varphi^{(h_\alpha^{(n+1)})}(x)=\varphi^{(h_\alpha^{(n+1)})}(x_0)=(Q(n+1)^tv)_{\alpha}\text{
for all }x\in C^{(n)}_{\alpha}.
\end{equation*}

Assume that $T=T_{(\pi,\lambda)}$ is of periodic type and $A$ is
its periodic matrix. Denote by $\rho_1$ the Perron-Frobenius
eigenvalue of $A$. Then there exists $C>0$ such that $\frac{1}{C}
\rho_1^n\leq\|A^n\|\leq C \rho_1^n$. Since
$h^{(n)}_{\max}=\|A^n\|=\max_{\alpha\in \mathcal{A}}A^n_\alpha$
and $h^{(n)}_{\min}=\min_{\alpha\in \mathcal{A}}A^n_\alpha$, by
(\ref{maxmin1}), it follows that
\begin{equation}\label{minimax}
\frac{1}{C\nu(A)} \rho_1^n\leq h^{(n)}_{\min}<h^{(n)}_{\max}\leq C
\rho_1^n.
\end{equation}
As $|I^{(n+1)}_\alpha|=\rho_1^{-(n+1)}|I^{(0)}_\alpha|$, we have
\begin{equation*}
\mu(C^{(n)}_{\alpha})=|I^{(n+1)}_\alpha|h_{\alpha_1}^{(n)}=|I^{(0)}_\alpha|h_{\min}^{(n)}/\rho_1^{n+1}\geq
\frac{|I^{(0)}_\alpha|}{C\nu(A)\rho_1}>0.
\end{equation*}
Multiplying the period of $T$, if necessary, we have
$I^{(n+1)}\subset I^{(n)}_{\alpha_1}$ for every natural $n$, and
hence
\begin{equation*}
\varphi^{(h_\alpha^{(n+1)})}(x)=(Q(n+1)^tv)_{\alpha}=((A^t)^{n+1}v)_{\alpha}\text{
for all }x\in C^{(n)}_{\alpha}.
\end{equation*}
\end{proof}

\subsection{Ergodic cocycles in case $\kappa>1$}\label{nonhyperbolic}\vskip 3mm

\hfill \break Assume that $T=T_{(\pi,\lambda)}$ is of periodic
type and $\kappa=\kappa(\pi)>1$. Then $\dim
\ker\Omega_{\pi}=\kappa-1>0$. As we already mentioned $A$ is the
identity on $\ker\Omega_{\pi}$. Let
\[F(T)=\{v\in\R^{\mathcal{A}}:A^tv=v\}.\]
Then $F(T)$ is a linear subspace with $\dim F(T)=k\geq\kappa-1$.
Since
\[\langle v,\lambda\rangle=\langle A^tv,\lambda\rangle=
\langle v,A\lambda\rangle=\rho_1\langle v,\lambda\rangle\text{ for
each }v\in F(T),\] we have $F(T)\subset \Gamma^{(0)}_0$. Moreover,
we can choose a basis of the linear space $F(T)$ such that each of
its element belongs to $\Z^{\mathcal{A}}$. It follows that
$\Z^{\mathcal{A}}\cap F(T)$ is a free abelian group of rank $k$.

\begin{lemma}\label{algebra} Let
$v_i=(v_{i\alpha})_{\alpha\in\mathcal{A}}$, $1\leq i\leq k$, be a
basis of the group $\Z^{\mathcal{A}}\cap F(T)$. Then the
collection of vectors
$w_{\alpha}=(v_{i\alpha})_{i=1}^{k}\in\Z^{k}$,
$\alpha\in\mathcal{A}$, generates the group $\Z^{k}$.\bez
\end{lemma}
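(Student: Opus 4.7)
The plan is to prove Lemma~\ref{algebra} by a pure linear algebra / divisibility argument over $\Z$: the $w_\alpha$ generate $\Z^k$ if and only if, for every prime $p$, their reductions modulo $p$ span $(\Z/p\Z)^k$, so it suffices to rule out a ``hidden'' common prime divisor. None of the dynamical content from earlier in the section is used; only the two facts that $F(T)$ is a linear subspace of $\R^{\mathcal{A}}$ and that $\{v_1,\dots,v_k\}$ is a $\Z$-basis of the lattice $L:=\Z^{\mathcal{A}}\cap F(T)$.

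First I would arrange the data into the $k\times d$ integer matrix $V=[v_{i\alpha}]$, whose rows are the basis vectors $v_i$ and whose columns are exactly the vectors $w_\alpha\in\Z^k$. The statement to prove is that the columns of $V$ generate $\Z^k$. Suppose, for contradiction, that they generate only a proper subgroup $H\subsetneq \Z^k$. Then there exists a prime $p$ such that $H\subset p\Z^k+\Z\cdot u$ is contained in a proper subgroup of $\Z^k$ of index divisible by $p$; equivalently, the reductions $\bar w_\alpha\in(\Z/p\Z)^k$ fail to span $(\Z/p\Z)^k$. Hence there exists a nonzero $a=(a_1,\dots,a_k)\in\Z^k$, not all $a_i$ divisible by $p$, such that
\[
\sum_{i=1}^{k} a_i\,v_{i\alpha}\equiv 0 \pmod{p}\qquad\text{for every }\alpha\in\mathcal{A}.
\]

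Now set $u:=\sum_{i=1}^{k}a_i v_i$. By construction $u\in\Z^{\mathcal{A}}$ and every coordinate of $u$ is divisible by $p$, so $u/p\in\Z^{\mathcal{A}}$. Since $F(T)=\ker(A^t-I)$ is an $\R$-linear subspace and each $v_i\in F(T)$, the combination $u$ lies in $F(T)$, and hence so does $u/p$. Consequently $u/p\in\Z^{\mathcal{A}}\cap F(T)=L$. Because $\{v_1,\dots,v_k\}$ is a $\Z$-basis of $L$, we can write $u/p=\sum_{i=1}^{k}b_i v_i$ with $b_i\in\Z$; multiplying by $p$ gives $\sum_i a_i v_i = \sum_i (pb_i) v_i$, and the $\Z$-linear independence of the $v_i$ forces $a_i=pb_i$ for every $i$, so $p\mid a_i$ for all $i$. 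This contradicts the choice of $a$, completing the proof.

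The only step that requires any care is verifying that $u/p$ still lies in $L$ (not merely in $\Z^{\mathcal{A}}$): this is immediate because $F(T)$ is a vector subspace, so closedness under the scalar $1/p$ is automatic. Everything else is routine, and there is no real obstacle — this is essentially the standard fact that a set of integer vectors generates $\Z^k$ iff no prime obstructs it, applied to the fact that a $\Z$-basis of a saturated sublattice remains a basis when one tries to ``divide out'' by a prime.
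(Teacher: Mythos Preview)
Your argument is correct. The paper in fact gives no proof of this lemma at all --- the statement ends with the end-of-proof box \verb|\bez| and nothing more --- so there is no ``paper's approach'' to compare against; the authors evidently regard it as an elementary lattice fact, and your prime-by-prime divisibility argument is exactly the standard way to verify it (equivalently: $L=\Z^{\mathcal{A}}\cap F(T)$ is a saturated sublattice of $\Z^{\mathcal{A}}$ because $F(T)$ is a linear subspace, hence any $\Z$-basis of $L$ extends to a $\Z$-basis of $\Z^{\mathcal{A}}$, which is equivalent to the columns of $V$ generating $\Z^k$). The phrase ``$H\subset p\Z^k+\Z\cdot u$'' in your write-up is garbled, but the actual reasoning that follows is clear and complete.
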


\begin{theorem}
Let $v_i=(v_{i\alpha})_{\alpha\in\mathcal{A}}$, $1\leq i\leq k$ be
a basis of the group $\Z^{\mathcal{A}}\cap F(T)$. Then the cocycle
$\varphi:I\to \Z^{k}$ given by
$\varphi=(\varphi_1,\ldots,\varphi_{k})$ with
$\varphi_i=\sum_{\alpha\in\mathcal{A}}v_{i\alpha}\chi_{I_{\alpha}}$
for $i=1,\ldots,k$ is ergodic.

If $R$ is a $(k-1)\times k$-real matrix satisfying (\ref{denseRm}),
then the cocycle $\widetilde{\varphi}:I\to\R^{k-1}$ given by
$\widetilde{\varphi}(x)=R\varphi(x)$, which is constant over
exchanged intervals, is ergodic.
\end{theorem}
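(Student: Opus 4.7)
The plan is to apply Corollary~\ref{ergodic} to the Rokhlin columns $C_\alpha^{(n)}$ produced by Lemma~\ref{ciasny2}, one column family for each $\alpha\in\mathcal{A}$. Properties (\ref{ciasny}) and (\ref{dolcn}) supply, after passing to a subsequence along which $\mu(C_\alpha^{(n)})$ converges to a positive limit and the heights $q_n:=h_\alpha^{(n+1)}$ are strictly increasing, exactly the hypotheses of the lemma preceding Corollary~\ref{ergodic}: the columns are almost $T$-invariant and $T^{q_n}$ displaces points of $C_\alpha^{(n)}$ by at most $|I^{(n)}_{\alpha_1}|\to 0$, while $\mu(C_\alpha^{(n)})$ stays uniformly bounded away from zero.

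The decisive ingredient is that by definition of $F(T)$ each basis vector $v_i$ satisfies $(A^t)^{n+1}v_i=v_i$. Applying the identity (\ref{wartosci}) of Lemma~\ref{ciasny2} coordinatewise therefore gives
\[
\varphi_i^{(h_\alpha^{(n+1)})}(x)=\bigl((A^t)^{n+1}v_i\bigr)_\alpha=v_{i\alpha}\quad\text{for all }x\in C_\alpha^{(n)},
\]
so that $\varphi^{(h_\alpha^{(n+1)})}(x)=w_\alpha$ on $C_\alpha^{(n)}$ for every $n$. Since the value $g_n=w_\alpha$ is constant in $n$, Corollary~\ref{ergodic} yields $w_\alpha\in E(\varphi)$ for each $\alpha\in\mathcal{A}$. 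Because $E(\varphi)$ is a (closed) subgroup of $\Z^k$ and Lemma~\ref{algebra} says the family $\{w_\alpha\}_{\alpha\in\mathcal{A}}$ generates $\Z^k$, this forces $E(\varphi)=\Z^k$. The criterion $E(\varphi)=G$ recalled in Section~\ref{prelim} then implies that $T_\varphi$ is ergodic.

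For the second assertion exactly the same towers and iterates produce the constant values $\widetilde\varphi^{(h_\alpha^{(n+1)})}(x)=Rw_\alpha$ on $C_\alpha^{(n)}$, so Corollary~\ref{ergodic}, now applied in $\R^{k-1}$, yields $Rw_\alpha\in E(\widetilde\varphi)$ for every $\alpha\in\mathcal{A}$. Since the $w_\alpha$ generate $\Z^k$, the vectors $Rw_\alpha$ generate the subgroup $R(\Z^k)$. By hypothesis (\ref{denseRm}) and Remark~\ref{macierz}, $R(\Z^k)$ is dense in $\R^{k-1}$; as $E(\widetilde\varphi)$ is a closed subgroup of $\R^{k-1}$, it must coincide with $\R^{k-1}$, and therefore $T_{\widetilde\varphi}$ is ergodic.

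I do not foresee a genuine obstacle: Lemma~\ref{ciasny2} was tailored precisely so that $A^t$-fixed vectors produce Birkhoff sums that are \emph{constant} along the column heights (not merely bounded), and this pointwise rigidity on sets of uniformly positive measure plugs directly into Corollary~\ref{ergodic}. The only mild housekeeping is the subsequence extraction that matches the exact monotonicity/convergence hypotheses of that corollary; everything else is algebra carried by Lemma~\ref{algebra} and the density statement of Remark~\ref{macierz}.
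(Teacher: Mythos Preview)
Your proof is correct and follows essentially the same route as the paper: apply Lemma~\ref{ciasny2} and Corollary~\ref{ergodic} to the towers $C_\alpha^{(n)}$, use that $v_i\in F(T)$ forces $(A^t)^{n+1}v_i=v_i$ so that $\varphi^{(h_\alpha^{(n+1)})}\equiv w_\alpha$ on $C_\alpha^{(n)}$, and then invoke Lemma~\ref{algebra}. For the second part the paper argues slightly more abstractly via the general inclusion $RE(\varphi)\subset E(R\varphi)$, whereas you re-run Corollary~\ref{ergodic} on $\widetilde\varphi$ directly; both yield the same conclusion and the difference is cosmetic.
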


\begin{proof}
By (\ref{wartosci}), for every $\alpha\in\mathcal{A}$ we have
\[\varphi^{(h_\alpha^{(n+1)})}(x)=(((A^t)^{n+1}v_1)_{\alpha},
\ldots,((A^t)^{n+1}v_{k})_{\alpha})
=((v_1)_{\alpha},\ldots,(v_{k})_{\alpha})=w_{\alpha}\] for $x\in
C^{(n)}_{\alpha}$. In view of Lemma~\ref{ciasny2}, we can apply
Corollary~\ref{ergodic}. Thus $w_\alpha\in E(\varphi)$ for all
$\alpha\in\mathcal{A}$. Since $E(\varphi)$ is a group, by
Lemma~\ref{algebra}, we obtain $E(\varphi)=\Z^{k}$.

\vskip 3mm It is easy to show that $RE(\varphi)\subset
E(R\varphi)$. Since $E(\varphi)=\Z^{k}$ and $E(R\varphi)$ is
closed, by Remark~\ref{macierz}, we obtain
$E(\widetilde{\varphi})=E(R\varphi)\supset\overline{R\Z^{k}}=\R^{k-1}$.
\end{proof}

\begin{remark} Note that Remark~\ref{macierz} indicates how to
construct matrices $R$ satisfying (\ref{denseRm}).
\end{remark}

\vskip 3mm
\section{Ergodicity of corrected cocycles}\label{correct}

In this section, using a method from \cite{Ma-Mo-Yo}, we present a
procedure of correction of functions in $\bv_0(\sqcup_{\alpha\in
\mathcal{A}} I^{(0)}_{\alpha})$ by piecewise constant functions (in
$\Gamma^{(0)}_0$) in order to obtain better control on the growth of
Birkhoff sums. It will allow us to prove the ergodicity of some
corrected cocycles.

\vskip 3mm
\subsection{Rauzy-Veech induction for cocycles}
\vskip 3mm

\hfill \break For every cocycle $\varphi:I^{(k)}\to\R$ for the IET
$T^{(k)}:I^{(k)}\to I^{(k)}$ and $l>k$ denote by
$S(k,l)\varphi:I^{(l)}\to\R$ the renormalized cocycle for
$T^{(l)}$ given by
\[S(k,l)\varphi(x)=\sum_{0\leq i<Q_{\beta}(k,l)}\varphi((T^{(k)})^ix)\text{ for }x\in I^{(l)}_\beta.\]
Note that the operator $S(k,l)$ maps $\bv(\sqcup_{\alpha\in
\mathcal{A}} I^{(k)}_{\alpha})$ into $\bv(\sqcup_{\alpha\in
\mathcal{A}} I^{(l)}_{\alpha})$ and
\begin{equation}\label{nave}
\var S(k,l)\varphi\leq \var\varphi,
\end{equation}
\begin{equation}\label{nase}
\| S(k,l)\varphi\|_{\sup}\leq \|
Q(k,l)\|\|\varphi\|_{\sup}\;\;\text{ and }
\end{equation}
\begin{equation}\label{zachcal}
\int_{I^{(l)}}S(k,l)\varphi(x)\,dx= \int_{I^{(k)}}\varphi(x)\,dx
\end{equation}
for all $\varphi\in \bv(\sqcup_{\alpha\in \mathcal{A}}
I^{(k)}_{\alpha})$. In view of (\ref{zachcal}), $S(k,l)$ maps
$\bv_0(\sqcup_{\alpha\in \mathcal{A}} I^{(k)}_{\alpha})$ into
$\bv_0(\sqcup_{\alpha\in \mathcal{A}} I^{(l)}_{\alpha})$.

Recall that $\Gamma^{(k)}$ is the space of functions
$\varphi:I^{(k)}\to \mathbb{R}$ which are constant on each
interval $I^{(k)}_{\alpha}$, $\alpha\in\mathcal{A}$ and
$\Gamma^{(k)}_0$ is the subspace of functions with zero mean. Then
\[S(k,l)\Gamma^{(k)}=\Gamma^{(l)}\;\;\text{ and }\;\;S(k,l)\Gamma^{(k)}_0=\Gamma^{(l)}_0.\]
Moreover, every function $\sum_{\alpha\in \mathcal{A}}
h_{\alpha}\chi_{I^{(k)}_{\alpha}}$ from $\Gamma^{(k)}$ can be
identified with the vector $h=(h_{\alpha})_{\alpha\in
\mathcal{A}}\in\R^{\mathcal{A}}$. Under this identification,
\[\Gamma^{(k)}_0=Ann(\lambda^{(k)}):=\{h=(h_{\alpha})_{\alpha\in \mathcal{A}}
\in\R^{\mathcal{A}}:\langle h, \lambda^{(k)}\rangle=0\}\] and the
operator $S(k,l)$ is the linear automorphism of $\R^{\mathcal{A}}$
whose matrix in the canonical basis is $Q(k,l)^t$. Moreover, the
norm on $\Gamma^{(k)}$ inherited from the supremum norm coincides
with the norm of vectors.

\subsection{Correction of functions of bounded variation}
\vskip 3mm

\hfill \break Suppose now that $T$ is of periodic type.
 Let us consider the linear subspaces
\[\Gamma^{(k)}_{cs}=\{h\in\Gamma^{(k)}:\limsup_{l\to\infty}\frac{1}{l}
\log\|S(k,l)h\|=\limsup_{l\to\infty}\frac{1}{l}\log\|Q(k,l)^t
h\|\leq 0\},\]
\[\Gamma^{(k)}_{u}=\{h\in\Gamma^{(k)}:\limsup_{l\to\infty}\frac{1}{l}
\log\|S(k,l)h\|=\limsup_{l\to\infty}\frac{1}{l}\log\|Q(k,l)^t h\|>
0\}.\] Denote by
\[U^{(k)}:\bv(\sqcup_{\alpha\in \mathcal{A}} I^{(k)}_{\alpha})\to
\bv(\sqcup_{\alpha\in \mathcal{A}}
I^{(k)}_{\alpha})/\Gamma^{(k)}_{cs}\] the projection on the quotient
space. Let us consider the linear operator
$P^{(k)}_0:\bv_0(\sqcup_{\alpha\in \mathcal{A}}
I^{(k)}_{\alpha})\to\bv_0(\sqcup_{\alpha\in \mathcal{A}}
I^{(k)}_{\alpha})$ given by
\[P^{(k)}_0\varphi(x)=\varphi(x)-\frac{1}{|I^{(k)}_{\alpha}|}
\int_{I^{(k)}_{\alpha}}\varphi(t)dt\text{ if }x\in I^{(k)}_{\alpha}.\]

\begin{theorem}\label{thmcorre}
For every $\varphi\in \bv_0(\sqcup_{\alpha\in \mathcal{A}}
I^{(k)}_{\alpha})$  the sequence
\begin{equation}\label{ciagdop}\{U^{(k)}\circ S(k,l)^{-1}\circ P_0^{(l)}\circ
S(k,l)\varphi\}_{l\geq k}
\end{equation}
converges in the quotient norm on $\bv_0(\sqcup_{\alpha\in
\mathcal{A}} I^{(k)}_{\alpha})/\Gamma_{cs}^{(k)}$ induced by
$\|\,\cdot\,\|_{\bv}$.
\end{theorem}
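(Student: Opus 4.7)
The plan is to rewrite the sequence in (\ref{ciagdop}) in a tractable form, compute the differences via a telescoping formula, and then use a uniform bound together with the Jordan (Oseledets) decomposition of $A^t$ to get summability in the quotient norm.

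First I would rewrite each term. For $l\geq k$, set $\psi_l=S(k,l)\varphi\in\bv_0(\sqcup_{\alpha}I^{(l)}_{\alpha})$, and let $M_l\in\Gamma^{(l)}$ be the piecewise constant function equal to the mean of $\psi_l$ on each $I^{(l)}_{\alpha}$. Since $\psi_l$ has zero total integral (by (\ref{zachcal})), $M_l\in\Gamma^{(l)}_0$. Hence $P_0^{(l)}\psi_l=\psi_l-M_l$, and
\[
S(k,l)^{-1}\circ P_0^{(l)}\circ S(k,l)\varphi=\varphi-g_l,\qquad g_l:=S(k,l)^{-1}M_l\in\Gamma^{(k)}_0.
\]
Thus it suffices to show that $\{g_l\}_{l\geq k}$ is Cauchy in $\bv_0/\Gamma^{(k)}_{cs}$, or equivalently (since $g_l\in\Gamma^{(k)}_0$) in $\Gamma^{(k)}_0/\Gamma^{(k)}_{cs}$.

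Next I would telescope. Writing $\psi_{l+1}=S(l,l+1)\psi_l=S(l,l+1)M_l+S(l,l+1)P_0^{(l)}\psi_l$, the first summand lies already in $\Gamma^{(l+1)}$, so taking means on each $I^{(l+1)}_{\beta}$ gives
\[
M_{l+1}=S(l,l+1)M_l+N_{l+1},\qquad N_{l+1}:=M\!\left[S(l,l+1)P_0^{(l)}\psi_l\right]\in\Gamma^{(l+1)}_0,
\]
where $M[\,\cdot\,]$ denotes the vector of means over the $I^{(l+1)}_{\beta}$. Applying $S(k,l+1)^{-1}$ and using $S(k,l+1)^{-1}S(l,l+1)=S(k,l)^{-1}$ yields the key identity
\[
g_{l+1}-g_l=S(k,l+1)^{-1}N_{l+1}.
\]

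The heart of the argument, and what I expect to be the main obstacle, is a \emph{uniform} bound on $\|N_{l+1}\|_{\sup}$. The restriction of $P_0^{(l)}\psi_l$ to each $I^{(l)}_{\alpha}$ has zero mean and the same variation as $\psi_l|_{I^{(l)}_{\alpha}}$, so
\[
\|P_0^{(l)}\psi_l|_{I^{(l)}_{\alpha}}\|_{\sup}\leq \var(\psi_l|_{I^{(l)}_{\alpha}})=:V_\alpha,\qquad\sum_\alpha V_\alpha\leq \var\psi_l\leq\var\varphi
\]
by (\ref{nave}). The tower structure of $T^{(l)}$ above $I^{(l+1)}_{\beta}$ visits $I^{(l)}_{\alpha}$ exactly $Q_{\alpha\beta}(l,l+1)=A_{\alpha\beta}$ times in disjoint translates of $I^{(l+1)}_{\beta}$, so
\[
|(N_{l+1})_\beta|\leq \frac{1}{|I^{(l+1)}_\beta|}\sum_{0\leq i<Q_\beta(l,l+1)}|I^{(l+1)}_\beta|V_{\alpha_i}=\sum_\alpha A_{\alpha\beta}V_\alpha\leq \bigl(\max_{\alpha,\beta}A_{\alpha\beta}\bigr)\var\varphi.
\]
Hence $\|N_{l+1}\|_{\sup}\leq C_1$ with $C_1$ depending only on $A$ and $\var\varphi$.

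Finally I would invoke the Jordan decomposition of $A^t$. Since $\Gamma^{(k)}_0=\operatorname{Ann}(\lambda)$ is $A^t$--invariant and does not meet the $\rho_1$--eigenspace, it splits as $\Gamma^{(k)}_0=W_u\oplus W_{cs}$, where $W_u$ is the sum of generalized $A^t$--eigenspaces with $|\rho|>1$ and $W_{cs}$ is the sum of those with $|\rho|\leq 1$; one checks that $W_{cs}=\Gamma^{(k)}_{cs}$, and this splitting is preserved by $S(k,l+1)^{\pm 1}$. Decompose $N_{l+1}=N_{l+1}^u+N_{l+1}^{cs}$ accordingly; the second part contributes $S(k,l+1)^{-1}N_{l+1}^{cs}\in\Gamma^{(k)}_{cs}$ which is trivial in the quotient. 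On $W_u$ the inverse $(A^t)^{-(l+1-k)}$ has all eigenvalues of modulus $<1$ (the slowest decay rate being $\rho_*^{-1}$ for some $\rho_*>1$ given by the smallest $A$--eigenvalue of modulus $>1$), so
\[
\bigl\|[g_{l+1}-g_l]\bigr\|_{quot}\leq\|S(k,l+1)^{-1}N_{l+1}^u\|_{\bv}\leq C_2(l+1-k)^{M-1}\rho_*^{-(l+1-k)},
\]
uniformly in $l$, where $M$ is the maximal Jordan block size of $A$. The right-hand side is summable in $l$, so the differences form an absolutely convergent telescoping series in $\Gamma^{(k)}_0/\Gamma^{(k)}_{cs}\hookrightarrow \bv_0/\Gamma^{(k)}_{cs}$; hence $\{U^{(k)}c_l\}$ converges.
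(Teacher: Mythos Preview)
Your proof is correct and follows essentially the same route as the paper. Both arguments rewrite the sequence as $U^{(k)}(\varphi-g_l)$ with $g_l\in\Gamma^{(k)}_0$, obtain the identical telescoping identity $g_{l+1}-g_l=S(k,l+1)^{-1}\bigl(C^{(l+1)}\circ S(l,l+1)\circ P_0^{(l)}\circ S(k,l)\varphi\bigr)$, bound the increment $N_{l+1}$ uniformly by a constant times $\var\varphi$, and then use that $(A^t)^{-n}$ contracts exponentially on the unstable part to sum the series in $\Gamma^{(k)}_0/\Gamma^{(k)}_{cs}$. The only cosmetic difference is that the paper passes to the quotient first (using $U^{(k)}\circ S(k,l+1)^{-1}=S_u(k,l+1)^{-1}\circ U^{(l+1)}$ and the estimate (\ref{szacowanieniest}) on $S_u^{-1}$), whereas you split $N_{l+1}=N_{l+1}^u+N_{l+1}^{cs}$ via the concrete $A^t$--invariant decomposition and then apply $(A^t)^{-(l+1-k)}$; since $\Gamma^{(k)}/\Gamma^{(k)}_{cs}\cong\Gamma^{(k)}_u$ as $A^t$--modules, these are the same estimate.
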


\begin{notations}
Let $P^{(k)}:\bv_0(\sqcup_{\alpha\in \mathcal{A}}
I^{(k)}_{\alpha})\to\bv_0(\sqcup_{\alpha\in \mathcal{A}}
I^{(k)}_{\alpha})/\Gamma^{(k)}_{cs}$ stand for the limit operator.
Note that if $\varphi\in\Gamma^{(k)}_0$ then $P_0^{(k)}\varphi=0$,
and hence $P^{(k)}\varphi=0$.

We denote by $\bv^{\lozenge}(\sqcup_{\alpha\in \mathcal{A}}
I_{\alpha})$ the subspace of functions
$\varphi\in\bv(\sqcup_{\alpha\in \mathcal{A}} I_{\alpha})$ such that
$\varphi_-(x)=\varphi_+(x)$ for every $x=T^nl_\alpha$,
$\alpha\in\mathcal{A}$, $\pi_0(\alpha)\neq 1$,
$n\in\Z\setminus\{0\}$.
\end{notations}

Recall that, in general, the growth of $(S(k)\varphi)_{k\geq 1}$ is
exponential with exponent $\theta_2/\theta_1$ (see
Theorem~\ref{thmthetas}). Nevertheless, the growth can be reduced by
correcting the function $\varphi$ by a function $h$ constant on the
exchanged intervals.
\begin{theorem}\label{thmcorrecgener}
Suppose now that $T=T_{(\pi,\lambda)}$ is of periodic type and $M$
is the maximal size of Jordan blocks in the Jordan decomposition
of its periodic matrix. Let $\varphi\in\bv_0(\sqcup_{\alpha\in
\mathcal{A}} I^{(0)}_{\alpha})$. There exist $C_1,C_2>0$ such that
if $\widehat{\varphi}+\Gamma_{cs}^{(0)}= P^{(0)}\varphi$, then
$\widehat{\varphi}-\varphi\in\Gamma_0^{(0)}$ and
\begin{equation}\label{polygr}
\|S(k)(\widehat{\varphi})\|_{\sup}\leq
C_1k^{M}\var\varphi+C_2k^{M-1}\|\widehat{\varphi}\|_{\sup}\text{
for every natural }k.
\end{equation} For every
$\varphi\in\bv_0(\sqcup_{\alpha\in \mathcal{A}} I^{(0)}_{\alpha})$
there exists $h\in \Gamma^{(0)}_u\cap\Gamma^{(0)}_0$ such that
$\varphi+h+\Gamma^{(0)}_{cs}=P^{(0)}\varphi$. Moreover, the vector
$h\in \Gamma^{(0)}_u\cap\Gamma^{(0)}_0$ is unique.

If additionally $T$ has non-degenerated spectrum and
$\varphi\in\bv_0^{\lozenge}(\sqcup_{\alpha\in \mathcal{A}}
I^{(0)}_{\alpha})$ then
\[\|S(k)(\widehat{\varphi})\|_{\sup}\leq C_1\var\varphi
+C_2\|\widehat{\varphi}\|_{\sup}\text{ for every natural }k.\]
\end{theorem}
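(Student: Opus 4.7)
The plan is to invoke Theorem~\ref{thmcorre} to produce $\widehat{\varphi}$, to analyse $\widehat{\varphi}-\varphi$ via the $A^t$-invariant Oseledets splitting of $\R^{\mathcal{A}}$, and to import the sharp Birkhoff-sum estimates for the corrected cocycle from the Marmi--Moussa--Yoccoz machinery. First I would fix a representative $\widehat{\varphi}$ of $P^{(0)}\varphi\in \bv_0(\sqcup I^{(0)}_\alpha)/\Gamma_{cs}^{(0)}$. To check $\widehat{\varphi}-\varphi\in \Gamma_0^{(0)}$, observe that for each $l\geq 0$ the operator $S(0,l)^{-1}\circ(I-P_0^{(l)})\circ S(0,l)$ sends $\bv_0$ into the finite-dimensional subspace $\Gamma_0^{(0)}$ (since $I-P_0^{(l)}$ returns the interval means and $S(0,l)$ preserves the zero-mean condition). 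Hence each approximating term $X_l=S(0,l)^{-1}\circ P_0^{(l)}\circ S(0,l)\varphi$ satisfies $\varphi-X_l\in \Gamma_0^{(0)}$. The identity $\langle A^tv,\lambda\rangle=\rho_1\langle v,\lambda\rangle$ forces $\Gamma_{cs}^{(0)}\subset \Gamma_0^{(0)}$, so convergence of $U^{(0)}X_l$ in $\bv_0/\Gamma_{cs}^{(0)}$ means that, up to modification by elements of $\Gamma_{cs}^{(0)}\subset \Gamma_0^{(0)}$, the $X_l$ converge in BV norm to $\widehat{\varphi}$; since $\Gamma_0^{(0)}$ is closed, $\widehat{\varphi}-\varphi\in \Gamma_0^{(0)}$.

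Next I would use the $A^t$-invariant direct-sum decomposition $\R^{\mathcal{A}}=\Gamma^{(0)}_{cs}\oplus \Gamma^{(0)}_u$ given by the generalised eigenspaces of $A^t$ for eigenvalues of modulus $\leq 1$, respectively $>1$. Decompose $\widehat{\varphi}-\varphi=h+h_{cs}$ accordingly. Since $\widehat{\varphi}-\varphi\in \Gamma^{(0)}_0$ and $\Gamma^{(0)}_{cs}\subset \Gamma^{(0)}_0$, the component $h$ lies in $\Gamma^{(0)}_u\cap \Gamma^{(0)}_0$, so we obtain $\varphi+h+\Gamma^{(0)}_{cs}=\widehat{\varphi}+\Gamma^{(0)}_{cs}=P^{(0)}\varphi$. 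Uniqueness of $h$ follows immediately from $\Gamma^{(0)}_u\cap \Gamma^{(0)}_{cs}=\{0\}$.

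The heart of the proof is the growth estimate (\ref{polygr}). I would split
\[S(k)\widehat{\varphi}=S(k)(\varphi+h)+S(k)h_{cs}.\]
For the piecewise constant term, $S(k)h_{cs}$ is the element of $\Gamma^{(k)}_0$ identified with $(A^t)^k h_{cs}\in \R^{\mathcal{A}}$. On $\Gamma^{(0)}_{cs}$, the top Lyapunov exponent of $A^t$ is $0$, and the Jordan blocks have size at most $M$, so $\|(A^t)^k h_{cs}\|\leq C k^{M-1}\|h_{cs}\|$. Bounding $\|h_{cs}\|$ by $\|\widehat{\varphi}\|_{\sup}$ up to a term controlled by $\var\varphi$ (through the explicit projection onto $\Gamma^{(0)}_{cs}$ along $\Gamma^{(0)}_u$) yields the second summand $C_2 k^{M-1}\|\widehat{\varphi}\|_{\sup}$. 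For the BV term, the point is that $\varphi+h$ represents $P^{(0)}\varphi$ with its $\Gamma^{(0)}_u$-component killed, so writing $\varphi+h=\lim_l X_l$ modulo $\Gamma^{(0)}_{cs}$ and using $\var S(k,l)\psi\leq \var\psi$ together with $\|S(k)\psi\|_{\sup}\leq \|Q(k)\|\|\psi\|_{\sup}$ from (\ref{nase}) reduces the estimate to the iteration of $A^t$ on $\Gamma^{(0)}_{cs}$; the resulting polynomial-in-$k$ bound of degree $M$ is exactly the content of the correction lemma of Marmi--Moussa--Yoccoz, which I would adapt verbatim to give $\|S(k)(\varphi+h)\|_{\sup}\leq C_1 k^M\var\varphi$.

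For the final non-degenerate statement, the extra hypothesis $\theta_g>0$ implies that all non-zero Lyapunov exponents are bounded away from $0$, so $\Gamma^{(0)}_{cs}\cap \Gamma^{(0)}_0$ reduces to the part where $A^t$ is the identity (namely the one coming from $\ker\Omega_\pi$, after enlarging the period as in Appendix~\ref{korekcja}); consequently $\|(A^t)^k h_{cs}\|$ is uniformly bounded in $k$. The refinement $\varphi\in\bv^{\lozenge}_0$ ensures that the boundary contributions in the Rauzy--Veech telescoping cancel exactly at the orbit points $T^n l_\alpha$, $n\neq 0$, killing the logarithmic/polynomial factor coming from the distribution of discontinuities. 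Together these eliminate every power of $k$ and give the uniform bound $C_1\var\varphi+C_2\|\widehat{\varphi}\|_{\sup}$. The main obstacle is the careful Jordan-block tracking on $\Gamma^{(0)}_{cs}$ for the BV estimate on $\varphi+h$, which forces us to follow the Marmi--Moussa--Yoccoz cancellation scheme quite literally.
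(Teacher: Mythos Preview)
Your outline is right in spirit for the easy parts (that $\widehat{\varphi}-\varphi\in\Gamma_0^{(0)}$, existence of $h$ via the splitting $\Gamma_0^{(0)}=(\Gamma_u^{(0)}\cap\Gamma_0^{(0)})\oplus\Gamma_{cs}^{(0)}$, uniqueness from $\Gamma_u^{(0)}\cap\Gamma_{cs}^{(0)}=\{0\}$), but the core growth estimate has a genuine gap. Your splitting $S(k)\widehat{\varphi}=S(k)(\varphi+h)+(A^t)^k h_{cs}$ is not a reduction: $\varphi+h$ is just another representative of $P^{(0)}\varphi$, so bounding $\|S(k)(\varphi+h)\|_{\sup}$ is exactly the statement (\ref{polygr}) you are trying to prove. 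Invoking ``the MMY correction lemma'' at this point is circular. The paper's mechanism is different and essential: from the bound $\|P^{(k)}\psi\|_{\sup/\Gamma_{cs}^{(k)}}\leq(1+K)\var\psi$ (a consequence of Theorem~\ref{thmcorre}) together with $S_u(k)\circ U^{(0)}\widehat{\varphi}=P^{(k)}\circ S(k)\varphi$ and $\var S(k)\varphi\leq\var\varphi$, one gets at \emph{each} level $k$ a fresh decomposition $S(k)\widehat{\varphi}=\varphi_k+h_k$ with $\|\varphi_k\|_{\sup}\leq(1+K)\var\varphi$ and $h_k\in\Gamma_{cs}^{(k)}$. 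One then tracks the increments $\Delta h_{k+1}:=h_{k+1}-A^t h_k=S(k,k+1)\varphi_k-\varphi_{k+1}$, which are uniformly bounded by $(1+\|A\|)(1+K)\var\varphi$, and sums $h_k=\sum_{l=0}^k (A^t)^l\Delta h_{k-l}$ using $\|(A^t)^l|_{\Gamma_{cs}}\|\leq Cl^{M-1}$. This is where the $k^M$ and the $\|\widehat{\varphi}\|_{\sup}$ (via $\Delta h_0$) enter.

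For the non-degenerate case your description is also off. It is not true that $\Gamma_{cs}^{(0)}$ ``reduces to the part where $A^t$ is the identity'': one still has $\Gamma_{cs}^{(0)}=\Gamma_c^{(0)}\oplus\Gamma_s^{(0)}$ with $\dim\Gamma_s^{(0)}=g>0$. The stable component $h_k^s$ is handled by the same telescoping, now with exponential contraction, giving a geometric series. The new difficulty is the center component $h_k^c$, and here the role of $\bv^{\lozenge}$ is specific: for such $\varphi$ the functionals $\mathcal{O}(\varphi)=\sum_{\pi_0(\alpha)\in\mathcal{O}}\varphi_-(r_\alpha)-\sum_{\pi_0(\alpha)-1\in\mathcal{O}}\varphi_+(l_\alpha)$, one for each $\sigma_\pi$-orbit $\mathcal{O}$, are invariant under renormalization, and the map $\Lambda^\pi h=(\langle h,b(\mathcal{O})\rangle)_{\mathcal{O}\in\Sigma_0(\pi)}$ is an isomorphism on $\Gamma_c^{(0)}$ (since $\R^{\mathcal{A}}=\Gamma_c^{(0)}\oplus H_\pi$ and $H_\pi=\ker\Lambda^\pi$). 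Applying $\mathcal{O}$ to $S(k)\widehat{\varphi}=\varphi_k+h_k^c+h_k^s$ and using $\mathcal{O}(h_k^s)=0$ gives a uniform bound on $\Lambda^\pi h_k^c$, hence on $\|h_k^c\|$. Your phrase about ``boundary contributions cancelling at $T^n l_\alpha$'' does not capture this; the $\bv^{\lozenge}$ hypothesis is there to make the $\mathcal{O}$-functionals well defined and renormalization-invariant, not to suppress discontinuity counting.
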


For completeness the proofs of these theorems will be given in
Appendix~\ref{korekcja}.

\begin{remark} If we restrict the choice of $h$ to the subspace
$\Gamma_u^{(0)}\cap\Gamma^{(0)}_0$, then the correction
$h\in\Gamma_u^{(0)}\cap\Gamma^{(0)}_0$ is unique. In what follows,
$\widehat{\varphi}$ will stand for the function $\varphi$ corrected
by the unique correction $h\in\Gamma_u^{(0)}\cap\Gamma^{(0)}_0$
(i.e. $\widehat{\varphi}=\varphi+h$).

\vskip 3mm If $\varphi:I\to\R^\ell$ with $\varphi=
(\varphi_1,\ldots,\varphi_\ell)$, we deal with the corrected
function $\widehat{\varphi}
:=(\widehat{\varphi_1},\ldots,\widehat{\varphi_\ell})$, and we have
\[\|S(k)(\widehat{\varphi})\|_{\sup}\leq C_1\max_{1\leq i\leq
\ell}\var\varphi_i+C_2\|\widehat{\varphi}\|_{\sup}\text{ for every natural }k.\]
\end{remark}

\subsection{Ergodicity of corrected step functions}\label{subsechyper}\vskip 3mm

\hfill \break
 We now consider
piecewise constant zero mean cocycles $\varphi:I\to\R^\ell$,
$\ell\geq 1$ which are also discontinuous in the interior of the
exchanged intervals. Suppose that $\gamma_i\in I$, $i=1,\ldots,s$
are discontinuities of $\varphi$ different from $l_{\alpha}$,
$\alpha\in\mathcal{A}$. Denote by $\bar{d}_i\in\R^\ell$ the vector
describing the jumps of coordinate functions of $\varphi$ at
$\gamma_i$, this is,
$\bar{d}_i=\varphi_+(\gamma_i)-\varphi_-(\gamma_i)\in\R^\ell$. In
this section we will prove the ergodicity of $\widehat{\varphi}$
for almost every choice of discontinuities. Note that the
corrected cocycle $\widehat{\varphi}$ is also piecewise constant
and it is discontinuous at $\gamma_i$ with the jump vector
$\bar{d}_i$ for $i=1,\ldots,s$, and hence it is still non-trivial.

\begin{theorem}\label{thmfullmeasure}
 Suppose that $T=T_{(\pi,\lambda)}$ is an IET of
periodic type and it has non-degenerated spectrum.
There exists a
set $D\subset I^s$ of full Lebesgue measure such that if
\begin{itemize}
\item[(i)] $(\gamma_1,\ldots,\gamma_s)\in D$;
\item[(ii)] the subgroup $\Z(\bar{d}_1,\ldots,\bar{d}_s)\subset\R^\ell$
generated by $\bar{d}_1,\ldots,\bar{d}_s$ is dense in $\R^\ell$,
\end{itemize}
then the cocycle $\widehat{\varphi}:I\to\R^\ell$ is ergodic.
\end{theorem}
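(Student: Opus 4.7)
My plan is to combine the boundedness of corrected cocycles from Theorem~\ref{thmcorrecgener} with the Rokhlin-type towers of Lemma~\ref{ciasny2}, feeding the output into the lemma that precedes Corollary~\ref{ergodic}. First, since the correction $h\in\Gamma^{(0)}_u\cap\Gamma^{(0)}_0$ is piecewise constant on the exchanged intervals, the cocycle $\widehat{\varphi}=\varphi+h$ has exactly the same jump locations $\{l_\alpha\}\cup\{\gamma_1,\ldots,\gamma_s\}$ and the same jump vectors $\bar d_i$ as $\varphi$. For a first full-measure restriction on $D$ I would impose that $\gamma_i\notin\{T^n l_\alpha:n\in\Z\setminus\{0\},\alpha\in\mathcal{A}\}$ for all $i$; this is a countable union of hyperplanes, hence generic, and it places $\widehat{\varphi}$ in $\bv_0^\lozenge(\sqcup_\alpha I_\alpha)$. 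Theorem~\ref{thmcorrecgener} then yields a constant $M>0$ with
\[
\sup_{k\ge 1}\|S(0,k)\widehat{\varphi}\|_{\sup}\le M.
\]

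Second, by Lemma~\ref{ciasny2} the towers $C^{(n)}_\alpha$, $\alpha\in\mathcal{A}$, satisfy $\liminf_n\mu(C^{(n)}_\alpha)>0$, $\mu(C^{(n)}_\alpha\triangle TC^{(n)}_\alpha)\to 0$, and $\sup_{x\in C^{(n)}_\alpha}|T^{h^{(n+1)}_\alpha}x-x|\to 0$. Combined with \eqref{warkockst}, the previous bound uniformly bounds $\|\widehat{\varphi}^{(h^{(n+1)}_\alpha)}\|_{\sup}$ on $C^{(n)}_\alpha$ by $M$. Writing $x=T^i x_0\in C^{(n)}_\alpha$ with $x_0\in I^{(n+1)}_\alpha$ and $0\le i<h^{(n)}_{\alpha_1}$, one has
\[
\widehat{\varphi}^{(h^{(n+1)}_\alpha)}(T^i x_0)-\widehat{\varphi}^{(h^{(n+1)}_\alpha)}(x_0)=\sum_{j=0}^{i-1}\bigl(\widehat{\varphi}(T^{h^{(n+1)}_\alpha+j}x_0)-\widehat{\varphi}(T^j x_0)\bigr);
\]
the contribution from the part of $\widehat{\varphi}$ that is constant on each $I_\alpha$ vanishes by the argument of Lemma~\ref{ciasny2}, while the remaining contribution is an integer combination $\sum_i m_i^n(x)\bar d_i$ in which $m_i^n(x)\in\Z$ records with sign how often the two shifted orbit pieces straddle the discontinuity $\gamma_i$. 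Hence the push-forwards of $\mu|_{C^{(n)}_\alpha}/\mu(C^{(n)}_\alpha)$ under $\widehat{\varphi}^{(h^{(n+1)}_\alpha)}$ are supported in a bounded subset of the lattice translate $\widehat{\varphi}^{(h^{(n+1)}_\alpha)}(x_0)+\Z(\bar d_1,\ldots,\bar d_s)$, and are uniformly tight. Passing to a subsequence gives a weak limit $P_\alpha$, and the lemma immediately before Corollary~\ref{ergodic} yields $\supp P_\alpha\subset E(\widehat{\varphi})$.

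Third, the main obstacle is to define $D$ so that the supports of the $P_\alpha$ are large enough to recover the generators $\bar d_i$. My target is the full-measure set of $(\gamma_1,\ldots,\gamma_s)\in I^s$ for which, in addition to (a) above, one has (b) for each $i\in\{1,\ldots,s\}$, the counting function $m_i^n(\cdot)$ assumes at least two consecutive integer values on positive-measure subsets of $C^{(n)}_\alpha$ for some $\alpha$ and infinitely many $n$. Condition (b) amounts to saying that, as $x$ varies in $C^{(n)}_\alpha$, the length-$h^{(n+1)}_\alpha$ orbit window crosses $\gamma_i$; unique ergodicity of $T$ and the uniform distribution of discontinuities of iterates of $T$ (Proposition~\ref{kulaga}) force this for all but a measure-zero set of positions of $\gamma_i$, by a Fubini argument across the coordinates of $(\gamma_1,\ldots,\gamma_s)$. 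For $(\gamma_1,\ldots,\gamma_s)\in D$, the difference of two points in $\supp P_\alpha$ then equals $\bar d_i$, so $\bar d_i\in E(\widehat{\varphi})$ for every $i$.

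Finally, $E(\widehat{\varphi})$ is a closed subgroup of $\R^\ell$ containing $\bar d_1,\ldots,\bar d_s$, and hypothesis (ii) forces $E(\widehat{\varphi})=\R^\ell$, i.e., $\widehat{\varphi}$ is ergodic. The delicate part is step three: specifying $D$ precisely and proving that the generic position of the extra discontinuities produces enough distinct values of the counting functions $m_i^n$ on the Rokhlin towers. This is where both Proposition~\ref{kulaga} and the non-degenerate spectrum hypothesis (via Theorem~\ref{thmcorrecgener}, which prevents the linear growth of the corrected part from swallowing the contribution of the jumps) are essential.
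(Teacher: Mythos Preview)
Your framework is correct---restricting to $\bv_0^\lozenge$, invoking Theorem~\ref{thmcorrecgener} for the uniform bound $M$, and feeding the towers $C^{(n)}_\alpha$ into the support lemma---but step three has a real gap, and you correctly flag it as the delicate part.

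The problem is isolation of the individual jumps. Your condition (b) asks only that each counting function $m_i^n$ take two consecutive values on positive-measure subsets of some tower. But to conclude $\bar d_i\in E(\widehat\varphi)$ you need two points in $\supp P_\alpha$ differing by \emph{exactly} $\bar d_i$, i.e.\ two positive-measure pieces of $C^{(n)}_\alpha$ on which $m_i^n$ differs by $1$ while \emph{all other} $m_j^n$ agree. Condition (b) does not guarantee this; if $\gamma_1$ and $\gamma_2$ happen to lie in the same level $T^j I^{(n+1)}_\alpha$ of the tower, moving $x$ across that level changes $m_1^n$ and $m_2^n$ simultaneously and you only recover $\bar d_1+\bar d_2$. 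Your appeal to Proposition~\ref{kulaga} does not help: that proposition controls the spacing of the points $T^{-k}l_\alpha$, not of the $\gamma_i$, and the vague Fubini argument gives no simultaneity across $i$.

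The paper resolves this by an explicit a priori partition rather than generic counting. Fix $\alpha\in\mathcal A$ and choose once and for all $b_0<a_1<b_1<\cdots<a_s<b_s<a_{s+1}$ with $[b_0,a_{s+1})=I_\alpha$. Rescaling by $\rho_1^{-(n+1)}$ subdivides $I^{(n+1)}_\alpha$; over the pieces $(b_i/\rho_1^{n+1},a_{i+1}/\rho_1^{n+1})$ build sub-towers $C^{(n)}_i\subset C^{(n)}_\alpha$ of height $h^{(n)}_{\alpha_1}$, and over the ``buffers'' $(a_i/\rho_1^{n+1},b_i/\rho_1^{n+1})$ build towers $F^{(n)}_i$ using the levels from $h^{(n)}_{\alpha_1}$ up to $h^{(n+1)}_\alpha$. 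One checks $\liminf_n\mu(F^{(n)}_i)>0$ and $\liminf_n\mu(C^{(n)}_i)>0$. By a tower-equidistribution lemma of King \cite{Ki}, a.e.\ point lies in $F^{(n)}_i$ for infinitely many $n$; applying this successively for $i=1,\ldots,s$ and passing to nested subsequences gives, for a.e.\ $(\gamma_1,\ldots,\gamma_s)$, a sequence $(k_n)$ along which $\gamma_i\in F^{(k_n)}_i$ for \emph{every} $i$. This is the definition of $D$. For such $n$ the orbit segment of any $x\in C^{(n)}_i$ meets each $\gamma_j$ in a controlled way, and $\widehat\varphi^{(h^{(n+1)}_\alpha)}$ is \emph{constant} on each $C^{(n)}_i$, equal to $\bar g^{(n)}_0+\sum_{l\le i}\bar d_l$. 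Now Corollary~\ref{ergodic} (not the full support lemma) applies directly to each $C^{(n)}_i$; boundedness gives $\bar g^{(k_n)}_0\to\bar g_0$ along a further subsequence, whence $\bar g_0+\sum_{l\le i}\bar d_l\in E(\widehat\varphi)$ for all $0\le i\le s$, and differencing yields each $\bar d_i$.

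In short: the paper pre-assigns each $\gamma_i$ its own buffer zone $F^{(n)}_i$ so that exactly one jump separates adjacent sub-towers, and uses King's lemma (not Proposition~\ref{kulaga}) to make this happen generically and simultaneously. Your counting-function route would need an equivalent separation mechanism, which is not supplied.
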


\begin{proof}
As we already mentioned we can assume that $I^{(n+1)}\subset
I^{(n)}_{\alpha_1}$ for every natural $n$, where
$\alpha_1=(\pi_0^{(n)})^{-1}(1)=\pi_0^{-1}(1)$. Fix
$\alpha\in\mathcal{A}$ and choose
$b_0<a_1<b_1<\ldots<a_s<b_s<a_{s+1}$ so that
$[b_0,a_{s+1})=I_{\alpha}$. Let
\[F^{(n)}_i=\bigcup_{h^{(n)}_{\alpha_1}\leq j<h^{(n+1)}_{\alpha}}
T^j(a_i/\rho_1^{n+1},b_i/\rho_1^{n+1}), \text{ for }1\leq i\leq s,\]
\[C^{(n)}_i=\bigcup_{0\leq j<h^{(n)}_{\alpha_1}}
T^j(b_i/\rho_1^{n+1},a_{i+1}/\rho_1^{n+1}), \text{ for }0\leq i\leq
s\] ($\rho_1$ is the Perron-Frobenius eigenvalue of the periodic
matrix $A$ of $T$). Since
$[b_0/\rho_1^{n+1},a_{s+1}/\rho_1^{n+1})=I^{(n+1)}_{\alpha}$, the
sets $C^{(n)}_i$, $F^{(n)}_i$ are towers for which each level is an
interval. Moreover, $C^{(n)}_i\subset C^{(n)}_{\alpha}$ for $0\leq
i\leq s$ and
$$h^{(n+1)}_{\alpha}-h^{(n)}_{\alpha_1}\geq\sum_{\beta\in\mathcal{A}}
h^{(n)}_{\beta}-h^{(n)}_{\alpha_1}\geq
h^{(n)}_{\min}.$$ In view of (\ref{minimax}), it follows that
\[\mu(C^{(n)}_i)=(a_{i+1}-b_i)\frac{h^{(n)}_{\alpha_1}}{\rho_1^{n+1}}\geq (a_{i+1}-b_i)
\frac{h_{\min}^{(n)}}{\rho_1^{n+1}}\geq \frac{a_{i+1}-b_i}{C\nu(A)\rho_1}>0,\]
\[\mu(F^{(n)}_i)=(b_i-a_i)\frac{h^{(n+1)}_{\alpha}-h^{(n)}_{\alpha_1}}{\rho_1^{n+1}}
\geq (b_i-a_i)\frac{h^{(n)}_{\min}}{\rho_1^{n+1}}\geq \frac{b_i-a_i}{C\nu(A)\rho_1}>0.\]

Recall  that if $T:\xbm\to \xbm$ is ergodic and  $(\Xi_n)_{n\geq 1}$
is a sequence of towers for $T$ for which
\[\liminf_{n\rightarrow\infty}\mu(\Xi_n)>0\text{ and
height}(\Xi_n)\rightarrow\infty,\]
then (see King \cite{Ki}, Lemma 3.4)
\begin{equation}\label{king}
\mu(B\cap \Xi_n)-\mu(B)\mu(\Xi_n)\rightarrow 0 \text{ for all
}B\subset\mathcal{B}.
\end{equation}
It follows that, for $\mu$-almost every $x\in X$, the point $x$
belongs to $\Xi_n$ for infinitely many $n$.

Applying this fact for subsequences of  $(F^{(n)}_i)_{n\geq 1}$
successively for $i=1,\ldots ,s$, we conclude that
for a.e.\ $(\gamma_1,\ldots,\gamma_s)\in I^s$ there exists
a subsequence $(k_n)_{n\geq 1}$ such that
\[\gamma_i\in F^{(k_n)}_i\text{ for all }1\leq i\leq s\text{ and }n\geq 1.\]
Denote by $D\subset I^s$ the subset of all such
$(\gamma_1,\ldots,\gamma_s)$ for which $\gamma_i$ does not belong to
the union of orbits of $l_\alpha$, $\alpha\in\mathcal{A}$, for
$i=1,\ldots,s$. Therefore $\varphi\in
\bv^{\lozenge}(\sqcup_{\alpha\in\mathcal{A}}I_\alpha,\R^\ell)$.

Suppose that for some $n\geq 1$ we have $\gamma_i\in F^{(n)}_i$
for all $1\leq i\leq s$. Then the sets $T^j C^{(n)}_i$, $0\leq
j<h^{(n+1)}_{\alpha}$, $0\leq i\leq s$ do not contain
discontinuities of $\widehat{\varphi}$. Thus similar arguments to
those from the proof of (\ref{wartosci}) show that
$\widehat{\varphi}^{(h^{(n+1)}_{\alpha})}$ is constant  on each
$C^{(n)}_i$ and equals say $\bar{g}^{(n)}_i\in\R^\ell$.

Let $x\in [b_{i-1}/\rho_1^{n+1},a_i/\rho_1^{n+1})$ and $y\in
[b_{i}/\rho_1^{n+1},a_{i+1}/\rho_1^{n+1})$. By assumption,
$\gamma_i\in T^{j_0}[a_i/\rho_1^{n+1},b_i/\rho_1^{n+1})$ for some
$h^{(n)}_{\alpha_1}\leq j_0<h^{(n+1)}_{\alpha}$. It follow that
$\widehat{\varphi}(T^jx)=\widehat{\varphi}(T^jy)$ for all $0\leq
j<h^{(n+1)}_{\alpha}$, $j\neq j_0$ and
$\widehat{\varphi}(T^{j_0}y)-\widehat{\varphi}(T^{j_0}x)=\bar{d}_i$.
Consequently,
\[\bar{g}^{(n)}_i-\bar{g}^{(n)}_{i-1}=\widehat{\varphi}^{(h^{(n+1)}_{\alpha})}
(y)-\widehat{\varphi}^{(h^{(n+1)}_{\alpha})}(x)=\bar{d}_i.\]
It  follows that
\[\widehat{\varphi}^{(h^{(n+1)}_{\alpha})}(x)=\bar{g}^{(n)}_0+\sum_{l=1}^i\bar{d}_l
\text{ for all }x\in C^{(n)}_i,\;0\leq i\leq s.\]
Since  $\varphi\in
\bv^{\lozenge}(\sqcup_{\alpha\in\mathcal{A}}I_\alpha,\R^\ell)$, by
Theorem~\ref{thmcorrecgener} there exists $C>0$ such that
\[\|\widehat{\varphi}^{(h^{(n+1)}_{\alpha})}(x)\|=\|S(n+1)
\widehat{\varphi}(x)\|\leq C\text{ for all  }x\in
I^{(n+1)}_{\alpha},\] and hence $\|\bar{g}^{(n)}_0\|\leq C$.
Therefore for each $(\gamma_1,\ldots,\gamma_s)\in D$ there exists
a subsequence $(k_n)_{n\geq 1}$ such that
\[\widehat{\varphi}^{(h^{(k_n+1)}_{\alpha})}(x)=\bar{g}^{(k_n)}_0+\sum_{l=1}^i\bar{d}_l\text{ for all }
x\in C^{(k_n)}_i,\;0\leq i\leq s\] and $\bar{g}^{(k_n)}_0\to
\bar{g}_0$ in $\R^\ell$. Since $\liminf \mu(C^{(k_n)}_i)>0$ for each
$0\leq i\leq s$, Corollary~\ref{ergodic} implies
$\bar{g}_0+\sum_{l=1}^i\bar{d}_l\in E(\widehat{\varphi})$ for each
$0\leq i\leq s$. Therefore $\bar{d}_l\in E(\widehat{\varphi})$ for
each $1\leq l\leq s$. Since $\bar{d}_1,\ldots,\bar{d}_s$ generate a
dense subgroup of $\R^\ell$ and $E(\widehat{\varphi})$ is closed, it
follows that $E(\widehat{\varphi})=\R^\ell$.
\end{proof}

\begin{remark}
Notice that the condition (ii) implies $s>\ell$. On the other
hand, if $s>\ell$, in view of Remark~\ref{macierz}, we can easily
find a collection of vectors
$\bar{d}_1,\ldots,\bar{d}_s\in\R^\ell$ such that
$\overline{\Z(\bar{d}_1,\ldots,\bar{d}_s)}=\R^\ell$.
\end{remark}

In order to have a more specific condition on the discontinuities
$\gamma_i$, $i=1,\ldots,s$ guaranteeing ergodicity, we can use a
periodic type condition.

Let us consider a set $\{\gamma_1,\ldots,\gamma_s\}\subset
I\setminus\{l_\alpha:\alpha\in\mathcal{A}\}$. The points
$\gamma_1,\ldots,\gamma_s$ together with $l_\alpha$,
$\alpha\in\mathcal{A}$ give a new partition of $I$ into $d+s$
intervals. Therefore $T$ can be treated as a $d+s$-IET. Denote by
$(\pi',\lambda')$ the combinatorial data of this representation of
$T$.

\begin{definition}
We say that the set $\{\gamma_1,\ldots,\gamma_s\}$ is of {\em
periodic type with respect to $T_{(\pi,\lambda)}$} if the IET
$T_{(\pi',\lambda')}$ is of periodic type as an exchange of $d+s$
intervals.
\end{definition}

\begin{remark}\label{diamond}
By the definition of periodic type, $(\lambda',\pi')$ satisfies the
Keane condition. Therefore, each $\gamma_i$ does not belong to the
orbit of any $l_\alpha$, $\alpha\in\mathcal{A}$.
\end{remark}
In view of Theorem~23 in \cite{Ra}, each admissible interval
$I^{(p)}$ ($p$ is a period) for $T_{(\pi',\lambda')}$ is also
admissible for $T_{(\pi,\lambda)}$. Therefore $T_{(\pi,\lambda)}$ is
of periodic type as an exchange of $d$-intervals as well. It follows
that, for every $n\geq 0$ and $i=1,\ldots,s$ if $\gamma_i\in
I_\alpha$, then $\gamma_i=T_{(\pi,\lambda)}^j(\gamma_i/\rho^n)$ for
some $0\leq j<h^{(n)}_\alpha$. Therefore similar arguments to those
in the proof of Theorem~\ref{thmfullmeasure} give the following
result.

\begin{theorem}\label{ergpiconsper}
Suppose that $T=T_{(\pi,\lambda)}$ is an IET of periodic type and
it has non-degenerated spectrum. Let $\varphi:I\to\R^\ell$ be a
zero mean piecewise constant cocycle with additional discontinuity
at $\gamma_i\in I\setminus\{l_\alpha:\alpha\in\mathcal{A}\}$ with
the jump vectors $\bar{d}_i\in\R^\ell$ for $i=1,\ldots,s$. If
\begin{itemize}
\item[(i)] the set $\{\gamma_1,\ldots,\gamma_s\}$ is of periodic
type with respect to $T_{(\pi,\lambda)}$;
\item[(ii)] $\overline{\Z(\bar{d}_1,\ldots,\bar{d}_s)}=\R^\ell$,
\end{itemize}
then the cocycle $\widehat{\varphi}:I\to\R^\ell$ is ergodic. \bez
\end{theorem}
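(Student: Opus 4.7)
The plan is to follow the proof of Theorem~\ref{thmfullmeasure} line by line, but to replace the almost-everywhere argument (King's lemma applied to the towers $F^{(n)}_i$) that produced the subsequence with $\gamma_i\in F^{(k_n)}_i$ by a deterministic argument coming from the periodic type hypothesis on the pair $(\pi',\lambda')$. Condition~(i) and Remark~\ref{diamond} guarantee that each $\gamma_i$ avoids the orbits of the $l_\alpha$, so $\varphi\in\bv^{\lozenge}(\sqcup_{\alpha\in\mathcal{A}}I_\alpha,\R^\ell)$; combined with the non-degenerated spectrum assumption, Theorem~\ref{thmcorrecgener} then provides a uniform bound $\|S(n)\widehat{\varphi}\|_{\sup}\le C$ for all $n\ge 1$. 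By the remark preceding the statement, $T_{(\pi,\lambda)}$ is itself of periodic type as a $d$-IET (with some period $p$), and for every $n$ and each $\gamma_i\in I_\alpha$ one has $\gamma_i=T^{j_{n,i}}(\gamma_i/\rho^n)$ for some $0\le j_{n,i}<h^{(n)}_\alpha$. The periodic type of $(\pi',\lambda')$ translates into the statement that the relative position of $\gamma_i/\rho^n$ inside the levels of the $d$-IET induction tower is periodic in $n$.

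Second, fix $\alpha\in\mathcal{A}$ and pass to a subsequence $n=k_m$ along which the heights $j_{n,i}$ all fall in the window $h^{(n)}_{\alpha_1}\le j<h^{(n+1)}_\alpha$; this is possible because along a fixed arithmetic progression modulo $p$ the scaled positions $\gamma_i/\rho^n$ occupy fixed relative locations inside $I^{(n+1)}_\alpha$. With these locations known, choose the rationals $b_0<a_1<b_1<\dots<a_s<b_s<a_{s+1}$ with $[b_0,a_{s+1})=I_\alpha$ so that, for every $m$, the point $\gamma_i$ lies in the tower
\[
F^{(k_m)}_i=\bigcup_{h^{(k_m)}_{\alpha_1}\le j<h^{(k_m+1)}_\alpha}T^j(a_i/\rho^{k_m+1},b_i/\rho^{k_m+1}).
\]
Define $C^{(k_m)}_i$ in the same way as in the proof of Theorem~\ref{thmfullmeasure}. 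The estimates on $\mu(C^{(k_m)}_i)\ge(a_{i+1}-b_i)/(C\nu(A)\rho)$ and the constancy of $\widehat\varphi^{(h^{(k_m+1)}_\alpha)}$ on each $C^{(k_m)}_i$ are copied verbatim; the inductive jump computation shows that on $C^{(k_m)}_i$ the value of $\widehat\varphi^{(h^{(k_m+1)}_\alpha)}$ equals $\bar g_0^{(k_m)}+\sum_{l=1}^i\bar d_l$ for some vector $\bar g_0^{(k_m)}\in\R^\ell$.

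Finally, the uniform bound $\|S(k_m+1)\widehat\varphi\|_{\sup}\le C$ from Theorem~\ref{thmcorrecgener} forces $\|\bar g_0^{(k_m)}\|\le C$, so a further subsequence gives $\bar g_0^{(k_m)}\to\bar g_0$ in $\R^\ell$. Together with \eqref{ciasny} (and the analog of \eqref{dolcn}), Corollary~\ref{ergodic} then yields $\bar g_0+\sum_{l=1}^i\bar d_l\in E(\widehat\varphi)$ for every $0\le i\le s$; subtracting consecutive such essential values shows $\bar d_l\in E(\widehat\varphi)$ for every $1\le l\le s$. Since $E(\widehat\varphi)$ is closed and $\overline{\Z(\bar d_1,\ldots,\bar d_s)}=\R^\ell$ by (ii), we conclude $E(\widehat\varphi)=\R^\ell$, so $\widehat\varphi$ is ergodic. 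The principal obstacle I expect is the second paragraph: one must pin down how the period-$p$ Rauzy--Veech orbit of the $(d+s)$-IET $T_{(\pi',\lambda')}$ forces the heights $j_{n,i}$ to be a periodic function of $n$, so that the $a_i,b_i$ can be chosen uniformly in $n$ along a fixed arithmetic progression; once this deterministic statement is in hand, the rest is a direct adaptation of the argument of Theorem~\ref{thmfullmeasure}.
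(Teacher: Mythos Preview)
Your approach is the same as the paper's---the paper simply says that ``similar arguments to those in the proof of Theorem~\ref{thmfullmeasure}'' apply once one knows the key fact $\gamma_i=T^{j}(\gamma_i/\rho^{n})$ with $0\le j<h^{(n)}_{\alpha}$ (for $\gamma_i\in I_\alpha$), which you also isolate. Two points deserve comment.

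First, the ``principal obstacle'' you flag is not an obstacle at all. In the proof of Theorem~\ref{thmfullmeasure} the restriction to levels $h^{(n)}_{\alpha_1}\le j<h^{(n+1)}_\alpha$ in the definition of $F^{(n)}_i$ serves \emph{only} to make $F^{(n)}_i$ a tall tower so that King's lemma can be invoked. Once King's lemma is replaced by the deterministic relation $\gamma_i=T^{j_{n,i}}(\gamma_i/\rho^{n+1})$, the conclusion you actually need---that the full orbit $\{T^j(b_k/\rho^{n+1},a_{k+1}/\rho^{n+1}):0\le j<h^{(n+1)}_\alpha\}$ misses every $\gamma_l$---follows just from knowing that the base point $\gamma_l/\rho^{n+1}$ lies in the interval $(a_l/\rho^{n+1},b_l/\rho^{n+1})$, regardless of the height $j_{n,l}$. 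So you do not need $j_{n,i}$ to be periodic, nor to land in any particular window; just take $a_l<\gamma_{i_l}<b_l$ and the argument runs for every $n$.

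Second, there is a small gap in your plan: you fix a single $\alpha$ and try to place \emph{all} $\gamma_i$ in subtowers of the Rokhlin tower over $I^{(n+1)}_\alpha$. But the periodic-type relation gives $\gamma_i/\rho^{n+1}\in I^{(n+1)}_{\alpha_i}$ where $\alpha_i$ is the symbol with $\gamma_i\in I_{\alpha_i}$, so $\gamma_i$ lies in the tower over $I^{(n+1)}_{\alpha_i}$ and in no other. The fix is to run the argument separately for each $\alpha$: consider only those $\gamma_i$ with $\gamma_i\in I_\alpha$, choose $b_0<a_1<b_1<\dots$ inside $I_\alpha$ bracketing exactly those $\gamma_i$, observe that the $\gamma_k$ with $\gamma_k\notin I_\alpha$ are automatically absent from the tower over $I^{(n+1)}_\alpha$, and conclude $\bar d_i\in E(\widehat\varphi)$ for $\gamma_i\in I_\alpha$. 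Taking the union over $\alpha$ yields all $\bar d_i$, and (ii) finishes the proof as you wrote.
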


\section{Recurrence and ergodicity of  extensions of multivalued Hamiltonians} \label{multihami}
In this section we deal with a class of smooth flows on
non-compact manifolds which are extensions of so called
multivalued Hamiltonian flows on compact surfaces of higher genus.
Each such flow has a special representation over a skew product of
an IET and a BV cocycle. This allows us to apply abstract results
from previous sections to state some sufficient conditions for
recurrence and ergodicity whenever the IET is of periodic type.

\subsection{Special flows}\vskip 3mm

\hfill \break In this subsection we briefly recall some basic
properties of special flows. Let $T$ be an automorphism of a
$\sigma$-finite measure space $(X,\mathcal{B},\mu)$. Let
$f:X\to\R$ be a strictly positive function such that
\begin{equation}\label{infty}
\sum_{n\geq 1}f(T^nx)=+\infty\text{ for a.e. }x\in X.
\end{equation}
By $T^f=(T^f_t)_{t\in\R}$ we will mean the corresponding special
flow under $f$ (see e.g.\ \cite{Co-Fo-Si}, Chapter 11) acting on
$(X^f,\mathcal{ B}^f,\mu^f)$, where $X^f=\{(x,s)\in X\times
\R:\:0\leq s<f(x)\}$ and $\mathcal{ B}^f$ $(\mu^f)$ is the
restriction of $\mathcal{ B}\times\mathcal{ B}(\R)$ $(\mu\times
m_{\R})$ to $X^f$. Under the action of the flow ${T}^f$ each point
in $X^f$ moves vertically at unit speed, and we identify the point
$(x,f(x))$ with $(Tx,0)$. More precisely, for every $(x,s)\in X^f$
we have
\[T^f_t(x,s)=(T^nx,s+t-f^{(n)}(x)),\]
where $n\in\Z$ is a unique number such that $f^{(n)}(x)\leq
s+t<f^{(n+1)}(x)$.

\begin{remark}\label{special}
If $T$ is conservative then the condition (\ref{infty}) holds
automatically and the special flow $T^f$ is conservative as well.
Moreover, if $T$ is ergodic then $T^f$ is ergodic.
\end{remark}

\subsection{Basic properties of multivalued Hamiltonian flows}
\vskip 3mm

\hfill \break Now we will consider multivalued Hamiltonians and
their associated flows, a model which has been developed by S.P.
Novikov (see also \cite{Arn} for the toral case). Let $(M,\omega)$
be a compact symplectic smooth surface and $\beta$ be a Morse closed
$1$-form on $M$. Denote by $\pi:\widehat{M}\to M$ the universal
cover of $M$ and by $\widehat{\beta}$ the pullback of $\beta$ by
$\pi:\widehat{M}\to M$. Since $\widehat{M}$ is simply connected and
$\widehat{\beta}$ is also a closed form, there exists a smooth
function $\widehat{H}:\widehat{M}\to \R$, called a multivalued
Hamiltonian, such that $d\widehat{H}=\widehat{\beta}$. By
assumption, $\widehat{H}$ is a Morse function. Suppose additionally
that all critical values of $\widehat{H}$ are distinct.

Denote by $X:M\to TM$ the smooth vector field determined by
\[\beta=i_X\omega=\omega(X,\,\cdot\,).\]
Let $(\phi_t)_{t\in\R}$ stand for the smooth flow on $M$
associated to the vector field $X$. Since $d\beta=0$, the flow
$(\phi_t)_{t\in\R}$ preserves the symplectic form $\omega$, and
hence it preserves the smooth measure $\nu=\nu_\omega$ determined
by $\omega$. Since $\beta$ is a Morse form, the flow
$(\phi_t)_{t\in\R}$ has finitely many fixed points (equal to zeros
of $\beta$ and equal to images of critical points of $\widehat{H}$
by the map $\pi$). The set of fixed points will by denoted by
$\mathcal{F}(\beta)$. All of them are centers or non-degenerated
saddles. By assumption, any two different saddles are not
connected by a separatrix of the flow (called a saddle
connection). Nevertheless, the flow $(\phi_t)_{t\in\R}$ can have
saddle connections which are loops. Each such saddle connection
gives a decomposition of $M$ into two nontrivial invariant
subsets.

By Theorem~14.6.3 in \cite{Ka-Ha}, the surface $M$ can be
represented as the  finite union of disjoint
$(\phi_t)_{t\in\R}$--invariant sets as follows
\[M= \mathcal{P}\cup \mathcal{S} \cup \bigcup_{\mathcal{T}\in\mathfrak{T}}\mathcal{T},\]
where $\mathcal{P}$ is an open set consisting of periodic orbits,
$\mathcal{S}$ is a finite union of fixed points or saddle
connections, and each $\mathcal{T}\in \mathfrak{T}$ is open and
every positive semi-orbit in $\mathcal{T}$, that is not a separatrix
incoming to a fixed point, is dense in $\mathcal{T}$. It follows
that $\overline{\mathcal{T}}$ is a transitive component of
$(\phi_t)_{t\in\R}$.  Each transitive component
$\overline{\mathcal{T}}$ is a surface with boundary and the boundary
of $\overline{\mathcal{T}}$ is a finite union of fixed points and
loop saddle connections.

\begin{remark}\label{paraind}
Let $X$ be a smooth tangent vector field preserving a volume form
$\omega$ on a surface $M$. A parametrization $\gamma:[a,b]\to M$ of
a curve is called {\em induced} if
\[\int_{\gamma(s)}^{\gamma(s')}i_X\omega=s-s'\text{ for all }s,s'\in[a,b].\]
Let $\gamma:[a,b]\to M$ and
$\widetilde{\gamma}:[\widetilde{a},\widetilde{b}]\to M$ be induced
parameterizations of two curves. Suppose that for every $x\in[a,b]$
the positive semi-orbit of the flow through $\gamma(x)$ hits the
curve $\widetilde{\gamma}$. Denote by
$T_{\gamma\widetilde{\gamma}}(x)\in [\widetilde{a},\widetilde{b}]$
the parameter  and by $\tau_{\gamma\widetilde{\gamma}}(x)>0$ the
time of the first hit. Using Stokes' theorem, it is easy to check
that $T_{\gamma\widetilde{\gamma}}:[a,b]\to
[\widetilde{a},\widetilde{b}]$ is a translation and
$\tau_{\gamma\widetilde{\gamma}}:[a,b]\to\R_+$ is a smooth function.
\end{remark}

Fix $\mathcal{T}\in\mathfrak{T}$ and let $J\subset \mathcal{T}$ be
a transversal smooth curve for $(\phi_t)_{t\in\R}$ such that the
boundary of $J$ consists of two points lying  on an incoming and
an outgoing separatrix respectively, and the segment of each
separatrix between the corresponding boundary point of $J$ and the
fixed point has no intersection with $J$. Let $\gamma:[0,a]\to J$
stand for the induced parametrization such that the boundary
points $\gamma(0)$ and $\gamma(a)$ lie on the incoming and
outgoing separatrixes respectively (see Figure~\ref{rys1}). Set
$I=[0,a)$. We will identify the interval $I$ with the curve $J$.

Denote by $T:=T_{\gamma\gamma}$ the first-return map induced on $J$;
$T$ can be seen as a map $T:I\to I$. By Remark~\ref{paraind},
$T:I\to I$ is an exchange interval transformation. Then
$T=T_{(\pi,\lambda)}$, where $\pi\in\mathcal{S}^0_{\mathcal{A}}$ for
some finite set $\mathcal{A}$ and
$(\pi,\lambda)\in\mathcal{S}^0_{\mathcal{A}}\times\R_+^{\mathcal{A}}$
satisfies Keane's condition. Recall that $l_\alpha$,
$\alpha\in\mathcal{A}$ stand for the left end points of the
exchanged intervals. Let $\mathcal{Z}=\mathcal{F}(\beta)\cap
\overline{\mathcal{T}}$. Since $\overline{\mathcal{T}}$ is a
transitive component, each element of $\mathcal{Z}$ is a
non-degenerated saddle. Let us decompose the set of fixed points
$\mathcal{Z}$ into subsets $\mathcal{Z}_0$, $\mathcal{Z}_+$ and
$\mathcal{Z}_-$ of points $ z\in\mathcal{Z}$ such that $ z$ has no
loop connection, has a loop connection with positive orientation and
has a loop connection with negative orientation respectively. For
each $z\in\mathcal{Z}_+\cup\mathcal{Z}_-$ denote by
$\sigma_{loop}(z)$ the corresponding loop connection.

Denote by $ \underline{z}\in\mathcal{Z}$  the fixed point such
that $\gamma(0)$ belongs to its incoming separatrix
$\sigma^-(\underline{z})$.  Then $\gamma(0)$ is the first backward
intersection  with $J$ of $\sigma^-(\underline{z})$. Set
$\underline{\alpha}=\pi_1^{-1}(1)\in\mathcal{A}$. Then each point
$\gamma(l_\alpha)$ with $\alpha\neq\underline{\alpha}$ corresponds
to the first backward intersection with $J$ of an incoming
separatrix of a fixed point, denoted by $
z_{l_{\alpha}}\in\mathcal{Z}$ (see Figure~\ref{rys1}). The point
$\gamma(l_{\underline{\alpha}})$ corresponds to the second
backward intersection  with $J$ of $\sigma^-(\underline{z})$ and
$Tl_{\underline{\alpha}}=0$.

Denote by $\tau:I\to \R_+$ the first-return time map of the flow
$(\phi_t)_{t\in\R}$ to $J$. This map is well defined and smooth on
the interior of  each interval $I_\alpha$, $\alpha\in\mathcal{A}$,
and $\tau$ has a singularity of logarithmic type at each point
$l_\alpha$, $\alpha\in\mathcal{A}$ (see \cite{Ko}) except for the
right-side of $l_{\underline{\alpha}}$; here the right-sided limit
of $\tau$ exists. Moreover, the flow $(\phi_t)_{t\in\R}$ on
$(\mathcal{T},\nu|_{\mathcal{T}})$ is measure-theoretical
isomorphic to the special flow $T^\tau$. An isomorphism is
established by the map
$\Gamma:I^\tau\to\overline{\mathcal{T}},\;\;\Gamma(x,s)=\phi_s\gamma(x)$.
\begin{figure}[h]
\begin{center}
\resizebox{10cm}{!}{\includegraphics{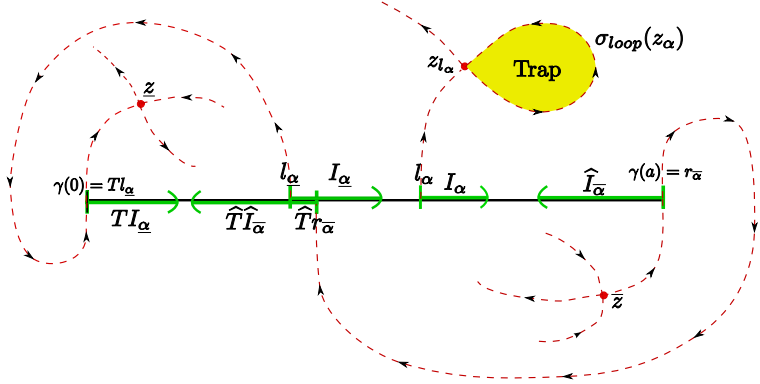}}
\caption{Separatrices of $(\phi_t)$\label{rys1}}
\end{center}
\end{figure}
\subsection{Extensions of multivalued Hamiltonian flows}
\hfill\break Let $f:M\to\R^\ell$ be a smooth function. Let us
consider a system of differential equations on $M\times\R^\ell$ of
the form
\[\left\{\begin{array}{ccc}
\frac{dx}{dt}&=&X(x),\\
\frac{dy}{dt}&=&f(x),
\end{array}\right.\]
for $(x,y)\in M\times\R^\ell$. Then the associated flow
$(\Phi^f_t)_{t\in\R}=(\Phi_t)_{t\in\R}$ on $M\times\R^\ell$ is
given by
\[\Phi_t(x,y)=\left(\phi_tx,y+\int_0^{t}f(\phi_sx)\,ds\right).\]
It follows that $(\Phi_t)_{t\in\R}$ is a skew product flow with
the base flow $(\phi_t)_{t\in\R}$ on $M$ and the cocycle
$F:\R\times M\to\R^\ell$ given by
\[F(t,x)=\int_0^{t}f(\phi_sx)\,ds.\]
Therefore $(\Phi_t)_{t\in\R}$ preserves the product measure
$\nu\times m_{\R^\ell}$. The deviation of the cocycle $F$ was
studied by Forni in \cite{Fo1}, \cite{Fo2} for typical
$(\phi_t)_{t\in\R}$ with no saddle connections.  Recall that the
ergodicity of $(\Phi^f_t)_{t\in\R}$ has been already studied in
\cite{Fa-Le} in the simplest case where $M=\T^2$ and $\ell=1$.

In this section we will study recurrence and ergodic properties of
the flow $(\Phi^f_t)_{t\in\R}$ for functions $f:M\to\R^\ell$ such
that $f(x)=0$ for all $x\in \mathcal{F}(\beta)$. By obvious reason
$(\Phi_t)_{t\in\R}$ will be restricted to the invariant set
$\overline{\mathcal{T}}\times\R^\ell$,
$\mathcal{T}\in\mathfrak{T}$. Let us consider its transversal
submanifold
$J\times\R^\ell\subset\overline{\mathcal{T}}\times\R^\ell$. Note
that every point $(\gamma(x),y)\in \gamma(\Int
I_\alpha)\times\R^\ell$ returns to $J\times\R^\ell$ and the return
time is $\widehat{\tau}(x,y)=\tau(x)$. Denote by
$\varphi:\bigcup_{\alpha\in\mathcal{A}}\Int I_\alpha\to\R^\ell$
the smooth function
\[\varphi(x)=F(\tau(x),\gamma(x))=\int_0^{\tau(x)}f(\phi_s\gamma(x))ds,\;\;\text{ for
}\;\;x\in\bigcup_{\alpha\in\mathcal{A}}\Int I_\alpha.\] Notice
that
\begin{equation}\label{mean}
\int_I\varphi(x)\,dx=\int_{\mathcal{T}}f\,d\nu.
\end{equation}

Let us consider the skew product
$T_\varphi:(I\times\R^\ell,\mu\times
 m_{\R^\ell})\to (I\times\R^\ell,\mu\times  m_{\R^\ell})$,
$T_\varphi(x,y)=(Tx,y+\varphi(x))$ and the special flow
$(T_\varphi)^{\widehat{\tau}}$ built over $T_\varphi$ and under
the roof function $\widehat{\tau}:I\times\R^\ell\to\R_+$ given by
$\widehat{\tau}(x,y)=\tau(x)$.

\begin{lemma}\label{specrep}
The special flow $(T_\varphi)^{\widehat{\tau}}$ is
measure-theoretical isomorphic to the flow $(\Phi_t)$ on
$(\mathcal{T}\times\R^\ell,\nu|_{\mathcal{T}}\times
m_{\R^\ell})$.\bez
\end{lemma}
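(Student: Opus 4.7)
The plan is to write down an explicit candidate isomorphism and then check the three usual requirements: bijectivity, measure preservation, and intertwining of the flows. Concretely, I would define
\[
\widehat{\Gamma}:(I\times\R^\ell)^{\widehat{\tau}}\longrightarrow
\overline{\mathcal{T}}\times\R^\ell,\qquad
\widehat{\Gamma}((x,y),s)=\Phi_{s}(\gamma(x),y)
=\bigl(\phi_s\gamma(x),\,y+F(s,\gamma(x))\bigr),
\]
which is simply $\Gamma$ in the first coordinate, coupled with the natural shear in the fiber coordinate.

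First I would verify bijectivity modulo null sets. Since $\Gamma(x,s)=\phi_s\gamma(x)$ is already an isomorphism between $I^{\tau}$ and $\overline{\mathcal{T}}$, and for each fixed $(x,s)$ the fiber map $y\mapsto y+F(s,\gamma(x))$ is an invertible translation of $\R^\ell$, the map $\widehat{\Gamma}$ is bijective off the measure-zero set on which $\Gamma$ fails to be bijective. Measure preservation follows immediately from Fubini: the target measure is $\nu|_{\mathcal{T}}\times m_{\R^\ell}$; pulling back, the shear preserves $m_{\R^\ell}$ for every fixed $(x,s)$, and then integrating against $\nu|_{\mathcal{T}}$ gives exactly the restriction of $\mu\times m_\R\times m_{\R^\ell}$ to the domain, because $\Gamma$ already transports $\mu\times m_\R|_{I^{\tau}}$ to $\nu|_{\mathcal{T}}$.

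The core of the proof is the intertwining identity $\widehat{\Gamma}\circ(T_\varphi)^{\widehat{\tau}}_t=\Phi_t\circ\widehat{\Gamma}$. Since $\widehat{\tau}(x,y)=\tau(x)$ does not depend on $y$ and $T_\varphi^i(x,y)=(T^ix,y+\varphi^{(i)}(x))$, we have $\widehat{\tau}^{(n)}(x,y)=\tau^{(n)}(x)$. Thus, given $((x,y),s)$ with $0\le s<\tau(x)$, the integer $n$ determined by $\tau^{(n)}(x)\le s+t<\tau^{(n+1)}(x)$ satisfies
\[
(T_\varphi)^{\widehat{\tau}}_t((x,y),s)
=\bigl((T^nx,\,y+\varphi^{(n)}(x)),\,s+t-\tau^{(n)}(x)\bigr).
\]
Applying $\widehat{\Gamma}$ and using the crucial cocycle identity
\[
\varphi^{(n)}(x)=\sum_{i=0}^{n-1}\int_0^{\tau(T^ix)}f(\phi_u\gamma(T^ix))\,du
=\int_0^{\tau^{(n)}(x)}f(\phi_v\gamma(x))\,dv
=F(\tau^{(n)}(x),\gamma(x)),
\]
obtained by substituting $v=u+\tau^{(i)}(x)$ and $\gamma(T^ix)=\phi_{\tau^{(i)}(x)}\gamma(x)$, a direct computation (together with the additive cocycle property $F(s+t,z)=F(s,z)+F(t,\phi_sz)$) shows that both sides of the desired intertwining equation equal $\bigl(\phi_{s+t}\gamma(x),\,y+F(s+t,\gamma(x))\bigr)$.

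I expect no genuine conceptual obstacle: the statement is essentially a bookkeeping lemma formalizing that, because the roof function $\widehat{\tau}$ depends only on the base variable $x$, the skew product in the base and the shear in the fiber decouple and reassemble exactly into $\Phi_t$. The only mildly delicate point is handling the measure-zero set of $x\in I$ on which $\tau$ is singular and $\Gamma$ is not defined (points in the forward orbits of the $l_\alpha$), but these are null and do not affect the measure-theoretic isomorphism.
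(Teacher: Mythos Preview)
Your proof is correct. The paper itself gives no proof of this lemma at all: the statement ends with the end-of-proof box \verb|\bez| and the text moves on immediately, treating the result as routine. Your explicit candidate isomorphism $\widehat{\Gamma}((x,y),s)=\Phi_s(\gamma(x),y)$ is the natural one (it extends the isomorphism $\Gamma:I^\tau\to\overline{\mathcal{T}}$ already introduced in the paper by the obvious fiber shear), and your verification of the intertwining identity via $\varphi^{(n)}(x)=F(\tau^{(n)}(x),\gamma(x))$ and the cocycle property of $F$ is exactly the standard computation that the authors are implicitly invoking.
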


\begin{remark}
If $\int_{\mathcal{T}}f\,d\nu\neq 0$ then, by (\ref{mean}), the
skew product $T_\varphi$ is dissipative. In view of
Lemma~\ref{specrep},  the flow $(\Phi_t)$ on
$(\mathcal{T}\times\R^\ell,\nu|_{\mathcal{T}}\times m_{\R^\ell})$
is dissipative, as well.

On the other hand, if $\ell=1$ and $(\phi_t)$ on
$(\mathcal{T},\nu|_{\mathcal{T}})$ is ergodic, then
$\int_{\mathcal{T}}f\,d\nu= 0$ implies the recurrence of
$(\Phi_t)$ on $(\mathcal{T}\times\R,\nu|_{\mathcal{T}}\times
m_{\R})$.
\end{remark}

The following lemma will help us to find out further properties of
$\varphi$. Since the proof is rather straightforward and the first
part follows very closely the proof of Proposition~2 in
\cite{Fr-Le3}, we leave it to the reader.
\begin{lemma}\label{lematprzej}
Let $g:[-1,1]\times[-1,1]\to\R$ be a $C^1$-function such that
$g(0,0)=0$. Then the function $\xi:[0,1]\to\R$,
\[\xi(s)=\left\{
\begin{array}{ccc}
\int_{s}^{1}g\left(u,\frac{s}{u}\right)\frac{1}{u}du,&\text{ if }&s>0,\\
\int_{0}^{1}\left(g(u,0)+g(0,u)\right)\frac{1}{u}du,&\text{ if
}&s=0,
\end{array}\right.\]
is absolutely continuous. If additionally $g$ is a $C^2$-function,
$g'(0,0)=0$, and $g''(0,0)=0$, then $\xi'$ is absolutely continuous.
\bez
\end{lemma}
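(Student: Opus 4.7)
My plan is to split off the axial part of $g$. Since $g(0,0)=0$, define
\[r(u,v) = g(u,v) - g(u,0) - g(0,v),\]
which vanishes on both coordinate axes. The substitution $w = s/u$ preserves the measure $du/u$ and maps $[s,1]$ to itself, yielding $\int_s^1 g(0, s/u)\,\frac{du}{u} = \int_s^1 g(0, w)\,\frac{dw}{w}$. Hence for $s>0$,
\[\xi(s) = F(s) + R(s), \qquad F(s) = \int_s^1 \frac{g(u,0) + g(0,u)}{u}\,du, \qquad R(s) = \int_s^1 \frac{r(u, s/u)}{u}\,du,\]
and the definition of $\xi(0)$ gives $F(0) = \xi(0)$. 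Since $g \in C^1$ and $g(0,0) = 0$, the integrand defining $F$ extends continuously to $u = 0$, so $F$ is $C^1$ on $[0,1]$, in particular absolutely continuous.

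The real work is to show $R$ is absolutely continuous. Applying the substitution $w = s/u$ only on the subinterval $[s, \sqrt{s}]$ (where $s/u \ge u$) gives the symmetric representation
\[R(s) = \int_{\sqrt{s}}^{1} \frac{r(u, s/u) + r(s/u, u)}{u}\,du,\]
on which $s/u \le \sqrt{s}$. Since $r$ vanishes on both axes and is $C^1$, the mean value theorem gives $|r(u, s/u)|, |r(s/u, u)| \le C\,(s/u)$, yielding $|R(s)| \le C\sqrt{s}$, so $R$ extends continuously to $[0,1]$ with $R(0) = 0$. Differentiating by Leibniz's rule,
\[R'(s) = -\frac{r(\sqrt{s}, \sqrt{s})}{s} + \int_{\sqrt{s}}^{1} \frac{\partial_v r(u, s/u) + \partial_u r(s/u, u)}{u^2}\,du,\]
and using $|r(\sqrt{s}, \sqrt{s})| = O(\sqrt{s})$, the boundedness of $\partial_u r$ and $\partial_v r$, and $\int_{\sqrt{s}}^{1} du/u^2 = O(1/\sqrt{s})$, I obtain $|R'(s)| \le C/\sqrt{s}$, which is integrable on $(0,1]$. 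Applying the fundamental theorem of calculus on $[\epsilon, s]$ and letting $\epsilon \to 0^+$ (using continuity of $R$ at $0$ and dominated convergence for the integral) gives $R(s) = \int_0^s R'(t)\,dt$. Hence $R$, and so $\xi = F + R$, is absolutely continuous.

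For the second part, the additional vanishing of $g'(0,0)$ and $g''(0,0)$ gives $r(u,v) = uv\,\rho(u,v)$ with $\rho$ continuous and $\rho(0,0) = 0$, while the integrand $[g(u,0)+g(0,u)]/u$ of $F$ becomes $C^1$ on $[0,1]$ with vanishing derivative at the origin; this upgrades $F$ from $C^1$ to $C^2$, so that $F'$ is itself absolutely continuous. The task therefore reduces to showing $R'$ is AC, for which I would differentiate the displayed representation of $R'$ a second time and estimate the resulting integrand, using $r = uv\rho$ to turn all surviving terms into expressions involving $\rho$ and its first derivatives only where the integrability can be controlled. The main obstacle is precisely that $\rho$ is only continuous (since $g$ is only $C^2$), so one cannot differentiate $\rho$ pointwise; the argument must instead exploit the symmetric structure of $R'$ together with $\rho(0,0) = 0$ to extract an $L^1$-bound of the form $|R''(s)| \le C/\sqrt{s}$, whose integrability then yields the AC conclusion for $R'$ by the same fundamental-theorem-of-calculus argument as before, mirroring the computation in~\cite{Fr-Le3}.
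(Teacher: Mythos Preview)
The paper itself omits the proof of this lemma, referring instead to Proposition~2 in \cite{Fr-Le3}; so there is no ``paper's proof'' to compare against beyond that reference.

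Your argument for the first part is correct and self-contained. The axial/remainder split $g = g(u,0) + g(0,v) + r(u,v)$, the substitution $w = s/u$, the symmetrised integral over $[\sqrt{s},1]$, and the bound $|R'(s)| \le C/\sqrt{s}$ all check out, and the passage from $R' \in L^1$ to $R$ absolutely continuous via the fundamental theorem is fine.

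The second part, however, is not a proof but a programme, and the programme has a real hole. From your own formula
\[
R'(s) = -\rho(\sqrt{s},\sqrt{s}) + \int_{\sqrt{s}}^{1} \frac{\partial_v r(u,s/u) + \partial_u r(s/u,u)}{u^2}\,du,
\]
the boundary term is $-r(\sqrt{s},\sqrt{s})/s$. Since $g$ is only $C^2$, the factor $\rho$ in $r = uv\,\rho$ is merely continuous; equivalently, $\eta(t) := r(t,t)$ is $C^2$ with $\eta(0)=\eta'(0)=\eta''(0)=0$, so $\eta(\sqrt{s})/s$ is continuous but there is no reason for it to be differentiable, let alone to have an $L^1$ derivative. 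Your plan to ``differentiate a second time and extract $|R''(s)| \le C/\sqrt{s}$'' therefore presupposes exactly the regularity that is missing, and invoking ``symmetric structure'' does not explain how the non-differentiable boundary term is absorbed. The same obstruction appears in the integral term once you try to differentiate again: the integrand involves second partials of $r$, which are only continuous.

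A workable route is to avoid the symmetric splitting for this step and instead rewrite $R'$ without the bad boundary term. Differentiating the alternative representation $R(s) = \int_s^1 r(s/v,v)\,dv/v$ and using $r_u(u,0)\equiv 0$ to factor $r_u(u,v) = v\int_0^1 g_{uv}(u,tv)\,dt$ gives
\[
R'(s) = -\frac{r(1,s)}{s} + \int_s^1 \frac{\mu(u,s/u)}{u}\,du, \qquad \mu(u,v) = \int_0^1 g_{uv}(u,tv)\,dt,
\]
where $-r(1,s)/s$ is genuinely $C^1$ (since $r(1,\cdot)\in C^2$ vanishes at $0$). The remaining task is the absolute continuity of $\int_s^1 \mu(u,s/u)\,du/u$ with $\mu$ merely continuous and $\mu(0,0)=0$; this is where one must actually carry out the estimate from \cite{Fr-Le3} rather than cite it, because the first-part argument used $C^1$ regularity in an essential way (to bound $|r(u,s/u)| \le C\,s/u$ via the mean value theorem) and that bound is not available for $\mu$. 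As written, the second half of your proposal leaves this gap open.
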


\begin{remark}\label{istzal}
Note that the second conclusion of the lemma becomes false if the
requirement $g''(0,0)=0$ is omitted. Indeed, if $g(x,y)=x\cdot y$
then $\xi(s)=-\log s-1$, $s>0$, is not even bounded.
\end{remark}

For each $z\in\mathcal{Z}_+\cup\mathcal{Z}_-$ choose an element
$u_z$ of the saddle loop $\sigma_{loop}(z)$.

\begin{theorem}\label{twofunkcji}
If $f(x)=0$ for all $x\in \mathcal{F}(\beta)$, then $\varphi$ is
absolutely continuous on each interval $I_{\alpha}$, $\alpha\in
\mathcal{A}$, in particular $\varphi\in \bv(\sqcup_{\alpha\in
\mathcal{A}} I_{\alpha},\R^\ell)$. Moreover,
\[\int_I\varphi'(x)\,dx=\sum_{z\in\mathcal{Z}_+}\int_\R f(\phi_s
u_z) \,ds-\sum_{z\in\mathcal{Z}_-}\int_\R f(\phi_s u_z)\,ds.\]

If additionally $f'(x)=0$ and $f''(x)=0$ for all $x\in
\mathcal{F}(\beta)$, then $\varphi''\in L^1(I,\R^\ell)$, in
particular, $\varphi\in \bv^1(\sqcup_{\alpha\in \mathcal{A}}
I_{\alpha},\R^\ell)$.
\end{theorem}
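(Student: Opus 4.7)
The plan is to split $\varphi(x)=\int_0^{\tau(x)} f(\phi_s\gamma(x))\,ds$ into pieces: the parts of the orbit that pass near a saddle (where the logarithmic singularity of $\tau$ lives) and the parts that remain in a compact region disjoint from $\mathcal{F}(\beta)$. Away from saddles, the flow is smooth and the corresponding contributions are $C^\infty$ in $x$ by smooth dependence on initial conditions (cf.\ Remark~\ref{paraind}). All of the work is therefore local: at each interior discontinuity $l_\alpha$ I must control a single saddle-passage contribution coming from the orbit approaching $z_{l_\alpha}$.

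Near each $z\in\mathcal{Z}$, I would apply the symplectic Morse lemma to the pair $(\omega,\widehat H)$ to get Darboux--Morse coordinates $(u,v)$ in which $\omega=du\wedge dv$ and $\widehat H=uv+{\rm const}$, so that $X=u\partial_u-v\partial_v$ and $\phi_t(u,v)=(ue^t,ve^{-t})$. Fix small transversals $\{v=v_0\}$ (incoming) and $\{u=u_0\}$ (outgoing) inside the chart. For $x$ in a one-sided neighbourhood of $l_\alpha$, the orbit from $\gamma(x)$ enters the chart at a point $(u_{\rm in}(x),v_0)$, where $u_{\rm in}(x)$ depends smoothly on $x$ and vanishes linearly at $x=l_\alpha$. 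Using conservation of $H\equiv h(x):=v_0\,u_{\rm in}(x)$ along the orbit and the time change $dt=du/u$, the saddle-passage part of $\varphi$ becomes
\[
\int_{u_{\rm in}(x)}^{u_0} f\!\left(u,\tfrac{h(x)}{u}\right)\frac{du}{u}.
\]
After the affine reparametrizations $s=u_{\rm in}(x)/u_0$ and $u=u_0 w$, this takes exactly the form
\[
\xi(s)=\int_s^1 g\!\left(w,\tfrac{s}{w}\right)\frac{dw}{w},\qquad g(w,y):=f(u_0 w,v_0 y),
\]
appearing in Lemma~\ref{lematprzej}.

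The hypothesis $f(z)=0$ gives $g(0,0)=0$, and the first half of Lemma~\ref{lematprzej} then yields absolute continuity of $\xi$ on $[0,1]$. Summing finitely many such saddle-passage contributions along one first-return orbit, together with the smooth contributions from the pieces away from saddles, shows that $\varphi|_{I_\alpha}$ is absolutely continuous for every $\alpha\in\mathcal{A}$; in particular $\varphi\in\bv(\sqcup_\alpha I_\alpha,\R^\ell)$. Under the stronger hypothesis $f'=f''=0$ at each fixed point, the chain rule gives $g'(0,0)=0$ and $g''(0,0)=0$, so the second half of Lemma~\ref{lematprzej} upgrades this to $\xi'$ absolutely continuous; assembling the pieces yields $\varphi\in\bv^1(\sqcup_\alpha I_\alpha,\R^\ell)$ and $\varphi''\in L^1(I,\R^\ell)$.

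To evaluate $\int_I\varphi'=s(\varphi)=\sum_\alpha(\varphi_-(r_\alpha)-\varphi_+(l_\alpha))$, I would observe that each one-sided limit equals a convergent improper integral of $f$ along a half-orbit that terminates at a saddle, followed by the integral along the outgoing separatrix arc until its first return to $J$; convergence uses $f(z)=0$ together with the local estimate of the previous step. Regrouping the boundary terms saddle by saddle (and absorbing the contributions at $0$ and $|\lambda|$ coming from the separatrices of $\underline z$), at each $z\in\mathcal{Z}_0$ the two outgoing separatrix arcs are distinct and their integrals appear with opposite signs in two consecutive terms of the telescoping sum, so they cancel; at $z\in\mathcal{Z}_\pm$, exactly one outgoing arc closes up into the loop $\sigma_{\rm loop}(z)$, contributing precisely $\pm\int_\R f(\phi_s u_z)\,ds$ with the sign determined by the orientation. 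The hard part will be this last combinatorial bookkeeping: tracking signs consistently at the boundary points $0$ and $|\lambda|$, and matching the local notion of left/right outgoing separatrix at each loop saddle with the global orientation defining the decomposition $\mathcal{Z}=\mathcal{Z}_0\cup\mathcal{Z}_+\cup\mathcal{Z}_-$.
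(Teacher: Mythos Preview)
Your proposal is correct and follows essentially the same line as the paper's proof: reduce to a local saddle-passage integral, bring it to the form of Lemma~\ref{lematprzej}, and then do the combinatorial bookkeeping with the one-sided limits $\varphi_\pm$ at the $l_\alpha,r_\alpha$. One small caveat: the ``symplectic Morse lemma'' as you state it ($\omega=du\wedge dv$ and $\widehat H=uv$ simultaneously) is slightly too strong---the Colin de Verdi\`ere--Vey normal form only gives $\widehat H=F(uv)$ with $F'(0)\neq 0$---and the paper sidesteps this by using the ordinary Morse lemma (so $H=xy$ but $\omega=p(x,y)\,dx\wedge dy$), absorbing the area factor $p$ into the function $g=f_z\cdot p$ fed to Lemma~\ref{lematprzej}; your version is easily repaired the same way, or by carrying the harmless factor $1/F'(h)$.
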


\begin{proof}
First note that it suffices to consider the case $\ell=1$.  Since
$d\beta=0$, there exists a family of pairwise disjoint open sets
$U_ z\subset M$, $ z\in\mathcal{Z}$ such that $ z\in U_ z$ and
there exists a smooth function $H:\bigcup_{ z\in\mathcal{Z}}U_
z\to\R$ such that $dH=\beta$ on $U_ z$ for every $
z\in\mathcal{Z}$.  By the Morse Lemma, for every $
z\in\mathcal{Z}$ there exist a neighborhood $(0,0)\in V_
z\subset\R^2$ and a smooth diffeomorphism $\Upsilon_ z:V_ z\to U_
z$ such that $\Upsilon_ z(0,0)= z$ and  \[{H}_
z(x,y):=H\circ\Upsilon_ z(x,y)=x\cdot y\text{ for all }(x,y)\in V_
z.\]

Denote by $\omega^ z\in\Omega^2(V_ z)$ the pullback of the form
$\omega$ by $\Upsilon_ z:V_ z\to U_ z$. Since $\omega^ z$ is
non-zero at each point, there exists a smooth non-zero function
$p=p_ z:V_ z\to\R$ such that
\[\omega^ z_{(x,y)}=p(x,y){dx\wedge dy}{}.\]
Let $(\phi^ z_t)$ stand for the pullback of the flow $(\phi_t)$ by
$\Upsilon_ z:V_ z\to U_ z$, i.e. the local flow on $V_z$ given by
$\phi^z_t=\Upsilon_z^{-1}\circ\phi_t\circ\Upsilon_z$. Denote by
$X^z:V_z\to\R^2$ the vector field corresponding to $(\phi^z_t)$.
Then $ dH_ z=\omega^ z(X^ z,\,\cdot\,)$, and hence
\[X^ z(x,y)=
\frac{\left(\frac{\partial H_ z}{\partial y}(x,y),-\frac{\partial
H_ z}{\partial x}(x,y)\right)}{p(x,y)} =\frac{(x,-y)}{p(x,y)}.\]

Let $\delta$ be a positive number such that
$[-\delta,\delta]\times[-\delta,\delta]\subset V_ z$ for every $
z\in\mathcal{Z}$. Let us consider the $C^{\infty}$--curves
$\gamma^{\pm,0}_ z,\gamma^{\pm,1}_ z:[-\delta^2,\delta^2]\to M$
given by
\[\gamma^{\pm,0}_ z(s)=\Upsilon_ z(\pm
s/\delta,\pm\delta),\;\;\gamma^{\pm,1}_ z(s)=\Upsilon_
z(\pm\delta,\pm s/\delta).\] Notice that $\gamma^{\pm,i}_ z$
establishes an induced parametrization for the form $\omega(x,y)$
and the vector field $X$. Indeed, we have for every
$s\in[-\delta^2,\delta^2]$ and $i=0,1$,
\begin{eqnarray*}\int^{\gamma^{\pm,i}_ z(s)}_{\gamma^{\pm,i}_ z(0)}\beta=
\int^{\gamma^{\pm,i}_ z(s)}_{\gamma^{\pm,i}_ z(0)}dH=
H(\gamma^{\pm,i}_ z(s))-H(\gamma^{\pm,i}_ z(0))=\pm s/\delta \cdot
\pm\delta=s.
 \end{eqnarray*}

We consider the functions $\tau^{\pm}_ z$ and $\varphi^{\pm}_ z$
from $[-\delta^2,0)\cup(0,\delta^2]$ to $\R$, where $\tau^{\pm}_
z(x)$ is the  exit time  of the point $\gamma^{\pm,0}_ z(x)$ for
the flow $({\phi}_{t})$ from the set $\Upsilon_
z([-\delta,\delta]\times[-\delta,\delta])$ and
\[\varphi^{\pm}_ z(x)=\int_{0}^{\tau^{\pm}_ z(x)}f(\phi_s\gamma^{\pm,0}_ z x)ds.\]
Note that $\tau^{\pm}_ z(x)$ is the passage time from $(\pm
x/\delta,\pm\delta)$ to $(\pm\sgn(x)\delta,\pm\sgn(x)x/\delta)$
for the local flow $(\phi^ z_{t})$. Let $f_ z:V_ z\to\R$ be given
by $f_ z=f\circ \Upsilon_ z$. By assumption, $f_ z$ is a smooth
function such that $f_ z(0,0)=0$. Furthermore,
\[\varphi^{\pm}_ z(x)=\int_{0}^{\tau^{\pm}_ z(x)}f_ z(\phi_s^ z(\pm x/\delta,\pm\delta))ds.\]
Let $(x_s,y_s)=\phi_s^ z(\pm x/\delta,\pm\delta)$. Then
\begin{equation}\label{rownanie}
\left(\frac{d}{ds}x_s,\frac{d}{ds}y_s\right)=X^z(x_s,-y_s)=\frac{(x_s,-y_s)}{p(x_s,y_s)},
\end{equation}
and hence
\[x_s\cdot y_s=H_ z(x_s,y_s)=H_ z(x_0,y_0)=H_ z(\pm x/\delta,\pm\delta)=x.\]
Since $x\neq 0$, it follows that $x_s\neq 0$ for all $s\in\R$. By
using the substitution $u = x_s$, we obtain $du = \frac{d}{ds}x_s
ds=\frac{x_s}{p(x_s,y_s)} ds$ and
\begin{eqnarray*}\varphi^{\pm}_ z(x)&=&\int_{0}^{\tau^{\pm}_ z(x)}f_
z(x_s,y_s)ds=\int_{0}^{\tau^{\pm}_ z(x)}f_
z\left(x_s,\frac{x}{x_s}\right)ds\\&=&\int_{\pm
x/\delta}^{\pm\sgn(x)\delta}f_
z\left(u,\frac{x}{u}\right)p\left(u,\frac{x}{u}\right)\frac{du}{u}.
\end{eqnarray*}
By Lemma~\ref{lematprzej}, the functions $\varphi^{\pm}_
z:[-\delta^2,0)\cup(0,\delta^2]\to\R$ is absolutely continuous and
\[\lim_{x\to 0^+}\varphi^{\pm}_
z(x)=\int_{0}^{\pm\delta}f_ z(u,0)p(u,0)\frac{du}{u}+
\int_{0}^{\pm\delta}f_ z(0,u)p(0,u)\frac{du}{u}.\]  It follows
that
\begin{equation}\label{granpr}
\lim_{x\to 0^+}\varphi^{\pm}_
z(x)=\int_{0}^{+\infty}f(\phi_s\gamma^{\pm,0}_ z
0)ds+\int_{-\infty}^{0}f(\phi_s\gamma^{\pm,1}_ z 0)ds.
\end{equation}
Similar arguments to those above show that
\begin{equation}\label{granpr1}
\lim_{x\to 0^-}\varphi^{\pm}_
z(x)=\int_{0}^{+\infty}f(\phi_s\gamma^{\pm,0}_ z
0)ds+\int_{-\infty}^{0}f(\phi_s\gamma^{\mp,1}_ z 0)ds.
\end{equation}
In view of Remark~\ref{paraind}, we conclude that $\varphi:I\to\R$
is absolutely continuous  on each interval $I_\alpha$,
$\alpha\in\mathcal{A}$ and
\begin{equation}\label{fina1}
\varphi_+(l_\alpha)=\int_0^{+\infty}f(\phi_s\gamma(l_\alpha))ds+\int_{-\infty}^{0}f(\phi_s\gamma(T
l_\alpha))ds,
\end{equation}
whenever $\alpha\neq\underline{\alpha}$ and
$z_{l_\alpha}\in\mathcal{Z}_-\cup\mathcal{Z}_0$. If
$\alpha\neq\underline{\alpha}$ and $z_{l_\alpha}\in\mathcal{Z}_+$,
then computing $\varphi_+(l_\alpha)$ we have cover also a distance
along the loop separatrix $\sigma_{loop}(z_{l_\alpha})$, so that
\begin{equation}\label{fina2}
\varphi_+(l_\alpha)=\int_0^{+\infty}f(\phi_s\gamma(l_\alpha))ds+\int_{-\infty}^{0}f(\phi_s\gamma(T
l_\alpha))ds+\int_{-\infty}^{+\infty}f(\phi_s u_{z_{l_\alpha}})ds.
\end{equation}

Moreover, if $f'(z_{l_\alpha})=0$ and $f''(z_{l_\alpha})=0$ then
the derivative $\varphi''$ is integrable on a neighborhood of
$l_\alpha$. It follows that if $f'(z)=0$ and $f''(z)=0$ for each
$z\in\mathcal{F}(\beta)$ then $\varphi''$ is integrable.

 If $\alpha=\underline{\alpha}$, then, by definition, the
positive semi-orbit through $\gamma(l_\alpha)$ returns to $\gamma$
before approaching the fixed point $\underline{z}$. It follows
that
\begin{equation}\label{fina3}
\varphi_+(l_\alpha)=\int_0^{\tau_{\gamma\gamma}(l_\alpha)}f(\phi_s\gamma(l_\alpha))ds.
\end{equation}
 Let
$\overline{\alpha}=\pi^{-1}_0(d)$, i.e.\
$r_{\overline{\alpha}}=|I|$. Similar arguments to those used for
the right-sided limits show that for every
$\alpha\neq\overline{\alpha}$ we have
\begin{align}\label{fina4}\varphi_-(r_\alpha)&=\int_0^{+\infty}f(\phi_s\gamma(r_\alpha))ds+\int_{-\infty}^{0}f(\phi_s
\gamma(\widehat{T}r_\alpha))ds\text{ if
}z_{r_\alpha}\in\mathcal{Z}_0\cup\mathcal{Z}_+,\\
\label{fina5}\varphi_-(r_\alpha)&=\int_0^{+\infty}f(\phi_s\gamma(r_\alpha))ds+\int_{-\infty}^{0}f(\phi_s
\gamma(\widehat{T}r_\alpha))ds+\int_{-\infty}^{+\infty}f(\phi_s
u_{z_{r_\alpha}})ds,
\end{align}
if $z_{r_\alpha}\in\mathcal{Z}_-$. Moreover, since
$\gamma(r_{\overline{\alpha}})$ lies on an outgoing separatrix,
the positive semi-orbit through $\gamma(r_\alpha)$ returns to the
curve $\gamma$, so that
\begin{equation}\label{fina6}
\varphi_-(r_{\overline{\alpha}})=\varphi_-(|I|)=
\int_0^{\tau_{\gamma\gamma}(r_{\overline{\alpha}})}f(\phi_s\gamma(r_{\overline{\alpha}}))ds.
\end{equation}
In view of (\ref{fina1}) - (\ref{fina6}), we have
\begin{eqnarray}\label{dlugi}
\begin{split}
\int_I\varphi'(x)\,dx=&\sum_{\alpha\in\mathcal{A}}\int_{I_\alpha}\varphi'(x)\,dx
=\sum_{\alpha\in\mathcal{A}}\left(\varphi_-(r_\alpha)-\varphi_+(l_\alpha)\right)\\
=&\sum_{z\in\mathcal{Z}_-}\int_{-\infty}^{+\infty}f(\phi_s
u_z)ds-\sum_{z\in\mathcal{Z}_+}\int_{-\infty}^{+\infty}f(\phi_s
u_z)ds\\&+ \sum_{\alpha\in\mathcal{A},\alpha\neq
\overline{\alpha}}\int_{-\infty}^{0}f(\phi_s
\gamma(\widehat{T}r_\alpha))ds-\sum_{\alpha\in\mathcal{A},\alpha\neq
\underline{\alpha}}\int_{-\infty}^{0}f(\phi_s\gamma(T
l_\alpha))ds\\&+ \sum_{\alpha\in\mathcal{A},\alpha\neq
\overline{\alpha}}\int^{+\infty}_{0}f(\phi_s
\gamma(r_\alpha))ds-\sum_{\alpha\in\mathcal{A},\alpha\neq
\underline{\alpha}}\int^{+\infty}_{0}f(\phi_s\gamma(l_\alpha))ds\\&
+\int_0^{\tau_{\gamma\gamma}(r_{\overline{\alpha}})}f(\phi_s\gamma(r_{\overline{\alpha}}))ds-
\int_0^{\tau_{\gamma\gamma}(l_{\underline{\alpha}})}f(\phi_s\gamma(l_{\underline{\alpha}}))ds.
\end{split}
\end{eqnarray}
Since $\underline{\alpha}=\pi^{-1}_1(1)$ and
$\overline{\alpha}=\pi^{-1}_0(d)$, in view of (\ref{zbzero}),
(\ref{zbjeden}),  we have
\begin{align*}\{r_\alpha:\alpha\in\mathcal{A},\;\alpha\neq
\overline{\alpha}\}&=
\{r_\alpha:\alpha\in\mathcal{A},\;\pi_0(\alpha)\neq d\}
=\{l_\alpha:\alpha\in\mathcal{A},\;\pi_0(\alpha)\neq 1\},\\
\{Tl_\alpha:\alpha\in\mathcal{A},\;\alpha\neq\underline{\alpha}\}&=
\{Tl_\alpha:\alpha\in\mathcal{A},\;\pi_1(\alpha)\neq 1\}=
\{\widehat{T}r_\alpha:\alpha\in\mathcal{A},\;\pi_1(\alpha)\neq
d\}.
\end{align*}
Moreover, $l_{\pi_0^{-1}(1)}=0=Tl_{\underline{\alpha}}$ and
$\widehat{T}r_{\pi_1^{-1}(d)}=|I|=r_{\overline{\alpha}}$. It
follows that
\begin{align*}
\sum_{\alpha\in\mathcal{A},\alpha\neq
\overline{\alpha}}&\int_{-\infty}^{0}f(\phi_s
\gamma(\widehat{T}r_\alpha))ds-\sum_{\alpha\in\mathcal{A},\alpha\neq
\underline{\alpha}}\int_{-\infty}^{0}f(\phi_s\gamma(T
l_\alpha))ds\\
=&\int_{-\infty}^{0}f(\phi_s
\gamma(r_{\overline{\alpha}}))ds-\int_{-\infty}^{0}f(\phi_s
\gamma(\widehat{T}r_{\overline{\alpha}}))ds,\\
\sum_{\alpha\in\mathcal{A},\alpha\neq
\overline{\alpha}}&\int^{+\infty}_{0}f(\phi_s
\gamma(r_\alpha))ds-\sum_{\alpha\in\mathcal{A},\alpha\neq
\underline{\alpha}}\int^{+\infty}_{0}f(\phi_s\gamma(l_\alpha))ds\\
=&\int^{+\infty}_{0}f(\phi_s\gamma(l_{\underline{\alpha}}))ds-
\int^{+\infty}_{0}f(\phi_s\gamma(Tl_{\underline{\alpha}}))ds.
\end{align*}
Since the negative semi-orbit of
$\widehat{T}r_{\overline{\alpha}}$ visits $r_{\overline{\alpha}}$
before approaching the fixed point $\overline{z}$ and the positive
semi-orbit of $l_{\underline{\alpha}}$ visits
$Tl_{\underline{\alpha}}$ before approaching the fixed point
$\underline{z}$ (see Figure~\ref{rys1}), we have
\begin{align*}
\int_{-\infty}^{0}f(\phi_s
\gamma(\widehat{T}r_{\overline{\alpha}}))ds-\int_{-\infty}^{0}f(\phi_s
\gamma(r_{\overline{\alpha}}))ds&=\int_{0}^{\tau_{\gamma\gamma}(r_{\overline{\alpha}})}f(\phi_s
\gamma(r_{\overline{\alpha}}))ds,\\
\int^{+\infty}_{0}f(\phi_s\gamma(l_{\underline{\alpha}}))ds-\int^{+\infty}_{0}f(\phi_s\gamma(Tl_{\underline{\alpha}}))ds
&=\int^{\tau_{\gamma\gamma}(l_{\underline{\alpha}})}_{0}f(\phi_s\gamma(l_{\underline{\alpha}}))ds.
\end{align*}
In view  of (\ref{dlugi}), it follows that
\[\int_I\varphi'(x)\,dx=\sum_{z\in\mathcal{Z}_-}\int_{-\infty}^{+\infty}f(\phi_s
u_z)ds-\sum_{z\in\mathcal{Z}_+}\int_{-\infty}^{+\infty}f(\phi_s
u_z)ds.\]
\end{proof}

\begin{remark}
Notice that, in view of Remark~\ref{istzal}, the assumption on the
vanishing of derivatives of $f$ at fixed points is necessary to
control the smoothness of $\varphi_f$.
\end{remark}

\begin{theorem}
Suppose that the IET $T$ is of periodic type. Let $f:M\to\R^\ell$
be a smooth function such that $f(x)=0$ for all
$x\in\mathcal{F}(\beta)$ and $\int_{\mathcal{T}}f\,d\nu=0$. If
$\theta_2(T)/\theta_1(T)<1/\ell$ then the flow $(\Phi_t)$ on
$\mathcal{T}\times\R^\ell$ is conservative.
\end{theorem}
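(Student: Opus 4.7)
The plan is to reduce conservativity of the extended flow to recurrence of the BV cocycle $\varphi$ over $T$, and then invoke Corollary~\ref{correc}.

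First, by Lemma~\ref{specrep}, the flow $(\Phi_t)$ on $(\mathcal{T}\times\R^\ell,\nu|_{\mathcal{T}}\times m_{\R^\ell})$ is measure-theoretically isomorphic to the special flow $(T_\varphi)^{\widehat{\tau}}$ over the skew product $T_\varphi$ with roof $\widehat{\tau}(x,y)=\tau(x)$, where $\varphi(x)=\int_0^{\tau(x)}f(\phi_s\gamma(x))\,ds$. By Remark~\ref{special}, conservativity of the base transformation $T_\varphi$ (on $I\times\R^\ell$ with the infinite invariant measure $\mu\times m_{\R^\ell}$) implies conservativity of the special flow. So it suffices to show that $T_\varphi$ is conservative, i.e.\ that $\varphi$ is a recurrent cocycle over $T$.

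Second, I would verify that $\varphi$ fits the hypotheses of Corollary~\ref{correc}. Since $f$ is smooth and vanishes on $\mathcal{F}(\beta)$, Theorem~\ref{twofunkcji} gives $\varphi\in\bv(\sqcup_{\alpha\in\mathcal{A}}I_\alpha,\R^\ell)$. Using (\ref{mean}) together with the hypothesis $\int_\mathcal{T} f\,d\nu=0$, we get
\[\int_I\varphi(x)\,dx=\int_{\mathcal{T}}f\,d\nu=0,\]
so $\varphi$ is a BV cocycle with zero mean.

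Finally, since $T$ is of periodic type and $\theta_2(T)/\theta_1(T)<1/\ell$, Corollary~\ref{correc} applies directly to $\varphi$, yielding that $\varphi$ is recurrent. Equivalently, $T_\varphi$ is conservative, and hence so is $(T_\varphi)^{\widehat{\tau}}$ and the isomorphic flow $(\Phi_t)$ on $\mathcal{T}\times\R^\ell$.

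The argument is mostly an assembly of earlier results, so I do not anticipate a serious obstacle; the only point that needs a moment of care is checking that the zero-mean hypothesis of Corollary~\ref{correc} really follows from $\int_\mathcal{T} f\,d\nu=0$ via the identity (\ref{mean}), and that Theorem~\ref{twofunkcji} indeed applies coordinatewise to the $\R^\ell$-valued function $f$ (which it does, as its statement is explicitly formulated for $\R^\ell$-valued cocycles under the assumption $f|_{\mathcal{F}(\beta)}=0$).
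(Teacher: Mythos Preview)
Your proof is correct and follows exactly the paper's approach: reduce via Lemma~\ref{specrep} to the special flow over $T_\varphi$, use Theorem~\ref{twofunkcji} and (\ref{mean}) to see that $\varphi$ is BV with zero mean, apply Corollary~\ref{correc} to get recurrence of $\varphi$, and conclude conservativity of the flow via Remark~\ref{special}.
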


\begin{proof}
By Lemma~\ref{specrep}, Theorem~\ref{twofunkcji} and (\ref{mean}),
the flow $(\Phi_t)$ on $\mathcal{T}\times\R$ is isomorphic to a
special flow built over the skew product $T_\varphi$, where
$\varphi:I\to\R^\ell$ is a function of bounded variation with zero
mean. In view of Corollary~\ref{correc}, the skew product is
conservative. Now the conservativity of $(\Phi_t)$ follows from
Remark~\ref{special}.
\end{proof}

Let $g$ be a Riemann metric on $M$. Let us consider $1$-form
$\vartheta^\beta\in\Omega^1(M\setminus \mathcal{F}(\beta))$ on
$M\setminus \mathcal{F}(\beta)$ defined by
\[\vartheta^\beta_xY=\frac{g_x(Y,X(x))}{g_x(X(x),X(x))}.\]
Then $\vartheta^\beta_xX(x)=1$, and hence
\[\int_{\{\phi_sx:s\in[a,b]\}}f\cdot\vartheta^\beta=\int_a^bf(\phi_sx)\cdot\vartheta^\beta_{\phi_sx}(X(\phi_sx))ds
=\int_a^bf(\phi_sx)ds.\] It follows that
\[\int_{\partial \mathcal{T}}f\cdot\vartheta^\beta=\sum_{z\in\mathcal{Z}_-}\int_{-\infty}^{+\infty}f(\phi_s
u_z)ds-\sum_{z\in\mathcal{Z}_+}\int_{-\infty}^{+\infty}f(\phi_s
u_z)ds=\int_I\varphi'(x)\,dx.\]

\begin{theorem}\label{thmmain1}
Suppose that the IET $T$ is of periodic type. Let $f:M\to\R$ be a
smooth function such that $f(x)=0$, $f'(x)=0$ and $f''(x)=0$ for
all $x\in\mathcal{F}(\beta)$,
\[\int_{\mathcal{T}}f\,d\nu=0\text{ and }\int_{\partial \mathcal{T}}f\cdot\vartheta^\beta\neq 0.\]
Then the corresponding flow $(\Phi_t)$ on $\mathcal{T}\times\R$ is
ergodic.
\end{theorem}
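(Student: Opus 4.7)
The plan is to reduce the flow ergodicity to the ergodicity of the skew product $T_\varphi$, and then to invoke the piecewise linear cocycle machinery developed in Section~\ref{piecewise}.

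First, by Lemma~\ref{specrep}, the flow $(\Phi_t)$ on $(\mathcal{T}\times\R,\nu|_{\mathcal{T}}\times m_\R)$ is measure-theoretically isomorphic to the special flow $(T_\varphi)^{\widehat{\tau}}$. Since $\widehat{\tau}(x,y)=\tau(x)$ depends only on $x$ and $\tau$ is $\mu$-integrable (it has only logarithmic singularities), ergodicity of $(T_\varphi)^{\widehat{\tau}}$ is equivalent to ergodicity of the base skew product $T_\varphi:I\times\R\to I\times\R$ (this is the standard Ambrose-type reduction already used implicitly in Remark~\ref{special}). So I reduce to proving that $T_\varphi$ is ergodic.

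Next, I gather what is known about $\varphi$ from the earlier work in Theorem~\ref{twofunkcji}. The hypotheses that $f$, $f'$, $f''$ all vanish on $\mathcal{F}(\beta)$ put us in the second case of that theorem, so $\varphi\in\bv^1(\sqcup_{\alpha\in\mathcal{A}}I_\alpha,\R)$. The zero-mean assumption $\int_{\mathcal{T}}f\,d\nu=0$ combined with (\ref{mean}) gives $\int_I\varphi\,d\mu=0$. Finally, the computation of $\int_{\partial\mathcal{T}}f\cdot\vartheta^\beta$ carried out just before the statement identifies the slope $s(\varphi)=\int_I\varphi'(x)\,dx$ (up to sign) with $\int_{\partial\mathcal{T}}f\cdot\vartheta^\beta$, which is nonzero by assumption. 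Hence $\varphi$ is an element of $\bv^1_*(\sqcup_{\alpha\in\mathcal{A}}I_\alpha)$ with nonzero slope is false — actually $s(\varphi)\neq 0$, so it lives in $\bv^1(\sqcup_{\alpha\in\mathcal{A}}I_\alpha)$ with $s(\varphi)\neq 0$.

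Now I invoke Lemma~\ref{lempl}: since $T$ is of periodic type, $\varphi$ is cohomologous to a piecewise linear cocycle $\varphi_{pl}\in\pl(\sqcup_{\alpha\in\mathcal{A}}I_\alpha)$ with $s(\varphi_{pl})=s(\varphi)\neq 0$. Since cohomologous cocycles yield isomorphic skew products, it suffices to prove ergodicity of $T_{\varphi_{pl}}$. But this is exactly the content of Theorem~\ref{kawallin} (the $\ell=1$ case of~\ref{kawallin2}): any piecewise linear zero-mean cocycle with nonzero slope over an IET of periodic type gives rise to an ergodic skew product. Hence $T_\varphi$ is ergodic, whence so is $(T_\varphi)^{\widehat{\tau}}$ and therefore $(\Phi_t)$.

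I expect no serious obstacle in this chain; the substantive work has already been done in Theorem~\ref{twofunkcji} (regularity and formula for $s(\varphi)$) and Theorem~\ref{kawallin} (ergodicity of piecewise linear skew products). The only delicate point worth double-checking is the reduction from the special flow $(T_\varphi)^{\widehat{\tau}}$ to the base $T_\varphi$: since the roof function $\tau$ is unbounded but $L^1$, and the base measure $\mu\times m_\R$ is infinite, I need to verify (or cite) that conservativity and ergodicity pass between base and special flow in this setting, exactly as claimed in Remark~\ref{special}.
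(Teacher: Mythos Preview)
Your argument is correct and follows the paper's proof essentially verbatim: reduce via Lemma~\ref{specrep} to the skew product $T_\varphi$, use Theorem~\ref{twofunkcji} and (\ref{mean}) to get $\varphi\in\bv^1$ with zero mean and $s(\varphi)\neq 0$, apply Lemma~\ref{lempl} to replace $\varphi$ by a cohomologous $\varphi_{pl}\in\pl$, invoke Theorem~\ref{kawallin}, and lift ergodicity back to the flow via Remark~\ref{special}. The only cosmetic point is that the identification $s(\varphi)=\int_{\partial\mathcal{T}}f\cdot\vartheta^\beta$ holds exactly (not merely up to sign), and your mid-sentence self-correction about $\bv^1_*$ should be cleaned up.
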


\begin{proof}
By Lemma~\ref{specrep},  Theorem~\ref{twofunkcji}  and
(\ref{mean}), the flow $(\Phi_t)$ on $\mathcal{T}\times\R$ is
isomorphic to a special flow built over the skew product
$T_\varphi$,  where $\varphi\in
\bv^1(\sqcup_{\alpha\in\mathcal{A}}I_\alpha)$ has zero mean and
\[s(\varphi)=\int_I\varphi'(x)dx=\int_{\partial
\mathcal{T}}f\cdot\vartheta^\beta\neq 0.\]  By Lemma~\ref{lempl},
the cocycle $\varphi$ is cohomologous to a cocycle $\varphi_{pl}\in
\pl(\sqcup_{\alpha\in\mathcal{A}}I_\alpha)$ with
$\int\varphi_{pl}(x)\,dx=0$ and $s(\varphi_{pl})=s(\varphi)\neq 0$.
In view of Theorem~\ref{kawallin}, the skew product
$T_{\varphi_{pl}}$ is ergodic. Consequently, the skew product
$T_\varphi$ and hence the flow $(\Phi_t)$ on $\mathcal{T}\times\R$
are ergodic, by Remark~\ref{special}.
\end{proof}

Suppose that the IET $T$ is of periodic type and
$\theta_2(T)/\theta_1(T)<1/\ell$ ($\ell\geq 2$). Let
$f:M\to\R^\ell$ be a smooth function such that $f(x)=0$, $f'(x)=0$
and $f''(x)=0$ for all $x\in\mathcal{F}(\beta)$,
\[\int_{\mathcal{T}}f\,d\nu=0\text{ and }\R^\ell\ni v=\int_{\partial \mathcal{T}}f\cdot\vartheta^\beta\neq 0.\]
Let $a_2,\ldots,a_\ell$ be a basis of the subspace $\{v\}^\perp$
and let $f_a:M\to\R^{\ell-1}$ be given by $f_a=(\langle a_2,
f\rangle,\ldots,\langle a_\ell, f\rangle)$.

\begin{theorem}\label{thmmain2}
If the flow $(\Phi^{f_a}_t)$ on $\mathcal{T}\times\R^{\ell-1}$ is
ergodic then $(\Phi^{f}_t)$ on $\mathcal{T}\times\R^\ell$ is
ergodic.
\end{theorem}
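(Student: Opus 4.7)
The plan is to reduce the statement to Theorem~\ref{kawallin2} by passing to a skew-product representation and performing a linear change of coordinates on the target~$\R^\ell$. By Lemma~\ref{specrep} combined with Theorem~\ref{twofunkcji}, the flow $(\Phi^f_t)$ on $\mathcal{T}\times\R^\ell$ is measure-theoretically isomorphic to the special flow $(T_\varphi)^{\widehat{\tau}}$, where the roof is $\widehat{\tau}(x,y)=\tau(x)$ and $\varphi\in \bv^1(\sqcup_{\alpha\in\mathcal{A}}I_\alpha,\R^\ell)$ has zero mean by (\ref{mean}) and slope $s(\varphi)=\int_{\partial\mathcal{T}}f\cdot\vartheta^\beta=v\neq 0$. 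In view of Remark~\ref{special}, it suffices to prove ergodicity of the skew product $T_\varphi$. Exactly the same construction applied to $f_a$ in place of $f$ produces the cocycle $\varphi_a(x)=\bigl(\langle a_j,\varphi(x)\rangle\bigr)_{j=2}^\ell$ (by linearity of the orbit integral), so the assumed ergodicity of $(\Phi^{f_a}_t)$ is equivalent to ergodicity of the skew product $T_{\varphi_a}$.

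Set $a_1:=v$ and consider the linear isomorphism $L\colon\R^\ell\to\R^\ell$ given by $Ly=(\langle a_i,y\rangle)_{i=1}^\ell$; it is invertible because $\langle a_1,v\rangle=|v|^2\neq 0$. The measure-space isomorphism $(x,y)\mapsto(x,Ly)$ conjugates $T_\varphi$ with $T_{L\varphi}=T_{(\varphi_1,\varphi_a)}$ where $\varphi_1:=\langle a_1,\varphi\rangle\in \bv^1(\sqcup_{\alpha\in\mathcal{A}}I_\alpha,\R)$. By construction $s(\varphi_1)=\langle a_1,v\rangle=|v|^2\neq 0$ while $s(\varphi_a)=0$ since $a_j\perp v$ for $j\geq 2$. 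Thus it is enough to prove that $T_{(\varphi_1,\varphi_a)}$ is ergodic, knowing that $T_{\varphi_a}$ is ergodic.

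Applying Lemma~\ref{lempl} to the scalar cocycle $\varphi_1$ and componentwise to the vector cocycle $\varphi_a$, we obtain piecewise linear cocycles $\varphi_{1,pl}\in\pl(\sqcup_{\alpha\in\mathcal{A}}I_\alpha,\R)$ and $\varphi_{a,pc}\in\pl(\sqcup_{\alpha\in\mathcal{A}}I_\alpha,\R^{\ell-1})$, cohomologous to $\varphi_1$ and $\varphi_a$ respectively, and preserving slopes: $s(\varphi_{1,pl})=|v|^2\neq 0$ and $s(\varphi_{a,pc})=0$ (so $\varphi_{a,pc}$ is in fact piecewise constant). Assembling the two coboundary equations into a single $\R^\ell$-valued coboundary shows that $(\varphi_1,\varphi_a)$ is cohomologous to $(\varphi_{1,pl},\varphi_{a,pc})$, and since cohomology preserves the group of essential values, $T_{\varphi_{a,pc}}$ is ergodic. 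The hypothesis $\theta_2(T)/\theta_1(T)<1/\ell$ now allows us to apply Theorem~\ref{kawallin2} to the pair $(\varphi_{1,pl},\varphi_{a,pc})$, yielding the ergodicity of $T_{(\varphi_{1,pl},\varphi_{a,pc})}$, hence of $T_{(\varphi_1,\varphi_a)}$, hence of $T_\varphi$, and finally of $(\Phi^f_t)$ by Remark~\ref{special}.

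The only non-routine bookkeeping is the identification of the $\R^{\ell-1}$-component of $L\varphi$ with the cocycle attached to $f_a$, which rests solely on the linearity of the construction in Theorem~\ref{twofunkcji}; every other step is a direct invocation of earlier results, so I do not anticipate a genuine obstacle.
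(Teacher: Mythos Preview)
Your proof is correct and follows essentially the same approach as the paper's: reduce to the skew product, make a linear change of coordinates in $\R^\ell$ so that the slope vector has only a first component, pass to piecewise linear representatives via Lemma~\ref{lempl} (equivalently Proposition~\ref{cohcon}), and invoke Theorem~\ref{kawallin2}. The paper abbreviates the coordinate change as ``without loss of generality $v=(1,0,\ldots,0)$'', whereas you carry it out explicitly with the map $L$; otherwise the arguments coincide.
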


\begin{proof}
Without loss of generality we can assume that $v=(1,0,\ldots,0)$,
$a_2=(0,1,0,\ldots,0),\ldots,a_\ell=(0,\ldots,0,1)$. Then
$\varphi=(\varphi_1,\varphi_2)$, where $\varphi_1:I\to\R$,
$\varphi_2:I\to\R^{\ell}$ are functions with
$\int_I\varphi_1'(x)\,dx\neq 0$ and $\int_I\varphi_2'(x)\,dx=0$.
Applying Proposition~\ref{cohcon} we can pass to cohomological
cocycles which are piecewise linear with constant slope. Now we
can apply Theorem~\ref{kawallin2} to prove the ergodicity of
$T_\varphi$ which implies the ergodicity of the flow $(\Phi_t)$ on
$\mathcal{T}\times\R^\ell$.
\end{proof}

\vskip 3mm
\section{Examples of ergodic extensions of multivalued
Hamiltonian flows} \label{constmulti} In this section we will
apply Theorems~\ref{ergpiconsper}, \ref{thmmain1} and
\ref{thmmain2} to construct explicit examples of recurrent ergodic
extensions of multivalued Hamiltonian flows.

\subsection{Construction of multivalued Hamiltonians}

\hfill \break Let $T=T_{(\pi,\lambda)}:I\to I$ be an arbitrary IET
satisfying the Keane condition. We begin this section by recalling
a recipe for constructing multivalued Hamiltonians such that the
corresponding flows have special representation over $T$. Let us
start from any translation surface $(M,\alpha)$ built over $T$ by
applying the zipped rectangles procedure (see \cite{Ve1} or
\cite{ViB}). Denote by $\Sigma=\{p_1,\ldots,p_{\kappa}\}$ the set
of singular point of $(M,\alpha)$. Let $J\subset M\setminus
\Sigma$ be a curve transversal to the vertical flow and such that
the first return map to $J$ is $T$. We will constantly identify
$J$ with the interval $I$. Denote by $S\subset M$ the union of
segments of all separatrices connecting singular points with $J$.

We will consider so called  regular adapted coordinates on
$M\setminus\Sigma$, this is coordinates $\zeta$ relatively to which
$\alpha_\zeta=d\zeta$. If $p\in \Sigma$ is a singular point with
multiplicity $m\geq 1$ then we consider singular adapted coordinates
around $p$, this is coordinates $\zeta$ relatively to which
$\alpha_\zeta=d\frac{\zeta^{m+1}}{m+1}=\zeta^m\,d\zeta$. Then all
changes of regular coordinates are given by translations. If
$\zeta'$ is a regular adapted coordinate and $\zeta$ is a singular
adapted coordinate, then $\zeta'=\zeta^{m+1}/(m+1)+c$. Let us
consider the vertical vector field $Y$ and the associated vertical
flow $(\psi_t)_{t\in\R}$ on $(M,\alpha)$, this is $\alpha_{x}Y(x)=i$
and  $\frac{d}{dt}\psi_tx=Y(\psi_tx)$ for $x\in M\setminus\Sigma$.
Then for a regular adapted coordinate $\zeta$ we have $Y(\zeta)=i$
and  $\psi_t\zeta=\zeta+it$. Moreover, for a singular adapted
coordinate $\zeta$ we have $\zeta^mY(\zeta)=i$, and hence
$Y(\zeta)=\frac{i\overline{\zeta}^m}{|\zeta|^{2m}}$.

For each $\vep>0$ and $p\in\Sigma$ denote by $B_\vep(p)$ the
$\vep$ open ball of center $p$ and let
$g=g_{\vep}:[0,+\infty)\to[0,1]$ be a monotonic
$C^\infty$-function such that $g(x)=x$ for $x\in[0,\vep]$ and
$g(x)=1$ for $x\geq 2\vep$. Fix $\vep>0$ small enough. In what
follows, we will deal with regular adapted coordinates on
$M\setminus\bigcup_{p\in\Sigma}B_{2\vep}(p)$ and singular adapted
coordinates on $B_{3\vep}(p)$ for $p\in\Sigma$. Let us consider a
 tangent $C^\infty$-vector field $\tilde{Y}$ on $M$ such that in adapted
coordinates $\zeta$ we have
\[\tilde{Y}(\zeta)=\left\{
\begin{matrix} Y(\zeta)=i,&\text{ on }& M\setminus\bigcup_{p\in\Sigma}B_{2\vep}(p),\\
\frac{g(|\zeta|)^{2m}i\overline{\zeta}^m}{|\zeta|^{2m}},&\text{ on
}&B_{3\vep}(p),\;p\in\Sigma.
\end{matrix}\right.\]

Denote by $(\tilde{\psi}_t)_{t\in\R}$ the associated $C^\infty$-flow
on $M$. Then $(\tilde{\psi}_t)_{t\in\R}$ on $M\setminus\Sigma$ is
obtained by a $C^{\infty}$ time change in the vertical flow
$(\psi_t)_{t\in\R}$, and $(\tilde{\psi}_t)_{t\in\R}$ coincides with
$(\psi_t)_{t\in\R}$ on $M\setminus\bigcup_{p\in\Sigma}B_{2\vep}(p)$.

Denote by $\tilde{\omega}$ the symplectic $C^\infty$-form  on $M$
such that in adapted coordinates $\zeta=x+iy$ we have
\[\tilde{\omega}_\zeta=\left\{
\begin{matrix} dx\wedge dy,&\text{ on }& M\setminus\bigcup_{p\in\Sigma}B_{2\vep}(p),\\
\frac{|\zeta|^{2m}}{g(|\zeta|)^{2m}}dx\wedge dy,&\text{ on
}&B_{3\vep}(p),\;p\in\Sigma.
\end{matrix}\right.\]

Let us consider the $C^\infty$ $1$-form on $M$ given by
$\tilde{\beta}=i_{\tilde{Y}}\tilde{\omega}$. Then in adapted
coordinates $\zeta=x+iy$ we have
\[\tilde{\beta}_{\zeta}=\left\{
\begin{matrix} -dx,&\text{ on }& M\setminus\bigcup_{p\in\Sigma}B_{2\vep}(p),\\
-\Re{\zeta}^mdx+\Im{\zeta}^mdy,&\text{ on
}&B_{3\vep}(p),\;p\in\Sigma.
\end{matrix}\right.\]

By Cauchy-Riemann equations, $\frac{\partial}{\partial
y}\Re{\zeta}^m+\frac{\partial}{\partial x}\Im{\zeta}^m=0$, and
hence $d\tilde{\beta}=0$. Therefore $(\tilde{\psi}_t)_{t\in\R}$ is
a multivalued Hamiltonian $C^\infty$-flow whose orbits on
$M\setminus\Sigma$ coincide with orbits of the vertical flow. It
follows that $(\tilde{\psi}_t)_{t\in\R}$ has a special
representation over the IET $T_{(\pi,\lambda)}$. If the
multiplicity of a singularity $p\in\Sigma$ is equal to $m=1$ then
in singular adapted coordinates $\zeta=x+iy$ on $B_{\vep}(p)$ we
have $\tilde{\beta}=-xdx+ydy$, and hence the multivalued
Hamiltonian $\widehat{H}$ is equal to
$\widehat{H}(x,y)=(y^2-x^2)/2+const$, so $p$ is a non-degenerated
critical point of $\widehat{H}$.

Let us consider the symplectic form $\nu=ce^{2x}dx\wedge dy$, $c\neq
0$ on the disk $D=\{(x,y)\in\R^2:(x-1/2)^2+y^2\leq (3/2)^2\}$ and
the Hamilton differential equation
\[\frac{dx}{dt}=-y,\;\;\frac{dy}{dt}=x(x-1)+y^2.\]
Then the function $-ce^{2x}((x-1)^2+y^2)/2$ is the corresponding
Hamiltonian. Denote by $(h_t)$ the associated local Hamiltonian
flow. It has two critical points: $z_0=(0,0)$ is a non-degenerated
saddle and $(1,0)$ is a center. The point $(0,0)$ has a loop
saddle connection which coincides with the curve
$e^{2x}((x-1)^2+y^2)=1$, $x\geq 0$. Inside this loop connection
all trajectories of $(h_t)$ are periodic (see Figure~\ref{rys2}).
Such domains  are called traps.
\begin{figure}[h]
\begin{center}
\resizebox{6cm}{!}{\includegraphics{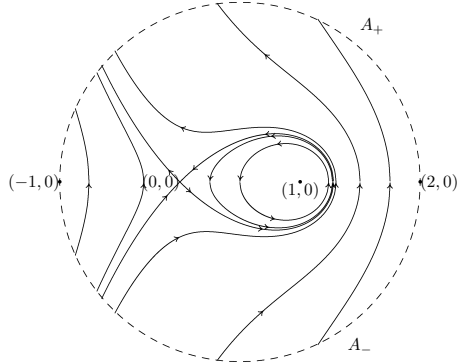}} \caption{The
phase portrait of the Hamiltonian flow for $c>0$\label{rys2}}
\end{center}
\end{figure}
\noindent It is easy to show that the corresponding Hamiltonian
vector field $Z$ does not vanish on $\partial D$, and that it has
two contact points $(2,0)$ and $(-1,0)$ and two arcs $A_+$ and
$A_-$ connecting them with the same length (with respect to
$\nu$). Let us cut out from $M\setminus S$ a disk $B_{\delta}(q)$,
$\delta>0$ such that $B_{2\delta}(q)$ is disjoint from the
transversal curve $J$ and from each $B_{3\vep}(p)$, $p\in\Sigma$.
The vector field $\tilde{Y}$ does not vanish on $\partial
B_{\delta}(q)$, has two contact points and two arcs $\tilde{A}_+$
and $\tilde{A}_-$ connecting them with the same length (with
respect to $\tilde{\omega}$). Choose $c\neq 0$ such that all four
arcs ${A}_+$, ${A}_-$, $\tilde{A}_+$ and $\tilde{A}_-$ have the
same length. Note that $c$ is unique up to sign. Therefore, by
Lemma~1 in \cite{Bl}, there exists a $C^{\infty}$-diffeomorphism
$f:\partial D\to\partial B_{\delta}(q)$, a symplectic
$C^{\infty}$-form $\omega$ on $(M\setminus B_{\delta}(q))\cup_f D$
and a tangent $C^{\infty}$ vector field $X$ such that
\begin{itemize}
\item $\mathcal{L}_{X}\omega=0$;
\item $\omega=\tilde{\omega}$ and $X=\tilde{Y}$ on $M\setminus B_{2\delta}(q)$;
\item $\omega=\nu$ and $X=Z$ on $D$;
\item the orbits of $X$ on $M\setminus B_{2\delta}(q)$ are pieces
of orbits of the flow $(\tilde{\psi}_t)$.
\end{itemize}
Of course, $(M\setminus B_{\delta}(q))\cup_f D$ is diffeomorphic
to $M$, and so the vector field $X$ and the symplectic form
$\omega$ can be considered on $M$. Since
$d(i_{X}\omega)=\mathcal{L}_{X}\omega=0$, $X$ is a Hamiltonian
vector field with respect to $\omega$. Denote by
$(\phi_t)_{t\in\R}$ the Hamilton flow associated to $X$. Since the
dynamics of $(\phi_t)_{t\in\R}$ and $(\psi_t)_{t\in\R}$ coincide
on $M\setminus(\bigcup_{p\in\Sigma}B_{2\vep}(p)\cup
B_{2\delta}(q))$ and $J\subset
M\setminus(\bigcup_{p\in\Sigma}B_{2\vep}(p)\cup B_{2\delta}(q))$,
the first return map to $J$ for $(\phi_t)_{t\in\R}$ is $T$. Denote
by $\gamma\in I$ the first backward intersection  with $J$ of the
separatrix incoming to $z_0$. Note that $\gamma$ may be an
arbitrary point of $I$ different from the ends of the exchanged
intervals. It suffices to choose the point $q\in M\setminus S$ and
$\delta>0$ carefully enough. Recall that the saddle point $z_0$
has a loop connection which will be denoted by
$\sigma_{loop}(z_0)$. Then the orientation of $\sigma_{loop}(z_0)$
is positive if $c>0$ and negative if $c<0$.

\begin{remark}\label{istpotoku}
We can repeat  the procedure of producing new loop connections
(positively or negatively oriented) as many times as we want.
Therefore for any collection of distinct points
$\{\gamma_1,\ldots,\gamma_s\}\subset\bigcup_{\alpha\in\mathcal{A}}\Int
I_\alpha$ and $\delta>0$ small enough we can construct a
multivalued Hamiltonian flow  $(\phi_t)_{t\in\R}$ on $M$ which has
$s$ non-degenerated saddle critical points $z_1,\ldots,z_s$ such
that each $z_i$ has a loop connection $\sigma_{loop}(z_i)$
included in $B_{\delta}(z_i)$ for $i=1,\ldots,s$. Moreover,
$(\psi_t)_{t\in\R}$ and $(\phi_t)_{t\in\R}$ coincide on
$M\setminus(\bigcup_{p\in\Sigma}B_{2\vep}(p)\cup \bigcup_{i=1}^s
B_{2\delta}(z_i)))$ and $\gamma_i\in I$ corresponds to the first
backward intersection with $J$ of the separatrix incoming to $z_i$
for $i=1,\ldots,s$.

We denote by $Trap_i$ the trap corresponding to $z_i$, by
$\epsilon(z_i)\in\{-,+\}$ the sign of the orientation of
$\sigma_{loop}(z_i)$ for $i=1,\ldots,s$, and by $\mathcal{T}$ the
surface $M$ with the interior of the traps $Trap_i$, $i=1,\ldots,s$
removed.
\end{remark}

\begin{remark}\label{geom}
Choose $0<\delta'<\delta$  such that
$\sigma_{loop}(z_i)\cap(M\setminus B_{\delta'}(z_i))\neq
\emptyset$ for $i=1,\ldots,s$. Let $f:M\to\R$ be a
$C^{\infty}$-function with $\int_{\mathcal{T}}f\omega=0$ and such
that $f$ vanishes on each $B_{2\vep}(p)$, $p\in\Sigma$ and
$B_{\delta}(z_i)$, $i=1,\ldots,s$. Then the corresponding function
\[\varphi_f : I\to\R,\;\;\;
\varphi_f(x)=\varphi(x)=\int_{0}^{\tau(x)}f(\phi_tx)\,dt\]
($\tau:I\to \R_+$ is the first-return time map of the flow
$(\phi_t)_{t\in\R}$ to $J$) can be extended to a
$C^{\infty}$-function on the closure of any interval of the
partition
$\mathcal{P}(\{l_{\alpha}:\alpha\in\mathcal{A}\}\cup\{\gamma_i:i=1,\ldots,s\})$.
Moreover,
\begin{equation}\label{relfphi}
d_i(f):=\varphi_+(\gamma_i)-\varphi_-(\gamma_i)=\epsilon(z_i)
\int_{-\infty}^{+\infty}f(\phi_tu_{z_i})\,dt \text{ and
}s(\varphi)=\sum_{i=1}^sd_i(f),
\end{equation}
where $u_{z_i}$ is an arbitrary point of $\sigma_{loop}(z_i)$ for
$i=1,\ldots,s$.
\end{remark}

\begin{lemma}\label{sskokow}
 For every $(d_1,\ldots,d_s)\in\R^s$ there exists a $C^{\infty}$-function
$f:M\to\R$ which vanishes on a neighborhood of each fixed point of
$(\phi_t)$ such that $\int_{\mathcal{T}}f\omega=0$ and
$(d_1(f),\ldots,d_s(f))=(d_1,\ldots,d_s)$.
\end{lemma}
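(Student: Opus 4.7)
The plan is to construct $f$ as a sum of localized bump functions along the loops plus one correction bump away from everything, exploiting the formula from Remark~\ref{geom}, namely $d_i(f) = \epsilon(z_i)\int_{-\infty}^{+\infty} f(\phi_t u_{z_i})\,dt$. The key observation is that each loop $\sigma_{loop}(z_i)$ lies near $z_i$ and, for $\delta'$ as in Remark~\ref{geom}, a nontrivial arc of it lies outside $B_{\delta'}(z_i)$, so we have room to place supports of functions there without violating the vanishing requirement near fixed points.

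First I would pick, for each $i=1,\ldots,s$, a point $u_{z_i}\in \sigma_{loop}(z_i)\setminus B_{\delta'}(z_i)$, and then choose pairwise disjoint flow-box neighborhoods $V_i$ of $u_{z_i}$, all disjoint from the balls $B_{\delta'}(z_j)$ and $B_{2\vep}(p)$ on which $f$ must vanish, and also disjoint from $\sigma_{loop}(z_j)$ for $j\neq i$ (possible since the loops sit in disjoint balls $B_\delta(z_j)$). Inside $V_i$ I would build a nonnegative $C^\infty$ bump $\rho_i$, supported in $V_i$, normalized by
\[\int_{-\infty}^{+\infty}\rho_i(\phi_t u_{z_i})\,dt = 1;\]
concretely, in adapted flow-box coordinates $(t,y)$ on $V_i$, take $\rho_i(t,y)=a(t)b(y)$ with $a$ a standard bump in the flow direction, $b$ a transversal bump with $b(0)\neq 0$, and rescale. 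Set $f_i:=\epsilon(z_i)d_i\rho_i$; then, because the orbit $\{\phi_tu_{z_j}\}$ is disjoint from $V_i$ for $j\neq i$, we immediately get $d_j\bigl(\sum_i f_i\bigr)=d_j$ for every $j$.

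Next I would correct the total integral. Fix a regular point $q$ of $(\phi_t)$ lying in the interior of $\mathcal{T}$, not on any loop $\sigma_{loop}(z_i)$ and away from all $B_{2\vep}(p)$ and all $V_i$, and choose a $C^\infty$ bump $\rho_0\geq 0$ supported in a small neighborhood $V_0$ of $q$ (disjoint from all the above) with $\int_{\mathcal{T}}\rho_0\,\omega=1$. Define
\[f := \sum_{i=1}^{s} \epsilon(z_i)\,d_i\,\rho_i \;-\; \Bigl(\sum_{i=1}^{s}\int_{\mathcal{T}}\epsilon(z_i)d_i\rho_i\,\omega\Bigr)\rho_0.\]
By construction $f\in C^\infty(M)$, $f$ vanishes on a neighborhood of each fixed point in $\Sigma\cup\{z_1,\ldots,z_s\}$, and $\int_{\mathcal{T}}f\,\omega=0$. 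Because $V_0$ is disjoint from every $\sigma_{loop}(z_i)$, the correction term contributes $0$ to each $d_i(f)$, so $d_i(f)=\epsilon(z_i)^2 d_i=d_i$ as required.

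I do not foresee a serious obstacle: the only subtlety is making sure the chosen supports respect all the disjointness conditions (fixed-point neighborhoods, mutually disjoint loops, transversal location of $q$), but this is straightforward once the $V_i$ and $V_0$ are taken sufficiently small. The normalization step for $\rho_i$ relies only on the fact that $u_{z_i}$ is a regular point of $(\phi_t)$, so that flow-box coordinates exist on $V_i$ and the integral along the orbit through $u_{z_i}$ can be made equal to any prescribed positive value.
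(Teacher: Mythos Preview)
Your argument is correct and follows essentially the same idea as the paper: place bumps supported on the annulus $B_{2\delta'}(z_i)\setminus B_{\delta'}(z_i)$ where the loop $\sigma_{loop}(z_i)$ passes, and use the formula $d_i(f)=\epsilon(z_i)\int_{\R}f(\phi_t u_{z_i})\,dt$ from Remark~\ref{geom}. The only difference is cosmetic: the paper arranges each local modification on the annulus to already satisfy $\int_{(B_{2\delta'}(z_i)\setminus B_{\delta'}(z_i))\setminus Trap_i}f\,\omega=0$, so no separate correction term is needed, whereas you fix the integral afterwards with an extra bump $\rho_0$ supported away from all loops and fixed points; both devices are equally elementary.
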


\begin{proof}Let us start from  $f\equiv 0$.  Since $\sigma_{loop}(z_i)\cap
(B_{2\delta'}(z_i)\setminus B_{\delta'}(z_i))\neq \emptyset$, we
can modify $f$ smoothly on  $ B_{2\delta'}(z_i)\setminus
B_{\delta'}(z_i)$ such that
\[\epsilon(z_i)\int_{-\infty}^{+\infty}f(\phi_tu_{z_i})\,dt=d_i\text{ and }
\int_{(B_{2\delta'}(z_i)\setminus B_{\delta'}(z_i))\setminus
Trap_i}f\omega=0\] for $i=1,\ldots,s$.  In view of
(\ref{relfphi}), it follows that $d_i(f)=d_i$ for $i=1,\ldots,s$.
Moreover,
\[\int_{\mathcal{T}}d\omega=\sum_{i=1}^s\int_{(B_{2\delta'}(z_i)\setminus B_{\delta'}(z_i))\setminus
Trap_i}f\omega=0.\]
\end{proof}

\begin{lemma}\label{lematkoh}
For every $h\in H_\pi$ there exists a $C^{\infty}$-function
$f:M\to\R$ such that
$\varphi_f=\sum_{\alpha\in\mathcal{A}}h_\alpha\chi_{I_\alpha}$
(cf.\ Remark~\ref{geom}). If $h\in H_\pi\cap\Gamma_0$ then
$\int_{\mathcal{T}}f\omega=0$.
\end{lemma}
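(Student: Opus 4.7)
The plan is to realize the prescribed vector $h \in H_\pi$ geometrically, by integrating a carefully chosen smooth closed $1$-form against orbit arcs of $(\phi_t)$. First I would invoke the standard Veech-theoretic identification of $H_\pi = \Omega_\pi(\R^\mathcal{A})$ with the absolute cohomology $H^1(M,\R)$ (see \cite{ViB}): in this identification, evaluating a closed $1$-form on the basis cycles $\sigma_\alpha \in H_1(M,\R)$ coming from the zipped rectangles decomposition of $(M,\alpha)$ recovers its image in $\R^\mathcal{A}$, with image exactly $H_\pi$. Accordingly I would produce a smooth closed $1$-form $\eta_0$ on $M$ with $\int_{\sigma_\alpha}\eta_0 = h_\alpha$ for every $\alpha\in\mathcal{A}$.

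Second, I would modify $\eta_0$ inside its cohomology class, replacing it with $\eta = \eta_0 - dG$ for a smooth function $G:M\to\R$ chosen so that the new form $\eta$ vanishes identically on a neighbourhood of the transversal curve $J$ and on small disk neighbourhoods of every fixed point of $(\phi_t)$ --- i.e.\ on neighbourhoods of the points of $\mathcal{F}(\beta)$ together with the artificial saddles $z_1,\ldots,z_s$ living in the traps of Remark~\ref{istpotoku}. Such a $G$ exists because $\eta_0$ is locally exact on each of these simply connected open sets, and the local primitives can be glued via a partition of unity and extended smoothly to $M$. Since $dG$ is exact, the periods of $\eta$ along any closed cycle, in particular along each $\sigma_\alpha$, are unchanged.

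Third, I would set $f := \eta(X)$, where $X$ is the Hamiltonian vector field. Since $\eta$ is smooth on $M$ and vanishes near every fixed point, $f$ is a $C^\infty$ function on $M$ vanishing on a neighbourhood of every fixed point of the flow, so it falls into the framework of Remark~\ref{geom}. For $x\in\Int I_\alpha$ the definition of $\varphi_f$ gives
\[\varphi_f(x) = \int_0^{\tau(x)} \eta_{\phi_s\gamma(x)}\bigl(X(\phi_s\gamma(x))\bigr)\,ds = \int_{\gamma_x}\eta,\]
where $\gamma_x$ denotes the oriented flow arc from $\gamma(x)$ to $\gamma(Tx)$. For $x,x'\in I_\alpha$ the arcs $\gamma_x$ and $\gamma_{x'}$, together with the two sub-arcs of $J$ joining their endpoints, bound an embedded rectangle in $M$; since $d\eta=0$ and $\eta$ vanishes on a neighbourhood of $J$, Stokes' theorem forces $\int_{\gamma_x}\eta = \int_{\gamma_{x'}}\eta$, so $\varphi_f$ is constant on $I_\alpha$. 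Closing a reference arc $\gamma_{x_0}$ through $J$ produces the cycle $\sigma_\alpha$, with $\int_{\sigma_\alpha}\eta = h_\alpha$ by construction (the $J$-part contributing zero), and hence $\varphi_f \equiv h_\alpha$ on $I_\alpha$.

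Finally, the mean statement is essentially automatic: via the special-flow isomorphism of Lemma~\ref{specrep},
\[\int_\mathcal{T} f\,\omega = \int_I \varphi_f(x)\,dx = \sum_{\alpha\in\mathcal{A}} h_\alpha\lambda_\alpha = \langle h,\lambda\rangle,\]
which vanishes exactly when $h\in\Gamma_0$, since $\Gamma_0^{(0)}=\operatorname{Ann}(\lambda)$. The crux of the whole argument is the very first step --- realising an arbitrary $h\in H_\pi$ as the period vector of a smooth closed $1$-form on $M$; this relies on the classical Veech identification of $H_\pi$ with $H_1(M,\R)$. Once this geometric input is granted, the rest (local exactness near $J$ and the singularities, then Stokes' theorem) is routine differential topology.
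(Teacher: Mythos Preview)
Your proof is correct and follows essentially the same route as the paper: realize $h\in H_\pi$ as the period vector of a smooth closed $1$-form (via the isomorphism $H^1(M,\R)\cong H_\pi$ from \cite{ViB}), arrange that the form vanishes near $J$, set $f=\eta(X)$, and read off $\varphi_f|_{I_\alpha}=h_\alpha$ from closedness. The paper's only streamlining is to observe directly that the closed curve $v_x$ (orbit arc plus $J$-segment) satisfies $[v_x]=[v_\alpha]$, which gives $\varphi_f(x)=\int_{v_x}\varrho=h_\alpha$ in one step rather than via your Stokes' argument on rectangles; also, your extra normalization making $\eta$ vanish near the fixed points is harmless but unnecessary, since $f=\eta(X)$ already vanishes there with $X$.
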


\begin{proof} Following \cite{ViB}, for every $\alpha\in\mathcal{A}$ denote by $[v_\alpha]\in
H_1(M,\R)$ the homology class of any closed curve $v_\alpha$
formed by a segment of the orbit for $(\psi_t)_{t\in\R}$ starting
at any point $x\in\Int I_{\alpha}$ and ending at $Tx$ together
with the segment of $J$ that joins $Tx$ and $x$. Let
$\Psi:H^1(M,\R)\to\R^{\mathcal{A}}$ be given by
$\Psi([\varrho])=(\int_{v_\alpha}\varrho)_{\alpha\in\mathcal{A}}$.
By Lemma~2.19 in \cite{ViB}, the map $\Psi:H^1(M,\R)\to H_\pi$
establishes the isomorphism of linear spaces. Therefore for every
$h\in H_\pi$ there exists a closed $1$-form $\varrho$ such that
$\Psi([\varrho])=h$ and $\varrho$ vanishes on an open neighborhood
of $J$. Let $f:M\to\R$ be given by $f(x)=\varrho_xX_x$ for $x\in
M$.

For every $x\in\Int I_\alpha$ let $v_x$ be the closed curve formed
by the segment of orbit for $(\phi_t)_{t\in\R}$ starting at $x$ and
ending at $Tx$ together with the segment of $J$ that joins $Tx$ and
$x$. Then $[v_x]=[v_\alpha]$. Therefore,
$h_\alpha=\int_{v_\alpha}\varrho=\int_{v_x}\varrho$.

Since the form $\rho$ vanishes on $J$, we have
\[\int_{v_x}\varrho=\int_{0}^{\tau(x)}\varrho_{\phi_tx}X(\phi_tx)\,dt
=\int_{0}^{\tau(x)}f(\phi_tx)\,dt=\varphi_f(x).\] Consequently,
$\varphi_f(x)=h_\alpha$ for all $x\in \Int I_\alpha$ and
$\alpha\in\mathcal{A}$. If we assume that $h\in H_\pi\cap\Gamma_0$,
then
\[0=\langle\lambda,h\rangle=\int_I\varphi_f(x)dx=\int_{\mathcal{T}}f\omega.\]
\end{proof}

\subsection{Examples} \label{example7permut}\vskip 3mm

\hfill \break Let us consider an IET $T=T_{(\pi,\lambda)}$ and a
set $\{\gamma_1,\ldots,\gamma_s\}\subset
I\setminus\{l_\alpha:\alpha\in\mathcal{A}\}$, $s\geq 3$. Set
$\ell=s-1$. Suppose that
\begin{equation}\label{zalnagamma}
\{\gamma_1,\ldots,\gamma_s\}\text{ is of  periodic type with
respect to $T$ and $\theta_2(T)/\theta_1(T)<1/\ell$}.
\end{equation}
Recall that $T$ has to be of periodic type as well. An explicit
example of such data  for $s=3$ is given at the end of this section.

\vskip 3mm By Remark~\ref{istpotoku}, there exists  a multivalued
Hamiltonian flow $(\phi_t)_{t\in\R}$ with $s$ traps (determined by
saddle points $z_i$, $i=1,\ldots,s$) on a symplectic surface
$(M,\omega)$ such that $(\phi_t)_{t\in\R}$ on $\mathcal{T}$ has a
special representation over $T_{(\pi,\lambda)}$ and $\gamma_i$
corresponds to the first backward intersection with the
transversal curve of the separatrix incoming to $z_i$ for
$i=1,\ldots,s$.

By Lemma~\ref{sskokow} and (\ref{relfphi}), there exists  a
$C^{\infty}$-function $f_1:M\to\R$ such that
$\int_{\mathcal{T}}f_1\omega=0$ and $s(\varphi_{f_1})\neq 0$. In
view of Theorem~\ref{thmmain1}, the flow $(\Phi^{f_1}_t)_{t\in\R}$
on $\mathcal{T}\times\R$ is ergodic.

Let $\bar{d}_1,\ldots,\bar{d}_s$ be  vectors in $\R^{\ell-1}$ such
that $\overline{\Z(\bar{d}_1,\ldots,\bar{d}_s)}=\R^{\ell-1}$ and
$\sum_{i=1}^s\bar{d}_i=\bar{0}$. Since $s=(\ell-1)+2$, the
existence of such collection follows directly from
Remark~\ref{macierz}. By Lemma~\ref{sskokow}, there exists a
$C^{\infty}$-function $f'_2:M\to\R^{\ell-1}$ such that $f_2'$
vanishes on a neighborhood of each fixed point of $(\phi_t)$,
$\int_{\mathcal{T}}f'_2\omega=\bar{0}$ and
$(\varphi_{f'_2})_+(\gamma_i)-(\varphi_{f'_2})_-(\gamma_i)=\bar{d}_i$
for $i=1,\ldots,s$. Then $\varphi_{f'_2}$ has zero mean and, by
(\ref{relfphi}),
$s(\varphi_{f'_2})=\sum_{i=1}^s\bar{d}_i=\bar{0}$.

Denote by $\bar{\varphi}:I\to\R^{\ell}$ the piecewise constant
function with zero mean whose discontinuities are $\gamma_i$,
$i=1,\ldots,s$ and
$\bar{\varphi}_+(\gamma_i)-\bar{\varphi}_-(\gamma_i)=\bar{d}_i$ for
$i=1,\ldots,s$. In view of (\ref{zalnagamma}) and
Remark~\ref{diamond},
$\bar{\varphi}\in\bv_0^\lozenge(\sqcup_{\alpha\in \mathcal{A}}
I_{\alpha},\R^{\ell-1})$. By Remark~\ref{geom}, $\varphi_{f'_2}$ can
be extended to a $C^{\infty}$-function on the closure of any
interval of the partition
$\mathcal{P}(\{l_{\alpha}:\alpha\in\mathcal{A}\}\cup\{\gamma_i:i=1,\ldots,s\})$.
It follows that
$\varphi_{f'_2}-\bar{\varphi}\in\bv^1(\sqcup_{\alpha\in
\mathcal{A}},\R^{\ell-1})$. Moreover, $\varphi_{f'_2}-\bar{\varphi}$
has zero mean and
$s(\varphi_{f'_2}-\bar{\varphi})=s(\varphi_{f'_2})-s(\bar{\varphi})=0$.
Therefore, by Proposition~\ref{cohcon},
$\varphi_{f'_2}-\bar{\varphi}$ is cohomologous to
$\bar{h}^1=(h_1^1,\ldots,h_{\ell-1}^1)$, where $h_i^1\in\Gamma_0$
for $i=1,\ldots,\ell-1$.

In view of Theorem~\ref{thmcorrecgener} applied to the coordinate
functions of the function
$\bar{\varphi}+\bar{h}^1\in\bv_0^\lozenge(\sqcup_{\alpha\in
\mathcal{A}} I_{\alpha},\R^{\ell-1})$, there exists
$\bar{h}^2=(h_2^1,\ldots,h_{\ell-1}^2)$ with
$h_i^2\in\Gamma_u\cap\Gamma_0$ for $i=1,\ldots,\ell-1$ such that
$\bar{\varphi}+\bar{h}^1+\bar{h}^2=\widehat{\bar{\varphi}+\bar{h}^1}$.
Moreover, by Theorem~\ref{ergpiconsper}, the cocycle
$\bar{\varphi}+\bar{h}^1+\bar{h}^2=\widehat{\bar{\varphi}+\bar{h}^1}$
is ergodic. As $\varphi_{f'_2}+\bar{h}^2$ is cohomologous to
$\bar{\varphi}+\bar{h}^1+\bar{h}^2$, it is ergodic as well. By
Lemma~\ref{lematkoh}, there exists a $C^{\infty}$-function
$f''_2:M\to\R^{\ell-1}$ with
$\int_{\mathcal{T}}f''_2\omega=\bar{0}$ such that
 $\varphi_{f''_2}=\bar{h}^2$. Setting $f_2=f'_2+f''_2$, we have
$\int_{\mathcal{T}}f_2\omega=\bar{0}$,
$\varphi_{f_2}=\varphi_{f'_2}+\bar{h}^2$, and
$s(\varphi_{f_2})=s(\varphi_{f'_2})=\sum_{i=1}^s\bar{d}_i=\bar{0}$.
It follows that the flow $(\Phi^{f_2}_t)_{t\in\R}$ on
$\mathcal{T}\times\R^{\ell-1}$ is ergodic. Finally applying
Theorem~\ref{thmmain2} to $f=(f_1,f_2):I\to\R^\ell$ we have the
ergodicity of the flow $(\Phi^{f}_t)_{t\in\R}$ on
$\mathcal{T}\times\R^{\ell}$.

\begin{example}
Let us consider  the permutation
$\begin{pmatrix}1&2&3&4&5&6&7\\6&7&4&5&3&1&2
\end{pmatrix}$ and a  corresponding pair $\pi'$.  On
the Rauzy graph $\mathcal{R}(\pi')$ let us consider the loop
starting from $\pi'$ and passing through the edges labeled
consecutively by
$$1,0,1,1,1,1,1,1,0,1,1,0,1,1,1,0,0,1,1,1,1,0,1,0,0,0,0,1,1,1.$$
Then the resulting matrix is
\[A':=\begin{pmatrix}9& 8& 20& 20& 15& 5& 5\\ 1& 2& 4& 4& 3& 2& 2\\
2& 2& 6& 5& 4& 1& 1\\ 2& 2& 5& 6& 4& 1& 1\\ 1& 1& 2& 2& 2& 0& 0\\
2& 2& 4& 4& 3& 2& 1\\ 1& 1& 3& 3& 2& 1& 2
\end{pmatrix}\]
and $(A')^2$ has positive entries. Let $\lambda'\in\R^7_+$ be  a
Perron-Frobenius eigenvector of $A'$. Then $T_{(\pi',\lambda')}$
is of periodic type and $A'$ is its periodic matrix. Of course,
$T_{(\pi',\lambda')}$ is an exchange of $4$ intervals, more
precisely, $T_{(\pi',\lambda')}=T_{(\pi^{sym}_4,\lambda)}$, where
$\lambda_1=\lambda'_1+\lambda'_2$,
$\lambda_2=\lambda'_3+\lambda'_4$, $\lambda_3=\lambda'_5$ and
$\lambda_4=\lambda'_6+\lambda'_7$. As we already noticed in
Section~\ref{subsechyper}, $T_{(\pi^{sym}_4,\lambda)}$ has also
periodic type and the family $\gamma_1=\lambda'_1$,
$\gamma_2=\lambda'_1+\lambda'_2+\lambda'_3$,
$\gamma_3=\lambda'_1+\lambda'_2+\lambda'_3+\lambda'_4+\lambda'_5+\lambda'_6$
is of periodic type with respect to $T_{(\pi^{sym}_4,\lambda)}$.
Moreover,
\[A=\begin{pmatrix}10& 24& 18& 7\\ 4& 11& 8& 2\\ 1& 2& 2& 0\\ 3& 7& 5& 3
\end{pmatrix}\]
is the periodic matrix of $T_{(\pi^{sym}_4,\lambda)}$, so
\[\rho_1=\frac{13}{2}+\frac12\sqrt{115}+\frac12\sqrt{280+26\sqrt{115}},\;\;\rho_2
=\frac{13}{2}-\frac12\sqrt{115}+\frac12\sqrt{280-26\sqrt{115}}.\]
Hence $\theta_2/\theta_1\approx 0.164<1/2$, so
$T_{(\pi^{sym}_4,\lambda)}$ and $\{\gamma_1,\gamma_2,\gamma_3\}$
satisfy (\ref{zalnagamma}) with $s=3$.
\end{example}

\begin{remark}
Similar examples can be constructed by matching the set
$\{\gamma_1,\ldots,\gamma_s\}$ for a fixed IET $T=T_{\pi,\lambda}$
of periodic type. Let $p\geq 1$ be the period  of $T$ and let
$\rho>1$ be the Perron-Frobenius eigenvalue of the periodic matrix
$A$ of $T$. For every $x\in I$ let $k(x)=\inf\{k\geq 0:T^{-k}x\in
I^{(p)}\}$. Let us consider the map $S:I\to I$, $S(x)=\rho\cdot
T^{-k(x)}x$. Note that for every $\alpha\in\mathcal{A}$ the map
$S$ has at least $A_{\alpha\alpha}-2$ fixed points in the interior
of $I_\alpha$. Therefore, multiplying the period of $T$, if
necessary, for every $s\geq 1$ we can find $s$ distinct fixed
points $\gamma_1,\ldots,\gamma_s$ different from $l_{\alpha}$,
$\alpha\in\mathcal{A}$. In view of Theorem~23 in \cite{Ra}, the
set $\{\gamma_1,\ldots,\gamma_s\}$ is of periodic type with
respect to $T$.
\end{remark}
Denote by $M_2$ a compact $C^\infty$-surface of genus $2$. We can
apply the above constructions to the sequence of IETs $T$ with
arbitrary small values of the ratio $\theta_2(T)/\theta_1(T)$ from
Appendix~\ref{theta1theta2} to obtain the following result:

\begin{corollary}
For every $\ell\geq 1$ there exists a multivalued Hamiltonian flow
$(\phi_t)_{t\in\R}$ on $M_2$  and  a $C^\infty$-function
$f:M_2\to\R^\ell$ for which the flow $(\Phi^{f}_t)_{t\in\R}$ on
$\mathcal{T}\times\R^\ell$ is ergodic.\bez
\end{corollary}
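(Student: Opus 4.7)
Fix $\ell\geq 1$ and set $s=\ell+1$. The plan is to reproduce the construction carried out in Section~\ref{example7permut} verbatim, but using an input IET $T=T_{(\pi,\lambda)}$ drawn from Appendix~\ref{theta1theta2}. The appendix produces, for any prescribed upper bound, an IET of periodic type whose associated translation surface has genus $2$ and whose Lyapunov ratio $\theta_2(T)/\theta_1(T)$ is below that bound; choose such a $T$ with $\theta_2(T)/\theta_1(T)<1/\ell$. In particular $M$ can be identified with $M_2$.

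Next I would produce a set $\{\gamma_1,\dots,\gamma_s\}\subset I\setminus\{l_\alpha:\alpha\in\mathcal{A}\}$ that is of periodic type with respect to $T$. The remark following Example~\ref{example7permut} gives a recipe: after possibly replacing $p$ by a multiple, the rescaling map $S:I\to I$, $S(x)=\rho\cdot T^{-k(x)}x$, has at least $A_{\alpha\alpha}-2$ fixed points in the interior of each $I_\alpha$, and by enlarging the period the total number of such interior fixed points can be made as large as desired; any $s$ of them furnish the required set (by Theorem~23 of \cite{Ra}, invoked in that same remark). Together with the bound on the Lyapunov ratio this yields (\ref{zalnagamma}) for the chosen $s=\ell+1$.

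With this data I would apply Remark~\ref{istpotoku} to construct a multivalued Hamiltonian flow $(\phi_t)_{t\in\R}$ on $M_2$ carrying $s$ traps whose incoming separatrices hit the transversal exactly at $\gamma_1,\dots,\gamma_s$, so that the first return of $(\phi_t)$ to the transversal $J$ of $\mathcal{T}$ coincides with $T_{(\pi,\lambda)}$. Then, following Section~\ref{example7permut} literally, I construct $f=(f_1,f_2):M_2\to\R\times\R^{\ell-1}=\R^\ell$ as follows. Using Lemma~\ref{sskokow} and (\ref{relfphi}) I produce $f_1:M_2\to\R$ with $\int_{\mathcal{T}}f_1\,\omega=0$ and $s(\varphi_{f_1})\neq 0$; Theorem~\ref{thmmain1} then gives ergodicity of $(\Phi^{f_1}_t)$ on $\mathcal{T}\times\R$. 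For the second component I pick vectors $\bar d_1,\dots,\bar d_s\in\R^{\ell-1}$ generating a dense subgroup with $\sum\bar d_i=0$ (possible since $s=(\ell-1)+2$, by Remark~\ref{macierz}) and use Lemma~\ref{sskokow} to realize them as jump vectors of some $\varphi_{f'_2}$; correcting $f'_2$ by a coboundary plus a piecewise-constant term supplied by Lemma~\ref{lematkoh}, exactly as in Section~\ref{example7permut}, yields $f_2:M_2\to\R^{\ell-1}$ so that $\varphi_{f_2}$ agrees, modulo cohomology, with the corrected step cocycle $\widehat{\bar\varphi+\bar h^1}$. Theorem~\ref{ergpiconsper} (applicable thanks to (\ref{zalnagamma})) gives ergodicity of $(\Phi^{f_2}_t)$ on $\mathcal{T}\times\R^{\ell-1}$, and finally Theorem~\ref{thmmain2} promotes this to ergodicity of $(\Phi^{f}_t)$ on $\mathcal{T}\times\R^\ell$.

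The only non-routine point is checking that the examples of Appendix~\ref{theta1theta2} can be arranged to live in genus $2$ while giving arbitrarily small $\theta_2/\theta_1$ and while still admitting a periodic-type set of $s$ additional marked points; this is precisely what the two remarks preceding the corollary are designed to guarantee, so once those are invoked the rest is a direct citation of the preceding sections.
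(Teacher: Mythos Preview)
Your proposal is correct and follows exactly the approach the paper intends: the corollary is stated without proof (only a $\Box$), with the preceding sentence indicating that one should feed the genus-$2$ IETs of Appendix~\ref{theta1theta2} (which have arbitrarily small $\theta_2/\theta_1$ and non-degenerated spectrum) into the construction of Section~\ref{example7permut}, using the remark there to manufacture the periodic-type marked points $\{\gamma_1,\dots,\gamma_s\}$. Your unpacking of that construction is accurate; the only small omission is that Theorem~\ref{ergpiconsper} also needs non-degenerated spectrum, not just (\ref{zalnagamma}), but this is precisely what Appendix~\ref{theta1theta2} supplies.
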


\appendix

\section{Deviation of cocycles: proofs \label{proofs-dev}}

Let $T:I\to I$ be an arbitrary IET satisfying Keane's condition.
For every $x\in I$ and $n\geq 0$ set
\[m(x,n,T)=\max\{l\geq 0:\#\{0\leq k\leq n:T^kx\in I^{(l)}\}\geq 2\}.\]
\begin{proposition}[see \cite{Zo} or \cite{ViB}] \label{relmn}
For every $x\in I$ and $n>0$ we have
\[\min_{\alpha\in\mathcal{A}}Q_{\alpha}(m)\leq n\leq d\max_{\alpha\in\mathcal{A}}
Q_{\alpha}(m+1)=d\|Q(m+1)\|, \text{ where }m=m(x,n,T).\text{  \bez}\]
\end{proposition}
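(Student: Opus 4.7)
The plan is to use the tower decomposition of $I$ induced by the Rauzy--Veech induction at levels $m$ and $m+1$. Recall that $T^{(l)}$ is the first return of $T$ to $I^{(l)}$, so $I$ partitions into disjoint towers $H^{(l)}_\alpha$ of heights $Q_\alpha(l)$, and any point at level $j$ of $H^{(l)}_\alpha$ returns to $I^{(l)}$ after exactly $Q_\alpha(l)-j$ iterates of $T$. The equality $\max_{\alpha\in\mathcal{A}}Q_\alpha(m+1)=\|Q(m+1)\|$ on the right of the claim is immediate from the definition of the matrix norm applied to the nonnegative matrix $Q(m+1)$, whose column sums are precisely the heights $Q_\beta(m+1)$.

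For the lower bound I would invoke the defining property of $m=m(x,n,T)$ directly: there exist $0\leq k_1<k_2\leq n$ with $T^{k_1}x,T^{k_2}x\in I^{(m)}$. If $\alpha\in\mathcal{A}$ is such that $T^{k_1}x\in I^{(m)}_\alpha$, the observation above forces $k_2-k_1\geq Q_\alpha(m)$, whence $n\geq k_2-k_1\geq\min_{\alpha\in\mathcal{A}}Q_\alpha(m)$.

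For the upper bound, maximality of $m$ implies that at most one of $T^0x,\ldots,T^nx$ lies in $I^{(m+1)}$. Write $k^*$ for this index when it exists; the remaining indices split into at most two runs during which the orbit stays outside $I^{(m+1)}$. Since any point of $I$ lies in some tower $H^{(m+1)}_\beta$ and hence reaches $I^{(m+1)}$ within $Q_\beta(m+1)\leq\|Q(m+1)\|$ steps, each such run has length at most $\|Q(m+1)\|$. Summing the two runs and the single possible exceptional point yields $n+1\leq 2\|Q(m+1)\|+1$, and consequently $n\leq 2\|Q(m+1)\|\leq d\|Q(m+1)\|$, using $d\geq 2$ for any IET satisfying Keane's condition. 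The only mildly delicate point is the accounting of the boundary cases where there is no visit to $I^{(m+1)}$ at all or where $k^*\in\{0,n\}$, but these only sharpen the estimate.
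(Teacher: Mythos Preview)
The paper does not give its own proof of this proposition; it is stated with a reference to \cite{Zo} and \cite{ViB} and closed with a box. Your argument is correct and is essentially the standard one: the lower bound is immediate from the definition of $m(x,n,T)$ and the fact that consecutive visits to $I^{(m)}$ are separated by a full tower height $Q_\alpha(m)$, while the upper bound follows because maximality of $m$ forces at most one visit to $I^{(m+1)}$, and the tower structure at level $m+1$ bounds the length of any orbit segment avoiding $I^{(m+1)}$ by $\|Q(m+1)\|$.

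One minor remark: your counting actually yields the sharper constant $2$ rather than $d$ in the upper bound (and in fact $n\leq 2\|Q(m+1)\|-2$ if you keep track of the $-1$'s in each run), so the inequality $n\leq d\|Q(m+1)\|$ follows from $d\geq 2$, which holds for any irreducible IET with Keane's condition. The stated form with $d$ is what appears in the cited references and is all that is needed downstream; your argument recovers it with room to spare.
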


\begin{remark}
Assume that $T=T_{(\pi,\lambda)}$ is of periodic type and $A$ is
its periodic matrix.  Then there exists $C>0$ such that $
e^{\theta_1 k}/C\leq\|A^k\|\leq C e^{\theta_1 k}$ for every $k\geq
1$, where $\theta_1$ is the greatest Lyapunov exponent of $A$. Let
$m=m(x,n,T)$. Since $\|A^n\|=\max_{\alpha\in
\mathcal{A}}A^n_\alpha$, by Proposition~\ref{relmn} and
(\ref{maxmin1}), it follows that
\begin{equation*}
n\geq
\min_{\alpha\in\mathcal{A}}Q_{\alpha}(m)=\min_{\alpha\in\mathcal{A}}A^m_{\alpha}\geq
\frac{1}{\nu(A)}
\max_{\alpha\in\mathcal{A}}A^m_{\alpha}=\frac{\|A^m\|}{\nu(A)}\geq\frac{e^{\theta_1
m}}{C\nu(A)}.
\end{equation*}
Thus
\begin{equation}\label{mperio}
m\leq \frac{1}{\theta_1}\log(C\nu(A)n).
\end{equation}
\end{remark}

\begin{proposition}[see \cite{Ma-Mo-Yo}]\label{propsn}
For each bounded function $\varphi:I\to\R$, $x\in I$ and $n>0$ we
have
\begin{equation}\label{szacsn}
|\varphi^{(n)}(x)|\leq
2\sum_{l=0}^m\|Z(l+1)\|\|S(l)\varphi\|_{\sup}, \text{ where
}m=m(x,n,T).
\end{equation}
If additionally $\varphi\in\bv_0(\sqcup_{\alpha\in \mathcal{A}}
I_{\alpha})$ then
\begin{equation}\label{szacsl}
\|S(l)\varphi\|_{\sup}\leq\sum_{1\leq j\leq
l}\|Z(j)\|\|S(j,l)|_{\Gamma^{(j)}_0}\|\var\varphi.\text{  \bez}
\end{equation}
\end{proposition}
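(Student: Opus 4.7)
The plan is to establish both inequalities of Proposition~\ref{propsn} by exploiting the Rokhlin tower structure of the Rauzy--Veech induction, proving them independently.

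For the first inequality, I proceed by induction on $m=m(x,n,T)$. Let $0\le t_1<\ldots<t_s\le n$ denote the successive visit times of the orbit to $I^{(m)}$; by definition of $m$ we have $s\ge 2$ and the orbit visits $I^{(m+1)}$ at most once. Splitting
\[\varphi^{(n)}(x)=\varphi^{(t_1)}(x)+\sum_{i=1}^{s-1}S(m)\varphi(T^{t_i}x)+\varphi^{(n-t_s)}(T^{t_s}x),\]
I bound the middle sum by noting that the visits to $I^{(m)}$ split into at most two consecutive ``runs'' separated by the (at most one) visit to $I^{(m+1)}$, each run of length at most $\|Z(m+1)\|$ (the maximal first-return time to $I^{(m+1)}$). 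Hence $s-1\le 2\|Z(m+1)\|$ and the middle contributes at most $2\|Z(m+1)\|\|S(m)\varphi\|_{\sup}$. The head $\varphi^{(t_1)}(x)$ and tail $\varphi^{(n-t_s)}(T^{t_s}x)$ are Birkhoff sums over sub-orbits each visiting $I^{(m)}$ at most once, so $m(\cdot,\cdot,T)\le m-1$ for them, and the inductive hypothesis applies.

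For the second inequality, I use a telescoping decomposition of $S(l)\varphi=\varphi_l$ via the mean-projection operators $P_0^{(j)}$. Writing $\varphi_j=\chi_j+P_0^{(j)}\varphi_j$ with $\chi_j:=(I-P_0^{(j)})\varphi_j$, one has $\chi_j\in\Gamma_0^{(j)}$ since $\int_{I^{(j)}}\varphi_j=\int_I\varphi=0$ by (\ref{zachcal}), and the BV part satisfies $\|P_0^{(j)}\varphi_j\|_{\sup}\le\var\varphi_j\le\var\varphi$ because a function with zero mean on each interval of a partition vanishes somewhere on each, so its sup is bounded by its variation. Since $S(l-1,l)$ sends $\Gamma^{(l-1)}$ into $\Gamma^{(l)}$ (acting by $Z(l)^t$), $P_0^{(l)}$ annihilates $S(l-1,l)\chi_{l-1}$, and one derives the recursion
\[\chi_l=S(l-1,l)\chi_{l-1}+\rho_l,\qquad \rho_l:=(I-P_0^{(l)})S(l-1,l)P_0^{(l-1)}\varphi_{l-1}\in\Gamma_0^{(l)},\]
with $\|\rho_l\|_{\sup}\le\|Z(l)\|\,\|P_0^{(l-1)}\varphi_{l-1}\|_{\sup}\le\|Z(l)\|\var\varphi$. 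Iterating gives $\chi_l$ as a sum of $S(j,l)\rho_j$ terms for $1\le j\le l$. Finally the decomposition $S(l)\varphi=\chi_l+P_0^{(l)}\varphi_l$ together with $\|S(j,l)\rho_j\|_{\sup}\le\|S(j,l)|_{\Gamma_0^{(j)}}\|\|\rho_j\|_{\sup}$ yields the estimate.

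The main obstacle lies in the bookkeeping. For Part~1, making the constant $2$ work requires arguing the inductive invariant carefully: a naive recursion charging head and tail symmetrically loses the constant, so one must either strengthen the invariant or exploit that the head and tail share structure (e.g., each moves along a single level-$m$ tower only), forcing the factor-of-2 to come from the \emph{middle} rather than from recursion. For Part~2, the subtle point is handling the base of the telescoping so that no $S(0,l)\chi_0$ term survives in the final bound; this is achieved by initiating the iteration from $\varphi_1$ and absorbing the level-$0$ mean contribution into the $\rho_1$ piece via the identity $\rho_1=\chi_1-S(0,1)\chi_0$, which places everything in $\Gamma_0^{(j)}$ for $j\ge 1$.
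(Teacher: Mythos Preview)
The paper does not supply its own proof of this proposition; it is quoted from Marmi--Moussa--Yoccoz with only a $\Box$. So your attempt must stand on its own, and in each half there is a point where the argument, as written, does not close.

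\medskip

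\textbf{Part 1.} Your decomposition and the bound $s-1\le 2\|Z(m+1)\|$ are correct. The problem is exactly the one you flag: applying the inductive hypothesis separately to the head and the tail yields $2\sum_{l\le m-1}(\cdots)$ from \emph{each}, hence a constant $4$ overall. You say one should ``exploit that the head and tail share structure,'' but you do not carry this out, and without it the induction does not give the constant $2$. The concrete fix is that the head and tail are \emph{one-sided}: the head orbit terminates at $T^{t_1}x\in I^{(m)}\subset I^{(m-1)}$, so its last visit to $I^{(m-1)}$ is at time $t_1$ and the level-$(m-1)$ recursion produces no new tail. Dually, the tail orbit starts in $I^{(m)}$ and produces no new head. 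Thus at every level $l<m$ each side contributes at most $\|Z(l+1)\|$ complete level-$l$ returns (not $2\|Z(l+1)\|$), and summing the two sides plus the level-$m$ middle gives exactly $2\sum_{l=0}^{m}\|Z(l+1)\|\|S(l)\varphi\|_{\sup}$. Stating and proving the one-sided bound (constant $1$) as an auxiliary lemma is what makes the induction work.

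\medskip

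\textbf{Part 2.} Your telescoping $\chi_l=S(l-1,l)\chi_{l-1}+\rho_l$ and the absorption $\tilde\rho_1:=\chi_1$ with $\|\tilde\rho_1\|\le\|Z(1)\|\var\varphi$ (using $\|\varphi\|_{\sup}\le\var\varphi$ for zero-mean $\varphi$) are correct. But when you write $S(l)\varphi=\chi_l+P_0^{(l)}\varphi_l$ and bound the two pieces separately, you pick up an extra $\|P_0^{(l)}\varphi_l\|_{\sup}\le\var\varphi$ beyond the stated sum. To hit the inequality exactly, do not split off the $j=l$ piece from $\psi_l$: observe that $\rho_l+P_0^{(l)}\varphi_l=S(l-1,l)P_0^{(l-1)}\varphi_{l-1}$, whose sup norm is at most $\|Z(l)\|\var\varphi$, precisely the $j=l$ summand (since $S(l,l)=\mathrm{Id}$). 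With this grouping the bound is exact.
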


\begin{proof}[Proof of Theorem~\ref{thmthetas}]
Since $\lambda$ is a positive Perron-Frobenius eigenvector of
$A$, by Proposition 5 in \cite{Zo}, the restriction of $A^t$ to
the invariant space $Ann(\lambda)=\{h\in\R^{\mathcal{A}}:\langle
h,\lambda\rangle=0\}$ has the following Lyapunov exponents:
\[\theta_2\geq\theta_3\geq\ldots\geq\theta_g\geq0=\ldots
=0\geq-\theta_g\geq\ldots\geq-\theta_3\geq-\theta_2>-\theta_1.\]
Thus there exists $C>0$ such that for every $k\in\N$ we have
\[\|(A^t)^kh\|\leq Ck^{M-1}\exp(k\theta_2)\|h\|\text{ for all }h\in Ann(\lambda).\]
Since $\Gamma_0^{(j)}=Ann(\lambda)$  and
$S(j,l)=Q^t(j,l)=(A^t)^{l-j}$ on $\Gamma_0^{(j)}$, by
(\ref{szacsl}),
\begin{eqnarray*}
\|S(l)\varphi\|_{\sup}&\leq&\sum_{1\leq j\leq
l}\|A\|\|(A^t)^{l-j}|_{Ann(\lambda)}\|\var\varphi
\\&\leq&\sum_{0\leq k< l}\|A\|Ck^{M-1}\exp(k\theta_2)\var\varphi\leq\|A\|Cl^{M}\exp(l\theta_2)\var\varphi.
\end{eqnarray*}
In view of (\ref{szacsn}), it follows that
\begin{eqnarray*}
|\varphi^{(n)}(x)|&\leq&2\sum_{l=0}^m\|A\|\|S(l)\varphi\|_{\sup}\leq
2\sum_{l=0}^m\|A\|^2Cl^{M}\exp(l\theta_2)\var\varphi
\\&\leq&
2\|A\|^2Cm^{M+1}\exp(m\theta_2)\var\varphi,\end{eqnarray*} where
$m=m(x,n,T)$. Consequently, by (\ref{mperio}),
\[|\varphi^{(n)}(x)| \leq 2
\frac{\|A\|^2C^2\nu(A)}{\theta^{M+1}_1}
\log^{M+1}(C\nu(A)n)n^{\theta_2/\theta_1}\var\varphi.\]
\end{proof}

\section{Possible values of $\theta_2/\theta_1$}\label{theta1theta2}

In this section we will show that for each symmetric pair
$\pi^{sym}_4$ there are IETs of periodic type such that
$\theta_2/\theta_1$ is arbitrary small and the spectrum of the
periodic matrix is non-degenerated. As it was shown in
\cite{Ma-Mo-Yo} for every natural $n$ the matrix
\[M(n)=\begin{pmatrix}1&1&1&1\\n&n+1&0&0\\0&0&2&1\\n+1&n+2&2&2\end{pmatrix}\]
is a resulting matrix corresponding to a loop in the Rauzy class
of $\pi^{sym}_4$ and starting from $\pi^{sym}_4$. Since $M(n)$ is
primitive, there exists an IET of periodic type for which $M(n)$
is its periodic matrix. The eigenvalues
$\rho_1(n)>\rho_2(n)>1>\rho_3(n)>\rho_4(n)>0$  of $M(n)$ are of
the form
\begin{align*}&\rho_1(n)=\frac{1}{2}\left(a^+_n+\sqrt{(a^+_n)^2-4}\right),
\;\;\rho_2(n)=\frac{1}{2}\left(a^-_n+\sqrt{(a^-_n)^2-4}\right),\\
&\rho_3(n)=\frac{1}{2}\left(a^-_n-\sqrt{(a^-_n)^2-4}\right),
\;\;\rho_4(n)=\frac{1}{2}\left(a^+_n-\sqrt{(a^+_n)^2-4}\right),
\end{align*}
where
\[a_n^\pm=\frac{1}{2}(n+6\pm\sqrt{n^2+4}).\]
Since $a_n^+\to+\infty$ and $a_n^-\to 3$ as $n\to+\infty$, it
follows that
\[\frac{\theta_2(n)}{\theta_1(n)}=\frac{\log \rho_2(n)}{\log \rho_1(n)}\to 0\text{ as }n\to+\infty.\]

\section{Deviation of corrected functions}\label{korekcja}
\begin{proof}[Proof of Theorem~\ref{thmcorre}]
First note that for every natural $k$ the subspace
$\Gamma^{(k)}_{cs}\subset\R^{\mathcal{A}}$ is the direct sum of
invariant subspaces associated to Jordan blocks of $A^t$ with
non-positive Lyapunov exponents. It follows that there exists
$C>0$ such that
\begin{equation}\label{censtab}
\|(A^t)^{n}h\|\leq Cn^{M-1}\|h\|\text{ for all
}h\in\Gamma^{(k)}_{cs}\text{ and }n\geq 0.
\end{equation}
It is easy to show that $\Gamma^{(k)}_{cs}\subset \Gamma^{(k)}_0$.

Next note that  $S(k,l)\Gamma^{(k)}_{cs}=\Gamma^{(l)}_{cs}$ and
the quotient linear transformation
\[S_u(k,l):\bv(\sqcup_{\alpha\in \mathcal{A}} I^{(k)}_{\alpha})/\Gamma^{(k)}_{cs}
\to\bv(\sqcup_{\alpha\in \mathcal{A}} I^{(l)}_{\alpha})/\Gamma^{(l)}_{cs}\]
is invertible. Moreover,
\begin{equation}\label{splatanies}
S_u(k,l)\circ U^{(k)}\varphi=U^{(l)}\circ S(k,l)\varphi\text{ for
}\varphi\in\bv(\sqcup_{\alpha\in \mathcal{A}} I^{(k)}_{\alpha}).
\end{equation}

Since $\Gamma^{(k)}_{u}\subset\R^{\mathcal{A}}$  the direct sum of
invariant subspaces associated to Jordan blocks of $A^t$ with
positive Lyapunov exponents,
$\R^{\mathcal{A}}=\Gamma^{(k)}=\Gamma^{(k)}_{cs}\oplus\Gamma^{(k)}_{u}$
is an invariant decomposition. Moreover, there exist $\theta_+>0$
and $C> 0$ such that
\[\|(A^t)^{-n}h\|\leq C\exp(-n\theta_+)\|h\|\text{ for all }h\in\Gamma^{(k)}_{u}\text{ and }n\geq 0.\]
Since the linear operators
$A^t:\Gamma^{(k)}_{u}\to\Gamma^{(k)}_{u}$ and
$A^t:\Gamma^{(k)}/\Gamma^{(k)}_{cs}\to\Gamma^{(k)}/\Gamma^{(k)}_{cs}$
are isomorphic, there exists $C'>0$ such that
\[\|(A^t)^{-n}(h+\Gamma^{(k)}_{cs})\|\leq C'\exp(-n\theta_+)\|h+\Gamma^{(k)}_{cs}\|\]
for all $h+\Gamma^{(k)}_{cs}\in\Gamma^{(k)}/\Gamma^{(k)}_{cs}$ and
$n\geq 0$. Consequently,
\begin{equation}\label{szacowanieniest}
\|(S_u(k,l))^{-1}(h+\Gamma^{(k)}_{cs})\|\leq
C'\exp(-(l-k)\theta_+)\|h+\Gamma^{(k)}_{cs}\|
\end{equation}
for all $h+\Gamma^{(k)}_{cs}\in\Gamma^{(k)}/\Gamma^{(k)}_{cs}$ and
$0\leq k< l$.

 Let us consider the linear operator
$C^{(k)}:\bv_0(\sqcup_{\alpha\in \mathcal{A}}
I^{(k)}_{\alpha})\to\Gamma^{(k)}_0$ given by
\[C^{(k)}\varphi(x)=\frac{1}{|I^{(k)}_{\alpha}|}\int_{I^{(k)}_{\alpha}}
\varphi(t)dt\text{ if }x\in I^{(k)}_{\alpha}.\]
Then $P^{(k)}_0\varphi=\varphi-C^{(k)}\varphi$ and
\begin{equation}\label{nace}
\|C^{(k)}\varphi\|\leq\|\varphi\|_{\sup},
\end{equation}
\begin{equation}\label{nape}
\|P^{(k)}_0\varphi\|_{\sup}\leq \var P^{(k)}_0\varphi=\var
\varphi.
\end{equation}
Let $\varphi\in\bv_0(\sqcup_{\alpha\in \mathcal{A}}I_\alpha)$.
Note that for $0\leq k\leq l$ we have
\begin{eqnarray*}
\lefteqn{ P_0^{(k)}\varphi-S(k,l)^{-1}\circ P_0^{(l)}\circ
S(k,l)\varphi}\\&=&\sum_{k\leq r<l}(S(k,r)^{-1}\circ P_0^{(r)} \circ
S(k,r)-S(k,r+1)^{-1}\circ P_0^{(r+1)}\circ S(k,r+1))\varphi
\\&=&\sum_{k\leq r<l}S(k,r+1)^{-1}\circ(S(r,r+1)
\circ  P_0^{(r)}- P_0^{(r+1)}\circ S(r,r+1))\circ S(k,r)\varphi.
\end{eqnarray*}
Next observe that
\[(S(r,r+1)\circ P_0^{(r)}-P_0^{(r+1)}\circ S(r,r+1))\psi=C^{(r+1)}
\circ S(r,r+1)\circ P_0^{(r)}\psi\in\Gamma^{r+1}_0\]
for $\psi\in\bv_0(\sqcup_{\alpha\in \mathcal{A}}
I^{(r)}_{\alpha})$. Indeed, if $\psi\in\bv_0(\sqcup_{\alpha\in
\mathcal{A}} I^{(r)}_{\alpha})$ then
$\psi=P_0^{(r)}\psi+C^{(r)}\psi$ and
\[P_0^{(r+1)}\circ S(r,r+1)\psi=P_0^{(r+1)}\circ S(r,r+1)
\circ P_0^{(r)}\psi+P_0^{(r+1)}\circ S(r,r+1)\circ C^{(r)}\psi.\]
Since $S(r,r+1)\circ C^{(r)}\psi\in\Gamma^{(r+1)}_0$, we obtain
$P_0^{(r+1)}\circ S(r,r+1)\circ C^{(r)}\psi=0$; hence
\begin{eqnarray*}
\lefteqn{S(r,r+1)\circ P_0^{(r)}\psi-P_0^{(r+1)}\circ S(r,r+1)\psi}\\
&=&S(r,r+1)\circ P_0^{(r)}\psi-P_0^{(r+1)}\circ S(r,r+1)\circ P_0^{(r)}\psi\\
&=&C^{(r+1)}\circ S(r,r+1)\circ P_0^{(r)}\psi.
\end{eqnarray*}
Therefore
\begin{eqnarray*}
\lefteqn{ P_0^{(k)}\varphi-S(k,l)^{-1}\circ P_0^{(r)}\circ
 S(k,l)\varphi}\\&=&\sum_{k\leq r<l}S(k,r+1)^{-1}\circ C^{(r+1)}\circ S(r,r+1)\circ P_0^{(r)}\circ
 S(k,r)\varphi\in\Gamma^{(k)}_0.
\end{eqnarray*}
In view of (\ref{splatanies}),
\begin{eqnarray*}
\lefteqn{ (U^{(k)}\circ P_0^{(k)}-U^{(k)}\circ S(k,l)^{-1}\circ
P_0^{(r)}\circ
 S(k,l))\varphi}\\&=&\sum_{k\leq r<l}S_u(k,r+1)^{-1} \circ U^{(r+1)}
 \circ C^{(r+1)}\circ S(r,r+1)\circ P_0^{(r)}\circ
 S(k,r)\varphi.
\end{eqnarray*}
Moreover, using (\ref{nace}), (\ref{nase}), (\ref{nape}) and
(\ref{nave}) successively we obtain
\begin{eqnarray*}
\lefteqn{\|C^{(r+1)}\circ S(r,r+1)\circ P_0^{(r)}\circ S(k,r)\varphi\|
\leq \|S(r,r+1)\circ P_0^{(r)}\circ S(k,r)\varphi\|_{\sup}}\\
&\leq& \|Z(r+1)\|\|P_0^{(r)}\circ S(k,r)\varphi\|_{\sup}\leq
\|A\|\var S(k,r)\varphi\leq \|A\|\var \varphi.
\end{eqnarray*}
Next let consider the series in $\Gamma^{(k)}_0/\Gamma^{(k)}_{cs}$
\begin{equation}\label{defoperdel}
\sum_{r\geq k}(S_u(k,r+1))^{-1}\circ U^{(r+1)}\circ C^{(r+1)}\circ
S(r,r+1)\circ P_0^{(r)}\circ S(k,r)\varphi.
\end{equation}
Since $\|U^{(r+1)}\|=1$ and $U^{(r+1)}\circ C^{(r+1)}\circ
S(r,r+1)\circ P_0^{(r)}\circ S(k,r)\varphi\in
\Gamma_0^{(r+1)}/\Gamma^{(r+1)}_{cs}$, by (\ref{szacowanieniest}),
the norm of the $r$-th element of the series  (\ref{defoperdel})
is bounded from above by
$C'\|A\|\exp(-(r-k+1)\theta_+)\var\varphi$. As
\[\sum_{r\geq k}C'\|A\|\exp(-(r-k+1)\theta_+)\var\varphi<+\infty,\]
the series (\ref{defoperdel})  converges in
$\Gamma_0^{(k)}/\Gamma^{(k)}_{cs}$. Denote by $\Delta
P^{(k)}\varphi\in\Gamma_0^{(k)}/\Gamma^{(k)}_{cs}$ the sum of
(\ref{defoperdel}). Then there exists $K>0$ such that
\begin{equation}\label{szacdelty}
\|\Delta P^{(k)}\varphi\|\leq K\var\varphi, \text{ for every
}\varphi\in\bv_0(\sqcup_{\alpha\in \mathcal{A}}
I^{(k)}_{\alpha})\text{ and }k\geq 0.
\end{equation}
It follows that the sequence (\ref{ciagdop}) converges in
$\bv_0(\sqcup_{\alpha\in \mathcal{A}}
I^{(k)}_{\alpha})/\Gamma^{(k)}_{cs}$ and
\begin{equation}\label{wzorp}
P^{(k)}=U^{(k)}\circ P_0^{(k)}-\Delta P^{(k)}.
\end{equation}
\end{proof}
\begin{lemma}
For all $0\leq k\leq l$ and $\varphi\in\bv_0(\sqcup_{\alpha\in
\mathcal{A}} I^{(k)}_{\alpha})$ we have
\begin{eqnarray}
S_u(k,l)\circ P^{(k)}\varphi&=&P^{(l)}\circ
S(k,l)\varphi, \label{przem} \\
\|P^{(k)}\varphi\|_{\sup/\Gamma^{(k)}_{cs}} &\leq& (1+K)\var\varphi.
\label{szapk}
\end{eqnarray}
\end{lemma}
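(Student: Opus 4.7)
The lemma has two parts that I would handle separately, both leaning heavily on what the proof of Theorem~\ref{thmcorre} already established.

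For the intertwining relation (\ref{przem}), the plan is to pass $S_u(k,l)$ inside the defining limit of $P^{(k)}$. The first step is to verify the semigroup identity $S(k,l) = S(j,l) \circ S(k,j)$ for $k \leq j \leq l$; this is routine since inducing to level $l$ can be performed in two stages via level $j$ (equivalently, it reflects the matrix factorisation $Q(k,l) = Q(k,j) Q(j,l)$), and gives in particular $S(k,m)^{-1} = S(k,l)^{-1} \circ S(l,m)^{-1}$ for $m \geq l$. Using (\ref{splatanies}) to move $S_u(k,l)$ past $U^{(k)}$ and the composition identity to cancel $S(k,l) \circ S(k,l)^{-1}$, one obtains
\[
S_u(k,l) \circ U^{(k)} \circ S(k,m)^{-1} \circ P_0^{(m)} \circ S(k,m)\varphi
 = U^{(l)} \circ S(l,m)^{-1} \circ P_0^{(m)} \circ S(l,m) \circ S(k,l)\varphi.
\]
The right-hand side converges, as $m\to\infty$, to $P^{(l)}\circ S(k,l)\varphi$ by the very definition of $P^{(l)}$ applied to the function $S(k,l)\varphi \in \bv_0(\sqcup_\alpha I^{(l)}_\alpha)$. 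The left-hand side converges to $S_u(k,l)\circ P^{(k)}\varphi$ by continuity of $S_u(k,l)$ on the finite-dimensional quotient $\bv_0(\sqcup_\alpha I^{(k)}_\alpha)/\Gamma^{(k)}_{cs}$. Equating the two limits gives (\ref{przem}).

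For the norm bound (\ref{szapk}), I would simply apply the decomposition (\ref{wzorp}), namely $P^{(k)} = U^{(k)} \circ P_0^{(k)} - \Delta P^{(k)}$, together with the triangle inequality in the quotient norm. The contractivity of the quotient projection $U^{(k)}$ and estimate (\ref{nape}) give $\|U^{(k)}\circ P_0^{(k)}\varphi\|_{\sup/\Gamma^{(k)}_{cs}} \leq \|P_0^{(k)}\varphi\|_{\sup} \leq \var\varphi$, while $\|\Delta P^{(k)}\varphi\|_{\sup/\Gamma^{(k)}_{cs}} \leq K\var\varphi$ is precisely (\ref{szacdelty}). Adding the two yields the factor $1+K$. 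No genuine obstacle appears: the analytic content (convergence of the series defining $\Delta P^{(k)}$ and its bound in terms of $\var\varphi$) was already done inside the proof of Theorem~\ref{thmcorre}, and the only bookkeeping is to check that all identities descend correctly modulo $\Gamma_{cs}$ through the quotient maps $U^{(k)}$ and $S_u(k,l)$.
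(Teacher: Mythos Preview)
Your proposal is correct and follows essentially the same route as the paper: pass $S_u(k,l)$ through the defining limit via (\ref{splatanies}) and the cocycle identity $S(k,r)=S(l,r)\circ S(k,l)$ for part (\ref{przem}), and combine (\ref{wzorp}), (\ref{nape}), (\ref{szacdelty}) for part (\ref{szapk}). One small correction: the quotient $\bv_0(\sqcup_\alpha I^{(k)}_\alpha)/\Gamma^{(k)}_{cs}$ is \emph{not} finite-dimensional (you are modding an infinite-dimensional space by a finite-dimensional subspace); the continuity of $S_u(k,l)$ you need comes instead from the boundedness of $S(k,l)$ on $\bv$ established in (\ref{nave}) and (\ref{nase}).
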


\begin{proof} By definition and by (\ref{splatanies}),
\begin{eqnarray*}\lefteqn{S_u(k,l)\circ
P^{(k)}\varphi=S_u(k,l)\lim_{r\to\infty}U^{(k)}\circ
S(k,r)^{-1}\circ P_0^{(r)}\circ S(k,r)\varphi}\\
&=&\lim_{r\to\infty}U^{(l)}\circ S(l,r)\circ S(k,r)^{-1}\circ
P_0^{(r)}\circ
S(k,r)\varphi\\
&=&\lim_{r\to\infty}U^{(l)}\circ S(l,r)^{-1}\circ P_0^{(r)}\circ
S(l,r)\circ S(k,l)\varphi=P^{(l)}\circ S(k,l)\varphi.
\end{eqnarray*}
Moreover, by (\ref{wzorp}), (\ref{nape}) and (\ref{szacdelty}),
\[\|P^{(k)}\varphi\|_{\sup/\Gamma^{(k)}_{cs}}\leq\|P_0^{(k)}
\varphi\|_{\sup}+\|\Delta P^{(k)}\varphi\|\leq (1+K)\var\varphi.\]
\end{proof}

Let $p:\{0,1,\ldots,d,d+1\}\to\{0,1,\ldots,d,d+1\}$ stand for  the
permutation
\[p(j)=\left\{
\begin{matrix}
\pi_1\circ\pi^{-1}_0(j)&\text{ if }&1\leq j\leq d\\
j&\text{ if }&j=0,d+1.
\end{matrix}
\right.
\]
Following \cite{Ve1,Ve2}, denote by $\sigma=\sigma_\pi$ the
corresponding permutation on $\{0,1,\ldots,d\}$,
\[\sigma(j)=p^{-1}(p(j)+1)-1\text{ for }0\leq j\leq d.\]
Then
$\widehat{T}_{(\pi,\lambda)}r_{\pi_0^{-1}(j)}={T}_{(\pi,\lambda)}r_{\pi_0^{-1}(\sigma
j)}$ for all $j\neq 0,p^{-1}(d)$. Denote by $\Sigma(\pi)$ the set
of orbits for the permutation $\sigma$. Let $\Sigma_0(\pi)$ stand
for the subset of orbits that do not contain zero. Then
$\Sigma(\pi)$ corresponds to the set of singular points of any
translation surface associated to $\pi$ and hence
$\#\Sigma(\pi)=\kappa(\pi)$. For every $\mathcal{O}\in\Sigma(\pi)$
denote by $b(\mathcal{O})\in\R^{\mathcal{A}}$ the vector given by\
\[b(\mathcal{O})_{\alpha}=\chi_{\mathcal{O}}(\pi_0(\alpha))-\chi_{\mathcal{O}}(\pi_0(\alpha)-1)
\text{ for }\alpha\in\mathcal{A}.\]
\begin{lemma}[see \cite{Ve2}]\label{bcharh}
For every irreducible pair $\pi$ we have
$\sum_{\mathcal{O}\in\Sigma(\pi)}b(\mathcal{O})=0$, the vectors
$b(\mathcal{O})$, $\mathcal{O}\in\Sigma_0(\pi)$ are linearly
independent and the linear subspace generated by them is equal to
$\ker\Omega_\pi$. Moreover, $h\in H_{\pi}$ if and only if $\langle
h,b(\mathcal{O)}\rangle=0$ for every $\mathcal{O}\in\Sigma(\pi)$.
  \bez
\end{lemma}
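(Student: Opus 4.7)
The plan is to prove the four assertions of the lemma in the order they are stated, with each building on the previous. The first, $\sum_{\mathcal{O}} b(\mathcal{O}) = 0$, is immediate: the orbits of $\sigma$ partition $\{0, 1, \ldots, d\}$, so $\sum_{\mathcal{O}} \chi_\mathcal{O}(j) = 1$ for every $j$, whence each coordinate $\alpha$ gives $\sum_{\mathcal{O}} b(\mathcal{O})_\alpha = 1 - 1 = 0$.

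For linear independence of $\{b(\mathcal{O}) : \mathcal{O} \in \Sigma_0(\pi)\}$, I would suppose $\sum_{\mathcal{O} \in \Sigma_0} c_\mathcal{O}\, b(\mathcal{O}) = 0$ and set $f(j) = \sum_{\mathcal{O} \in \Sigma_0} c_\mathcal{O}\,\chi_\mathcal{O}(j)$ for $j \in \{0, \ldots, d\}$. The vanishing assumption amounts to $f(\pi_0(\alpha)) = f(\pi_0(\alpha) - 1)$ for every $\alpha \in \mathcal{A}$, so $f$ is constant on $\{0, \ldots, d\}$. Since no orbit in $\Sigma_0(\pi)$ contains $0$, one has $f(0) = 0$, forcing $f \equiv 0$; disjointness of orbits then yields $c_\mathcal{O} = 0$ for each $\mathcal{O}$.

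The heart of the proof is showing that each $b(\mathcal{O})$ lies in $\ker \Omega_\pi$. Writing $\omega_{ij} := \Omega_{\pi_0^{-1}(i)\,\pi_0^{-1}(j)}$ and $i = \pi_0(\alpha)$, the required identity becomes $\sum_{j=1}^d \omega_{ij}(\chi_\mathcal{O}(j) - \chi_\mathcal{O}(j-1)) = 0$ for each $i$. I would exploit the clean formula $\omega_{ij} = \tfrac{1}{2}(\sgn(j - i) - \sgn(p(j) - p(i)))$, apply Abel summation to transfer the difference onto $\omega_{i,j+1} - \omega_{i,j}$, and observe that these differences vanish unless $i \in \{j, j+1\}$ or $p(i)$ lies strictly between $p(j)$ and $p(j+1)$. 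The key combinatorial input is that the $\sigma$-orbits group the nonzero contributions into cancelling pairs: if $j$ and $j' = \sigma(j)$ are consecutive elements of $\mathcal{O}$ in its cyclic $\sigma$-order, the contribution coming from the jump $j \to j+1$ cancels against that from $j' \to j'+1$ via the defining relation $p(\sigma(j) + 1) = p(j) + 1$; the diagonal corrections at $i = j, j+1$ are absorbed into the same bookkeeping. Once each $b(\mathcal{O}) \in \ker \Omega_\pi$ is established, Claim 2 together with the dimension equality $|\Sigma_0(\pi)| = \kappa - 1 = \dim \ker \Omega_\pi$ forces the span to be the entire kernel, giving Claim 3.

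For the last claim, antisymmetry $\Omega_\pi^t = -\Omega_\pi$ yields $H_\pi = \Omega_\pi(\R^\mathcal{A}) = (\ker \Omega_\pi)^\perp$, so $h \in H_\pi$ iff $\langle h, b(\mathcal{O})\rangle = 0$ for every $\mathcal{O} \in \Sigma_0(\pi)$; Claim 1 allows us to adjoin the remaining orbit (the one through $0$) without altering the condition, giving the stated equivalence. The main obstacle is the combinatorial identity $\Omega_\pi\, b(\mathcal{O}) = 0$: each individual case of the Abel-summed difference is elementary, but the bookkeeping required to see that all nonzero contributions organize themselves along the cyclic $\sigma$-structure of $\mathcal{O}$ is the delicate part of the argument.
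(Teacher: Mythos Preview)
The paper does not supply its own proof of this lemma: it is stated with a reference to Veech~\cite{Ve2} and closed with the \texttt{\textbackslash bez} box immediately after the statement, so there is nothing in the paper to compare your argument against directly.

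That said, your outline is sound and follows the standard route. The first two claims and the last one are handled cleanly; in particular, your reduction of linear independence to the constancy of the auxiliary function $f$ on $\{0,\ldots,d\}$, together with $f(0)=0$, is exactly right, and the final claim is an immediate consequence of $H_\pi=(\ker\Omega_\pi)^\perp$ once the third claim and the dimension count $|\Sigma_0(\pi)|=\kappa-1=\dim\ker\Omega_\pi$ are in place. The sign formula $\omega_{ij}=\tfrac12(\sgn(j-i)-\sgn(p(j)-p(i)))$ is correct and does make the Abel-summation step tractable. The only part that remains a genuine sketch is the cancellation argument for $\Omega_\pi\,b(\mathcal{O})=0$: you correctly identify the defining relation $p(\sigma(j)+1)=p(j)+1$ as the mechanism, but the actual pairing of boundary terms (including the endpoint contributions at $j=0,d$ and the diagonal terms at $j\in\{i-1,i\}$) needs to be written out to be convincing. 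This is not a gap in strategy, only in execution, and you have flagged it yourself.
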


\begin{remark}\label{remiso} Let $\Lambda^\pi:\R^{\mathcal{A}}\to\R^{\Sigma_0(\pi)}$ stand for the
linear transformation given by $(\Lambda^\pi
h)_\mathcal{O}=\langle h,b(\mathcal{O)}\rangle$ for
$\mathcal{O}\in\Sigma_0(\pi)$. By Lemma~\ref{bcharh},
$H_{\pi}=\ker\Lambda^\pi$ and if $\R^{\mathcal{A}}=F\oplus H_\pi$
is a direct sum decomposition then
$\Lambda^\pi:F\to\R^{\Sigma_0(\pi)}$ establishes an isomorphism of
linear spaces. It follows that there exists $K_F>0$ such that
\[\|h\|\leq K_F\|\Lambda^\pi h\| \;\;\text{ for all }\;\;h\in F.\]
\end{remark}

\begin{lemma}[see \cite{Ve2}]\label{invario}
Suppose that
$T_{(\tilde{\pi},\tilde{\lambda})}=\mathcal{R}(T_{(\pi,\lambda)})$.
Then there exists a bijection
$\xi:\Sigma(\pi)\to\Sigma(\tilde{\pi})$ such that
$\Theta(\pi,\lambda)^{-1}b(\mathcal{O})=b(\xi\mathcal{O})$ for
$\mathcal{O}\in\Sigma(\pi)$.\bez
\end{lemma}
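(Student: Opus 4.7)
The plan is to exploit the geometric meaning of the objects. The orbits of the permutation $\sigma=\sigma_\pi$ parametrize the singular points of the translation surface associated to $(\pi,\lambda)$: each orbit collects the points of $I$ (among the $\gamma(l_\alpha)$, $\gamma(r_\alpha)$ and the endpoints $0,|\lambda|$) that are glued together in the surface. Rauzy--Veech induction is realized by cutting the translation surface along a small segment and re-gluing, an operation that does not change the surface—only the way its singular points are enumerated. Consequently the singular points of $\pi$ and $\tilde\pi$ are in canonical bijection $\xi:\Sigma(\pi)\to\Sigma(\tilde\pi)$, and this $\xi$ is the map we must use. The content of the lemma is then purely combinatorial: verify that $\Theta(\pi,\lambda)^{-1}b(\mathcal{O})=b(\xi\mathcal{O})$.

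To carry this out, I would first split cases according to $\vep=\vep(\pi,\lambda)\in\{0,1\}$, set $\alpha^*=\pi_\vep^{-1}(d)$ and $\alpha^{**}=\pi_{1-\vep}^{-1}(d)$, so that $\Theta=I+E_{\alpha^*\,\alpha^{**}}$ and $\Theta^{-1}=I-E_{\alpha^*\,\alpha^{**}}$. Using the explicit formulas for $\tilde\pi_0,\tilde\pi_1$ recalled in Section~\ref{prelim}, I would compute the extended permutation $\tilde p$ and derive $\tilde\sigma$ from $\sigma$ explicitly. The outcome (the main combinatorial step) is that $\tilde\sigma$ is conjugate to a small surgery of $\sigma$: depending on $\vep$, exactly one value $j_0\in\{0,\ldots,d\}$ is "cut out" of one orbit and "spliced in" at another position, with all other orbit data preserved under a label shift on $\{1,\ldots,d\}$ that comes from the formulas for $\tilde\pi_{1-\vep}$. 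This produces the explicit bijection $\xi$ on orbits.

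With $\xi$ in hand, I would verify the identity $\Theta^{-1}b(\mathcal{O})=b(\xi\mathcal{O})$ coordinatewise. For $\gamma\neq\alpha^*$, both sides are computed from the formula $b(\mathcal{O})_\gamma=\chi_{\mathcal{O}}(\pi_0(\gamma))-\chi_{\mathcal{O}}(\pi_0(\gamma)-1)$ applied respectively to $(\pi,\mathcal{O})$ and $(\tilde\pi,\xi\mathcal{O})$; the point is that the positions $\pi_0(\gamma),\pi_0(\gamma)-1$ and $\tilde\pi_0(\gamma),\tilde\pi_0(\gamma)-1$ lie in matched orbits by construction of $\xi$. For $\gamma=\alpha^*$, the left side equals $b(\mathcal{O})_{\alpha^*}-b(\mathcal{O})_{\alpha^{**}}$; a short calculation using (\ref{zbzero}) and the definition of $\sigma$ shows that this telescopes to $\chi_{\xi\mathcal{O}}(\tilde\pi_0(\alpha^*))-\chi_{\xi\mathcal{O}}(\tilde\pi_0(\alpha^*)-1)$, because the two "extra" indicator terms cancel exactly along the orbit that has been surgered.

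The main obstacle I expect is bookkeeping: the re-indexing of $\tilde\pi_{1-\vep}$ shifts some labels by $+1$, and one must match these shifted labels with the relocated element of the orbit $\mathcal{O}$ containing $\pi_{1-\vep}\pi_\vep^{-1}(d)$. Once this single orbit is handled correctly in both cases $\vep=0$ and $\vep=1$, the remaining coordinates are automatic, and bijectivity of $\xi$ follows from the reversibility of the surgery (or, equivalently, from $\#\Sigma(\pi)=\#\Sigma(\tilde\pi)=\kappa$, a topological invariant, combined with injectivity of the constructed map).
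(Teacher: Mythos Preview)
The paper does not actually prove this lemma: it is stated with the citation ``see \cite{Ve2}'' and closed with the box symbol \bez, meaning the authors defer entirely to Veech's original argument. So there is no in-paper proof to compare against.

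Your outline is the standard route and is sound in principle: reduce to the two cases $\vep=0,1$, compute how $\sigma$ transforms into $\tilde\sigma$ from the explicit formulas for $\tilde\pi$, observe that the orbit structure changes only at the indices touched by $\alpha^*=\pi_\vep^{-1}(d)$ and $\alpha^{**}=\pi_{1-\vep}^{-1}(d)$, and then check the identity $\Theta^{-1}b(\mathcal{O})=b(\xi\mathcal{O})$ coordinate by coordinate using $\Theta^{-1}=I-E_{\alpha^*\alpha^{**}}$. This is exactly how Veech's argument proceeds, and your geometric preamble (orbits of $\sigma$ $\leftrightarrow$ singularities, Rauzy--Veech leaves the surface unchanged) is the right intuition.

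That said, what you have written is a plan, not a proof. The sentences ``The outcome \ldots\ is that $\tilde\sigma$ is conjugate to a small surgery of $\sigma$'' and ``a short calculation \ldots\ shows that this telescopes'' are precisely the places where all the work lies, and you have not done them. In particular, the description of the surgery is vague: you must write down explicitly which value moves where (for $\vep=0$ it is the position $\pi_0(\alpha^{**})$ that is relocated in the $\sigma$-cycle containing $d$, and analogously for $\vep=1$), and then the cancellation at coordinate $\alpha^*$ is a genuine case check, not a one-liner. The appeal to (\ref{zbzero}) is also misplaced; that identity concerns endpoints of intervals, whereas what you need here is purely the combinatorics of $\sigma$ and $\tilde\sigma$. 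If you want a self-contained proof, carry out the two cases in full.
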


Let $T=T_{(\pi,\lambda)}$ be an IET satisfying Keane's condition.
For every $\mathcal{O}\in\Sigma(\pi)$ and
$\varphi\in\bv^{\lozenge}(\sqcup_{\alpha\in \mathcal{A}}
I_{\alpha})$ let
\[\mathcal{O}(\varphi)=\sum_{\alpha\in\mathcal{A},\pi_0(\alpha)\in\mathcal{O}}\varphi_-(r_{\alpha})-
\sum_{\alpha\in\mathcal{A},\pi_0(\alpha)-1\in\mathcal{O}}\varphi_+(l_{\alpha}).\]
Note that if $h\in \Gamma^{(0)}$ (i.e. $h$ is a function constant on
exchanged intervals), then
\[\mathcal{O}(h)=\sum_{\pi_0(\alpha)\in\mathcal{O}}h_{\alpha}-
\sum_{\pi_0(\alpha)-1\in\mathcal{O}}h_{\alpha}=\sum_{\alpha\in\mathcal{A}}
(\chi_{\mathcal{O}}(\pi_0(\alpha))-\chi_{\mathcal{O}}(\pi_0(\alpha)-1))h_\alpha=\langle
h,b(\mathcal{O})\rangle.\] Moreover,
\begin{equation}\label{szao}
|\mathcal{O}(\varphi)|\leq 2d\|\varphi\|_{\sup}\text{ for every
}\varphi\in\bv^{\lozenge}(\sqcup_{\alpha\in \mathcal{A}}
I_{\alpha})\text{ and }\mathcal{O}\in\Sigma(\pi).
\end{equation}

Let us consider
$T_{(\tilde{\pi},\tilde{\lambda})}=\mathcal{R}(T_{(\pi,\lambda)})$
and  the renormalized cocycle $\tilde{\varphi}:\tilde{I}\to\R$,
this is
\[\tilde{\varphi}(x)=\sum_{0\leq
i<\Theta_{\beta}(\pi,\lambda)}\varphi(T_{(\pi,\lambda)}^ix)\text{
for }x\in \tilde{I}_\beta.\] The proof of the following lemma is
straightforward and we leave it to the reader.
\begin{lemma}\label{invariophi}
If $\varphi\in\bv^{\lozenge}(\sqcup_{\alpha\in \mathcal{A}}
I_{\alpha})$ then
$\tilde{\varphi}\in\bv^{\lozenge}(\sqcup_{\alpha\in \mathcal{A}}
\tilde{I}_{\alpha})$ and
$(\xi\mathcal{O})(\tilde{\varphi})=\mathcal{O}(\varphi)$ for each
$\mathcal{O}\in\Sigma(\pi)$. \bez
\end{lemma}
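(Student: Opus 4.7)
The plan is to combine a structural observation about Rauzy--Veech induction with the definition of $\tilde\varphi$ as a first-return renormalization. The structural observation I would establish first is that each left endpoint $\tilde l_\alpha$ with $\tilde\pi_0(\alpha)\neq 1$ lies in the forward $T$-orbit of $\{l_\beta:\pi_0(\beta)\neq 1\}$. In case $\vep(\pi,\lambda)=0$ one has $\tilde\pi_0=\pi_0$, so $\tilde l_\alpha=l_\alpha$ for every such $\alpha$; in case $\vep(\pi,\lambda)=1$, only the $\pi_0$-data is reshuffled and one checks directly from the formulas defining $\tilde\pi$ that the single new cut point is of the form $T^{j}l_\beta$ for an explicit $\beta$ with $\pi_0(\beta)\neq 1$. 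Together with Keane's condition, this yields: every point of the form $\tilde T^n\tilde l_\alpha$ with $n\neq 0$ and $\tilde\pi_0(\alpha)\neq 1$ equals $T^m l_\beta$ for some $m\neq 0$ and some $\beta$ with $\pi_0(\beta)\neq 1$.

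Next I would verify membership in $\bv^\lozenge$. Since each $T^i$ is an orientation-preserving translation on $\tilde I_\alpha$, the one-sided limits of $\tilde\varphi$ at an interior point $y\in\tilde I_\alpha$ satisfy
\[
\tilde\varphi_\pm(y)=\sum_{0\le i<Q_\alpha}\varphi_\pm(T^i y).
\]
Applying this at $y=\tilde T^n\tilde l_\alpha$, and using the structural observation, every argument $T^i y=T^{i+m}l_\beta$ has a nonzero $T$-iterate exponent (by Keane and $n\neq 0$), hence $\varphi_-(T^i y)=\varphi_+(T^i y)$ by the lozenge hypothesis on $\varphi$. Summing gives $\tilde\varphi_-(y)=\tilde\varphi_+(y)$, proving $\tilde\varphi\in\bv^\lozenge(\sqcup_{\alpha}\tilde I_\alpha)$.

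For the identity $(\xi\mathcal{O})(\tilde\varphi)=\mathcal{O}(\varphi)$, I would substitute the explicit formulas
\[
\tilde\varphi_-(\tilde r_\alpha)=\sum_{0\le i<Q_\alpha}\varphi_-(T^i\tilde r_\alpha),\qquad
\tilde\varphi_+(\tilde l_\alpha)=\sum_{0\le i<Q_\alpha}\varphi_+(T^i\tilde l_\alpha)
\]
into the definition of $(\xi\mathcal{O})(\tilde\varphi)$. This produces a signed double sum indexed by pairs $(\alpha,i)$ with $0\le i<Q_\alpha$. Every intermediate argument $T^i\tilde r_\alpha$ or $T^i\tilde l_\alpha$ either lies on the orbit of some original $l_\beta$ with $\pi_0(\beta)\neq 1$ away from $l_\beta$ itself -- in which case the lozenge condition lets the corresponding $\varphi_-$ and $\varphi_+$ contributions be freely exchanged -- or coincides with an original endpoint $r_\alpha$ or $l_\alpha$. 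Using the combinatorial definition of $\xi$ from Lemma~\ref{invario}, which matches $\sigma_{\tilde\pi}$-orbits with $\sigma_\pi$-orbits via $\Theta^{-1}$, the interior contributions cancel pairwise and the surviving boundary terms reassemble into $\sum_{\pi_0(\alpha)\in\mathcal{O}}\varphi_-(r_\alpha)-\sum_{\pi_0(\alpha)-1\in\mathcal{O}}\varphi_+(l_\alpha)=\mathcal{O}(\varphi)$. The main obstacle, and the reason the authors leave the proof as an exercise, is precisely the bookkeeping of this cancellation through the two Rauzy cases; it is a direct verification, but one must carefully track how the pairs $(\tilde l_\alpha,\tilde r_\alpha)$ distribute along $T$-orbits and correspond to $(l_\alpha,r_\alpha)$ under $\xi$.
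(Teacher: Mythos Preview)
The paper itself supplies no proof: immediately before Lemma~\ref{invariophi} it states ``The proof of the following lemma is straightforward and we leave it to the reader.'' Your outline is exactly the direct verification one would expect for such an omitted proof, and it follows the same spirit as the paper's treatment of the companion result Lemma~\ref{invario} (proved in \cite{Ve2}) on which it relies.

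A couple of remarks on the outline. First, the structural observation is correct but should be stated a bit more sharply: for a single Rauzy step the return times $\Theta_\beta$ equal $1$ for all letters except one, so the bookkeeping reduces to tracking a single extra summand $\varphi\circ T$ on one subinterval, and the two cases $\vep=0,1$ can each be written out completely by hand. Second, in the $\bv^\lozenge$ verification you need slightly more than ``$T^iy$ has a nonzero $T$-iterate exponent'': you need that $T^iy$ is not equal to any original cut point $l_\beta$. This follows because Keane's condition on $(\pi,\lambda)$ forces $\tilde T^n\tilde l_\alpha$ (for $n\neq 0$, $\tilde\pi_0(\alpha)\neq 1$) to lie in the interior of some $\tilde I_\gamma$, and then the intermediate points $T^iy$ (for $0\le i<\Theta_\gamma$) avoid the set $\{l_\beta:\pi_0(\beta)\neq 1\}$ by the very construction of the induced partition. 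With this refinement, the cancellation argument for $(\xi\mathcal{O})(\tilde\varphi)=\mathcal{O}(\varphi)$ goes through case by case as you describe.
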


Let $T=T_{(\pi,\lambda)}$ be an IET of periodic type and let $A$
be its periodic matrix.  By Lemma \ref{invario}, there exists a
bijection $\xi:\Sigma(\pi)\to\Sigma(\pi)$ such that
$A^{-1}b(\mathcal{O})=b(\xi\mathcal{O})$ for
$\mathcal{O}\in\Sigma(\pi)$. Since $\xi^N=Id_{\Sigma(\pi)}$ for
some $N\geq 1$, multiplying the period of $T$ by $N$, we can
assume that $\xi=Id_{\Sigma(\pi)}$. Therefore
$Ab(\mathcal{O})=b(\mathcal{O})$ for each
$\mathcal{O}\in\Sigma(\pi)$,  and hence
$A|_{\ker\Omega_{\pi}}=Id$. It follows that  the dimension of
$\Gamma^{(0)}_c=\{h\in\R^{\mathcal{A}}:A^th=h\}$ is greater or
equal than $\kappa-1$. Denote by
$\Gamma^{(0)}_{s}\subset\R^{\mathcal{A}}$ the direct sum of
invariant subspaces associated to Jordan blocks of $A^t$ with
negative Lyapunov exponents.

Assume that $T$ has non-degenerated spectrum, i.e.\ $\theta_g>0$.
Then $\dim\Gamma^{(0)}_s=\dim\Gamma^{(0)}_u=g$. Since
$2g+\kappa-1=d$ and $\dim\Gamma^{(0)}_c=\kappa-1$,
\[\R^{\mathcal{A}}=\Gamma^{(0)}=\Gamma^{(0)}_s\oplus\Gamma^{(0)}_c\oplus\Gamma^{(0)}_u\]
is an $A^t$--invariant decompositions. It follows that
$\Gamma^{(0)}_s\oplus\Gamma^{(0)}_c=\Gamma^{(0)}_{cs}\subset\Gamma^{(0)}_0$.
Therefore
\[\Gamma^{(0)}_0=\Gamma^{(0)}_s\oplus\Gamma^{(0)}_c\oplus(\Gamma^{(0)}_u\cap\Gamma^{(0)}_0).\]
Recall that $\Gamma^{(0)}_s\oplus\Gamma^{(0)}_u\subset H_{\pi}$.
As $T$ has non-degenerated spectrum,  these subspaces have the
same dimension, and so they are equal. Denote by $\Gamma^{(k)}_s$,
$\Gamma^{(k)}_c$ and  $\Gamma^{(k)}_u$ the subspaces of functions
on $I^{(k)}$  constant on intervals $I^{(k)}_\alpha$,
$\alpha\in\mathcal{A}$ corresponding to the vectors from
$\Gamma^{(0)}_s$, $\Gamma^{(0)}_c$ and  $\Gamma^{(0)}_u$
respectively. Then
\begin{equation}\label{trzydeco}
\Gamma^{(k)}=\Gamma^{(k)}_s\oplus\Gamma^{(k)}_c\oplus\Gamma^{(k)}_u,\;\;
H_{\pi}=\Gamma^{(k)}_s\oplus\Gamma^{(k)}_u,
\;\;\Gamma^{(k)}_0=\Gamma^{(k)}_s\oplus\Gamma^{(k)}_c\oplus(\Gamma^{(k)}_u\cap\Gamma^{(k)}_0)
\end{equation}
for $k\geq 0$ is a family of decomposition invariant with respect
to the renormalization operators $S(k,l)$ for $0\leq k<l$.

As $\xi=Id_{\Sigma(\pi)}$, by Lemma~\ref{invariophi}, for every
$\varphi\in\bv^{\lozenge}(\sqcup_{\alpha\in \mathcal{A}}
I^{(k)}_{\alpha})$ and $l\geq k$ we have
\begin{equation}\label{niezmo}
S(k,l)\varphi\in\bv^{\lozenge}(\sqcup_{\alpha\in \mathcal{A}}
I^{(l)}_{\alpha})\text{ and }
\mathcal{O}(S(k,l)\varphi)=\mathcal{O}(\varphi)\text{ for each
}\mathcal{O}\in\Sigma(\pi).
\end{equation}

\begin{proof}[Proof of Theorem~\ref{thmcorrecgener}]
Since
\[U^{(0)}\widehat{\varphi}= P^{(0)}\varphi=U^{(0)}\circ P_0^{(0)}\varphi-\Delta P^{(0)}
\varphi=U^{(0)}\varphi-U^{(0)}\circ C^{(0)}\varphi-\Delta P^{(0)}\varphi,\]
we have
\[\varphi-\widehat{\varphi}\in U^{(0)}\circ C^{(0)}\varphi+\Delta P^{(0)}\varphi \subset \Gamma_0^{(0)}.\]
In view of (\ref{splatanies}) and (\ref{przem}),
\[U^{(k)}\circ
S(k)\widehat{\varphi}=S_u(k)\circ U^{(0)}\widehat{\varphi}
=S_u(k)\circ P^{(0)}\varphi=P^{(k)}\circ S(k)\varphi.\] Therefore,
by (\ref{szapk}) and  (\ref{nave}), we have
\begin{eqnarray*}
\|U^{(k)}\circ
S(k)\widehat{\varphi}\|_{\sup/\Gamma^{(k)}_{cs}}&=&\|P^{(k)}(S(k)\varphi)\|_{\sup/\Gamma^{(k)}_{cs}}\\
&\leq&(1+K)\var(S(k)\varphi)\leq(1+K)\var\varphi.
\end{eqnarray*}
It follows that for every $k\geq 0$ there exists
$\varphi_k\in\bv_0(\sqcup_{\alpha\in \mathcal{A}}
I^{(k)}_{\alpha})$ and $h_k\in\Gamma^{(k)}_{cs}$ such that
\begin{equation}\label{szcsk1}
S(k)\widehat{\varphi}=\varphi_k+h_k\text{ and
}\|\varphi_k\|_{\sup}\leq(1+K)\var\varphi.
\end{equation}
As
\begin{equation}\label{roznren}
\varphi_{k+1}+h_{k+1}=S(k+1)\widehat{\varphi}=S(k,k+1)S(k)\widehat{\varphi}=S(k,k+1)\varphi_k+A^th_k,
\end{equation}
setting $\Delta h_{k+1}=h_{k+1}-A^th_k$ ($\Delta h_0=h_0$) we have
$\Delta h_{k+1}=-\varphi_{k+1}+S(k,k+1)\varphi_k$. Moreover, by
(\ref{szcsk1}),
\begin{eqnarray*}
\|\Delta
h_{k+1}\|&=&\|\varphi_{k+1}-S(k,k+1)\varphi_k\|_{\sup}\\&\leq&\|\varphi_{k+1}\|_{\sup}+\|S(k,k+1)\varphi_k\|_{\sup}\leq
(1+\|A\|)(1+K)\var\varphi.
\end{eqnarray*}
and
\[\|\Delta h_{0}\|=\|\widehat{\varphi}-\varphi_0\|_{\sup}\leq \|\widehat{\varphi}\|_{\sup}+(1+K)\var\varphi.\]
Since $h_k=\sum_{0\leq l\leq k}(A^t)^l\Delta h_{k-l}$ and $\Delta
h_l\in\Gamma^{(l)}_{cs}$, by (\ref{censtab}),
\begin{eqnarray*}
\|h_k\|&\leq&\sum_{0\leq l\leq k}\|(A^t)^l\Delta h_{k-l}\|\leq\sum_{0\leq l\leq k}Cl^{M-1}\|\Delta h_{k-l}\|\\
&\leq&
Ck^M(1+\|A\|)(1+K)\var\varphi+Ck^{M-1}\|\widehat{\varphi}\|_{\sup}.
\end{eqnarray*}
In view of (\ref{szcsk1}), it follows that
\[\|S(k)\widehat{\varphi}\|_{\sup}\leq\|{\varphi}_k\|_{\sup}+\|h_k\|
\leq Ck^M(2+\|A\|)(1+K)\var\varphi+Ck^{M-1}\|\widehat{\varphi}\|_{\sup}.\]

\vskip 3mm Since $\widehat{\varphi}-\varphi\in
\Gamma^{(0)}_0=(\Gamma^{(0)}_u\cap\Gamma^{(0)}_0)\oplus\Gamma^{(0)}_{cs}$,
there exist $h\in(\Gamma^{(0)}_u\cap\Gamma^{(0)}_0)$ and
$h'\in\Gamma^{(0)}_{cs}$ such that
$\varphi+h=\widehat{\varphi}+h'$. Hence
\[\varphi+h+\Gamma^{(0)}_{cs}=\widehat{\varphi}+\Gamma^{(0)}_{cs}=
P^{(0)}\varphi.\] Suppose that
$h_1,h_2\in\Gamma^{(0)}_u\cap\Gamma^{(0)}_0$ are vectors such that
\[\varphi+h_1+\Gamma^{(0)}_{cs}=\varphi+h_2+\Gamma^{(0)}_{cs}=
P^{(0)}\varphi.\] In view of (\ref{polygr}),
$\|S(k)(\varphi+h_1)\|_{\sup}$ and $\|S(k)(\varphi+h_2)\|_{\sup}$
have at most polynomial growth. Therefore,
$\|(A^t)^k(h_1-h_2)\|=\|S(k)(h_1-h_2)\|$ has at most polynomial
growth, as well. Since $h_1-h_2\in\Gamma_u^{(0)}$, it follows that
$h_1=h_2$.

\vskip 3mm Assume that  $T$ has non-degenerated spectrum. Then
$\Gamma^{(k)}_{cs}=\Gamma^{(k)}_{c}\oplus\Gamma^{(k)}_{s}$. Suppose
$\varphi_k$, $h_k\in\Gamma^{(k)}_{cs}$ satisfy (\ref{szcsk1}). Let
us decompose $h_k=h^s_k+h^c_k$, where $h^c_k\in\Gamma^{(k)}_{c}$ and
$h^s_k\in \Gamma^{(k)}_{s}\subset H_\pi$. By Remark~\ref{remiso},
$\Lambda^\pi(h^s_k)=0$. In view of (\ref{szcsk1}) and
(\ref{niezmo}), it follows that
\[\mathcal{O}(\widehat{\varphi})=\mathcal{O}(S(k)\widehat{\varphi})=\mathcal{O}(\varphi_k)+\mathcal{O}(h^c_k)
\text{ for every }\mathcal{O}\in\Sigma(\pi).\] Moreover, by
(\ref{szao}) and (\ref{szcsk1}),
\[|\mathcal{O}(\varphi_k)|\leq 2d\|\varphi_k\|_{\sup}\leq 2d(1+K)\var\varphi\text{ and }
|\mathcal{O}(\widehat{\varphi})|\leq
2d\|\widehat{\varphi}\|_{\sup}\] for every
$\mathcal{O}\in\Sigma(\pi)$. Therefore \[|\langle
h^c_k,b(\mathcal{O})\rangle|=|\mathcal{O}(h^c_k)|\leq
2d((1+K)\var\varphi+\|\widehat{\varphi}\|_{\sup})\text{ for every
}\mathcal{O}\in\Sigma(\pi),\] so that
\begin{equation}
\label{normlambda} \|\Lambda^\pi(h^c_k)\|\leq
2d((1+K)\var\varphi+\|\widehat{\varphi}\|_{\sup}).
\end{equation}
By (\ref{trzydeco}), we have
$\R^{\mathcal{A}}=\Gamma^{(k)}=\Gamma^{(k)}_{c}\oplus H_\pi$, so
in view of Remark~\ref{remiso}, there exists $K'\geq 1$ such that
$\|h\|\leq K'\|\Lambda^\pi h\|$ for every $h\in\Gamma^{(k)}_{c}$.
By (\ref{normlambda}), it follows that
\begin{equation}
\label{estcentral}
\|h^c_k\|\leq 2dK'((1+K)\var\varphi+\|\widehat{\varphi}\|_{\sup}).
\end{equation}
Let $\Delta h^s_{k+1}=h^s_{k+1}-A^th^s_k$  for $k\geq 0$ and
$\Delta h^s_0=h^s_0$. Then from (\ref{roznren}), we have \[\Delta
h^s_{k+1}=-\varphi_{k+1}+S(k,k+1)\varphi_k-h^c_{k+1}+A^th^c_k=-\varphi_{k+1}+S(k,k+1)\varphi_k-h^c_{k+1}+h^c_k.\]
Therefore, by (\ref{szcsk1}) and (\ref{estcentral}),
\begin{eqnarray*}
\|\Delta
h^s_{k+1}\|&\leq&\|\varphi_{k+1}\|_{\sup}+\|A\|\|\varphi_k\|_{\sup}+\|h^c_{k+1}\|+\|h^c_k\|\\&\leq&
(1+\|A\|+4dK')(1+K)\var\varphi+4dK'\|\widehat{\varphi}\|_{\sup}, \\
\|\Delta h^s_{0}\|&=& \|\widehat{\varphi}-
\varphi_0-h^c_0\|_{\sup}\leq (1+2dK')(\|\widehat{\varphi}\|_{\sup}
+(1+K)\var\varphi).
\end{eqnarray*}
Notice that for every $0<\theta_-<\theta_g$ there exists  $C\geq 1$
such that
\begin{equation*}
\|(A^t)^{n}h\|\leq C\exp(-n\theta_-)\|h\|\text{ for all
}h\in\Gamma^{(k)}_{s}\text{ and }n\geq 0.
\end{equation*}
Since $h^s_k=\sum_{0\leq l\leq k}(A^t)^l\Delta h^s_{k-l}$ and
$\Delta h^s_l\in\Gamma^{(l)}_{s}$, it follows that
\begin{eqnarray*}
\|h^s_k\|&\leq&\sum_{0\leq l\leq k}\|(A^t)^l\Delta h^s_{k-l}\|\leq\sum_{0\leq l\leq k}C\exp(-l\theta_-)\|\Delta h^s_{k-l}\|\\
&\leq&
\frac{C(1+\|A\|+4dK')}{1-\exp(-\theta_-)}((1+K)\var\varphi+\|\widehat{\varphi}\|_{\sup}).
\end{eqnarray*}
In view of (\ref{szcsk1}) and (\ref{estcentral}), it follows that
\begin{eqnarray*}
\|S(k)\widehat{\varphi}\|_{\sup}&\leq&\|{\varphi}_k\|_{\sup}+\|h^c_k\|+\|h^s_k\|\\&\leq&
\frac{C(2+\|A\|+6dK')}{1-\exp(-\theta_-)}((1+K)\var\varphi+\|\widehat{\varphi}\|_{\sup}),
\end{eqnarray*}
which completes the proof.
\end{proof}

\begin{theorem}
There exist $C_3,C_4>0$ such that
\[\|\widehat{\varphi}^{(n)}\|_{\sup}\leq C_3\log^{M+1}n\var\varphi+C_4\log^{M}n\|\widehat{\varphi}\|_{\sup}\]
for every natural $n$. If additionally $T$ has non-degenerated
spectrum then
\[\|\widehat{\varphi}^{(n)}\|_{\sup}\leq C_3\log n\var\varphi+C_4\log n\|\widehat{\varphi}\|_{\sup}.\]
\end{theorem}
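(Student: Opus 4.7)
The plan is to combine the a priori sup-norm estimate (\ref{szacsn}) from Proposition~\ref{propsn} with the polynomial-growth bound (\ref{polygr}) on the renormalized cocycle furnished by Theorem~\ref{thmcorrecgener}, and then to pass from a power of $m=m(x,n,T)$ to a power of $\log n$ by means of (\ref{mperio}).

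First I will apply (\ref{szacsn}) directly to the corrected cocycle $\widehat{\varphi}$. This estimate only requires $\widehat{\varphi}$ to be bounded, which is automatic since $\widehat{\varphi}=\varphi+h$ with $\varphi\in\bv_0$ and $h\in\Gamma^{(0)}_0$ a piecewise constant function. Because $T$ is of periodic type, one has $Z(l+1)=A$ for every $l\geq 0$, so $\|Z(l+1)\|=\|A\|$ is a uniform constant independent of $l$, giving
\[
|\widehat{\varphi}^{(n)}(x)|\leq 2\|A\|\sum_{l=0}^{m}\|S(l)\widehat{\varphi}\|_{\sup},\qquad m=m(x,n,T).
\]

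Next I will insert into this sum the bound (\ref{polygr}), $\|S(l)\widehat{\varphi}\|_{\sup}\leq C_1 l^{M}\var\varphi+C_2 l^{M-1}\|\widehat{\varphi}\|_{\sup}$. The elementary estimates $\sum_{l=0}^{m}l^{M}\leq (m+1)^{M+1}$ and $\sum_{l=0}^{m}l^{M-1}\leq (m+1)^{M}$ lead to
\[
|\widehat{\varphi}^{(n)}(x)|\leq C_1'(m+1)^{M+1}\var\varphi+C_2'(m+1)^{M}\|\widehat{\varphi}\|_{\sup}.
\]
The final step is to invoke (\ref{mperio}), which gives $m\leq\theta_1^{-1}\log(C\nu(A)n)$; absorbing numerical constants and the lower-order additive term into fresh $C_3,C_4$ yields the first inequality of the theorem.

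For the statement under the non-degenerated spectrum hypothesis, I will use the second clause of Theorem~\ref{thmcorrecgener} (valid under the natural additional assumption $\varphi\in\bv_0^{\lozenge}$), which replaces the $l$-dependent polynomial factor by a uniform constant: $\|S(l)\widehat{\varphi}\|_{\sup}\leq C_1\var\varphi+C_2\|\widehat{\varphi}\|_{\sup}$. The summation in the step above then produces only a factor $(m+1)$, and (\ref{mperio}) turns this into a single factor of $\log n$, proving the sharpened bound. There is no genuine obstacle here; the argument is a mechanical combination of the three ingredients already assembled in the appendix, and the only mildly delicate point is to observe that (\ref{szacsn}) applies verbatim to $\widehat{\varphi}$ even though the correction term $h$ itself need not have zero mean on the subintervals — only boundedness of $\widehat{\varphi}$ is used by that inequality.
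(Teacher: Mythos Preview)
Your proof is correct and follows essentially the same approach as the paper: apply (\ref{szacsn}) to $\widehat{\varphi}$ with $\|Z(l+1)\|=\|A\|$, insert the bound (\ref{polygr}) from Theorem~\ref{thmcorrecgener}, sum the resulting powers of $l$, and convert $m$ to $\log n$ via (\ref{mperio}). You are also right to flag the implicit $\bv_0^{\lozenge}$ hypothesis needed for the non-degenerated spectrum clause, since Theorem~\ref{thmcorrecgener} only provides the uniform bound under that extra assumption.
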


\begin{proof}
By Proposition~\ref{propsn} and Theorem~\ref{thmcorrecgener}, for
every $x\in I$ we have
\begin{eqnarray*}
\|\widehat{\varphi}^{(n)}(x)\|&\leq& 2\|A\|\sum_{k=0}^m(C_1k^{M}\var\varphi+C_2k^{M-1}\|\widehat{\varphi}\|_{\sup})\\
&\leq&
2\|A\|(C_1m^{M+1}\var\varphi+C_2m^{M}\|\widehat{\varphi}\|_{\sup}),
\end{eqnarray*}
where $m=m(x,n,T)$. Now the assertion follows directly from
(\ref{mperio}).
\end{proof}

\section{Example of non-regular step cocycle} Let
$T=T_{(\pi,\lambda)}$ be an IET of periodic type with periodic
matrix is $A$. Then there exists $C>0$ and $\theta>0$ such that
\begin{equation*}\label{hypcenstab1}
\|(A^t)^{n}h\|\leq C\exp(-n\theta)\|h\|\text{ for all
}h\in\Gamma^{(0)}_{s}\text{ and }n\geq 0.
\end{equation*}

\begin{lemma}\label{kobniekob}
Suppose that $h\in\Gamma^{(0)}_0$ and $\varphi:I\to\R$ is the
associated step cocycle. If $h\in \Gamma^{(0)}_{s}$ then $\varphi$
is a coboundary. If $h\notin \Gamma^{(0)}_{cs}$ then $\varphi$ is
not a coboundary.
\end{lemma}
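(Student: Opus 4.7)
The plan is to exploit the spectral decomposition $\Gamma^{(0)} = \Gamma^{(0)}_s \oplus \Gamma^{(0)}_c \oplus \Gamma^{(0)}_u$ of $A^t$ and to derive both halves from the coboundary criterion $\overline{E}(\varphi) = \{0\}$ recalled in Section~\ref{prelim}. The key input in both directions is the tower information from Lemma~\ref{ciasny2}: $\varphi^{(h_\alpha^{(n+1)})}|_{C^{(n)}_\alpha} = ((A^t)^{n+1}h)_\alpha$, together with $\liminf_n \mu(C^{(n)}_\alpha) > 0$.

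For the first assertion, I would use $h \in \Gamma^{(0)}_s$ and the exponential estimate stated immediately before the lemma to write $\|S(n)\varphi\|_{\sup} = \|(A^t)^n h\| \leq Ce^{-n\theta}\|h\|$. Plugging this geometric bound into (\ref{szacsn}) (with $\|Z(l+1)\|=\|A\|$ in the periodic-type case) yields
\[
|\varphi^{(n)}(x)| \;\leq\; 2\|A\|\sum_{l=0}^{\infty} Ce^{-l\theta}\|h\| \;=:\; M,
\]
uniformly in $x$ and $n$. With Birkhoff sums bounded by $M$, the defining condition (\ref{val-ess}) of an essential value forces every finite element of $\overline{E}(\varphi)$ to lie in $[-M,M]$ and prevents $\infty$ from being one. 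Since $E(\varphi)$ is a closed subgroup of $\R$ contained in a bounded set, $\overline{E}(\varphi) = \{0\}$, and $\varphi$ is a coboundary.

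For the second assertion, I would decompose $h = h_{cs} + h_u$ with $h_u \in \Gamma^{(0)}_u \cap \Gamma^{(0)}_0$ nonzero; the bound (\ref{censtab}) gives $\|(A^t)^n h_{cs}\| = O(n^{M-1})$, whereas $\|(A^t)^n h_u\|$ grows exponentially, so $\|(A^t)^n h\| \to \infty$. Since $\mathcal{A}$ is finite, a pigeonhole argument yields $\alpha \in \mathcal{A}$ and a subsequence $(n_k)$ with $|g_k| \to \infty$, where $g_k := ((A^t)^{n_k+1}h)_\alpha$. Equation~(\ref{wartosci}) then gives $\varphi^{(h_\alpha^{(n_k+1)})} \equiv g_k$ on $C^{(n_k)}_\alpha$, and (\ref{dolcn}) supplies $\mu(C^{(n_k)}_\alpha) \geq c > 0$. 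Assume for contradiction $\varphi = g - g\circ T$ for some measurable $g$; then $\varphi^{(n)}(x) = g(x) - g(T^nx)$, so the family $\{(\varphi^{(n)})_*\mu\}_{n\geq 1}$ is uniformly tight (choose $K$ with $\mu(|g|>K)<\vep/2$; by $T$-invariance, $\mu(|\varphi^{(n)}|>2K)<\vep$ for every $n$). But $\mu(\{|\varphi^{(h_\alpha^{(n_k+1)})}| \geq |g_k|\}) \geq c$ and $|g_k| \to \infty$ contradict tightness, so $\varphi$ is not a coboundary.

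The main obstacle is the second part: $(A^t)^n h$ could a priori oscillate across coordinates, so one has to commit to a single $\alpha$ via finite pigeonhole before invoking Lemma~\ref{ciasny2}; after that, the clash between an unbounded tower-return-sum on a positive-measure base and the tightness forced by the coboundary equation is the clean finish. The first part is essentially automatic once uniform boundedness of $\varphi^{(n)}$ is extracted from Proposition~\ref{propsn}.
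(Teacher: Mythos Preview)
Your proof is correct and follows essentially the same strategy as the paper: for the first part, both use the exponential decay on $\Gamma^{(0)}_s$ together with Proposition~\ref{propsn} to obtain a uniform bound on $\varphi^{(n)}$, then conclude the cocycle is a coboundary (the paper simply cites the fact that bounded $\R$-valued cocycles are coboundaries, while you unpack this via $\overline{E}(\varphi)=\{0\}$); for the second part, both combine the tower identity~(\ref{wartosci}) with the lower bound~(\ref{dolcn}) and the tightness of the distributions of $\varphi^{(n)}$ forced by a coboundary representation. The paper's second half is marginally cleaner: rather than decomposing $h$ and pigeonholing to a single $\alpha$, it runs the tower-intersection argument for \emph{every} $\alpha$ and $n$ simultaneously (using $\mu(C^{(n)}_\alpha)>\vep$ and $\mu(B_{h^{(n+1)}_\alpha})>1-\vep$) to conclude $\|(A^t)^{n+1}h\|\leq M$ for all $n$, hence $h\in\Gamma^{(0)}_{cs}$ directly---your pigeonhole step is correct but unnecessary.
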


\begin{proof}
Assume that $h\in \Gamma^{(0)}_{s}$. Since
\[\|S(l)\varphi\|_{\sup}=\|(A^t)^lh\|\leq C\exp(-l\theta)\|h\|,\]
 by Proposition~\ref{propsn}, we have
\[\|\varphi^{(n)}\|_{\sup}\leq 2\sum_{l=0}^\infty\|Z(l+1)\|\|S(l)\varphi\|_{\sup}
\leq 2C\|A\|\|h\|\sum_{l=0}^\infty\exp(-l\theta) =
\frac{2C\|A\|\|h\|}{1-\exp(-\theta)}\] for every natural $n$. But
each bounded cocycle in $\R^\ell$ is a coboundary.

\vskip 3mm Now suppose that $h\in\Gamma^{(0)}_0$ and $\varphi$ is a
coboundary. Set
\[\vep=\inf\{\mu(C^{(n)}_\alpha):n\geq 0,\alpha\in\mathcal{A}\}\]
(see Section~\ref{step} for the definition of the tower
$C^{(n)}_\alpha$). In view of (\ref{dolcn}), $\vep>0$. Since
$\varphi$ is a coboundary, there exist $M>0$ and a sequence
$(B_k)_{k\geq 0}$ of measurable sets with $\mu(B_k)>1-\vep$ for $n
\geq 0$ such that $|\varphi^{(k)}(x)|\leq M$ for all $x\in B_k$ and
$k \geq 0$. Recall that for every $x\in C^{(n)}_\alpha$ we have
$\varphi^{(h^{(n+1)}_\alpha)}(x)
 =((A^t)^{n+1}h)_{\alpha}$. Since $C^{(n)}_\alpha\cap
B_{h^{(n+1)}_\alpha}\neq\emptyset$, it follows that
$|((A^t)^{n+1}h)_{\alpha}|\leq M$ for every $n\geq 0$ and
$\alpha\in\mathcal{A}$. Thus $\|(A^t)^{n+1}h\|\leq M$ for every
$n\geq 0$, and hence $h\in \Gamma^{(0)}_{cs}$.
\end{proof}

\begin{example}
Let us consider an IET $T=T_{(\pi^{sym}_5,\lambda)}$ of periodic
type whose periodic matrix is equal to
\[A=\left(
\begin{array}{ccccc}
18 & 28 & 31 & 38  & 18\\
10 & 16 & 8 & 9 & 6\\
13 & 20 & 36 & 46 & 18\\
2 & 3 & 16 & 22 & 6 \\
39 & 61 & 63 & 77 & 37
\end{array}
\right).\] The existence of such IET was shown in \cite{Si-Ul}. The
Perron-Frobenius eigenvalue of $A$ is $55+12\sqrt{21}$ and $\lambda$
is equal to
\[(1+\sqrt{21},2,1+\sqrt{21},2,7+\sqrt{21})\]
up to multiplication by a positive constant. Moreover, the
eigenvalues and eigenvectors of $A^t$ are as follows:
\[\begin{array}{ll}
\rho_1= 55+12\sqrt{21}, & v_1=(-1+\sqrt{21}, 1+\sqrt{21},
3+\sqrt{21}, 5+\sqrt{21}, 4)\\
\rho_2=9+4\sqrt{5}, & v_2=(-2, -1-1\sqrt{5}, 2, 1+\sqrt{5},0)\\
\rho_3=1, & v_3=(-1, -2, 0, -1, 1)\\
\rho_4=9-4\sqrt{5}, & v_4=(-2, -1+1\sqrt{5}, 2, 1-\sqrt{5},0)\\
\rho_5= 55-12\sqrt{21}, & v_5=(-1-\sqrt{21}, 1-\sqrt{21},
3-\sqrt{21}, 5-\sqrt{21}, 4).
\end{array}
\]
Note that $v_2,v_3,v_4,v_5\in \Gamma^{(0)}_0$. Denote by
$\varphi_i:I\to\R$ the step function corresponding to $v_i$ for
$1< i\leq 5$. Since $|\rho_2|>1>|\rho_4|$, by
Lemma~\ref{kobniekob}, $\varphi_4$ is a coboundary and $\varphi_2$
is not a coboundary.

We will show that $\varphi_2$ is a non-regular cocycle. Note that
the cocycles $\varphi_2+\varphi_4$ and $\varphi_2-\varphi_4$ take
values in $\Z$ and $\sqrt{5}\Z$ respectively. Since $\varphi_4$ is
a coboundary, it follows that $E(\varphi_2)\subset \Z$ and
$E(\varphi_2)\subset\sqrt{5}\Z$, and hence $E(\varphi_2)=\{0\}$.
Since $\varphi_2$ is not a coboundary,
$\overline{E}(\varphi_2)=\{0,\infty\}$, and hence  it is
non-regular.
\end{example}

\vskip 3mm {\it Acknowledgements}: This research was carried out
during visits of the first author to the Faculty of Mathematics and
Computer Science at Nicolaus Copernicus University, and of the
second author to the IRMAR at the University of Rennes 1. The
authors are grateful to their hosts for their hospitality and
support. They would like to thank Jacek Brzykcy for the numerical
computation of the example presented in Subsection
\ref{example7permut}.

\end{document}